\documentclass[hidelinks,onefignum,onetabnum]{siamart250211}

\ifpdf
  \DeclareGraphicsExtensions{.eps,.pdf,.png,.jpg}
\else
  \DeclareGraphicsExtensions{.eps}
\fi

\newsiamremark{remark}{Remark}
\newsiamremark{hypothesis}{Hypothesis}
\crefname{hypothesis}{Hypothesis}{Hypotheses}
\newsiamthm{claim}{Claim}
\newsiamremark{fact}{Fact}
\crefname{fact}{Fact}{Facts}

\newtheorem{thrm}{Theorem}[section]

\newtheorem{lmm}[thrm]{Lemma}
\newtheorem{rmk}[thrm]{Remark}
\newtheorem{xmpl}[thrm]{Example}
\newcommand{\f}{\frac}
\newcommand{\eps}{\varepsilon}

\newcommand{\cE}{\mathbb{E}}
\newcommand{\p}{\partial}
\newcommand{\al}{\alpha}
\newcommand{\N}{\aleph}
\newcommand{\wtNh}{\widetilde{\N}_h}
\newcommand{\urone}{\uppercase\expandafter{\romannumeral1}}
\newcommand{\urtwo}{\uppercase\expandafter{\romannumeral2}}
\allowdisplaybreaks
\usepackage{lipsum}
\usepackage{amsfonts}
\usepackage{graphicx}
\usepackage{epstopdf}
\usepackage{algorithmic}
\usepackage{subcaption} 
\usepackage{float}
\usepackage{charter}
\usepackage[charter]{mathdesign}

\ifpdf
  \DeclareGraphicsExtensions{.eps,.pdf,.png,.jpg}
\else
  \DeclareGraphicsExtensions{.eps}
\fi

\crefname{hypothesis}{Hypothesis}{Hypotheses}
\crefname{fact}{Fact}{Facts}

\newcommand{\correspondingnote}{%
  \raggedright Corresponding author.%
}

\headers{Stochastic multiscale subdiffusion}{J. Dong, N. Du, X. Guo, M. Liu and X. Zheng}

\title{Analysis and numerical approximation of stochastic multiscale subdiffusion driven by fractional Gaussian noise\thanks{Submitted to the editors DATE.
\funding{This work was partially supported by the Natural Science Foundation of Shandong Province (Nos. ZR2025QB01, ZR2024JQ008, ZR2023ZD33), the National Natural Science Foundation of China (Nos. 12301555, 12131014, 12271303), the National Key R\&D Program of China (Nos. 2023YFA1008903, 2023YFA1009200), the Taishan Scholars Program of Shandong Province of China (No. tstp20250521)}}.}

\makeatletter
\newcommand{\silentthanks}[2]{%
  \g@addto@macro\@thanks{%
    \begingroup
     \long\def\@makefntext##1{%
          \noindent
          \hb@xt@1.8em{\hss\textparagraph}%
          \ignorespaces##1%
        }
      \footnotetext[#1]{#2}%
    \endgroup
  }%
}
\makeatother

\author{Jincheng Dong\thanks{\raggedright School of Mathematics, Shandong University, Jinan 250100, China\protect\\
  (\email{dongjincheng@mail.sdu.edu.cn},\email{duning@sdu.edu.cn},\email{liumengmeng423@163.com}, \email{xzheng@sdu.edu.cn}).}
\and Ning Du\footnotemark[2]
\and Xu Guo\thanks{Geotechnical and Structural Engineering Research Center, Shandong University,
17923 JingshiRoad, Jinan, China (\email{guoxu@sdu.edu.cn}).}
\and Mengmeng Liu\footnotemark[2]
\and
Xiangcheng Zheng\textsuperscript{\textdagger\,\textparagraph}
\silentthanks{1}{\correspondingnote}
}

\usepackage{amsopn}

\begin{document}
\maketitle
% REQUIRED
\begin{abstract}
This paper investigates a stochastic multiscale subdiffusion model driven by fractional Gaussian noise, where the multiscale Abel kernel with variable exponent $\alpha(t)\in(0,1)$ is used to capture multiscale and crossover behavior in anomalous diffusion. The main difficulties of this model lie in the complexity of the multiscale Abel kernel (e.g. non-monotonicity and non-coercivity) and the low regularity caused by the noise. Concerning these issues, we prove the well-posedness and regularity of the mild solutions by means of solution operator approach and a perturbation technique for multiscale Abel kernel. Then both the semidiscrete-in-time and fully-discrete numerical schemes are proposed and analyzed under the low-regularity numerical analysis framework, with proved temporal and spatial convergence rates. Numerical experiments are presented to substantiate the theoretical results.
\end{abstract}

% REQUIRED
\begin{keywords}
stochastic multiscale subdiffusion, fractional Gaussian noise, well-posedness, regularity, numerical analysis
\end{keywords}

% REQUIRED
\begin{MSCcodes}
65C30, 60H15
\end{MSCcodes}

\section{Introduction}
Subdiffusion is widely used in modeling, e.g., anomalous diffusion \cite{SerMen,GliEliRanChe,IdoEdw,XuFenShaSeeCha}, and has been extensively studied, cf. the book \cite{Jin} and the references therein. Numerical methods for anomalous diffusion problems have also received considerable attention \cite{HuAliEfeLeu,ZengZhaKar2016,ZengZhaKar}. More generally, numerical methods for nonlocal transport equations with memory effects in multiscale media were developed in
\cite{EfeLeuLiPunVab}. In the real world, external stochastic noise is ubiquitous across various systems and environments, which suggests considering stochastic ingredients to describe phenomena where noise significantly affects spatial and temporal dynamics \cite{CleDa,DicGao,KovPri,MijNan}. White noise is widely used in stochastic models \cite{ZhaKar}, while it assumes uncorrelated fluctuations in space and time, which is often unrealistic. Instead, stochastic processes with long-range dependence enable modeling of non-local interactions and multiscale heterogeneity \cite{FenYaoLiWan,PhiRal}. Considerable progress has recently been made in the study of stochastic subdiffusion problems, and 
a typical model takes the following form \cite{NieDeng2022,NieSunDenSiamJNA}
\begin{equation*}
	\left\{  
	\begin{aligned}  
		&\p_tG+\p_t^{1-\al}AG=\dot{W}^H_Q ~ \text{on }(x,t)\in D\times (0,T], \\  
		&G(\cdot,0)=G_0 ~\text{on}~D;~~~G=0 ~~\text{on}~~\p D\times(0,T],  
	\end{aligned}
	\right.  
\end{equation*}
where $D\subset \mathbb{R}^d~(d=1,~2, ~3)$ is a convex polygonal domain, $T>0$, $\p_t^{1-\al}$ with $0<\alpha<1$ refers to the fractional differential operator defined via the Abel kernel $t^{\al-1}/\Gamma(\al)$
\begin{equation*}
	\begin{aligned}
	\p_t^{1-\al}f(t):=\f{\p}{\p t}\left[\f{t^{\al-1}}{\Gamma(\al)}*f(t)\right]=\f{\p}{\p t}\int_0^t\f{(t-s)^{\al-1}}{\Gamma(\al)}f(s)ds,
    \end{aligned}
\end{equation*}
and $A:=-\Delta$. The $\dot{W}^H_Q$ refers to the fractional Gaussian noise, which will be rigorously defined in the subsequent section. Apart from the fractional Gaussian noise,  other noises have also been adopted in the literature of stochastic subdiffusion problems, including the fractionally integrated multiplicative noise \cite{WuYan2025}, fractionally integrated additive noise \cite{JinYanZho} and fractional Brownian sheet noise \cite{NieSunDeng,SunNieDeng2023}.
% fractionally integrated additive fractional Brownian motion \cite{LiDew}.

In most studies of subdiffusion, the exponent $\alpha$ is taken as a constant, which corresponds to the single-scale power law of the mean squared displacement. However, in complex subdiffusive processes there often exist transitions between diffusive regimes, which the single-scale subdiffusion models could not properly depict. A natural way to characterize such multiscale subdiffusive behavior is to replace the constant exponent $\alpha$ by a variable exponent $\alpha(t)$, resulting in the multiscale Abel kernel $t^{\al(t)-1}/\Gamma(\al(t))$ and the corresponding multiscale subdiffusion model. For instance, the work \cite{SunZhaCheRee} compares single-scale and multiscale subdiffusion based on experimental data of uranine transport, which indicates that the multiscale subdiffusion provides a better description. A main reason, as stated in the abstract of reference \cite{SunZhaCheRee}, is that the growth of contaminant plumes may not exhibit constant scaling and may instead transition between diffusive states at various transport scales. Recently, the multiscale nature of the multiscale Abel kernel has been studied in reference \cite{ZheCMS}, which substantiates the applicability of multiscale Abel kernel in modeling crossover behavior.

%Thus, the multiscale model serves a competitive instrument for modeling such multiscale behavior. In the multiscalet case, a primary challenge lies in the fact that the kernel lacks the necessary properties of constant-exponent case to apply Laplace transform. Consequently, Zheng \cite{Zheng} proposed a perturbation method for multiscale problems.

 Motivated by the above discussions, we consider the following stochastic multiscale subdiffusion equation driven by the fractional Gaussian noise
\begin{equation}\label{model_2}
	\left\{  
	\begin{aligned}  
		&\p_tG+\p_t^{1-\al(t)}AG=\dot{W}^H_Q ~ \text{on }(x,t)\in D\times (0,T],  \\  
		&G(\cdot,0)=G_0 ~~\text{on}~~D;~~~~ G=0 ~~\text{on}~~\p D\times(0,T],    
	\end{aligned}
	\right.
\end{equation}
where $\p_t^{1-\al(t)}$ with $0<\al(t)< 1$ refers to the multiscale fractional differential operator, %%%原来的$0<\al(t)\leq 1$改成了$0<\al(t)< 1$
\begin{align*} 
	\p_t^{1-\al(t)}f(t):=\f{\p}{\p t}\left[\f{t^{\al(t)-1}}{\Gamma(\al(t))}*f(t)\right].
\end{align*}Throughout the work we assume that $\al'(t),~\al''(t)$ exists and is bounded over $[0,T]$. This assumption implies the continuity of $\alpha$ on $[0,T]$ such that $\al_*=\inf_{t\in[0,T]}\al(t)>0$. Furthermore, we set  $\al_0=\al(0)$ for simplicity.

\underline{Main difficulties}: Different from the classical Abel kernel, the multiscale Abel kernel lacks favorable properties (e.g. the complete monotonicity and the explicit representation of its integral transform) that are critical for theoretical studies of the corresponding differential equations. For this reason, the existing analysis tools for single-scale subdiffusion problems do not apply directly. Recently, the work \cite{Zheng} proposed a perturbation method to circumvent this difficulty, and this idea has subsequently been applied to perform rigorous mathematical and numerical analysis for multiscale subdiffusion and viscoelastic problems \cite{ZheACOM,ZheCMS,ZheMMS}. However, the stochastic noise in (\ref{model_2}) leads to low regularity and thus causes fundamental differences in both mathematical analysis and numerical approximation of (\ref{model_2}) from the studies in references \cite{ZheACOM,ZheCMS,ZheMMS}, where the forcing terms are assumed to be relatively smooth. Furthermore, the perturbation method introduces an additional nested convolution structure: the convolution of the solution operator and a convolutional perturbation term involving the unknown solution, which is not encountered in the conventional studies of stochastic subdiffusion. 

Concerning the aforementioned issues, a comprehensive study for (\ref{model_2}) is carried out in this work. Specifically, the well-posedness and solution regularity have been proved, based on which the semi-discrete and fully-discrete numerical schemes are developed and analyzed, providing a substantial complement to the studies of stochastic subdiffusion models.
The rest of the paper is organized as follows: In Section \ref{sec:model_ana}, we introduce preliminaries and prove the well-posedness of the proposed model. The spatial regularity estimate and the temporal H\"older estimate are proved in Section \ref{sec:regular}. Section \ref{sec:timediscre} presents the temporal discretization scheme, and Section \ref{sec:TemDisAux} establishes auxiliary estimates and spatial regularity of numerical solutions to support error estimates in Section \ref{sec:err_temp_discre}. Section \ref{sec:spatial_discre} introduces the finite element spatial discretization and analyzes the resulting fully discrete scheme. 
In Section \ref{sec:simulation}, several numerical experiments are carried out to validate the theoretical results. A conclusion is given in Section \ref{sec:conclu}.% and outline potential directions for future research 

% Throughout this paper, $C$, which may vary across occurrences, denotes a generic constant.
\section{Modeling issues and well-posedness}\label{sec:model_ana}

\subsection{Preliminaries}
We first follow references \cite{Adams2003sobolev,Chow2007stochastic,Jin,JinYanZho} to introduce the following spaces and norms. 
Let $C^m(D)$ denote the vector space of all functions $u$ on $D$ such that $u$ and all its partial derivatives up to order $m$ are continuous. Let $C^\infty(D):=\bigcap_{m=0}^\infty C^m(D)$ and $C_0^\infty(D)$ consist of functions in $C^\infty(D)$ with compact support in $D$. For $1\leq p\leq \infty$, let $L^p(D)$ denote the $p$-th power Lebesgue integrable functions on $D$. In particular, we denote $\mathbb{H}:=L^2(D)$ with the corresponding inner product $(\cdot, \cdot)$. Then we define $W^{m,p}(D):=\{v\in L^p(D):D^l v\in L^p(D) ~\text{for}~0\leq|l|\leq m\}$ for some positive integer $m$ and $1\leq p\leq \infty$ where $D^l$ denotes the weak partial derivative of order $l$. Let $H^m(D):=W^{m,2}(D)$ and $H^m_0(D)$ be the closure of $C_0^\infty(D)$ in $W^{m,2}(D)$. Throughout the work,
$\|\cdot\|$ denotes the function or operator norm on $\mathbb{H}$, $C$ denotes a generic positive constant that may assume different values at different occurrences, and $\eps>0$ denotes an arbitrarily small positive number that may vary and is chosen smaller than the finitely many required thresholds.

Let $\{\lambda_j,\varphi_j\}_{j=1}^\infty$ be eigenpairs of $A:H^2(D)\cap H^1_0(D)\rightarrow \mathbb{H}$ with the zero boundary condition. For any $s \geq 0$, we define the Hilbert space $\hat{H}^s(D)$ as $\hat{H}^s(D): = \{g \in L^2(D): \sum_{j=1}^{\infty} \lambda_j^s (g, \varphi_j)^2 < \infty\}$ with the norm
$\|g\|_{\hat{H}^s(D)}^2 := \sum_{j=1}^{\infty} \lambda_j^s (g, \varphi_j)^2$. Furthermore, it is clear that $\|g\|_{\hat{H}^s(D)}^2=\|A^{s/2}g\|_{L^2(D)}^2$ for $s\geq 0$ and $\hat{H}^0(D)=\mathbb H$.
Let $\mathscr{L}(\mathbb{U};\mathbb{V})$ be a space consisting of all bounded linear operators from $\mathbb{U}$ to $\mathbb{V}$ where $\mathbb{U}$ and $\mathbb{V}$ are two separable Hilbert spaces with norms $\|\cdot\|_{\mathbb{U}}$ and $\|\cdot\|_{\mathbb{V}}$, respectively.  The covariance operator $Q$ is a self-adjoint, nonnegative linear operator on $\mathbb{H}$ and its square-root operator $Q^{1/2}$ is a unique nonnegative, self-adjoint operator on $\mathbb{H}$ such that $(Q^{1/2})^2=Q$. %$A^{-\rho} Q^{1/2}$, with $\rho\in\mathbb{R}$, is assumed to be a bounded operator on $L^2(D)$. 
Furthermore, let $\{\mu_{j}\}_{j\in\mathbb{N}}$ be an orthonormal basis in the separable Hilbert space $\mathbb{U}$. The space $\mathcal{L}_{2}(\mathbb{U};\mathbb{V})\subset\mathcal{L}(\mathbb{U};\mathbb{V})$ consists of all Hilbert-Schmidt operators with norm and inner product
\begin{equation*}
	\|S_1\|^{2}_{\mathcal{L}_{2}(\mathbb{U},\mathbb{V})}=\sum_{j\in\mathbb{N}}\|S_1\mu_{j}\|^{2}_{\mathbb{V}},\quad\langle S_1,S_2\rangle_{\mathcal{L}_{2}(\mathbb{U},\mathbb{V})}=\sum_{j\in\mathbb{N}}(S_1\mu_{j},S_2\mu_{j})_{\mathbb{V}},\quad S_1,S_2\in \mathcal{L}_{2}(\mathbb{U},\mathbb{V}).
\end{equation*}
% Let $\{ \phi_k \}_{k \in \mathbb{N}}$ be the orthonormal eigenfunctions of $Q$ with eigenvalues $\{ \Lambda_k \}_{k \in \mathbb{N}}$. The operators $Q^{1/2}$ and $Q^{-1/2}$ are defined as
% \begin{equation*}
%     Q^{1/2}\mu = \sum_{k=1}^\infty\Lambda_k^{1/2} (\mu, \omega_k) \omega_k,~~Q^{-1/2}\mu = \sum_{k: \Lambda_k>0} \Lambda_k^{-1/2} (\mu, \omega_k) \omega_k.
% \end{equation*}
Let $\mathbb{H}_0$ be the image $Q^{1/2}(\mathbb{H})$ with inner product $(\mu, \nu)_{\mathbb{H}_0} = (Q^{-1/2}\mu, Q^{-1/2}\nu)$ for $\mu, \nu \in \mathbb{H}_0$, where $Q^{-1/2}$ denotes the Moore–Penrose inverse of $Q^{1/2}$. We abbreviate the operators $\langle \cdot,\cdot\rangle_{\mathcal{L}_{2}},~\mathcal{L}_2(\mathbb{H}, \mathbb{H})$ and $\mathcal{L}_2(\mathbb{H}_0, \mathbb{H})$ as \ $\langle \cdot,\cdot\rangle,~\mathcal{L}_2$ and $\mathcal{L}_2^0$, respectively. 

The fractional Brownian motion with covariance operator $Q$ is given by $
    W^{H}_{Q}(x,t):=\sum_{k=1}^{\infty}\sqrt{\Lambda_{k}}\phi_{k}(x)W^{H}_{k}(t)
$ \cite{NieDeng2022,NieSunDenSiamJNA},
where the covariance operator $Q$ on a filtered probability space $(\Omega,\mathcal{F},\mathbb{P},\{\mathcal{F}_t\}_{t\geq 0})$ generalizes spatial correlations beyond the Dirac delta and $W^{H}_{k}$ ($k=1,2,\ldots$) are independent one-dimensional  fractional Brownian motion \cite{BenJoh,Yul} with Hurst index $H\in (1/2,1)$, satisfying $W^{H}_{k}(0)=0$, $\mathbb{E}[W^{H}_{k}(t)]=0$ for any $t>0$ and
\begin{equation*}
	\begin{aligned}
		\text{Cov}(t_1,t_2):=E(W^H_k(t_1)W^H_k(t_2))=\frac{1}{2}\left(|t_1|^{2H}+|t_2|^{2H}-|t_1-t_2|^{2H}\right),\quad \forall t_1,t_2\in [0,\infty).
	\end{aligned}
\end{equation*}
The $\dot{W}^H_Q$ denotes the fractional Gaussian noise defined as the derivative of the fractional Brownian motion. Define the space of $\mathbb{H}$-valued squared integrable random variables by %\cite{JinYanZhou}
$
L^2(\Omega;\mathbb{H})=\left\{g:\mathbb{E}[\|g\|^2]=\int_\Omega\|g(\omega)\|^2\mathrm{d}\mathbb{P}(\omega)<\infty\right\},
$
equipped with norm $\|g\|_{L^2(\Omega;\mathbb{H})}=(\mathbb{E}[\|g\|^2])^{\frac{1}{2}}$, where $\mathbb{E}$ denotes the expectation.

We finally introduce some useful sectors and the corresponding estimates.

\begin{lmm}(cf. \cite[(C.2)]{Gunzburger2019};\cite[(2.5)]{NieSunDenSiamJNA})\label{thm_contour}
For $\kappa > 0$ and $\pi/2 < \theta < \pi$, define sector $\Gamma_{\theta,\kappa}$ as 
\begin{equation}\begin{aligned}\label{Def:F_contour}
        \Gamma_{\theta,\kappa}:=\{re^{-\mathrm{i}\theta}:r\geq\kappa\}\cup\{\kappa e^{\mathrm{i}\psi}:|\psi|\leq\theta\}\cup\{re^{\mathrm{i}\theta}:r\geq\kappa\},
	\end{aligned}
\end{equation}
and $\Sigma_\theta = \{z \in \mathbb{C}: z \neq 0, |\arg z| \leq \theta\}$.
Then we have $\left|\frac{e^{z\tau}-1}{\tau^\gamma}\right|\leq C|z|^\gamma$ on $\Gamma_{\theta,\kappa}$ for $\gamma\in(0,1]$ and $\|A^\beta (z^{\al_0-1}(z^{\al_0}+A)^{-1})\| \leq C|z|^{\beta \alpha_0 - 1}$ for any $ z \in \Sigma_{\theta} $ and $ \beta \in [0, 1]$.
\end{lmm}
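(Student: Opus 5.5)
The plan is to prove the two estimates separately, since they are independent. For the first I would use an elementary estimate on $e^{w}-1$ with $w=z\tau$. For the second I would use the spectral decomposition of $A$ together with a sectorial lower bound for $|z^{\al_0}+\lambda|$. Throughout, $\tau\in(0,T]$ (in the intended application $\tau$ is a time step), and the constants may depend on $\theta,\kappa,T,\al_0,\gamma$.

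\emph{First estimate.} Set $w=z\tau$ and split $\Gamma_{\theta,\kappa}$ into the two rays and the arc.
\begin{itemize}
\item On the rays $z=re^{\pm\mathrm{i}\theta}$ with $\pi/2<\theta<\pi$, we have $\mathrm{Re}\,w\le 0$. Hence $|e^{w}-1|\le \min(2,|w|)$, and since $\gamma\in(0,1]$ this gives $|e^{w}-1|\le 2|w|^\gamma$.
\item On the arc $|z|=\kappa$, we have $|w|=\kappa\tau\le\kappa T$. From the series expansion, $|e^{w}-1|\le |w|e^{|w|}\le e^{\kappa T}(\kappa T)^{1-\gamma}|w|^{\gamma}$.
\end{itemize}
In both cases $|e^{z\tau}-1|\le C|z|^\gamma\tau^\gamma$, and dividing by $\tau^\gamma$ gives the claim. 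The only point needing care is that the arc may contain points with $\mathrm{Re}\,z>0$. This is harmless because $|z|=\kappa$ is fixed and $\tau\le T$ is bounded.

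\emph{Second estimate.} Write $A=\sum_j\lambda_j(\cdot,\varphi_j)\varphi_j$ with $\lambda_j\ge\lambda_1>0$. Then
\[
\|A^\beta z^{\al_0-1}(z^{\al_0}+A)^{-1}\|=\sup_j \frac{|z|^{\al_0-1}\lambda_j^\beta}{|z^{\al_0}+\lambda_j|}.
\]
For $z\in\Sigma_\theta$ we have $|\arg z^{\al_0}|\le\al_0\theta<\pi$, since $\al_0<1$ and $\theta<\pi$. Let $\phi=\al_0\theta$. For any $\lambda>0$ and $\zeta$ with $|\arg\zeta|\le\phi$, the law of cosines gives
\[
|\zeta+\lambda|^2=|\zeta|^2+\lambda^2+2|\zeta|\lambda\cos(\arg\zeta)\ge |\zeta|^2+\lambda^2-2|\zeta|\lambda|\cos\phi|.
\]
From this we obtain $|\zeta+\lambda|\ge c_\phi(|\zeta|+\lambda)$, where $c_\phi=\min(1,\cos(\phi/2))>0$. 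Applying this with $\zeta=z^{\al_0}$ and using $\lambda^\beta\le(|z|^{\al_0}+\lambda)^\beta$ for $\beta\in[0,1]$, each term of the supremum is bounded by
\[
c_\phi^{-1}|z|^{\al_0-1}\bigl(|z|^{\al_0}+\lambda_j\bigr)^{\beta-1}\le c_\phi^{-1}|z|^{\al_0-1}|z|^{\al_0(\beta-1)}=C|z|^{\beta\al_0-1}.
\]

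The main obstacle is the uniform sectorial lower bound $|z^{\al_0}+\lambda|\ge c(|z|^{\al_0}+\lambda)$. It holds precisely because $\al_0\theta<\pi$ keeps $z^{\al_0}$ away from the negative real axis, and the resulting constant degenerates as $\al_0\theta\to\pi$. Everything else in the argument is routine.
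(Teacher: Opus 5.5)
Your proof is correct and is essentially the standard argument behind the estimates that the paper only cites. It combines an elementary bound for $e^{z\tau}-1$, correctly using $\tau\le T$ on the arc, with a spectral/sectorial resolvent bound, which is a self-contained form of the resolvent-plus-interpolation estimate in the cited works.

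One cosmetic slip: weakening $\cos(\arg\zeta)\ge\cos\phi$ to $\ge-|\cos\phi|$ does not yield $c_\phi=\cos(\phi/2)$ when $\phi<\pi/2$ (take $|\zeta|=\lambda$ and $\phi\to0$). Keep $\cos\phi$ and use $(a+b)^2-4ab\sin^2(\phi/2)\ge\cos^2(\phi/2)(a+b)^2$, or accept the constant $\sqrt{(1-|\cos\phi|)/2}$, which is still positive since $0<\phi=\alpha_0\theta<\pi$.
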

%$\|A^\beta \tilde{F}(z)\| \leq C|z|^{\beta \alpha_0 - 1}$
% \begin{lmm}[cf. \cite{JinYan}]\label{lem:zal0A}
% For $\kappa > 0$ and $\pi/2 < \theta < \pi$, define the sector as
% $\Sigma_\theta = \{z \in \mathbb{C}: z \neq 0, |\arg z| \leq \theta\}$ and the estimate $\left\|(z^{\al_0}+A)^{-1}\right\|\leq C|z|^{-\al_0}$ holds for any $z\in\Sigma_\theta$.
% \end{lmm}

\begin{lmm}(cf. \cite[Lemma 2.2]{NieSunDenSiamJNA})\label{lmm:nieDW}
Let $f \in L^2([0, T]; L^2_0)$ and $H \in (1/2, 1)$. Then we have 
\begin{equation*}
\mathbb{E} \Big[\big\|\int_0^t f(s)  dW_Q^H(s)\big\|^2 \Big]=H(2H-1) \int_0^t \int_0^t 
\langle f(s) Q^{1/2}, f(r) Q^{1/2} \rangle|r-s|^{2H-2}  dr  ds.
\end{equation*}
\end{lmm}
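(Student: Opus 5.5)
The plan is to prove the isometry-type identity by reducing it to the scalar case through the series representation $W^H_Q(t)=\sum_{k}\sqrt{\Lambda_k}\phi_k W^H_k(t)$, and then to apply the known covariance formula for Wiener integrals of deterministic integrands with respect to a one-dimensional fractional Brownian motion with $H\in(1/2,1)$. I would first treat step functions $f(s)=\sum_{i} f_i\chi_{(t_{i-1},t_i]}(s)$ with $f_i\in L^2_0$, and then pass to a general $f\in L^2([0,T];L^2_0)$ by density.

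For a step function the stochastic integral is
\begin{equation*}
\int_0^t f(s)\,dW^H_Q(s)=\sum_k\sum_i \sqrt{\Lambda_k}\,f_i\phi_k\,\bigl(W^H_k(t_i)-W^H_k(t_{i-1})\bigr).
\end{equation*}
Expanding the squared norm and taking expectations, the cross terms with $k\neq l$ vanish by independence and zero mean. For fixed $k$, the quantity $\mathbb{E}[(W^H_k(b)-W^H_k(a))(W^H_k(d)-W^H_k(c))]$ follows from the covariance $\frac12(|t_1|^{2H}+|t_2|^{2H}-|t_1-t_2|^{2H})$. A direct computation, using that the second mixed derivative of $\frac12|r-s|^{2H}$ equals $-H(2H-1)|r-s|^{2H-2}$ off the diagonal (integrable since $2H-2>-1$), shows that this expectation equals
\begin{equation*}
H(2H-1)\int_a^b\int_c^d|r-s|^{2H-2}\,dr\,ds.
\end{equation*}
Summing over $i,j$ then gives
\begin{equation*}
\mathbb{E}\Bigl\|\int_0^t f\,dW^H_Q\Bigr\|^2=H(2H-1)\int_0^t\int_0^t\sum_k\Lambda_k\bigl(f(s)\phi_k,f(r)\phi_k\bigr)|r-s|^{2H-2}\,dr\,ds.
\end{equation*}
Since $\sum_k\Lambda_k(f(s)\phi_k,f(r)\phi_k)=\sum_k(f(s)Q^{1/2}\phi_k,f(r)Q^{1/2}\phi_k)=\langle f(s)Q^{1/2},f(r)Q^{1/2}\rangle$, this is exactly the claimed identity for step functions.

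The extension to general $f$ is the main obstacle. The right-hand side defines a quadratic form, and I need to show it is bounded on $L^2([0,T];L^2_0)$. I would use Cauchy--Schwarz on the Hilbert--Schmidt inner product together with the Hardy--Littlewood--Sobolev inequality:
\begin{equation*}
\int_0^t\int_0^t\|f(s)\|_{L^2_0}\|f(r)\|_{L^2_0}|r-s|^{2H-2}\,dr\,ds\le C\,\|f\|^2_{L^{1/H}(0,t;L^2_0)}\le C_T\,\|f\|^2_{L^2(0,t;L^2_0)},
\end{equation*}
where the last step uses $1/H<2$ on the bounded interval. A simpler alternative is Young's inequality, since $|\cdot|^{2H-2}$ is integrable on $[-T,T]$. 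With this bound, the stochastic integral is defined on all of $L^2([0,T];L^2_0)$ as the $L^2(\Omega;\mathbb{H})$-limit of the integrals of approximating step functions, and both sides of the identity are continuous in $f$, so the identity passes to the limit. Finally, I would justify interchanging the sum over $k$ with the expectation and the integrals by monotone or dominated convergence, using that $\sum_k\|f(s)Q^{1/2}\phi_k\|^2=\|f(s)\|^2_{L^2_0}$.
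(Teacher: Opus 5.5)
The paper gives no proof of its own here and simply quotes \cite[Lemma 2.2]{NieSunDenSiamJNA}. Your argument is the standard proof of that identity and is correct: you expand $W^H_Q$ in the eigenbasis of $Q$, drop the cross terms $k\neq l$ by independence, and reduce to the covariance of increments of a scalar fractional Brownian motion, which you correctly compute as $H(2H-1)\int_a^b\int_c^d|r-s|^{2H-2}\,dr\,ds$. You then identify $\sum_k\bigl(f(s)Q^{1/2}\phi_k,f(r)Q^{1/2}\phi_k\bigr)$ with the Hilbert--Schmidt inner product, and the density step is sound. Your Young's-inequality bound (the kernel $|x|^{2H-2}$ lies in $L^1(-T,T)$) already gives continuity of the quadratic form on $L^2([0,T];L^2_0)$, so Hardy--Littlewood--Sobolev is not needed.
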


\begin{lmm}(Gronwall's inequality cf. \cite[Theorem 4.1]{Jin})\label{lem:gron}
Let $\beta > 0$ and $b \geq 0$. Assume $a \in L^1(0,T)$ is nondecreasing and $a(t) \geq 0$ for almost all $t$. If $0\leq v \in L^1(0,T)$ satisfies 
$
v(t) \leq a(t) + \frac{b}{\Gamma(\beta)} \int_0^t \frac{v(s)  ds}{(t-s)^{1-\beta}}
$
for $0 < t \leq T$ a.e., then $v(t) \leq a(t)  E_{\beta,1}\bigl(b t^{\beta}\bigr)$ for almost all $t \in (0,T]$.
If instead $a(t) = a t^{-\gamma}$ for some constants $a > 0$ and $0 < \gamma < 1$, then $v(t) \leq a  \Gamma(1-\gamma)  E_{\beta,1-\gamma}\bigl(b t^{\beta}\bigr)  t^{-\gamma}$ for $0 < t \leq T$, where $E_{\beta, \gamma}(z) = \sum_{k=0}^{\infty} \frac{z^k}{\Gamma(\beta k + \gamma)}$ is the Mittag-Leffler function.
\end{lmm}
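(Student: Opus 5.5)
The plan is to iterate the integral inequality and sum the resulting Neumann series. Define the positive linear operator
\begin{equation*}
(\mathcal{B}w)(t):=\frac{b}{\Gamma(\beta)}\int_0^t (t-s)^{\beta-1}w(s)\,ds ,
\end{equation*}
so that the hypothesis reads $v\le a+\mathcal{B}v$ a.e. Because $\mathcal{B}$ preserves nonnegativity and is monotone, applying $\mathcal{B}$ repeatedly to both sides gives, for every $n\ge1$,
\begin{equation*}
v\le \sum_{k=0}^{n-1}\mathcal{B}^k a+\mathcal{B}^n v \quad \text{a.e. on } (0,T].
\end{equation*}
By the semigroup property of the Riemann--Liouville integral (the Beta integral $\int_s^t (t-r)^{\beta-1}(r-s)^{k\beta-1}dr=B(\beta,k\beta)(t-s)^{(k+1)\beta-1}$), the operator $\mathcal{B}^k$ is an integral operator with kernel $b^k(t-s)^{k\beta-1}/\Gamma(k\beta)$.

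\textbf{First case.} Take $a$ nondecreasing. Then $a(s)\le a(t)$ for $s\le t$, and hence
\begin{equation*}
\mathcal{B}^k a(t)\le a(t)\,\frac{b^k}{\Gamma(k\beta)}\int_0^t (t-s)^{k\beta-1}ds=a(t)\frac{(bt^\beta)^k}{\Gamma(k\beta+1)}.
\end{equation*}
Summing over $k$ yields exactly $a(t)E_{\beta,1}(bt^\beta)$.

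\textbf{Second case.} Take $a(t)=at^{-\gamma}$. Computing directly with the Beta function gives
\begin{equation*}
\mathcal{B}^k(s^{-\gamma})(t)=b^k\frac{\Gamma(1-\gamma)}{\Gamma(k\beta+1-\gamma)}t^{k\beta-\gamma}.
\end{equation*}
Summing over $k$ gives $a\,\Gamma(1-\gamma)E_{\beta,1-\gamma}(bt^\beta)t^{-\gamma}$. Since this bound involves no monotonicity of $a$, it holds for every $t\in(0,T]$ rather than only almost everywhere.

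\textbf{Remainder.} The remaining step, which I expect to be the only delicate point, is to show that $\mathcal{B}^n v\to0$, since we only know $v\in L^1(0,T)$. For $n\beta\ge1$ the kernel satisfies $(t-s)^{n\beta-1}\le T^{n\beta-1}$, so
\begin{equation*}
0\le \mathcal{B}^n v(t)\le \frac{b^nT^{n\beta-1}}{\Gamma(n\beta)}\|v\|_{L^1(0,T)}.
\end{equation*}
This tends to $0$ uniformly in $t$ because $\Gamma(n\beta)$ grows superexponentially. Passing to the limit $n\to\infty$ in the iterated inequality then gives both claims. Convergence of the series itself follows from the fact that the Mittag-Leffler function is entire.
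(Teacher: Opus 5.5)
Your proof is correct and matches the standard argument: it is the usual Neumann-series (Henry-type) iteration behind the result, and the paper gives no proof of its own, only a citation to Jin's Theorem 4.1. One correction concerns your remark that the second bound holds for every $t$ ``since it involves no monotonicity of $a$'': the a.e.\ qualifier enters at the first step $v\le a+\mathcal{B}v$ through the hypothesis on $v$, not through $a$, so both conclusions hold only a.e.\ (and both would hold for every $t$ if the hypothesis did).
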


\begin{lmm}(Discrete Gronwall Inequality cf. \cite[Lemma 5.1]{AlKar})\label{lmm:disGrowIneq} %$Lemma 5.1.p14
Let $\beta\in(0,1)$, $N>0$ integer, $T>0$, and $t_n=(n/N)T$ for $0\leq n\leq N$ and for some positive integer $N$. Let $(\varphi_n)_{n=1}^N$ be a nonnegative sequence. Assume that there exist 
$\eta_1, \eta_2 \in [0,1)$ and $A_1, A_2, B \geq 0$ such that $\varphi_n \leq A_1 t_n^{-\eta_1} + A_2 t_n^{-\eta_2} 
+ B\tau \sum_{j=1}^{n-1} t_{n-j}^{-1+\beta} \varphi_j,~ 1 \leq n \leq N.$
Then there exists a constant $C = C(\eta_1, \eta_2, \beta, B, T)$ such that $\varphi_n \leq C(A_1 t_n^{-\eta_1} + A_2 t_n^{-\eta_2})$.

\end{lmm}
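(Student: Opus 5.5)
This discrete Gronwall inequality is quoted from \cite[Lemma 5.1]{AlKar}. To prove it directly, I would iterate the inequality finitely many times until the convolution kernel becomes bounded, and then apply the classical discrete Gronwall lemma.

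\textbf{Basic discrete convolution estimate.} For $a,b>0$ with $a<1$ and $b\le 1$,
\begin{equation*}
\tau\sum_{j=1}^{n-1} t_{n-j}^{-1+\beta}\, t_j^{-\eta}\le C\, B(\beta,1-\eta)\, t_n^{\beta-\eta},
\end{equation*}
where $1-\eta=a$, $\beta=b$, and $B(\cdot,\cdot)$ is the Beta function. To see this, compare the sum with the integral $\int_0^{t_n}(t_n-s)^{\beta-1}s^{-\eta}\,ds$, using monotonicity of each factor on the cells $[t_{j-1},t_j]$.

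In the same way, iterating the kernel itself gives
\begin{equation*}
\tau\sum_{j=k+1}^{n-1} t_{n-j}^{-1+\beta}\, t_{j-k}^{-1+\beta}\le C\, t_{n-k}^{2\beta-1}.
\end{equation*}
Thus after $m$ substitutions of the inequality into itself, the kernel becomes $C t_{n-j}^{m\beta-1}$.

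\textbf{Iteration scheme.} Write $f_n=A_1t_n^{-\eta_1}+A_2t_n^{-\eta_2}$.

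First, substitute the bound for $\varphi_j$ into the sum. The contribution of $f$ produces terms $A_i t_n^{\beta-\eta_i}$. Since $t_n\le T$, these are bounded by $C(T)A_i t_n^{-\eta_i}$.

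Repeat this $m$ times, with $m$ the smallest integer such that $m\beta\ge 1$. This yields
\begin{equation*}
\varphi_n\le C f_n + C\tau\sum_{j=1}^{n-1} t_{n-j}^{m\beta-1}\varphi_j\le Cf_n + C\tau\sum_{j=1}^{n-1}\varphi_j.
\end{equation*}
The last step holds because $t_{n-j}^{m\beta-1}\le T^{m\beta-1}$ is bounded.

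\textbf{Closing the argument.} Define $\psi_n=\varphi_n/f_n$. Since $t_j\le t_n$ gives $f_n\le f_j$, one might try to bound $\psi_n\le C+C\tau\sum_j (f_j/f_n)\psi_j$. However, $f_j/f_n$ can be large, so this does not close directly.

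The main obstacle is therefore closing the final inequality with the singular weights $t_j^{-\eta}$ rather than a constant forcing. I would resolve it as follows.

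1. Split the sum at $j=n/2$. For $j\ge n/2$, we have $f_j\le C f_n$.
2. For $j<n/2$, keep the singular factor $t_{n-j}^{\beta-1}\le Ct_n^{\beta-1}$ from the original kernel rather than the bounded one.
3. Alternatively, apply a Henry-type weighted Gronwall argument: with $M=\max_{k\le n}\psi_k$, obtain $M\le C+\tfrac12 M$ on short time windows of length $\delta$. Choose $\delta$ so that $B\,\delta^{\beta}\,B(\beta,1-\eta)\le 1/2$, which is possible since the discrete Beta estimate gives the factor $\delta^\beta$. Then propagate across the finitely many windows, about $T/\delta$ of them, using induction on the window index.

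This windowing step yields $\psi_n\le C(\eta_1,\eta_2,\beta,B,T)$, which is the claim.
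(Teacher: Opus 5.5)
The paper gives no proof of this lemma; it only cites \cite[Lemma 5.1]{AlKar}. The standard argument for such weakly singular discrete Gronwall inequalities is your first two steps: iterate until the kernel $t_{n-j}^{m\beta-1}$ is bounded, then apply the classical discrete Gronwall lemma. So up to $\varphi_n\le Cf_n+C\tau\sum_{j<n}\varphi_j$ you are on the usual route.

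The ``main obstacle'' you then identify is not real; it comes only from normalizing by $f_n$. Set $S_n:=\tau\sum_{j=1}^{n}\varphi_j$. The inequality gives $S_n\le(1+C\tau)S_{n-1}+C\tau f_n$, hence $S_{n-1}\le Ce^{CT}\tau\sum_{j=1}^{n-1}f_j$. Since $t_j^{-\eta_i}\le s^{-\eta_i}$ for $s\in[t_{j-1},t_j]$,
\[
\tau\sum_{j=1}^{n-1}t_j^{-\eta_i}\le\int_0^{t_n}s^{-\eta_i}\,ds=\frac{t_n^{1-\eta_i}}{1-\eta_i}\le\frac{T}{1-\eta_i}\,t_n^{-\eta_i}.
\]
Therefore $\varphi_n\le Cf_n+CS_{n-1}\le C'f_n$, with no splitting or windowing needed.

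Your Henry-type windowing is still a valid and genuinely different way to close. In fact it makes the iteration superfluous, since it works directly with the kernel $t_{n-j}^{\beta-1}$. It needs only the one-step discrete Beta estimate, at the price of a constant that grows geometrically in the number $T/\delta$ of windows.

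As written, however, the windowing has three issues.

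\begin{itemize}
\item Your smallness condition $B\delta^\beta B(\beta,1-\eta)\le\frac12$, derived from the global discrete Beta estimate, is correct only on the first window. For $t_n\in((k-1)\delta,k\delta]$ with $k\ge2$, that estimate produces $t_n^{\beta}$, not $\delta^{\beta}$. You must split the sum. The earlier windows contribute at most $C(T)\max_{l<k}M_l\,f_n$, by the induction hypothesis and the global estimate. In the current window, $t_j\ge t_n/2$ gives $f_j\le 2f_n$, and $\tau\sum_{1\le i\le\delta/\tau}t_i^{\beta-1}\le\delta^\beta/\beta$. Together these give the factor $\delta^\beta$, with a modified constant.
\item Your items 1--2 (splitting at $n/2$) do not close on their own. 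The part $j<n/2$ yields a coefficient of order $T^{\beta}$ in front of $\max_k\psi_k$, and this need not be below $1$.
\item Dividing by $f_n$ presupposes $A_1+A_2>0$. The case $A_1=A_2=0$ follows trivially by induction on $n$, but you should state it.
\end{itemize}
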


% {\color{red}\begin{lmm}[{Discrete Gronwall Inequality} cf. \cite{JinYan}] %$Lemma 9.8$p244
% Let $ \varphi^n \geq 0 $ for $ 0 \leq t_n \leq T $. If 
% $
% \varphi^n \leq a t_n^{-\mu} + b \tau \sum_{j=1}^n \varphi^j, 
% \quad \text{for } 0 < t_n \leq T,
% $
% for some $ a, b \geq 0 $, and $ b \tau < \frac{1}{2} $, $ 0 \leq \mu < 1 $, then there is $ c = c(b, T) $ such that
% $
% \varphi^n \leq \frac{ac}{1 - \mu} t_n^{-\mu}, \quad 0 < t_n \leq T.
% $
% \end{lmm}}

\subsection{Model reformulation}%\label{subsec:Int_mod_re}
Model \eqref{model_2} can be decomposed into a deterministic problem
\begin{equation}\label{model_3_det}
	\left\{   
	\begin{aligned}  
		&\p_tv+\p_t^{1-\al(t)}Av=0 ~~~~~~~\text{on}~~~~D\times(0,T],  \\  
		&v(\cdot,0)=G_0 ~\text{on}~D, v=0 ~\text{on}~\p D\times (0,T],    
	\end{aligned}
	\right.
\end{equation}
and a stochastic problem
\begin{equation}\label{model_3_sto}
	\left\{  
	\begin{aligned}  
		&\p_tu+\p_t^{1-\al(t)}Au=\dot{W}^H_Q ~~~ \text{on}~~~D\times (0,T],  \\  
		&u(\cdot,0)=0 ~ \text{on}~D,~~u=0 ~\text{on}~\p D\times(0,T].
	\end{aligned}
	\right.
\end{equation}

To handle the difficulties caused by the variable exponent, we apply the perturbation method proposed in \cite{Zheng} for model reformulation. Specifically, we split the kernel as follows
	\begin{align*}  
		\f{t^{\al(t)-1}}{\Gamma(\al(t))}=\f{t^{\al_0-1}}{\Gamma(\al_0)}+\int_{0}^{t}\p_z\f{t^{\al(z)-1}}{\Gamma(\al(z))}dz=:\f{t^{\al_0-1}}{\Gamma(\al_0)}+g(t),
	\end{align*}
and direct calculations yield
\begin{align*} 
		g(t)=\int_{0}^{t}\f{t^{\al(z)-1}}{\Gamma(\al(z))}\left(\al'(z)\ln t-\digamma(\al(z))\al'(z)\right)dz,~~\digamma(z)=\f{\Gamma'(z)}{\Gamma(z)},
	\end{align*}
	and 
\begin{equation}\label{eq:g'}
	\begin{aligned}
		g'(t)&=\f{t^{\al(t)-1}}{\Gamma(\al(t))}\left(\al'(t)\ln t-\digamma(\al(t))\al'(t)\right)\\
		&~+\int_{0}^{t}\f{(\al(z)-1)t^{\al(z)-2}}{\Gamma(\al(z))}\left(\al'(z)\ln t-\digamma(\al(z))\al'(z)\right)+\f{t^{\al(z)-2}}{\Gamma(\al(z))}\al'(z)dz.
	\end{aligned}
\end{equation}
It is shown in {\cite[Estimates (5.7)--(5.8)]{Zheng}} that for $0<\eps\ll 1$,
\begin{align}\label{Bound_g_g'}
	|g|\leq Ct^{\al_0}(1+|\ln t|),~~|g'|\leq Ct^{\al_0-1}(1+|\ln t|)\leq Ct^{\al_0-1-\eps}.
\end{align}

Based on the kernel splitting and the properties of $g$, the governing equations in (\ref{model_3_det}) and (\ref{model_3_sto}) could be equivalently reformulated as 
\begin{equation}\label{jy1}
		\p_tu+\p_t^{1-\al_0}Au=-g'(t)*Au+\dot{W}^H_Q \text{ and }
\p_tv+\p_t^{1-\al_0}Av=-g'(t)*Av.
\end{equation}

Based on the Laplace transform of the equations in \eqref{jy1}, we define mild solutions of \eqref{model_3_det} and \eqref{model_3_sto} by
\begin{align}\label{v_ref}
	v=F(t)G_0-F(t)*(g'*Av)(t),
\end{align}

\begin{align}\label{u_ref}
	u=F(t)*(\dot{W}^H_Q(t)-g'*Au(t)),
\end{align}
where 
\begin{align}\label{def:F}
	F(t):=\f{1}{2\pi i}\int_{\Gamma_{\theta,\kappa}}e^{zt}z^{\al_0-1}(z^{\al_0}+A)^{-1}dz,\quad\kappa>0,~~\pi/2<\theta<\pi,
\end{align}
and $\Gamma_{\theta,\kappa}$ is defined in \eqref{Def:F_contour}. It is known that for $\gamma\leq\beta\leq\gamma+2$ with $\gamma,~\beta\in\mathbb{R}$, we have {\cite[Corollary 6.2]{Jin}}
\begin{equation}\label{ineq:Ffleqf}
        \|F(t)f\|_{\hat{H}^\beta(D)}^2\leq Ct^{(\gamma-\beta)\al_0}\|f\|_{\hat{H}^\gamma(D)}^2,
\end{equation}
and the following estimate holds.
\begin{lmm}[cf. {\cite[Theorem 2.3]{NieSunDeng}}]\label{lemjy1}
Let $F$ be defined as in \eqref{def:F} and assume that $\|A^{-\rho}\|_{\mathcal{L}_2^0}<\infty$ with $\rho<\min\{\f{H}{\al_0},1+\eps\}$, then we have
$\cE\big[\big\|\int_{0}^{t}A^\sigma F(t-s)\dot{W}^H_Qds\big\|^2\big]\leq C$ for $\sigma\in[-\rho, \min\{1-\rho,\f{H}{\al_0}-\rho-\eps\}]$, where $0<\eps\ll\f{H}{\al_0}-\rho$.
\end{lmm}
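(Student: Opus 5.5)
We argue in the eigenbasis of $A$ and reduce the stochastic integral to a deterministic double integral through Lemma \ref{lmm:nieDW}. Since $\int_0^t A^\sigma F(t-s)\dot W^H_Q\,ds=\int_0^t A^\sigma F(t-s)\,dW^H_Q(s)$, that lemma gives
\begin{equation*}
\cE\Big[\Big\|\int_0^t A^\sigma F(t-s)\,dW^H_Q(s)\Big\|^2\Big]
=H(2H-1)\int_0^t\!\!\int_0^t \big\langle A^\sigma F(t-s)Q^{1/2},A^\sigma F(t-r)Q^{1/2}\big\rangle |r-s|^{2H-2}\,dr\,ds .
\end{equation*}
We bound the Hilbert--Schmidt inner product by Cauchy--Schwarz. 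Writing $A^\sigma F(t-s)Q^{1/2}=\big(A^{\sigma+\rho}F(t-s)\big)\big(A^{-\rho}Q^{1/2}\big)$ and using the ideal property of $\mathcal L_2$, we obtain
\begin{equation*}
\|A^\sigma F(t-s)Q^{1/2}\|_{\mathcal L_2}\le \|A^{\sigma+\rho}F(t-s)\|\,\|A^{-\rho}\|_{\mathcal L_2^0}.
\end{equation*}
Here $\|A^{-\rho}\|_{\mathcal L_2^0}=\|A^{-\rho}Q^{1/2}\|_{\mathcal L_2}<\infty$ by assumption.

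The operator norm comes from \eqref{ineq:Ffleqf} with $\beta-\gamma=2(\sigma+\rho)\in[0,2]$. This requires $\sigma+\rho\le1$, which is exactly the constraint $\sigma\le 1-\rho$, and $\sigma\ge-\rho$. It yields
\begin{equation*}
\|A^{\sigma+\rho}F(t-s)\|\le C(t-s)^{-(\sigma+\rho)\al_0}.
\end{equation*}
Alternatively, one can integrate the resolvent bound of Lemma \ref{thm_contour} along $\Gamma_{\theta,\kappa}$ with $\kappa=1/(t-s)$.

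Set $a:=(\sigma+\rho)\al_0$. The integrand is then bounded by $C(t-s)^{-a}(t-r)^{-a}|r-s|^{2H-2}$. After the substitutions $s\to t-s$ and $r\to t-r$, the double integral becomes $\int_0^t\int_0^t s^{-a}r^{-a}|r-s|^{2H-2}\,dr\,ds$. Scaling $r=s\xi$ shows that, apart from the integrable singularity near the diagonal (note $2H-2>-1$), this is finite with value $C\,t^{2H-2a}$ provided $a<1$ and $2H-2a>0$. The standard approach is to split into $r<s$ and $r>s$. By symmetry it suffices to treat
\begin{equation*}
\int_0^t s^{-a}\int_0^s r^{-a}(s-r)^{2H-2}\,dr\,ds=B(1-a,2H-1)\int_0^t s^{2H-1-2a}\,ds ,
\end{equation*}
which converges precisely when $2H-2a>0$, that is, $\sigma+\rho<H/\al_0$. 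The choice $\sigma\le H/\al_0-\rho-\eps$ guarantees this with margin $\eps$, and the bound $C t^{2H-2a}\le C T^{2H-2a}$ is uniform in $t$.

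The main obstacle is the endpoint $\sigma=1-\rho$ when $1-\rho<H/\al_0-\rho$, i.e.\ when $a=\al_0$. There we need $a<1$ for the Beta function, which holds since $\al_0<1$. The inner integral $\int_0^s r^{-a}(s-r)^{2H-2}\,dr$ must still be checked to converge; it does because $a<1$ and $2H-1>0$. The condition $\rho<\min\{H/\al_0,1+\eps\}$ ensures that the admissible interval for $\sigma$ is nonempty. Beyond these exponent checks, the argument is direct.
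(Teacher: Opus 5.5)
The paper gives no proof of this lemma. It imports it from \cite[Theorem 2.3]{NieSunDeng}, which is legitimate because $F$ in \eqref{def:F} is the solution operator of the \emph{constant}-exponent problem with $\al_0$, so the variable exponent never enters. Your argument is a self-contained replacement, and it is correct in substance. It combines the isometry of Lemma \ref{lmm:nieDW}, Cauchy--Schwarz in $\mathcal{L}_2$ with the ideal property to extract $\|A^{-\rho}\|_{\mathcal{L}_2^0}$, the smoothing bound $\|A^{\sigma+\rho}F(t)\|\le Ct^{-(\sigma+\rho)\al_0}$ from \eqref{ineq:Ffleqf}, and a Beta-function evaluation. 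This is the same mechanism the paper itself uses later for $R_{21}$ in Lemma \ref{lem:AsigUn_bound}. Your exponent bookkeeping is right: $a=(\sigma+\rho)\al_0\in[0,\min\{\al_0,H-\eps\al_0\}]$, so $a<H<1$ and the bound is $C(\eps)T^{2H-2a}$. What your route buys is transparency. It shows that only $\|A^{-\rho}\|_{\mathcal{L}_2^0}<\infty$ and $0\le\sigma+\rho\le\min\{1,\f{H}{\al_0}-\eps\}$ are used. Your closing remark is off, though. The interval for $\sigma$ is nonempty for every $\rho$ once $\eps<H/\al_0$; the restriction on $\rho$ only makes the requirement $0<\eps\ll\f{H}{\al_0}-\rho$ meaningful.

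One step needs patching. Lemma \ref{lmm:nieDW} is stated for $f\in L^2([0,T];\mathcal{L}_2^0)$. Your bound $\|A^\sigma F(t-s)Q^{1/2}\|_{\mathcal{L}_2}\le C(t-s)^{-a}$ is square integrable only when $a<1/2$. However, $a\in[1/2,H)$ is admissible, for example $\rho=0$, $\sigma=1$, $\al_0=0.6$, $H=0.9$, giving $a=0.6$. In that regime $A^\sigma F(t-\cdot)$ need not lie in $L^2(0,t;\mathcal{L}_2^0)$, so you cannot invoke the lemma directly.

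The fix is standard. Apply the lemma to the truncation $f_\delta:=A^\sigma F(t-\cdot)\mathbf{1}_{[0,t-\delta]}$, which is bounded and hence admissible. Your double-integral computation gives a bound uniform in $\delta$. The same computation on the square $[t-\delta,t]^2$ gives $\cE\|\int_0^t(f_{\delta'}-f_{\delta})\,dW_Q^H\|^2\le C\delta^{2H-2a}\to0$ for $\delta'<\delta$. Hence the stochastic convolution, understood as the $L^2(\Omega;\mathbb{H})$ limit, inherits the bound. Equivalently, you can use that for $H>1/2$ the isometry extends to $L^{1/H}(0,T;\mathcal{L}_2^0)$. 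Since $(t-s)^{-a}\in L^{1/H}$ exactly when $a<H$, this is precisely the convergence condition you already verified.
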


\begin{lmm}[cf. {\cite[Theorem 2.4]{NieSunDeng}}]\label{lemjy2}
Let $F$ be defined as in \eqref{def:F} and assume that $\|A^{-\rho}\|_{\mathcal{L}_2^0}<\infty$ with $\rho\in[0,\f{H}{\al_0})\cap[0,1]$, then we have 
\begin{equation*}
    \begin{aligned}
        \cE\bigg[\Big\|\f{\int_{0}^{t}F(t-s)\dot{W}_Q^H(s)ds-\int_{0}^{t-\tau}F(t-\tau-s)\dot{W}_Q^H(s)ds}{\tau^\gamma}\Big\|^2\bigg]\leq C,~~\gamma\in[0,H-\rho\al_0).
    \end{aligned}
\end{equation*}
\end{lmm}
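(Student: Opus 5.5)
The plan is to split the increment of the stochastic convolution into a ``fresh noise'' part and a ``smoothing difference'' part. I would bound each part with the isometry of Lemma \ref{lmm:nieDW}, combined with operator-norm bounds on $F$ coming from the contour representation \eqref{def:F} and the resolvent estimate in Lemma \ref{thm_contour}. For $t\ge\tau$ (the case $t<\tau$ is handled like the first term below with $\tau$ replaced by $t\le\tau$), write
\begin{equation*}
\int_{0}^{t}F(t-s)\dot{W}_Q^H(s)ds-\int_{0}^{t-\tau}F(t-\tau-s)\dot{W}_Q^H(s)ds
=\int_{t-\tau}^{t}F(t-s)\,dW_Q^H(s)+\int_0^{t-\tau}\big(F(t-s)-F(t-\tau-s)\big)\,dW_Q^H(s)=:I_1+I_2 .
\end{equation*}
It then suffices to show $\mathbb{E}\|I_i\|^2\le C\tau^{2\gamma}$ for $i=1,2$.

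\textbf{Step 1 (pointwise operator bounds).} From \eqref{def:F} and Lemma \ref{thm_contour} with $\beta=\rho$, I would derive $\|A^{\rho}F(t)\|\le C\int_{\Gamma_{\theta,\kappa}}e^{\Re z\, t}|z|^{\rho\al_0-1}|dz|\le Ct^{-\rho\al_0}$, taking $\kappa=1/t$. For the difference, I would write
\begin{equation*}
F(t-s)-F(t-\tau-s)=\frac{1}{2\pi i}\int_{\Gamma_{\theta,\kappa}}e^{z(t-\tau-s)}(e^{z\tau}-1)z^{\al_0-1}(z^{\al_0}+A)^{-1}dz
\end{equation*}
and use $|e^{z\tau}-1|\le C\tau^\gamma|z|^\gamma$ from Lemma \ref{thm_contour} (for $\gamma\in(0,1]$; the case $\gamma=0$ is trivial). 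Choosing $\kappa=1/(t-\tau-s)$, this gives
\begin{equation*}
\|A^{\rho}(F(t-s)-F(t-\tau-s))\|\le C\tau^{\gamma}(t-\tau-s)^{-\gamma-\rho\al_0}.
\end{equation*}
Since $\|SQ^{1/2}\|_{\mathcal{L}_2}\le\|SA^{\rho}\|\,\|A^{-\rho}\|_{\mathcal{L}_2^0}$ and $A^\rho$ commutes with $F$, both bounds transfer to Hilbert--Schmidt norms, at the cost of the finite constant $\|A^{-\rho}\|_{\mathcal{L}_2^0}$.

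\textbf{Step 2 (fGn isometry).} By Lemma \ref{lmm:nieDW} and Cauchy--Schwarz in $\mathcal{L}_2$, I would reduce each $\mathbb{E}\|I_i\|^2$ to
\begin{equation*}
C\int\!\!\int a(s)a(r)|r-s|^{2H-2}\,dr\,ds ,
\end{equation*}
with $a$ the scalar bound from Step 1. For $H\in(1/2,1)$ the Hardy--Littlewood--Sobolev inequality bounds this double integral by $C\|a\|_{L^{1/H}}^2$. For $I_1$, with $a(s)=(t-s)^{-\rho\al_0}$ on $(t-\tau,t)$, one gets
\begin{equation*}
\|a\|_{L^{1/H}}^2=\Big(\int_0^\tau s^{-\rho\al_0/H}ds\Big)^{2H}=C\tau^{2H-2\rho\al_0},
\end{equation*}
which is finite since $\rho<H/\al_0$, and which is at most $C\tau^{2\gamma}$ because $\gamma<H-\rho\al_0$ and $\tau\le T$. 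For $I_2$, with $a(s)=\tau^\gamma(t-\tau-s)^{-\gamma-\rho\al_0}$, one gets
\begin{equation*}
\|a\|_{L^{1/H}}^2\le C\tau^{2\gamma}\Big(\int_0^{T}s^{-(\gamma+\rho\al_0)/H}ds\Big)^{2H}\le C\tau^{2\gamma},
\end{equation*}
which is finite exactly when $\gamma+\rho\al_0<H$. This is precisely the stated range of $\gamma$.

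\textbf{Main obstacle.} The delicate point is Step 1 for the difference term. One must balance the power $|z|^\gamma$ gained from $e^{z\tau}-1$ against the smoothing exponent $\rho\al_0$, while keeping the total singularity $(t-\tau-s)^{-(\gamma+\rho\al_0)}$ below the threshold $H$ that the fractional-noise kernel can absorb. I would first try the contour choice $\kappa=1/(t-\tau-s)$ together with the estimate $\int_{\Gamma_{\theta,\kappa}}e^{\Re z\,\sigma}|z|^{\mu-1}|dz|\le C\sigma^{-\mu}$. If the HLS step is inconvenient, a fallback is to split the $\mathcal{L}_2$ inner product and estimate the double integral directly with Beta-function identities.
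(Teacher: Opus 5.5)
The paper does not prove this lemma; it imports it from \cite[Theorem 2.4]{NieSunDeng}. Your argument is a correct self-contained proof, and it is closest to two computations the paper does carry out.

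Your split into a fresh-noise piece on $[t-\tau,t]$ and a smoothing-difference piece on $[0,t-\tau]$ is exactly the $\urone+\urtwo$ decomposition the paper uses for the deterministic perturbation term in the proof of Theorem \ref{thm_holder_reg}. There, the bound on $F(t-s)-F(t-\tau-s)$ is also obtained from Lemma \ref{thm_contour}, with $A^{1-\sigma}$ in place of $A^{\rho}$.

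Where you differ is in controlling the fractional-noise double integral. You majorize by a scalar $a$ and invoke the Hardy--Littlewood--Sobolev bound $\int\!\!\int a(s)a(r)|r-s|^{2H-2}\,dr\,ds\le C_H\|a\|_{L^{1/H}}^2$. The sharp condition $\gamma+\rho\alpha_0<H$ then appears directly as $L^{1/H}$-integrability of $(t-\tau-s)^{-\gamma-\rho\alpha_0}$. When the paper meets such integrals (e.g.\ $R_{21}$ in the proof of Lemma \ref{lem:AsigUn_bound}), it uses the symmetry of the kernel and evaluates the inner integral as a Beta function, which is your fallback. That route is more elementary and gives the same range. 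HLS is shorter and also identifies the natural class of admissible integrands.

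That last point matters a little. Lemma \ref{lmm:nieDW} is stated for $L^2$ integrands, but your majorant $(t-s)^{-\rho\alpha_0}$ is not square integrable once $2\rho\alpha_0\ge 1$ (e.g.\ $H=\alpha_0=0.9$, $\rho=0.8$). So the isometry should be used in its extended form, for integrands with finite double integral, justified by truncation and a limit controlled by your $L^{1/H}$ bound. The paper glosses over the same point.

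One small repair is needed in Step 1. With $\kappa=1/(t-\tau-s)$, the inequality $|e^{z\tau}-1|\le C(\tau|z|)^{\gamma}$ fails on the arc $|z|=\kappa$ near $\arg z=0$ when $\tau\gg t-\tau-s$, because $|e^{\kappa\tau}-1|$ grows exponentially in $\kappa\tau$. You can fix this in either of two ways:
\begin{itemize}
\item Treat $t-\tau-s\le\tau$ separately via $\|A^{\rho}F(t-s)\|+\|A^{\rho}F(t-\tau-s)\|\le C(t-\tau-s)^{-\rho\alpha_0}\le C\tau^{\gamma}(t-\tau-s)^{-\gamma-\rho\alpha_0}$.
\item Keep a fixed $\kappa\le 1/T$, as the paper does. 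Since $\gamma+\rho\alpha_0>0$ for $\gamma>0$, the contour integral is still bounded by $C(t-\tau-s)^{-\gamma-\rho\alpha_0}$.
\end{itemize}
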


Then the mild solutions to \eqref{model_2} could be defined by $G=F(t)G_{0}+F(t)*\dot{W}_{Q}^{H}-F(t)*g^{\prime}*AG.$

\subsection{Well-posedness}%\label{subsec:wellpose}
%According to the resolvent estimate $\|(z + A)^{-1}\|\leq C|z|^{-1},~\|A(z^{\al_0}+A)^{-1}\|\leq C$ for any $ z \in \Sigma_{\theta},\theta\in(0,\pi)$ ~\cite{Jin,JinYan}, then $\|\tilde{F}(z)\| \leq C|z|^{-1},~|A\tilde{F}(z)\| \leq C|z|^{\al_0-1},\forall z \in \Sigma_{\theta}$, where $\widetilde{\cdot}$ denote the Laplace transform.
%% where \(\tilde{F}(z)\) is the Laplace transform of \(F(t)\). 
%Furthermore,
%
%\begin{align}\label{betaal_1}
%	.
%\end{align}

Define the space $\mathcal{X}=L^{q}(0,T;L^{2}(\Omega;\mathbb{H}))$ for some $\varsigma\geq0$ and $q\in[2,+\infty)$, equipped with the equivalent norm
\begin{equation*}
    \|f\|_{\mathcal{X},q,\varsigma}:=\Big\|e^{-\frac{\varsigma t}{2}}\left(\cE\left[\|f\|^2\right]\right)^{1/2}\Big\|_{L^q(0,T)}=\left\|e^{-\varsigma t}\left(\cE\left[\|f\|^2\right]\right)\right\|_{L^{q/2}(0,T)}^{1/2}.
\end{equation*}

%It is straightforward to obtain the following lemma.
%\begin{lmm}
%	Let $q\in[2,+\infty)$, the norm
%	$$\|f\|_{\mathcal{X},q,\varsigma}=\left\|e^{-\frac{\varsigma t}{2}}\left(\cE\left[\|f\|^2\right]\right)^{1/2}\right\|_{L^q(0,T)}=\left\|e^{-\varsigma t}\left(\cE\left[\|f\|^2\right]\right)\right\|_{L^{q/2}(0,T)}^{1/2}.$$
%\end{lmm}
%\begin{proof}
%For $q\in[2,+\infty)$,
%\begin{equation*}\begin{aligned}
%		&\left\|e^{-\frac{\varsigma t}{2}}\left(\cE\left[\|f\|^2\right]\right)^{1/2}\right\|_{L^q(0,T)}
%		=\left(\int_{0}^{T}\left(e^{-\frac{\varsigma t}{2}}\left(\cE\left[\|f\|^2\right]\right)^{1/2}\right)^qdt\right)^{1/q}\\
%		&=\left(\left(\int_{0}^{T}\left(e^{-\varsigma t}\cE\left[\|f\|^2\right]\right)^{q/2}dt\right)^{2/q}\right)^{1/2}
%		=\left\|e^{-\varsigma t}\left(\cE\left[\|f\|^2\right]\right)\right\|_{L^{q/2}(0,T)}^{1/2}.
%\end{aligned}\end{equation*}
%Thus we finish the proof.
%%For $q=\infty$,
%%\begin{equation}\begin{aligned}
%%		&\|e^{-\frac{\varsigma t}{2}}\left(\cE\left[\|f\|_\mathbb{H}^2\right]\right)^{1/2}\|_{L^{\infty}(0,T)}
%%		=\sup\limits_{t\in[0,T]}\left(e^{-\frac{\varsigma t}{2}}\left(\cE\left[\|f\|_\mathbb{H}^2\right]\right)^{1/2}\right)\\
%%		=&\left(\sup\limits_{t\in[0,T]}\left(e^{-\varsigma t}\cE\left[\|f\|_\mathbb{H}^2\right]\right)\right)^{1/2}
%%		=\left\|e^{-\varsigma t}\left(\cE\left[\|f\|^2\right]\right)\right\|_{L^{\infty}(0,T)}^{1/2}.
%%\end{aligned}\end{equation}
%\end{proof}
\begin{thrm}
   If $\|A^{-\rho}\|_{\mathcal{L}_2^0}<\infty$ with $\rho\in[0,1]\cap[0,\frac{H}{\alpha_0})$ and $G_0\in \mathbb{H}$, then problem \eqref{model_2} admits a unique mild solution $G$ in  $\mathcal{X}$.
\end{thrm}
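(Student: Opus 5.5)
We prove existence and uniqueness by a Banach fixed-point argument in $\mathcal{X}$ with the weighted norm $\|\cdot\|_{\mathcal{X},q,\varsigma}$, choosing $\varsigma$ large. Define the map
\begin{equation*}
\mathcal{M}w:=F(t)G_0+F(t)*\dot W_Q^H-F(t)*\big(g'*Aw\big),
\end{equation*}
so that a mild solution is exactly a fixed point $G=\mathcal{M}G$. The nested term contains $A$ acting on $w\in L^2(\Omega;\mathbb H)$, so it has to be rewritten before it can be estimated. Since $F$ and $g'*$ are both convolutions in time, we reorder them as $F*(g'*Aw)=(AF*g')*w$ and set $K(t):=(AF*g')(t)$. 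The resulting operator is then $w\mapsto K*w$.

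\textbf{Well-definedness.} We first check that $\mathcal{M}$ maps $\mathcal{X}$ into itself. By \eqref{ineq:Ffleqf} with $\beta=\gamma=0$ we have $\|F(t)G_0\|\le C\|G_0\|$, which is bounded. By Lemma \ref{lemjy1} with $\sigma=0$, which is allowed since $0\in[-\rho,\min\{1-\rho,H/\alpha_0-\rho-\eps\}]$ under $\rho<H/\alpha_0$ and $\rho\le1$, the expectation $\mathbb{E}\|F*\dot W_Q^H\|^2$ is bounded uniformly in $t$. Both terms therefore lie in $\mathcal{X}$ for every $q<\infty$. The remaining term is the linear map $w\mapsto K*w$ and is handled together with the contraction estimate.

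\textbf{Key kernel estimate.} The main step is to show $\|K(t)\|\le Ct^{-1+\alpha_0-\eps}$, that is, an integrable singularity. The naive bound $\|AF(t)\|\le Ct^{-\alpha_0\cdot 0-1}$ from \eqref{ineq:Ffleqf} with $\beta-\gamma=2$ gives $\|AF(t)\|\le Ct^{-\alpha_0}$, hmm: taking $\beta=2,\gamma=0$ in the squared estimate gives $\|AF(t)\|\le Ct^{-\alpha_0}$. Combined with $|g'(t)|\le Ct^{\alpha_0-1-\eps}$ from \eqref{Bound_g_g'}, the beta-integral gives
\begin{equation*}
\|K(t)\|\le C\int_0^t (t-s)^{-\alpha_0}s^{\alpha_0-1-\eps}\,ds=Ct^{-\eps}.
\end{equation*}
This is integrable, so the naive route already works. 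A sharper alternative, if needed, is to use $AF=-\partial_t^{\alpha_0}$-type identities and integrate by parts onto $g$, which yields a bound of order $t^{\alpha_0-1-\eps}$. Either way we obtain $\|K(t)\|\le Ct^{\mu-1}$ for some $\mu\in(0,1]$.

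\textbf{Contraction.} For $w_1,w_2\in\mathcal{X}$ with $e=w_1-w_2$, Minkowski's inequality in $L^2(\Omega)$ gives
\begin{equation*}
\big(\mathbb{E}\|(\mathcal{M}w_1-\mathcal{M}w_2)(t)\|^2\big)^{1/2}\le C\int_0^t (t-s)^{\mu-1}\big(\mathbb{E}\|e(s)\|^2\big)^{1/2}ds.
\end{equation*}
Multiplying by $e^{-\varsigma t/2}$ and writing $e^{-\varsigma t/2}=e^{-\varsigma(t-s)/2}e^{-\varsigma s/2}$, Young's convolution inequality in $L^q(0,T)$ gives the factor
\begin{equation*}
C\big\|t^{\mu-1}e^{-\varsigma t/2}\big\|_{L^1(0,T)}\le C\varsigma^{-\mu}\Gamma(\mu)2^{\mu}.
\end{equation*}
This factor tends to $0$ as $\varsigma\to\infty$, so for large $\varsigma$ the map $\mathcal{M}$ is a contraction. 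The same estimate with $w_2=0$ shows that $K*w\in\mathcal{X}$, which completes the self-mapping check. Banach's fixed-point theorem then gives existence and uniqueness of $G$ in $\mathcal{X}$; the weighted norm is equivalent to the standard one, so uniqueness holds in $\mathcal{X}$ itself.

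The main obstacle is justifying the interchange $F*(g'*Aw)=(AF*g')*w$ for $w$ that is only in $\mathbb H$, together with the kernel bound for $K$. We would handle this by defining $K$ directly, verifying its bound spectrally through the contour representation \eqref{def:F} and Lemma \ref{thm_contour}, and interpreting the mild formulation with $K$ in place of the formally written term.
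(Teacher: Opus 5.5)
Your proposal is correct and follows essentially the paper's route. Both arguments run a Banach fixed-point argument in the weighted norm $\|\cdot\|_{\mathcal{X},q,\varsigma}$, move $A$ onto $F$, combine $\|AF(t)\|\le Ct^{-\alpha_0}$ with $|g'(t)|\le Ct^{\alpha_0-1-\eps}$ through a Beta integral into a $t^{-\eps}$ kernel, and use Young's inequality to get a contraction factor of order $\varsigma^{-1+\eps}$. The differences are minor: you apply Minkowski to $(\cE\|\cdot\|^2)^{1/2}$ where the paper uses Cauchy--Schwarz on $\cE\|\cdot\|^2$, and you read uniqueness off the fixed-point theorem rather than from a separate Gronwall step. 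Drop the side remark about a ``sharper'' bound of order $t^{\alpha_0-1-\eps}$: near $t=0$ it is weaker than the $t^{-\eps}$ bound you already have, and it is not needed.
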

\begin{proof}
For each $w\in\mathcal{X}$, define $v=\mathcal{M}w:=F(t)G_{0}+F(t)*\dot{W}_{Q}^{H}-F(t)*g^{\prime}*Aw$. First, we demonstrate that the mapping $\mathcal{M}$ is well-defined on $\mathcal{X}$.  Using \eqref{ineq:Ffleqf}, we obtain $\|F(t)G_0\|_{\mathcal{X},q,\varsigma} \leq C \|G_0\| < \infty$. Applying \eqref{Bound_g_g'}, $\|g\|_{\hat{H}^s(D)}^2=\|A^{s/2}g\|^2$ for $s\geq 0$ and $(z^{\al_0}+A)^{-1}=A(z^{\al_0}+A)^{-1}A^{-1}$, we have
% \begin{equation*}
% 	\begin{aligned}
% 		\cE&\left[\left\| F(t)*g'*Aw\right\|^2\right]
% 		=\cE\left[\int_D\left(\int_{0}^{t}AF(t-s)(g'* w)(s)ds\right)^2dx\right]\\
% 		&\leq  Ct^{1-\eps}\left[\int_{0}^{t}\left\|w(\xi)\right\|^2\int_{\xi}^{t}(t-s)^{-\al_0}\cdot(s-\xi)^{\al_0-1-\eps}dsd\xi\right]\\
% 		&\leq  Ct^{1-\eps}\left[\int_{0}^{t}(t-\xi)^{-\eps}\cE\left\| w(\xi)\right\|^2d\xi\right],
% 	\end{aligned}
% \end{equation*}

\begin{equation}\label{ineq:FgAw}
	\begin{aligned}
		\cE&\left[\left\| F(t)*g'*Aw\right\|^2\right]
		=\cE\bigg[\int_D\Big(\int_{0}^{t}AF(t-s)(g'* w)(s)ds\Big)^2dx\bigg]\\
        &\leq\cE\left[\int_D\int_{0}^{t}(t-s)^{-\al_0}ds\int_{0}^{t}(t-s)^{\al_0}\left(AF(t-s)(g'* w)(s)\right)^2dsdx\right]\\
       % &=\f{t^{1-\al_0}}{1-\al_0}\cE\left[\int_{0}^{t}(t-s)^{\al_0}\left\|AF(t-s)(g'* w)(s)\right\|^2ds\right]\\
		&=\f{t^{1-\al_0}}{1-\al_0}\cE\bigg[\int_{0}^{t}(t-s)^{\al_0}\big\|F(t-s)(g'* w)(s)\big\|_{\hat{\mathbb{H}}^2}^2ds\bigg]\\
        &\leq C t^{1-\al_0}\cE\bigg[\int_{0}^{t}(t-s)^{-\al_0}\big\|(g'* w)(s)\big\|^2ds\bigg]\\
        % &\leq  Ct^{1-\eps}\cE\left[\int_{0}^{t}(t-s)^{-\alpha_0}\int_{0}^{s}|g'(s-\xi)|\cdot\left\| w(\xi)\right\|^2d\xi ds\right]\\
        &\leq Ct^{1-\eps}\cE\bigg[\int_{0}^{t}\left\| w(\xi)\right\|^2\int_{\xi}^{t}(t-s)^{-\al_0}\cdot|g'(s-\xi)|dsd\xi\bigg]\\
		&\leq  Ct^{1-\eps}\bigg[\int_{0}^{t}\cE\left[\left\| w(\xi)\right\|^2\right]\int_{\xi}^{t}(t-s)^{-\al_0}\cdot(s-\xi)^{\al_0-1-\eps}dsd\xi\bigg]\\
        &\leq  Ct^{1-\eps}\bigg[\int_{0}^{t}(t-\xi)^{-\eps}\cE\left[\left\| w(\xi)\right\|^2\right]d\xi\bigg],
	\end{aligned}
\end{equation}
where $0<\eps\ll\al_0$ and we used 
\begin{equation*}
		\int_{\xi}^{t}(t-s)^{-\al_0}(s-\xi)^{\al_0-1-\eps}ds
		=\int_{0}^{t-\xi}(t-\xi-\eta)^{-\al_0}\eta^{\al_0-1-\eps}d\eta=\frac{\Gamma(1-\alpha_0)\Gamma(\alpha_0-\varepsilon)}{\Gamma(1-\varepsilon)}(t-\xi)^{-\eps}.
\end{equation*}
%where $\left\|g'(t)* w(t)\right\|^2$ is bounded by
%	\begin{equation*}
%		\begin{aligned}
%			\left\|g'(t)* w(t)\right\|^2\leq Ct^{\al_0-\eps}\int_{0}^{t}|g'(t-s)|\cdot\left\| w(s)\right\|^2ds.
%		\end{aligned}
%	\end{equation*}

By Young's convolution inequality,  $\int_{0}^{t}e^{-\varsigma y}y^{-\varepsilon}dy\leq\varsigma^{-(1-\varepsilon)}\Gamma(1-\varepsilon)$ and $w\in\mathcal{X}$, we obtain
\begin{equation}\label{ineq:FcovGcovAw}
\begin{aligned}
    &\|F(t)*(g'*Aw)(t)\|_{\mathcal{X},q,\varsigma}^2 =\left\|e^{-\varsigma t}\left(\cE\left[\|F(t)*(g'*Aw)(t)\|^2\right]\right)\right\|_{L^{q/2}(0,T)}\\
    % &\leq CT^{1-\varepsilon}\left\|e^{-\varsigma t}\left(\int_0^t(t-\xi)^{-\varepsilon}\cE[\left\|w(\xi)\right\|^2]d\xi\right)\right\|_{L^{q/2}(0,T)}\\
    &\quad\leq CT^{1-\varepsilon}\Big\|\Big(\int_0^t(t-\xi)^{-\varepsilon}e^{-\varsigma (t-\xi)}e^{-\varsigma\xi}\cE[\left\|w(\xi)\right\|^2]d\xi\Big)\Big\|_{L^{q/2}(0,T)}\\
    &\quad\leq CT^{1-\varepsilon}\|t^{-\varepsilon}e^{-\varsigma t}\|_{L^1(0,T)}\left\|e^{-\varsigma t}\cE[\left\|w(\xi)\right\|^2]\right\|_{L^{q/2}(0,T)}\leq C T^{1-\varepsilon} \varsigma^{-1+\varepsilon} \|w\|_{\mathcal{X},q,\varsigma}^2 .
\end{aligned}
\end{equation}
By taking $\sigma=0$ in Lemma \ref{lemjy1}, we have $\cE\left[\|F(t)*\dot{W}_Q^H(t)\|\right]\leq C$, which implies $\|F(t)*\dot{W}_Q^H(t)\|_{\mathcal{X},q,\varsigma}\leq C\left( \frac{2}{\varsigma q} \left( 1 - e^{-\frac{\varsigma q}{2}T} \right) \right)^{1/q}$. Thus, the mapping $\mathcal{M}$ is well-defined. %C\left\|e^{-\frac{\varsigma t}{2}}\right\|_{L^q(0,T)}=

Next, we prove its contractivity. For $v_{i} \in \mathcal{X}$ and $v_{i} := \mathcal{M} w_{i}$ for $i = 1, 2$, $e_{w} := w_{1} - w_{2}$ and $e_{v} := v_{1} - v_{2}$ satisfy $e_{v}= -F(t)*(g^{\prime}*Ae_{w})(t)$. We multiply both sides of this equation by $e^{-\varsigma t}$ to obtain $e^{-\varsigma t}e_{v}=-e^{-\varsigma t}F(t)*(g^{\prime}*Ae_{w})(t)$, 
and follow the same argument as \eqref{ineq:FgAw} to obtain
% ultilize Cauchy–Schwarz inequality and \eqref{ineq:Ffleqf} to get
\begin{equation*}%\label{contra_ineq}
\begin{aligned}
		e^{-\varsigma t}\cE\left[\|e_{v}\|^2\right]
		% =&e^{-\varsigma t}\cE\left[\int_D\left(\int_{0}^{t}AF(t-s)(g'* e_w)(s)ds\right)^2dx\right]\\
		% \leq&e^{-\varsigma t}\cE\left[\int_D\int_{0}^{t}(t-s)^{-\al_0}ds\int_{0}^{t}(t-s)^{\al_0}\left[AF(t-s)(g'* e_w)(s)\right]^2dsdx\right]\\
		% % =&\f{t^{1-\al_0}}{1-\al_0}e^{-\varsigma t}\cE\left[\int_{0}^{t}(t-s)^{\al_0}\left\|AF(t-s)(g'* e_w)(s)\right\|^2ds\right]\\
		% % =&\f{t^{1-\al_0}}{1-\al_0}e^{-\varsigma t}\cE\left[\int_{0}^{t}(t-s)^{\al_0}\left\|F(t-s)(g'* e_w)(s)\right\|_{\hat{\mathbb{H}}^2}^2ds\right]\\
		% \leq&C \f{t^{1-\al_0}}{1-\al_0}e^{-\varsigma t}\cE\left[\int_{0}^{t}(t-s)^{-\al_0}\left\|g'* e_w(s)\right\|^2ds\right]\\
		% \leq & Ct^{1-\eps}e^{-\varsigma t}\left(\int_{0}^{t}(t-s)^{-\alpha_0}\int_{0}^{s}|g'(s-\xi)|\cdot\cE\left[\left\| e_w(\xi)\right\|^2\right]d\xi ds\right)\\		
		% % =&Ct^{1-\eps}\left(\int_{0}^{t}(t-s)^{-\alpha_0}e^{-\varsigma(t-s)}\int_{0}^{s}e^{-\varsigma(s-\xi)}|g'(s-\xi)|e^{-\varsigma\xi}\cdot\cE\left[\left\| e_w(\xi)\right\|_{\mathbb{H}}^2\right]d\xi ds\right)\\	
		\leq C\left[\left(t^{-\alpha_0}e^{-\varsigma t}\right)*\left(e^{-\varsigma t}g'(t)\right)*\left(e^{-\varsigma t }\cE\left[\left\| e_w(t)\right\|^2\right]\right)\right].
\end{aligned}\end{equation*}
%where $\left\|g'*e_w(t)\right\|^2$ is bounded by using Minkowski's inequality and \eqref{Bound_g_g'},
%\begin{equation*}
%	\begin{aligned}
%		\left\|g'*e_w(t)\right\|^2
%        =\left\|\int_{0}^{t}g'(t-s)\cdot e_w(s)ds\right\|^2
%		% \leq \left(\int_{0}^{t}|g'(t-s)|\cdot\left\|e_w(s)\right\|_{\mathbb{H}}ds\right)^2\\
%		&\leq\int_{0}^{t}|g'(t-s)|ds\int_{0}^{t}|g'(t-s)|\cdot\left\|e_w(s)\right\|^2ds\\
%		&\leq Ct^{\al_0-\eps}\int_{0}^{t}|g'(t-s)|\cdot\left\|e_w(s)\right\|^2ds.
%	\end{aligned}
%\end{equation*}
By applying Young's convolution inequality and \eqref{Bound_g_g'}, and following a similar derivation as \eqref{ineq:FcovGcovAw}, we obtain
% and the fact that $\int_{0}^{t}e^{-\varsigma y}y^{-\alpha_0}dy\leq\varsigma^{-(1-\alpha_0)}\Gamma(1-\alpha_0)$, for $q\in\left[2,+\infty\right)$, 
\begin{equation*}\begin{aligned}
		&\left\|e_{v}\right\|_{\mathcal{X},q,\varsigma}^2=\Big\|e^{-\frac{\varsigma t}{2}}\left(\cE\left[\|e_{v}\|^2\right]\right)^{1/2}\Big\|^2_{L^q(0,T)}
		=\left\|e^{-\varsigma t}\left(\cE\left[\|e_{v}\|^2\right]\right)\right\|_{L^{q/2}(0,T)}\\
		&\leq C\left[\left\|t^{-\alpha_0}e^{-\varsigma t}\right\|_{L^1(0,T)}\left\|e^{-\varsigma t}g'(t)\right\|_{L^1(0,T)}\left\|e^{-\varsigma t }\cE\left[\left\| e_w(t)\right\|^2\right]\right\|_{L^{q/2}(0,T)}\right]\leq C\varsigma^{-1+\varepsilon}\|e_{w}\|_{\mathcal{X},q,\varsigma}^2.
\end{aligned}\end{equation*}
%这里原本是CT^{1-\eps}\varsigma^{-1+\varepsilon}\|e_{w}\|_{\mathcal{X},q,\varsigma}^2，节省空间省区了T^{1-\eps}
% which implies $\|e_{v}\|_{\mathcal{X},q,\varsigma}\leq CT^{(1-\eps)/2}\varsigma^{(-1+\varepsilon)/2}\|e_{w}\|_{\mathcal{X},q,\varsigma}$.
Thus $\mathcal{M}$ is a contraction from $\mathcal{X}$ to $\mathcal{X}$ for $\varsigma$ large enough such that $C\varsigma^{-1+\varepsilon}<1$, which implies the existence of a mild solution of (\ref{model_2}) in $\mathcal X$. To show the uniqueness,
assume that $\tilde{G}$ is another solution to \eqref{model_2}. Then following the same procedure as (\ref{ineq:FgAw}), we have
%Using the analogue of \eqref{contra_ineq}, we obtain the following estimate
\begin{equation*}
		\cE\left[\|G-\tilde{G}\|^2\right]\leq Ct^{1-\eps}\bigg[\int_{0}^{t}(t-\xi)^{-\eps}\cE\bigg[\Big\|G(\xi)-\tilde{G}(\xi)\Big\|^2\Big]d\xi\bigg].
\end{equation*}
Then we use Lemma \ref{lem:gron} to obtain $\cE\left[\|G-\tilde{G}\|^2\right]= 0$, which completes the proof.
\end{proof}

\section{Solution regularity}\label{sec:regular}
In this section, we investigate the regularity properties of the solution $G$ to problem \eqref{model_2} based on the decomposition $G=u+v$, where $u$ and $v$ solve the stochastic problem \eqref{model_3_sto} and the deterministic problem \eqref{model_3_det}, respectively.
\subsection{Regularity of the stochastic problem} 

\begin{thrm}\label{thm_reg}
	Let $u$ be the mild solution of problem \eqref{model_3_sto} and assume $\|A^{-\rho}\|_{\mathcal{L}_2^0}<\infty$ with $\rho<\min\left\{1+\varepsilon,\f{H}{\al_0}\right\}$, then 
	%虽然Nie文章没要求rho<1，但sigma\geq0要求rho<1
	$\cE\left[\|A^\sigma u\|^2\right]<C$
	for $\sigma\in[-\rho, \min\{1-\rho,\f{H}{\al_0}-\rho-\eps\}]$. %由于rho一定大于0所以sigma一定小于1
\end{thrm}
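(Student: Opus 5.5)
We will work directly from the mild formulation \eqref{u_ref}, $u=F(t)*\dot W^H_Q-F(t)*(g'*Au)$. Applying $A^\sigma$ gives
\begin{equation*}
\cE\big[\|A^\sigma u(t)\|^2\big]\leq 2\,\cE\Big[\Big\|\int_0^t A^\sigma F(t-s)\dot W^H_Q(s)\,ds\Big\|^2\Big]+2\,\cE\big[\|A^\sigma F*(g'*Au)(t)\|^2\big].
\end{equation*}
The first term is bounded by a constant via Lemma \ref{lemjy1}, under exactly the stated hypotheses on $\rho$ and $\sigma$. The remaining task is to control the nested convolution by a weakly singular integral of $\cE[\|A^\sigma u\|^2]$, and then close with Lemma \ref{lem:gron}.

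For the perturbation term we write $A^\sigma F(t-s)(g'*Au)(s)=A^{1}F(t-s)\,(g'*A^\sigma u)(s)$, since $A^\sigma$ commutes with $F$ and with the scalar convolution. We then repeat the argument of \eqref{ineq:FgAw} with $w$ replaced by $A^\sigma u$. Cauchy--Schwarz with weight $(t-s)^{-\al_0}$ is combined with \eqref{ineq:Ffleqf} for $\beta-\gamma=2$, which gives $\|AF(t-s)f\|^2\le C(t-s)^{-2\al_0}\|f\|^2$. Next, $\|g'*w\|^2\le \|g'\|_{L^1}\,|g'|*\|w\|^2$ together with \eqref{Bound_g_g'} yields $|g'(s)|\le Cs^{\al_0-1-\eps}$. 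Finally the Beta-integral identity gives
\begin{equation*}
\cE\big[\|A^\sigma F*(g'*Au)(t)\|^2\big]\le Ct^{1-\eps}\int_0^t (t-\xi)^{-\eps}\,\cE\big[\|A^\sigma u(\xi)\|^2\big]\,d\xi .
\end{equation*}
Setting $\phi(t)=\cE[\|A^\sigma u(t)\|^2]$, we get $\phi(t)\le C+C\int_0^t(t-\xi)^{-\eps}\phi(\xi)d\xi$, and Lemma \ref{lem:gron} with $\beta=1-\eps$ and constant $a$ gives $\phi\le C E_{1-\eps,1}(CT^{1-\eps})\le C$.

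The main obstacle is justifying the Gronwall step, which requires $\phi\in L^1(0,T)$ a priori. For $\sigma\le 0$ this follows from the well-posedness theorem, since $u\in\mathcal X$ and $\|A^\sigma u\|\le C\|u\|$. For $\sigma>0$ this is not known in advance. We would handle it by running the contraction argument of the well-posedness proof in the weighted space $L^q(0,T;L^2(\Omega;\hat H^{2\sigma}))$. The same estimates, with $w$ replaced by $A^\sigma w$, show that the map $\mathcal M$ (without the $G_0$ term) is a contraction there for large $\varsigma$, because the stochastic convolution lies in that space by Lemma \ref{lemjy1}. By uniqueness in $\mathcal X$ the resulting fixed point coincides with $u$, so $\phi\in L^{q/2}(0,T)\subset L^1(0,T)$. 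Alternatively, one could apply the argument to the spectral truncations $P_N u$, where $\phi_N$ is finite, obtain bounds uniform in $N$, and pass to the limit by Fatou's lemma.

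A minor point is that the perturbation estimate uses the full smoothing of $AF$ only through the integrable factor $(t-s)^{-\al_0}$. This is independent of $\sigma$, so no further restriction on $\sigma$ beyond those inherited from Lemma \ref{lemjy1} arises.
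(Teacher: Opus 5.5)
Your proposal is correct and follows the paper's proof: you split the mild formulation \eqref{u_ref}, bound the stochastic convolution by Lemma~\ref{lemjy1}, and repeat \eqref{ineq:FgAw} with $A^\sigma u$ in place of $w$ to obtain $C\int_0^t(t-\xi)^{-\eps}\cE[\|A^\sigma u(\xi)\|^2]\,d\xi$, then close with Lemma~\ref{lem:gron}. Your extra check that $\cE[\|A^\sigma u\|^2]\in L^1(0,T)$ before invoking Gronwall is a step the paper omits, and it is sound. One caveat: for $\rho\in(1,1+\eps)$ you should use the contraction or spectral-truncation route, because the well-posedness theorem you cite for $\sigma\le 0$ assumes $\rho\le 1$ and so does not give $u\in\mathcal X$ in that case.
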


\begin{proof}
Applying \eqref{u_ref}, we obtain
\begin{equation}\label{Ine_reg_1}
	\begin{aligned}
		\cE\left[\|A^\sigma u\|^2\right]&=\cE\left[\left\|A^\sigma \left(F(t)*(\dot{W}^H_Q(t)-g'*Au(t))\right)\right\|^2\right]\\
		&\leq2\cE\bigg[\Big\|\int_{0}^{t}A^\sigma F(t-s)\dot{W}^H_Qds\Big\|^2\bigg]+2\cE\bigg[\Big\|\int_{0}^{t}A^\sigma F(t-s)g'*Auds\Big\|^2\bigg].
	\end{aligned}
\end{equation}
Let $0<\eps\ll\min\{1,\f{H}{\al_0}-\rho\}$, the first right-hand side term is bounded by some constant $C$ (cf. Lemma \ref{lemjy1}), and the second right-hand side term can be bounded in analogous with \eqref{ineq:FgAw}
\begin{equation*}%\label{Ine_reg_3}
	\begin{aligned}
		\cE&\bigg[\Big\|\int_{0}^{t}A^\sigma F(t-s)g'*Au\Big\|^2\bigg]
		\leq  C\left[\int_{0}^{t}(t-\xi)^{-\eps}\cE\left\|A^\sigma u(\xi)\right\|^2d\xi\right].
	\end{aligned}
\end{equation*}
%where $\left\|g'*A^\sigma u\right\|^2$ is bounded by
%	\begin{equation*}
%		\begin{aligned}
%			\left\|g'(t)*A^\sigma u(t)\right\|^2\leq Ct^{\al_0-\eps}\int_{0}^{t}|g'(t-s)|\cdot\left\|A^\sigma u(s)\right\|^2ds.
%		\end{aligned}
%	\end{equation*}
%	\begin{equation}
%		\begin{aligned}
%			\left\|g'(t)*A^\sigma u(t)\right\|_{\mathbb{H}}^2&=\left\|\int_{0}^{t}g'(t-s)\cdot A^\sigma u(s)ds\right\|_{\mathbb{H}}^2\\
%			&\leq \left(\int_{0}^{t}|g'(t-s)|\cdot\left\|A^\sigma u(s)\right\|_{\mathbb{H}}ds\right)^2 ~~(Minko.~ineq.)\\
%			&\leq\int_{0}^{t}|g'(t-s)|ds\int_{0}^{t}|g'(t-s)|\cdot\left\|A^\sigma u(s)\right\|_{\mathbb{H}}^2ds~~(using |g'|\leq Ct^{\al_0-1-\varepsilon})\\
%			&=Ct^{\al_0-\eps}\int_{0}^{t}|g'(t-s)|\cdot\left\|A^\sigma u(s)\right\|_{\mathbb{H}}^2ds
%		\end{aligned}
%	\end{equation}
    Substituting the estimate into \eqref{Ine_reg_1} and applying Lemma \ref{lem:gron} completes the proof.
\end{proof}
\begin{remark}
Throughout the remainder of the paper, we fix $\sigma$ and
choose $\eps$ as follows. If $H>\alpha_0$, we choose $0<\eps\ll\frac{H}{\alpha_0}-1$ and set $\sigma=1-\rho$. If $H\leq\alpha_0$, we choose $0<\eps\ll\f{H}{\al_0}-\rho$ and set $\sigma=\f{H}{\al_0}-\rho-\eps$. Any additional restrictions on $\eps$ will be specified in the corresponding results; otherwise, the above conditions are assumed to hold.
\end{remark}

We now consider the temporal H\"older regularity.
\begin{thrm}\label{thm_holder_reg}
	Let $u$ be the mild solution of problem \eqref{model_3_sto} and assume $\|A^{-\rho}\|_{\mathcal{L}_2^0}<\infty$ with $\rho\in[0,\frac{H}{\alpha_0})\cap [0,1]$,  then we have
	\begin{align*}
		\cE\left[\left\|\f{u(t)-u(t-\tau)}{\tau^\gamma}\right\|^2\right]\leq C,~~\gamma\in (0,H-\rho\al_0).
	\end{align*}
	%如果 sigma=1 该证明最后算围道积分就需要要求gamma>0
\end{thrm}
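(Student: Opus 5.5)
We start from the mild formulation \eqref{u_ref}, $u=F(t)*\dot W^H_Q-F(t)*(g'*Au)$, and write
\begin{equation*}
u(t)-u(t-\tau)=\big[(F*\dot W^H_Q)(t)-(F*\dot W^H_Q)(t-\tau)\big]-\big[(F*h)(t)-(F*h)(t-\tau)\big],\qquad h:=g'*Au .
\end{equation*}
The first bracket, divided by $\tau^\gamma$, has bounded second moment for $\gamma\in[0,H-\rho\al_0)$ directly by Lemma \ref{lemjy2}. So the work is the perturbation bracket. For it we do not use Gronwall. Theorem \ref{thm_reg} already gives $\cE[\|A^\sigma u\|^2]\le C$ uniformly in $t$, with the $\sigma$ fixed in the Remark. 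We will spend this spatial regularity to gain temporal H\"older regularity.

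For the perturbation term we split
\begin{equation*}
(F*h)(t)-(F*h)(t-\tau)=\int_{t-\tau}^{t}F(t-s)h(s)\,ds+\int_0^{t-\tau}\big(F(t-s)-F(t-\tau-s)\big)h(s)\,ds=:I_1+I_2 .
\end{equation*}

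\textbf{Step 1: bound on $h$.} Write $A^{1-\sigma}\,(g'*A^\sigma u)$ and use Minkowski together with \eqref{Bound_g_g'}. This gives $\cE[\|A^{\sigma}(g'*u)(s)\|^2]\le C s^{2(\al_0-\eps)}$. The operator $A^{1-\sigma}$ is then absorbed by $F$ via \eqref{ineq:Ffleqf}, which yields $\|AF(r)A^{-\sigma}\|\le Cr^{-(1-\sigma)\al_0}$.

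\textbf{Step 2: the term $I_1$.} The same Cauchy--Schwarz trick as in \eqref{ineq:FgAw} gives
\begin{equation*}
\cE\|I_1\|^2\le C\Big(\int_{t-\tau}^t (t-s)^{-(1-\sigma)\al_0}ds\Big)^2\le C\tau^{2(1-(1-\sigma)\al_0)} .
\end{equation*}

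\textbf{Step 3: the term $I_2$.} We use the representation \eqref{def:F} and Lemma \ref{thm_contour}:
\begin{equation*}
A^{1-\sigma}\big(F(r+\tau)-F(r)\big)=\frac1{2\pi i}\int_{\Gamma_{\theta,\kappa}}e^{zr}(e^{z\tau}-1)A^{1-\sigma}z^{\al_0-1}(z^{\al_0}+A)^{-1}dz .
\end{equation*}
Its norm is bounded by
\begin{equation*}
C\tau^{\gamma}\int_{\Gamma}|e^{zr}||z|^{\gamma+(1-\sigma)\al_0-1}|dz|\le C\tau^\gamma r^{-\gamma-(1-\sigma)\al_0} .
\end{equation*}
This requires $\gamma>0$, which explains the open interval in the statement. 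Then $\cE\|I_2\|^2\le C\tau^{2\gamma}\big(\int_0^{t-\tau}r^{-\gamma-(1-\sigma)\al_0}dr\big)^2$, which is finite provided $\gamma+(1-\sigma)\al_0<1$.

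\textbf{Step 4: exponent check (main obstacle).} We must verify $\gamma+(1-\sigma)\al_0<1$ for all $\gamma<H-\rho\al_0$. This is the same as $(1-\sigma)\al_0\le 1-H+\rho\al_0$.
\begin{itemize}
\item If $H>\al_0$, then $\sigma=1-\rho$ and the condition reads $\rho\al_0\le 1-H+\rho\al_0$, which holds since $H<1$.
\item If $H\le\al_0$, then $\sigma=H/\al_0-\rho-\eps$, so $(1-\sigma)\al_0=\al_0-H+\rho\al_0+\eps\al_0$. The condition becomes $\al_0+\eps\al_0<1$, which holds for small $\eps$ since $\al_0<1$.
\end{itemize}
The $I_1$ exponent $1-(1-\sigma)\al_0$ then also exceeds $\gamma$. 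If either endpoint is tight, we would instead shift a factor $s^{\al_0-\eps}$ from Step 1 into the $r$-integral, using $\tau\le T$. Combining Steps 2 and 3 with Lemma \ref{lemjy2} gives the claimed bound.
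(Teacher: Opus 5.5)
Your proposal is correct and follows essentially the paper's proof. You use the same split into the stochastic increment (Lemma~\ref{lemjy2}), the local term over $[t-\tau,t]$, and the $F$-difference term over $[0,t-\tau]$ handled by the contour bound of Lemma~\ref{thm_contour}, together with the uniform bound on $\cE[\|A^\sigma u\|^2]$ from Theorem~\ref{thm_reg} and the same exponent condition $\gamma+(1-\sigma)\al_0<1$ checked in the cases $H>\al_0$ and $H\le\al_0$. The only difference is technical: you use Minkowski's inequality in $L^2(\Omega;\mathbb{H})$ with $\cE[\|A^\sigma(g'*u)(s)\|^2]\le Cs^{2(\al_0-\eps)}$, whereas the paper uses weighted Cauchy--Schwarz and Beta-integral computations, so your version is somewhat shorter.
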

\begin{proof}
	According to \eqref{u_ref}, we have
	\begin{equation}\label{hol_reg_1}
		\begin{aligned}
			&\cE\bigg[\Big\|\f{u(t)-u(t-\tau)}{\tau^\gamma}\Big\|^2\bigg]\\
			=&\cE\bigg[\Big\|\f{\int_{0}^{t}F(t-s)(\dot{W}_Q^H-g'*Au)(s)ds-\int_{0}^{t-\tau}F(t-\tau-s)\left(\dot{W}_Q^H-g'*Au\right)(s)ds}{\tau^\gamma}\Big\|^2\bigg]\\
			\leq&2\cE\bigg[\Big\|\f{\int_{0}^{t}F(t-s)\dot{W}_Q^H(s)ds-\int_{0}^{t-\tau}F(t-\tau-s)\dot{W}_Q^H(s)ds}{\tau^\gamma}\Big\|^2\\
			&\qquad +\Big\|\f{\int_{0}^{t-\tau}F(t-\tau-s)(g'*Au)(s)ds-\int_{0}^{t}F(t-s)\left(g'*Au\right)(s)ds}{\tau^\gamma}\Big\|^2\bigg].\\
		\end{aligned}
	\end{equation}
	% Taking $\rho\in[0,\frac{H}{\alpha_0}]\cap[0,1]$ and $\gamma<H-\rho\alpha_0$, 
    The first right-hand side of \eqref{hol_reg_1} is bounded by some constant (cf. Lemma \ref{lemjy2}) and the second right-hand side term can be bounded as
	\begin{equation}\label{eq_holder_reg}
		\begin{aligned}
			&4\cE\bigg[\Big\|\f{\int_{t-\tau}^{t}F(t-s)(g'*Au)(s)ds}{\tau^\gamma}\Big\|^2\bigg]\\
			&\qquad+4\cE\bigg[\Big\|\f{\int_{0}^{t-\tau}(F(t-\tau-s)-F(t-s))(g'*Au)(s)ds}{\tau^\gamma}\Big\|^2\bigg]
			=:4\left(\urone+\urtwo\right).
		\end{aligned}
	\end{equation}
	Using Cauchy-Schwarz inequality and \eqref{ineq:Ffleqf}, $\urone$ can be bounded by
	%这里要满足for $0\leq\sigma\leq1$,也就是rho要\in[0,1](H/al>1),in [H/al-1,H/al]
    \begin{equation}\label{urone}
		\begin{aligned}
			\urone\leq& \tau^{-2\gamma}\cE\left[\int_D\int_{t-\tau}^{t}(t-s)^{-\al_0}ds\int_{t-\tau}^{t}(t-s)^{\al_0}(F(t-s)(g'*Au)(s))^2dsdx\right]\\
			% =&C\tau^{1-\al_0-2\gamma}\cE\left[\int_{t-\tau}^{t}(t-s)^{\al_0}\left\|F(t-s)(g'*Au)(s)\right\|_{\mathbb{H}}^2ds\right]\\
			% =&C\tau^{1-\al_0}\cE\left[\int_{t-\tau}^{t}(t-s)^{\al_0}\left\|A^{1-\sigma}F(t-s)(g'*A^\sigma u)(s)\right\|_{\mathbb{H}}^2ds\right]\\
			\leq&C\tau^{1-\al_0-2\gamma}\cE\left[\int_{t-\tau}^{t}(t-s)^{\al_0}\left\|F(t-s)(g'*A^\sigma u)(s)\right\|_{\hat{\mathbb{H}}^{2-2\sigma}}^2ds\right]\\
			\leq&C\tau^{1-\al_0-2\gamma}\cE\left[\int_{t-\tau}^{t}(t-s)^{(2\sigma-1)\al_0}\left\|(g'*A^\sigma u)(s)\right\|^2ds\right]\\
			% \leq&C\tau^{1-\al_0-2\gamma}t^{\al_0-\eps}\int_{t-\tau}^{t}(t-s)^{(2\sigma-1)\al_0}\int_{0}^{s}|g'(s-\xi)|\cdot\cE\left[\left\|A^\sigma u(\xi)\right\|_{\mathbb{H}}^2\right]d\xi ds\\
			\leq&C\tau^{1-\al_0-2\gamma}t^{\al_0-\eps}\bigg(\int_{0}^{t-\tau}\cE\left[\left\|A^\sigma u(\xi)\right\|^2\right]\int_{t-\tau}^{t}(t-s)^{(2\sigma-1)\al_0}|g'(s-\xi)|dsd\xi\\
			&\hspace{2em}+
			\int_{t-\tau}^{t}\cE\left[\left\|A^\sigma u(\xi)\right\|^2\right]\int_{\xi}^{t}(t-s)^{(2\sigma-1)\al_0}|g'(s-\xi)|dsd\xi\bigg)=:I_1+I_2,
		\end{aligned}
	\end{equation}
	where we take $\sigma=\min\left\{1-\rho,\f{H}{\al_0}-\rho-\eps\right\}$ with $0<\eps\ll \min\{\f{1}{\al_0}-1,\al_0\}$. We apply the variable substitution $y=\frac{t-s}{t-\xi}$ to obtain
    \begin{equation*}%\label{ineq:intintI1}
		\begin{aligned}
			&\int_{t-\tau}^{t}(t-s)^{(2\sigma-1)\al_0}(s-\xi)^{\al_0-1-\eps}ds= (t-\xi)^{2\sigma\al_0-\eps}\int_{0}^{\f{\tau}{t-\xi}}y^{(2\sigma-1)\al_0}(1-y)^{\al_0-1-\eps}dy\\
            &\qquad\leq(t-\xi)^{2\sigma\al_0-\eps}\left(1-\f{\tau}{t-\xi}\right)^{\al_0-1-\eps}\left(\f{\tau}{t-\xi}\right)^{(2\sigma-1)\al_0+1}= (t-\xi-\tau)^{\al_0-\eps-1}\tau^{(2\sigma-1)\al_0+1},%上限带入第二个y，剩下的求积分
		\end{aligned}
	\end{equation*}	
% 	\begin{equation}\label{ineq:intintI1}
% 		\begin{aligned}
% 			\int_{t-\tau}^{t}&(t-s)^{(2\sigma-1)\al_0}(s-\xi)^{\al_0-1-\eps}ds		=\int_{t-\xi-\tau}^{t-\xi}(t-\xi-\eta)^{(2\sigma-1)\al_0}\eta^{\al_0-1-\eps}d\eta\\
% %			=&\int_{r-\tau}^{r}(r-\eta)^{(2\sigma-1)\al_0}\eta^{\al_0-1-\eps}d\eta
% %			=r^{2\sigma\al_0-\eps}\int_{\f{r-\tau}{r}}^{1}(1-k)^{(2\sigma-1)\al_0}k^{\al_0-1-\eps}dk%\eta=rk\\
% 			=& (t-\xi)^{2\sigma\al_0-\eps}\left(B\left(\al_0-\eps,(2\sigma-1)\al_0+1\right)-B\left(1-\frac{\tau}{(t-\xi)};\al_0-\eps,(2\sigma-1)\al_0+1\right)\right)\\
% 			\leq&(t-\xi)^{2\sigma\al_0-\eps}\left(\frac{\tau}{t-\xi}\right)^{(2\sigma-1)\al_0+1}
% 			\leq C(t-\xi)^{\al_0-1-\eps}\tau^{(2\sigma-1)\al_0+1}.
% 		\end{aligned}
% 	\end{equation}	
	and we invoke this and Theorem \ref{thm_reg} to bound $I_1$ as
	$I_1\leq C\tau^{2(\sigma-1)\al_0+2-2\gamma}$. Since $\gamma\in(0,H-\rho\al_0)$, we obtain $I_1\leq C\tau^{2\left(\sigma-1\right)\al_0+2-2(H-\rho\al_0)}$. If $H>\al_0$, then $\sigma=1-\rho$ and the exponent becomes $2-2H\geq 0$. If $H\leq\al_0$, then $\sigma=\f{H}{\al_0}-\rho-\eps$, and the exponent reduces to $ 2-2(1+\eps)\al_0\geq 0$. Consequently, we have $I_1\leq C$. Furthermore, we apply (\ref{Bound_g_g'}) to get
	\begin{equation*}%\label{urone_2}
		\begin{aligned}
			I_2\leq& C\tau^{1-\al_0-2\gamma}t^{\al_0-\eps}\left(\int_{t-\tau}^{t}\cE\left[\left\|A^\sigma u(\xi)\right\|^2\right]\int_{\xi}^{t}(t-s)^{(2\sigma-1)\al_0}(s-\xi)^{\al_0-1-\eps}dsd\xi\right)\\
			\leq& C\tau^{1-\al_0-2\gamma}t^{\al_0-\eps}\left(\int_{t-\tau}^{t}\cE\left[\left\|A^\sigma u(\xi)\right\|^2\right](t-\xi)^{2\sigma\al_0-\eps}d\xi\right)
			% \\ \leq& C\tau^{(2\sigma-1)\al_0+2-2\gamma-\eps}t^{\al_0-\eps}
            \leq C\tau^{(2\sigma-1)\al_0+2-2\gamma-\eps}.%\leq C\tau^{(2\sigma-1)\al_0-2\gamma+2} .       
		\end{aligned}
	\end{equation*}
%	where using the fact that
%	\begin{equation}
%		\begin{aligned}
%			&\int_{\xi}^{t}(t-s)^{(2\sigma-1)\al_0}(s-\xi)^{\al_0-1-\eps}ds\\
%			\leq&(t-\xi)^{2\sigma\al_0-\eps}\int_{0}^{1}(1-k)^{(2\sigma-1)\al_0}k^{\al_0-1-\eps}dk
%			\leq C(t-\xi)^{2\sigma\al_0-\eps}.%~~(\sigma>\frac{\varepsilon-1}{2\alpha_0})
%		\end{aligned}
%	\end{equation}
	%算出来I_1比较大，这也比较符合直观感觉
Note that the exponent $(2\sigma-1)\al_0+2-2\gamma-\eps$ is nonnegative under the above assumptions. Indeed, if $H>\al_0$, then the exponent becomes $2-2H+\al_0-\varepsilon$. If $H\leq\al_0$, the exponent becomes $2-\al_0-\varepsilon(1+2\al_0)$. Thus we have $I_2<C$. Now, we estimate $\urtwo$ by using Lemma \ref{thm_contour} and Minkowski's inequality, %and \eqref{betaal_1}
	\begin{equation}\label{urtwo}
		\begin{aligned}
            \hspace{-2.5mm}\urtwo=&\tau^{-2\gamma}\cE\bigg[\Big\|\int_{0}^{t-\tau}(F(t-\tau-s)-F(t-s))\cdot(g'*Au)(s)ds\Big\|^2\bigg]\\
%			&\leq\tau^{-2\gamma}\cE\left[\left(\int_{0}^{t-\tau}\left\|(F(t-\tau-s)-F(t-s))\cdot(g'*Au)(s)\right\|_{\mathbb{H}}ds\right)^2\right]\\
%			&=\cE\left[\left(\int_{0}^{t-\tau}\left\|\int_{\Gamma_{\theta,\kappa}}\left(e^{z(t-\tau-s)}-e^{z(t-s)}\right)z^{\al_0-1}\left(z^{\al_0}+A\right)^{-1}A^{1-\sigma}dz(g'*A^\sigma u(s))\right\|_{\mathbb{H}}ds\right)^2\right]\\
			%		&\leq \tau^{-2\gamma}\cE\left[\left(\int_{0}^{t-\tau}\left\|\int_{\Gamma_{\theta,\kappa}}\left(e^{z(t-\tau-s)}-e^{z(t-s)}\right)z^{\al_0-1}\left(z^{\al_0}+A\right)^{-1}A^{1-\sigma}(g'*A^\sigma u)(s)dz\right\|_{\mathbb{H}}ds\right)^2\right]\\
			\leq&\tau^{-2\gamma}\cE\bigg[\Big(\int_{0}^{t-\tau}\!\!\!\int_{\Gamma_{\theta,\kappa}}\left|e^{z(t-\tau-s)}-e^{z(t-s)}\right|\left\|z^{\al_0-1}\left(z^{\al_0}+A\right)^{-1}A^{1-\sigma}\right\|\cdot\left\|g'*A^\sigma u(s)\right\||dz|ds\Big)^2\bigg]\\
			\leq& \tau^{-2\gamma}\cE\bigg[\Big(\int_{0}^{t-\tau}\left\|g'*A^\sigma u(s)\right\|\int_{\Gamma_{\theta,\kappa}}|e^{z(t-\tau-s)}|\cdot\left|1-e^{z\tau}\right|\left\|A^{1-\sigma}z^{\al_0-1}(z^{\al_0}+A)^{-1}\right\||dz|ds\Big)^2\bigg]\\ 
			\leq& C\tau^{-2\gamma}\cE\bigg[\Big(\int_{0}^{t-\tau}\left\|g'*A^\sigma u(s)\right\|\int_{\Gamma_{\theta,\kappa}}\left|e^{z(t-\tau-s)}\right|\tau^\gamma|z|^{\gamma+(1-\sigma)\al_0-1}|dz|ds\Big)^2\bigg]\\ 
			\leq& C \cE\bigg[\int_{0}^{t-\tau}\left(t-\tau-s\right)^{-\varrho}\left\|g'*A^\sigma u(s)\right\|^2ds\\
			&\hspace{1in}\cdot\int_{0}^{t-\tau}\left(t-\tau-s\right)^{\varrho}\Big(\int_{\Gamma_{\theta,\kappa}}\left|e^{z(t-\tau-s)}\right||z|^{\gamma+(1-\sigma)\al_0-1}|dz|\Big)^2ds\bigg],\\ 
		\end{aligned}
	\end{equation}
%	in which we use \eqref{g'Asigm_au} to obtain
%	\begin{equation}
%		\begin{aligned}
%			&\int_{0}^{t-\tau}\left(t-\tau-s\right)^{-\varrho}\cE\left[\left\|g'*A^\sigma u(s)\right\|_{\mathbb{H}}^2\right]ds\\
%			\leq&\int_{0}^{t-\tau}\left(t-\tau-s\right)^{-\varrho}s^{\al_0-\varepsilon}\int_{0}^{s}|g'(s-\xi)|\cE\left[\left\|A^\sigma u(\xi)\right\|_{\mathbb{H}}^2\right]d\xi ds\\
%%			\leq& C \int_{0}^{t-\tau}\left(t-\tau-s\right)^{-\varrho}s^{\al_0-\varepsilon}\int_{0}^{s}(s-\xi)^{\al_0-1-\varepsilon}d\xi ds\\
%%			\leq C \int_{0}^{t-\tau}\left(t-\tau-s\right)^{-\varrho}s^{2(\al_0-\varepsilon)}ds\\
%			\leq&C(t-\tau)^{2(\al_0-\varepsilon)-\varrho+1}\int_{0}^{1}(1-s)^{-\varrho}s^{2(\al_0-\varepsilon)}ds\leq C  %(\varrho<1)
%		\end{aligned}
%	\end{equation}
	where we used the following estimate based on the substitution $r=-\cos(\theta)(t-\tau-s)|z|$ 	\begin{equation*}
		\begin{aligned}
			&\int_{\Gamma_{\theta,\kappa}}\left|e^{z(t-\tau-s)}\right||z|^{\gamma+(1-\sigma)\al_0-1}|dz|\\
			=&
			2\int_{\kappa}^\infty e^{|z|\cos\theta(t-\tau-s)}|z|^{\gamma+(1-\sigma)\al_0-1}|dz|+\kappa^{\gamma+(1-\sigma)\al_0}\int_{-\theta}^{\theta}e^{\kappa(t-\tau-s)\cos\iota}d\iota\\
			\leq&\frac{2}{(-\cos(\theta)(t-\tau-s))^{\gamma+(1-\sigma)\al_0}}\int_{-\cos(\theta)(t-\tau-s)\kappa}^{\infty}e^{-r}r^{\gamma+(1-\sigma)\al_0-1}dr+C\\
			\leq&\frac{2\Gamma(\gamma+(1-\sigma)\al_0,-\kappa\cos(\theta)(t-\tau-s))}{(-\cos(\theta)(t-\tau-s))^{\gamma+(1-\sigma)\al_0}}+C,~~%\qquad(\gamma+(1-\sigma)\al_0>0)
		\end{aligned}
	\end{equation*}
 %$\gamma+(1-sigma)al_0>0$
   
    To estimate $\urtwo$, we choose $\varrho$ in \eqref{urtwo} such that $\varrho<1$. In particular, we take $\varrho=1-2\varepsilon$, where $0<\varepsilon\ll\min\{1-\alpha_0,1-H\}$ under the condition $\gamma<H-\rho\alpha_0$, we obtain\begin{equation*}
	    \begin{aligned}
	        \urtwo\leq C \int_{0}^{t-\tau}\left(t-\tau-s\right)^{\varrho-2(\gamma+(1-\sigma)\al_0)}ds\leq C.
            %(\varrho-2(\gamma+(1-\sigma)\al_0)>-1)
	    \end{aligned}
	\end{equation*}
 %    where we require $-(1+\alpha_0)/2<(\sigma-1)\alpha_0<\gamma<1+(\sigma-1)\alpha_0-\varepsilon$. According to Theorem \ref{thm_reg} and $\rho\in[0,\frac{H}{\alpha_0})\cap [0,1]$, $\sigma=\min\left\{1-\rho,\f{H}{\al_0}-\rho-\eps\right\}$, then it requires 
	% $\gamma<\min\left\{1-\rho\al_0,1-\rho\al_0+H-\al_0\right\}$, which is naturally restricted by the condition $\gamma<H-\rho\al_0$. 
    We finally complete the proof in combination with $\urone\leq C$ and \eqref{eq_holder_reg}.
	%由于sigma一定小于1，而holder连续的gamma一定大于0，所以gamma左边的限制可以拿去，而且e^z\tau-1的控制也要强调\gamma>0
\end{proof}

\subsection{Regularity of the deterministic problem}
\begin{thrm}\label{thm:Asig_v}
	If $G_0\in\hat{H}^{2q}(D)$, $q\in[0,\sigma]$, let $v$ be the mild solution of problem \eqref{model_3_det}, then we have %这里$q\in[\sigma-1,\sigma]$， 但由于\hat{H}^s空间s>=0,所以q>=0
		$\|A^\sigma v\|^2< Ct^{(2q-2\sigma)\al_0}\left\|G_0\right\|_{\hat{H}^{2q}(D)}^2$ for $\sigma\in[0,1].
		$
\end{thrm}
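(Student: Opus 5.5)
We start from the mild formulation \eqref{v_ref}, $v=F(t)G_0-F(t)*(g'*Av)(t)$, apply $A^\sigma$, and estimate the two terms separately. The first term is handled by \eqref{ineq:Ffleqf} with $\beta=2\sigma$ and $\gamma=2q$. Since $q\in[0,\sigma]$ and $\sigma\le 1$, we have $2q\le 2\sigma\le 2q+2$, so
\[
\|A^\sigma F(t)G_0\|^2\le Ct^{(2q-2\sigma)\al_0}\|G_0\|_{\hat H^{2q}(D)}^2 .
\]
This already exhibits the claimed singular factor. The rest of the proof shows that the perturbation term only contributes a weakly singular Volterra term, which can be absorbed by Lemma \ref{lem:gron}.

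For the perturbation term we follow the proof of \eqref{ineq:FgAw}. Write $A^\sigma F(t-s)(g'*Av)(s)=AF(t-s)(g'*A^\sigma v)(s)$ and apply Cauchy--Schwarz with the weight $(t-s)^{\pm\al_0}$. Then \eqref{ineq:Ffleqf} with $\beta=2,\gamma=0$ gives $\|AF(t-s)f\|^2\le C(t-s)^{-2\al_0}\|f\|^2$. Next, Minkowski's inequality together with \eqref{Bound_g_g'} gives
\[
\|(g'*A^\sigma v)(s)\|^2\le Cs^{\al_0-\eps}\int_0^s (s-\xi)^{\al_0-1-\eps}\|A^\sigma v(\xi)\|^2d\xi .
\]
After exchanging the order of integration and using the Beta-integral identity that appears after \eqref{ineq:FgAw}, we obtain, for $\psi(t):=\|A^\sigma v(t)\|^2$,
\[
\psi(t)\le Ct^{(2q-2\sigma)\al_0}\|G_0\|^2_{\hat H^{2q}(D)}+C\int_0^t(t-\xi)^{-\eps}\psi(\xi)\,d\xi .
\]

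The main obstacle is that $\psi$ may blow up at $t=0$ at rate $t^{-(2\sigma-2q)\al_0}$. We therefore need the second form of Lemma \ref{lem:gron}, with $a(t)=a t^{-\gamma}$, $\gamma=(2\sigma-2q)\al_0$ and $\beta=1-\eps$. This requires $\gamma<1$, which holds because $(2\sigma-2q)\al_0\le 2\al_0$ is not automatically below $1$. Two sub-steps need checking.

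\textbf{(i) The Gronwall exponent.} If $2(\sigma-q)\al_0\ge1$, we cannot apply the lemma directly. In that case we would first use the weaker bound: apply $A^{\sigma}$ to only part of the smoothing, i.e. bound $\|A^\sigma F(t-s)A f\|$ via the factor $(t-s)^{-(1+\sigma-\tilde q)\al_0}$ acting on $\|A^{\tilde q}\cdot\|$. Alternatively, we bootstrap from the case $\sigma=q$ (which gives a bounded $\psi$) through intermediate levels, each time increasing $\sigma-q$ by less than $1/(2\al_0)$.

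\textbf{(ii) Integrability of the convolution.} We must check that the convolution $\int_0^t(t-\xi)^{-\eps}\xi^{-\gamma}d\xi$ stays of the order $t^{-\gamma}$ near $0$. It does, up to a factor $t^{1-\eps}$, which is harmless.

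Once these are verified, Lemma \ref{lem:gron} yields $\psi(t)\le Ct^{(2q-2\sigma)\al_0}\|G_0\|^2_{\hat H^{2q}(D)}$, which is the claim. A priori finiteness of $\psi$, needed to apply the lemma, follows from the fixed-point construction: run the contraction argument in the weighted space with weight $t^{(2\sigma-2q)\al_0}$.
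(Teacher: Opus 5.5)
There is a genuine gap at the Gronwall step. Because you square first, your Volterra inequality for $\psi(t)=\|A^\sigma v(t)\|^2$ has source term $t^{-2(\sigma-q)\al_0}$. Within the hypotheses of the theorem the exponent $2(\sigma-q)\al_0$ can equal or exceed $1$, for instance with $q=0$, $\sigma=1$, $\al_0\ge 1/2$. In that regime Lemma \ref{lem:gron} does not apply. Worse, your inequality is vacuous there: $\psi$ is not integrable at $0$, so $\int_0^t(t-\xi)^{-\eps}\psi(\xi)\,d\xi=\infty$. This also breaks your sub-step (ii), which holds only when the exponent is below $1$. Your own sentence ``this requires $\gamma<1$, which holds because $(2\sigma-2q)\al_0\le 2\al_0$ is not automatically below $1$'' is self-contradictory.

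The remedies in (i) are only sketched, and the bootstrap as stated does not help. The source term at level $\sigma$ is always $t^{-2(\sigma-q)\al_0}$, measured against the initial regularity $q$, not against the previous level. The Volterra term at level $\sigma$ still contains $\int_0^t(t-\xi)^{-\eps}\|A^\sigma v(\xi)\|^2d\xi$, with the same non-integrable singularity. To make a bootstrap work you would have to move smoothing onto $F$. That means estimating $A^{1+\sigma-\tilde\sigma}F(t-s)(g'*A^{\tilde\sigma}v)(s)$ from an already controlled level $\tilde\sigma$. This requires $(1+\sigma-\tilde\sigma)\al_0<1$, i.e. steps $\sigma-\tilde\sigma<1/\al_0-1$ rather than $1/(2\al_0)$, and none of it is carried out.

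The missing observation is that $v$ is deterministic, so there is no reason to square at all. The paper applies Minkowski's inequality to the unsquared norm:
\[
\|A^\sigma F(t)*(g'*Av)(t)\|\le\int_0^t\|AF(t-s)(g'*A^\sigma v)(s)\|ds\le C\int_0^t(t-s)^{-\al_0}\|(g'*A^\sigma v)(s)\|ds\le C\int_0^t(t-\xi)^{-\eps}\|A^\sigma v(\xi)\|d\xi .
\]
Combined with $\|A^\sigma F(t)G_0\|\le Ct^{(q-\sigma)\al_0}\|G_0\|_{\hat{H}^{2q}(D)}$, this gives a Volterra inequality for $\|A^\sigma v(t)\|$. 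Its singular exponent is $(\sigma-q)\al_0\le\al_0<1$ for every admissible $q\in[0,\sigma]$, $\sigma\in[0,1]$. The second form of Lemma \ref{lem:gron} (or the first form when $q=\sigma$) then applies directly, and squaring the result gives the claim.

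Your remark on a priori finiteness of the quantity fed into Gronwall is a fair point that the paper leaves implicit. It should be justifiable the same way, but with the weight $t^{(\sigma-q)\al_0}$ on the unsquared norm.
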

\begin{proof}
	From \eqref{v_ref}, we obtain
	\begin{equation*}
		\begin{aligned}
			\|A^\sigma v\|&=\left\|A^\sigma \left(F(t)G_0-F(t)*(g'*Av)(t)\right)\right\|\\
			&\leq\left\|A^\sigma F(t)G_0\right\|+\Big\|\int_{0}^{t}A^\sigma F(t-s)(g'*Av)(s)ds\Big\|.
		\end{aligned}
	\end{equation*}
	
	Using \eqref{ineq:Ffleqf}, for $\sigma\geq 0$ and $q\in[0,\sigma]$, we estimate 
	$
			\left\|A^\sigma F(t)G_0\right\|=\left\|F(t)G_0\right\|_{\hat{H}^{2\sigma}(D)}\leq C t^{(q-\sigma)\al_0}\left\|G_0\right\|_{\hat{H}^{2q}(D)}.
$
	%where $\sigma\geq0$.
	
	Similar to the method used in deriving \eqref{ineq:FgAw}, using Minkowski's inequality, we have 
	% \begin{equation*}
	% 	\begin{aligned}
	% 		\left\| \int_{0}^{t}A^\sigma F(t-s)(g'*Av)(s)ds\right\|^2\leq Ct^{1-\eps}\left[\int_{0}^{t}(t-\xi)^{-\eps}\left\|A^\sigma v(\xi)\right\|^2d\xi\right],
	% 	\end{aligned}
	% \end{equation*}
    \begin{equation*}
    \begin{aligned}
        &\left\|A^{\sigma}F(t)*(g'*Av)(t)\right\|
        =\Big\|\int_0^t AF(t-s)(g'*A^{\sigma}v)(s)ds\Big\|\\
        &\leq \int_0^t \left\|AF(t-s)(g'*A^{\sigma}v)(s)\right\|ds
        \leq C\int_0^t (t-s)^{-\alpha_0}\|(g'*A^{\sigma}v)(s)\|ds\\
        % &\leq C\int_0^t (t-s)^{-\alpha_0}
        % \int_0^s |g'(s-\xi)|\|A^{\sigma}v(\xi)\|d\xi ds
        % \quad \text{(convolution bound + Fubini)}\\
        % &= C\int_0^t \|A^{\sigma}v(\xi)\|
        % \int_\xi^t (t-s)^{-\alpha_0}|g'(s-\xi)|dsd\xi
        % \quad \text{(Fubini)}\\
        % &\leq C\int_0^t \|A^{\sigma}v(\xi)\|
        % \int_\xi^t (t-s)^{-\alpha_0}(s-\xi)^{\alpha_0-1-\varepsilon}dsd\xi
        % \quad \text{(bound on } g')\\
        % &= C B(1-\alpha_0,\alpha_0-\varepsilon)
        % \int_0^t (t-\xi)^{-\varepsilon}\|A^{\sigma}v(\xi)\|d\xi
        % \quad \text{(Beta function)}\\
        &\leq C\int_0^t (t-\xi)^{-\varepsilon}
        \|A^{\sigma}v(\xi)\|d\xi,
    \end{aligned}
    \end{equation*}
	which implies
	$
			\|A^\sigma v\|\leq C t^{(q-\sigma)\al_0}\left\|G_0\right\|_{\hat{H}^{2q}(D)}+C\Big[\int_{0}^{t}(t-\xi)^{-\eps}\left\|A^\sigma v(\xi)\right\|d\xi\Big].
		$
	According to Lemma \ref{lem:gron} with $q\in[\sigma-1,\sigma)$, we finish the proof.
\end{proof}

	\begin{thrm}\label{thm_holder_reg_det}
		Let $v$ be the solution of problem \eqref{model_3_det} with $G_0\in\mathbb{H}$. Then $v$ satisfies the following H\"older regularity
		% \begin{align*}
		% 	\left\|\f{v(t)-v(t-\tau)}{\tau^\gamma}\right\|_{\mathbb{H}}\leq C+Ct^{-\gamma}\left\|G_0\right\|_{\mathbb{H}},
		% \end{align*}
        $
			\big\|\f{v(t)-v(t-\tau)}{\tau^\gamma}\big\|\leq C+Ct^{-\gamma}$ for $\gamma\in(0,1).
		$
     %根据后面证明(3.12) M1 证明需要\gamma>0
	\end{thrm}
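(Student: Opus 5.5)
We will follow the structure of the proof of Theorem \ref{thm_holder_reg}, now with the deterministic mild formulation \eqref{v_ref}. Writing $v(t)-v(t-\tau)$ from \eqref{v_ref}, we split
\begin{equation*}
\begin{aligned}
\frac{v(t)-v(t-\tau)}{\tau^\gamma}
&=\frac{(F(t)-F(t-\tau))G_0}{\tau^\gamma}
-\frac{1}{\tau^\gamma}\int_{t-\tau}^{t}F(t-s)(g'*Av)(s)\,ds\\
&\quad-\frac{1}{\tau^\gamma}\int_{0}^{t-\tau}\bigl(F(t-s)-F(t-\tau-s)\bigr)(g'*Av)(s)\,ds
=:M_1+M_2+M_3 .
\end{aligned}
\end{equation*}
We then estimate the three terms separately.

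\textbf{Term $M_1$.} We use the contour representation \eqref{def:F}, writing $F(t)-F(t-\tau)=\frac{1}{2\pi i}\int_{\Gamma_{\theta,\kappa}}e^{z(t-\tau)}(e^{z\tau}-1)z^{\al_0-1}(z^{\al_0}+A)^{-1}dz$. Lemma \ref{thm_contour}, with $\beta=0$ and $\gamma\in(0,1]$, then gives
\begin{equation*}
\|M_1\|\le C\|G_0\|\int_{\Gamma_{\theta,\kappa}}|e^{z(t-\tau)}||z|^{\gamma-1}|dz| .
\end{equation*}
Scaling as in the integral estimated after \eqref{urtwo}, this is bounded by $C(t-\tau)^{-\gamma}\|G_0\|+C$. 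To obtain the stated $t^{-\gamma}$ rather than $(t-\tau)^{-\gamma}$, we distinguish two cases.
\begin{itemize}
\item If $\tau\le t/2$, then $t-\tau\ge t/2$, so the bound above already gives $Ct^{-\gamma}$.
\item If $\tau>t/2$, we bound $\|F(t)G_0\|+\|F(t-\tau)G_0\|\le C\|G_0\|$ using \eqref{ineq:Ffleqf} with $\beta=\gamma=0$, and divide by $\tau^\gamma\ge (t/2)^\gamma$.
\end{itemize}
This needs $\gamma>0$, which is why the theorem restricts to $\gamma\in(0,1)$.

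\textbf{Term $M_2$.} The key preliminary bound is on $\|(g'*Av)(s)\|$, for which we use Theorem \ref{thm:Asig_v} with $q=0$. The choice $\sigma=1$ would produce the kernel $\xi^{-\al_0}$ against $|g'(s-\xi)|\le C(s-\xi)^{\al_0-1-\eps}$, which is integrable and gives $\|(g'*Av)(s)\|\le Cs^{-\eps}$. However, $AF$ has no spare smoothing in \eqref{ineq:Ffleqf}, so we instead take $\sigma\in(0,1)$ and estimate
\begin{equation*}
\|F(t-s)(g'*Av)(s)\|\le C(t-s)^{-(1-\sigma)\al_0}\|(g'*A^\sigma v)(s)\|,\qquad \|(g'*A^\sigma v)(s)\|\le Cs^{(1-\sigma)\al_0-\eps}.
\end{equation*}
For the second bound we use $\|A^\sigma v(\xi)\|\le C\xi^{-\sigma\al_0}\|G_0\|$ and a Beta-integral computation.
\begin{itemize}
\item If $s\ge t/2$, the integral $\int_{t-\tau}^{t}(t-s)^{-(1-\sigma)\al_0}\,ds$ gives $\tau^{1-(1-\sigma)\al_0}$, which dominates $\tau^\gamma$ once $\sigma$ is chosen close to $1$ so that $1-(1-\sigma)\al_0\ge\gamma$. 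This is possible since $\gamma<1$.
\item If $t<2\tau$, we simply bound the whole integral by $Ct^{\,1-\eps}$ (times constants) and use $\tau^{-\gamma}\le Ct^{-\gamma}$.
\end{itemize}

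\textbf{Term $M_3$.} This is the main obstacle. As in \eqref{urtwo}, we use $|e^{z\tau}-1|\le C\tau^\gamma|z|^\gamma$ and $\|A^{1-\sigma}z^{\al_0-1}(z^{\al_0}+A)^{-1}\|\le C|z|^{(1-\sigma)\al_0-1}$, which gives
\begin{equation*}
\|M_3\|\le C\int_0^{t-\tau}(t-\tau-s)^{-\gamma-(1-\sigma)\al_0}\,s^{-\sigma\al_0+\al_0-\eps}\,ds .
\end{equation*}
For this to be integrable we need $\gamma+(1-\sigma)\al_0<1$, which is again achieved by taking $\sigma$ near $1$ (depending on $\gamma<1$). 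The resulting bound is uniform, i.e.\ at most $C$. The delicate point is choosing $\sigma=\sigma(\gamma)$ and $\eps$ so that all exponents are admissible simultaneously while keeping the singularity at $s=0$ integrable. Here $s^{-\eps}$ combined with $(1-\sigma)\al_0\ge0$ is harmless, so the contribution is $C$.

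Collecting the bounds, $\|M_1\|\le C+Ct^{-\gamma}$, $\|M_2\|\le C+Ct^{-\gamma}$ and $\|M_3\|\le C$, which yields the stated estimate.
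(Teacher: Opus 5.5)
Your proposal is correct and follows the paper's proof. It uses the same three-way split into the initial-data term $M_1$, the local piece over $[t-\tau,t]$ (the paper's $M_{21}$) and the contour-difference piece over $[0,t-\tau]$ (the paper's $M_{22}$), estimated with Lemma~\ref{thm_contour}, \eqref{ineq:Ffleqf}, Theorem~\ref{thm:Asig_v} and the bound on $g'$; your case split $\tau\le t/2$ versus $\tau>t/2$ for $M_1$ and your tracking of $\xi^{-\sigma\alpha_0}$ inside $g'*A^\sigma v$ are more careful than the paper, which writes $t^{-\gamma}$ where the contour integral gives $(t-\tau)^{-\gamma}$ and treats $\|A^\sigma v\|$ as bounded in $M_{22}$. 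The detour to $\sigma<1$ is unnecessary but harmless, since $\sigma=1$ (as the paper takes) already works: $\|F(t-s)\|\le C$, $\|z^{\alpha_0-1}(z^{\alpha_0}+A)^{-1}\|\le C|z|^{-1}$ and $\|(g'*Av)(s)\|\le Cs^{-\eps}$ close both integral terms for every $\gamma<1$ once $\eps<1-\gamma$.
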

	
	\begin{proof}
		According to \eqref{v_ref}, we have
		\begin{equation}\label{M1M2}
			\begin{aligned}
				&\Big\|\f{v(t)-v(t-\tau)}{\tau^\gamma}\Big\|\\
				=&\Big\|\f{F(t)G_0-F(t)*(g'*Av)(t)-F(t-\tau)G_0+F(t-\tau)*(g'*Av)(t-\tau)}{\tau^\gamma}\Big\|\\
				=&\tau^{-\gamma}\left\|(F(t)-F(t-\tau))G_0+F(t-\tau)*(g'*Av)(t-\tau)-F(t)*(g'*Av)(t)\right\|\\
				\leq&\tau^{-\gamma}\big(\left\|(F(t)-F(t-\tau))G_0\right\|\\
				&\hspace{3em}+\left\|F(t-\tau)*(g'*Av)(t-\tau)-F(t)*(g'*Av)(t)\right\|\big)=:M_1+M_2.
			\end{aligned}
		\end{equation}
		
		Using Lemma \ref{thm_contour}, we estimate
		\begin{equation}\label{M1}
			\begin{aligned}
				M_1
                % =\left\|(F(t)-F(t-\tau))G_0\right\|
	           =&\tau^{-\gamma}\left\|G_0\right\|\cdot\Big\|\Big(\int_{\Gamma_{\theta,\kappa}}\left(e^{zt}-e^{z(t-\tau)}\right)z^{\al_0-1}\left(z^{\al_0}+A\right)^{-1}dz\Big)\Big\|\\%后面再加上
%				\leq& \tau^{-\gamma}\left\|G_0\right\|_{\mathbb{H}}\cdot\left(\tau^\gamma\int_{\Gamma_{\theta,\kappa}}|e^{z(t-\tau)}| |z|^{\al_0+\gamma-1}\left\|\left(z^{\al_0}+A\right)^{-1}\right\|_{\mathbb{H}}|dz|\right)\\
				\leq&
				%\tau^{-\gamma} \left\|G_0\right\|_{\mathbb{H}}\cdot\left(\tau^\gamma\int_{\Gamma_{\theta,\kappa}}|e^{z(t-\tau)}| |z|^{\gamma-1}|dz|\right)\leq C(t-\tau)^{-\gamma}\tau^\gamma\left\|G_0\right\|_{\mathbb{H}}. %~~~~(\gamma>0)\\
               \tau^{-\gamma} \left\|G_0\right\|\cdot\Big(\tau^\gamma\int_{\Gamma_{\theta,\kappa}}|e^{z(t-\tau)}| |z|^{\gamma-1}|dz|\Big)\leq Ct^{-\gamma}. %~~~~(\gamma>0)\\
			\end{aligned}
		\end{equation}
        % For the case $t\in(\tau,2\tau)$, applying \eqref{ineq:Ffleqf}, it holds that $M_1\leq\left\|F(t)G_0\right\|+\left\|F(t-\tau)G_0\right\|\leq C\left\|G_0\right\|$
        % $\tau^{-\gamma}(\left\|(F(t)-F(t-\tau))G_0\right\|)\leq \tau^{-\gamma}\left(\left\|F(t)G_0\right\|+\left\|F(t-\tau)G_0\right\|\right) \leq2C\tau^{-\gamma}\left\|G_0\right\|\leq 2^{\gamma+1}Ct^{-\gamma}\left\|G_0\right\|\leq Ct^{-\gamma}\left\|G_0\right\|$.
	The  $M_2$ can be bounded by $M_{21}+M_{22}$ where 
		\begin{equation}\label{M2}
			M_{21}\!\!=\!\!\tau^{-\gamma}\Big\|\int_{t-\tau}^{t}\!F(t-s)(g'*Av)(s)ds\Big\|,M_{22}\!\!=\!\!\tau^{-\gamma}\Big\|\int_{0}^{t-\tau}\!\!(F(t-\tau-s)-F(t-s))(g'*Av)(s)ds\Big\|.
		\end{equation}
	
	    % Similar to the procedure applied to derive \eqref{urone}, we have $M_{21}\leq\tau^\gamma$ with $\gamma<(\sigma-1)\al_0+1$,  which is naturally restricted by the condition $\gamma<H-\rho\al_0$. Meanwhile, following the similar approach as for \eqref{urtwo}, for $\gamma<H-\rho\al_0$, we have
        
        % Similar to the procedure applied to derive \eqref{urone}, we have $M_{21}\leq\tau^{(\sigma-1)\alpha_0+1}$, which means $\gamma<(\sigma-1)\alpha_0+1$. Meanwhile, following the similar approach as for \eqref{urtwo}, we have

        Following the same argument used to derive \eqref{urone} and applying Theorem \ref{thm:Asig_v},  we obtain $M_{21}\leq C\tau^{(\sigma-1)\al_0+1-\gamma}+C\tau^{(2\sigma-1)\al_0/2+1-\varepsilon-\gamma}$ for $0<\eps\ll\f{\al_0}{2}$. Taking $\sigma=1$ and $\gamma<1$, we have $M_{21}\leq C$. Meanwhile, for $\gamma<1$, we have
	    %由于sigma\in[0,1],这里G_0\in H,所以q=0,sigma=1代入(\sigma-1)\alpha_0+1}得到1 \eqref{urtwo}
		\begin{equation*}%\label{M22}
			\begin{aligned}
				M_{22}
				\leq&\tau^{-\gamma}\int_{0}^{t-\tau}\left\|(F(t-\tau-s)-F(t-s))(g'*Av)(s)\right\|ds\\
				\leq&\tau^{-\gamma}\int_{0}^{t-\tau}\int_{\Gamma_{\theta,\kappa}}\left|e^{z(t-\tau-s)}-e^{z(t-s)}\right|\left\|z^{\al_0-1}\left(z^{\al_0}+A\right)^{-1}A^{1-\sigma}\right\|\cdot\left\|g'*A^\sigma v(s)\right\||dz|ds\\
				\leq& \tau^{-\gamma}\int_{0}^{t-\tau}\left\|g'*A^\sigma v(s)\right\|\int_{\Gamma_{\theta,\kappa}}|e^{z(t-\tau-s)}|\cdot\left|1-e^{z\tau}\right|\left\|A^{1-\sigma}z^{\al_0-1}(z^{\al_0}+A)^{-1}\right\||dz|ds\\ 
				\leq& C\tau^{-\gamma}\bigg(\int_{0}^{t-\tau}\left\|g'*A^\sigma v(s)\right\|\int_{\Gamma_{\theta,\kappa}}\left|e^{z(t-\tau-s)}\right|\tau^\gamma|z|^{\gamma+(1-\sigma)\al_0-1}|dz|ds\bigg)\\ 
				\leq&  C\tau^{-\gamma}\bigg(\int_{0}^{t-\tau}\int_{0}^{s}|g'(s-\xi)|\cdot\left\|A^\sigma v(\xi)\right\|d\xi\int_{\Gamma_{\theta,\kappa}}\left|e^{z(t-\tau-s)}\right|\tau^\gamma|z|^{\gamma+(1-\sigma)\al_0-1}|dz|ds\bigg)\\ 
				\leq& C\tau^{-\gamma}\bigg(\int_{0}^{t-\tau}\int_{0}^{s}(s-\xi)^{\al_0-1-\varepsilon}d\xi\int_{\Gamma_{\theta,\kappa}}\left|e^{z(t-\tau-s)}\right|\tau^\gamma|z|^{\gamma+(1-\sigma)\al_0-1}|dz|ds\bigg)\\
%				\leq& C\tau^\gamma\left(\int_{0}^{t-\tau}s^{\al_0-\varepsilon}\int_{\Gamma_{\theta,\kappa}}\left|e^{z(t-\tau-s)}\right||z|^{\gamma+(1-\sigma)\al_0-1}|dz|ds\right)\\
%				\leq&C\tau^\gamma\left(\int_{0}^{t-\tau}s^{\al_0-\varepsilon}(t-\tau-s)^{\gamma+(1-\sigma)\al_0}ds\right)\\%~~~~(\gamma+(1-\sigma)\al_0>0)
				\leq&C\tau^{-\gamma}(t-\tau)^{\al_0-\varepsilon-\gamma-(1-\sigma)\al_0+1}B(\alpha_0-\varepsilon+1,1-\gamma-(1-\sigma)\alpha_0)\tau^\gamma\leq C.
			\end{aligned}
		\end{equation*}
		Thus, incorporating \eqref{M1} and \eqref{M2} into \eqref{M1M2}, we complete the proof.
\end{proof}
\subsection{Regularity of original problem}
A direct consequence of Theorems \ref{thm_reg} and \ref{thm:Asig_v} is the following spatial regularity estimate.
\begin{thrm}\label{thm_reg_G}
Let $G$ be the mild solution of problem \eqref{model_2}. Suppose that $\|A^{-\rho}\|_{\mathcal{L}_2^0}<\infty$ with $\rho<\min\left\{1+\varepsilon,\frac{H}{\alpha_0}\right\}$ and $G_0\in\hat{H}^{2q}(D),~q\in[0,\sigma)$. We have
\begin{equation*}
	\begin{aligned}
		\cE\left[\|A^\sigma G\|^2\right]<C+ Ct^{(2q-2\sigma)\al_0}\left\|G_0\right\|_{\hat{H}^{2q}(D)}^2,~\sigma=\min\left\{1, 1-\rho,\f{H}{\al_0}-\rho-\eps\right\}.
	\end{aligned}
\end{equation*} 
%由于需要sigma-1<=q<=sigma,只有sigma<1才有q\in[0,\sigma]
\end{thrm}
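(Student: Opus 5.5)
The plan is to use the decomposition $G=u+v$, where $u$ solves the stochastic problem \eqref{model_3_sto} and $v$ solves the deterministic problem \eqref{model_3_det}. First I will check that this is consistent with the mild formulation. By linearity, $u+v$ satisfies $u+v=F(t)G_0+F(t)*\dot W^H_Q-F(t)*g'*A(u+v)$, which follows from adding \eqref{v_ref} and \eqref{u_ref}. By the uniqueness part of the well-posedness theorem, $G=u+v$ in $\mathcal X$.

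Next I will use the elementary inequality
\begin{equation*}
\cE\left[\|A^\sigma G\|^2\right]\leq 2\,\cE\left[\|A^\sigma u\|^2\right]+2\,\|A^\sigma v\|^2,
\end{equation*}
where the second term is deterministic, so its expectation is just itself.

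For the stochastic part I will apply Theorem \ref{thm_reg}. The choice $\sigma=\min\{1,1-\rho,\f{H}{\al_0}-\rho-\eps\}$ lies in the admissible interval $[-\rho,\min\{1-\rho,\f{H}{\al_0}-\rho-\eps\}]$. For the lower end, note $\sigma\geq-\rho$ because $1-\rho\geq-\rho$ and $\f{H}{\al_0}-\rho-\eps\geq -\rho$ for small $\eps$. Theorem \ref{thm_reg} then gives $\cE[\|A^\sigma u\|^2]\le C$ uniformly in $t$. The extra $1$ inside the minimum is harmless: $\rho\ge0$ forces $1-\rho\le1$ whenever $\rho\geq 0$, and it guarantees $\sigma\le 1$ in any case.

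For the deterministic part I will apply Theorem \ref{thm:Asig_v}, which needs $\sigma\in[0,1]$ and $q\in[0,\sigma]$. The assumption $q\in[0,\sigma)$ already forces $\sigma>0$ whenever the statement is nonvacuous, and $\sigma\le 1$ holds by the choice of the minimum. Theorem \ref{thm:Asig_v} then yields $\|A^\sigma v\|^2\le Ct^{(2q-2\sigma)\al_0}\|G_0\|^2_{\hat H^{2q}(D)}$. Adding the two bounds gives the claim.

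The main point requiring care is the range bookkeeping, since $\sigma$ must be simultaneously admissible in both theorems. In Theorem \ref{thm_reg} we need $\sigma\ge-\rho$, and in Theorem \ref{thm:Asig_v} we need $0\le q\le\sigma\le1$; the assumption $q<\sigma$ ensures the latter is nonempty. I will also take $\eps$ small enough that every $\eps$-dependent restriction in the two theorems holds at once, in line with the convention in the Remark. No genuine analytic obstacle is expected, because the result is a direct combination of the two earlier regularity theorems.
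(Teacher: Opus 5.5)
Your proposal is correct and follows the paper's route: the paper derives this theorem directly from the splitting $G=u+v$ together with Theorems \ref{thm_reg} and \ref{thm:Asig_v}, which is exactly your argument through $\cE[\|A^\sigma G\|^2]\le 2\cE[\|A^\sigma u\|^2]+2\|A^\sigma v\|^2$. One small bookkeeping slip: your check $\sigma\ge-\rho$ ignores the entry $1$ in the minimum, which matters only if $\rho<-1$, and even then it is harmless because $\|A^{\sigma}x\|\le\lambda_1^{\sigma+\rho}\|A^{-\rho}x\|$ for $\sigma\le-\rho$.
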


Combining Theorems \ref{thm_holder_reg} and \ref{thm_holder_reg_det} yields the following H\"older regularity of mild solution for problem \eqref{model_2}.
\begin{thrm}\label{thm_holder_G}
	 Let $G$ be the mild solution of problem \eqref{model_2}, $G_0\in\mathbb{H}$. If $\|A^{-\rho}\|_{\mathcal{L}_2^0}<\infty$ with $\rho\in[0,\f{H}{\al_0})\cap [0,1]$, then
	% \begin{align*}
	% 	\left(\cE\left[\left\|\f{G(t)-G(t-\tau)}{\tau^\gamma}\right\|_{\mathbb{H}}^2\right]\right)^{1/2}\leq C+Ct^{-\gamma}\left\|G_0\right\|_{\mathbb{H}},
	% \end{align*}
    \begin{align*}
		\bigg(\cE\Big[\Big\|\f{G(t)-G(t-\tau)}{\tau^\gamma}\Big\|^2\Big]\bigg)^{1/2}\leq C+Ct^{-\gamma},~~\gamma\in(0,H-\rho\al_0).
	\end{align*}
\end{thrm}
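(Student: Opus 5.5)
The plan is to use the decomposition $G=u+v$, where $u$ is the mild solution of the stochastic problem \eqref{model_3_sto} and $v$ is the (deterministic) mild solution of \eqref{model_3_det}. First I will justify this decomposition. Sum the mild formulations \eqref{v_ref} and \eqref{u_ref}. This shows that $u+v$ satisfies $G=F(t)G_0+F(t)*\dot W^H_Q-F(t)*g'*AG$. The well-posedness theorem gives uniqueness of the mild solution in $\mathcal X$, so $G=u+v$.

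Next I will write
\[
\frac{G(t)-G(t-\tau)}{\tau^\gamma}=\frac{u(t)-u(t-\tau)}{\tau^\gamma}+\frac{v(t)-v(t-\tau)}{\tau^\gamma}.
\]
By Minkowski's inequality in $L^2(\Omega;\mathbb H)$, the left-hand side in $(\cE\|\cdot\|^2)^{1/2}$ is bounded by the sum of the two corresponding quantities. For the stochastic part, I will apply Theorem \ref{thm_holder_reg}. Its hypotheses $\rho\in[0,H/\al_0)\cap[0,1]$ and $\gamma\in(0,H-\rho\al_0)$ match those of the present statement, so its contribution is at most $C^{1/2}$.

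For the deterministic part, $v$ is nonrandom, so $(\cE\|\cdot\|^2)^{1/2}$ reduces to the $\mathbb H$-norm. Theorem \ref{thm_holder_reg_det} with $G_0\in\mathbb H$ then gives the bound $C+Ct^{-\gamma}$ for every $\gamma\in(0,1)$. Since $H<1$ and $\rho\ge 0$, we have $H-\rho\al_0<1$, so every admissible $\gamma$ here lies in $(0,1)$. Adding the two bounds gives $C+Ct^{-\gamma}$.

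There is no serious obstacle. The two points to check are the range compatibility of $\gamma$ and the identification of the mild solution with $u+v$, and both are immediate from the earlier results as described.
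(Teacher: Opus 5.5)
Your proposal is correct and follows the paper's own route: the paper obtains this theorem directly by combining Theorem \ref{thm_holder_reg} (for the stochastic part $u$) with Theorem \ref{thm_holder_reg_det} (for the deterministic part $v$) through the decomposition $G=u+v$. The paper leaves two points implicit, and you verify both explicitly: you identify $G=u+v$ via uniqueness of the mild solution, and you note that $H-\rho\alpha_0<1$, so every admissible $\gamma$ lies in $(0,1)$.
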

\begin{rmk}
When $\alpha(t)\equiv \alpha$ for some $0<\alpha<1$,  the results in Theorems \ref{thm_reg_G} and \ref{thm_holder_G} are exactly consistent with those for the constant-exponent case, cf. \cite[Theorem 2.5]{NieSunDenSiamJNA}.	%In comparison to the constant-exponent case presented in {\cite[Theorem 2.5]{NieSunDenSiamJNA}}:
   % Let $G$ be the mild solution of \eqref{model:const} and assume $\|A^{-\rho}\|_{\mathcal{L}_2^0} < \infty $,
%    \begin{enumerate}
%        \item[(1)] If $ \rho < \min\left(\frac{H}{\alpha}, 1 + \varepsilon\right) $ and $ G_0 \in \hat{H}^q(D) $ with $ q \in [0, \sigma] $, then
%        \begin{equation*}
%        \mathbb{E} \left[\|A^\sigma G\|^{2}\right] \leq C + Ct^{(2q-2\sigma)\alpha} \|G_0\|^2_{\hat{H}^{2q}(D)},
%        \end{equation*}
%        where $ \sigma = \min\left\{1, 1 - \rho, \frac{H}{\alpha} - \rho - \eps\right\} $.
%    
%        \item[(2)] If $ \rho \in [0, \frac{H}{\alpha}) \cap [0, 1] $ and $ G_0 \in \mathbb{H} $, then
%        \begin{equation*}
%        \left(\mathbb{E}\left[\left\| \frac{G(t) - G(t-\tau)}{\tau^\gamma} \right\|^{2}\right]\right)^{\frac{1}{2}}
%        \leq C + Ct^{-\gamma} \|G_0\|,
%        \end{equation*}
%        where $ \gamma \in [0, H - \rho\alpha) $.
%    \end{enumerate}
%    Our  lead to a conclusion under the substitution of the constant parameter $\alpha$ with the initial value $\alpha_0$ of the variable-exponent function.
\end{rmk}

    % \section{Temporal semi-discretization}
     
\section{Temporal discretization}\label{sec:timediscre}
We construct semidiscrete-in-time schemes for equations in \eqref{jy1}.   For a given $ N \in \mathbb{N}^+ $, define the uniform time step $ \tau = T / N $ and set $ t_i = i\tau $ for $ i = 0, 1, \dots, N $, so that $ 0 = t_0 < t_1 < \cdots < t_N = T $. For notational convenience, set $\tau_{n,k}:=t_n-t_k$.
For the ease of computation, we rewrite $g'(t)$ in \eqref{eq:g'} as
	\begin{equation*}%\label{eq:rewrt_g'}
		\begin{aligned}
			g'(t)&=\f{t^{\al(t)-1}}{\Gamma(\al(t))}(\al'(t)\ln t-\digamma(\al(t))\al'(t))\\
			&\quad+\int_{0}^{1}\f{(\al(zt)-1)t^{\al(zt)-1}}{\Gamma(\al(zt))}\left(\al'(zt)\ln (t)-\digamma(\al(zt))\al'(zt)\right)+\f{t^{\al(zt)-1}}{\Gamma(\al(zt))}\al'(zt)dz\\
			&=:\f{t^{\al(t)-1}}{\Gamma(\al(t))}H(t)+\int_{0}^{1}\f{t^{\al(zt)-1}}{\Gamma(\al(zt))}\cdot U(z,t)dz,
		\end{aligned}
	\end{equation*}
   where the definitions of $H(t)$ and $U(z,t)$ are given by
    \begin{align*}
		&H(t):=\al'(t)\ln t-\digamma(\al(t))\al'(t),~~
		% &U(z,t):=(\al(zt)-1)\al'(zt)\ln(t)-(\al(zt)-1)\digamma(\al(zt))\al'(zt)+\al'(zt).\\%原始写法
  %       &U(z,t) := (\alpha(zt) - 1)H(zt) + \alpha'(zt)\left[1 - (\alpha(zt) - 1)\ln z\right].\\
        U(z,t) := (\alpha(zt) - 1)(H(zt)-\alpha'(zt)\ln z) + \alpha'(zt).
	\end{align*}
  Then we discretize $g'*Au$ at $t=t_n$ for $1\leq n\leq N$ by
	\begin{equation}\label{eq:gpconAu}
		\begin{aligned}
			g'*Au(t_n)
            % =\int_{t_0}^{t_n}g'(t_n-s)Au(s)ds
            =\sum_{k=1}^{n}\int_{t_{k-1}}^{t_k}g'(t_n-s)Au(s)ds
			% &=\sum_{k=1}^{n}\int_{t_{k-1}}^{t_k}\bigg(\f{(t_n-s)^{\al(t_n-s)-1}H(t_n-s)}{\Gamma(\al(t_n-s))}\\
   %          &\hspace{4em}+\int_{0}^{1}\f{(\al(z(t_n-s))-1)(t_n-s)^{\al(z(t_n-s))-1}}{\Gamma(\al(z(t_n-s))}\\
			% &\hspace{5em}\times\left(\al'(z(t_n-s))\ln ((t_n-s))-\digamma(\al(z(t_n-s)))\al'(z(t_n-s))\right)\\
			% &\hspace{5em}+\f{(t_n-s)^{\al(z(t_n-s))-1}}{\Gamma(\al(z(t_n-s)))}\left(\al'(z(t_n-s))\right)dz\bigg)Au(s)ds\\
			=:I_\tau Au(t_n)+J_n+\hat{J}_n.
		\end{aligned}
	\end{equation}
	Here
	\begin{equation}\label{eq:ItauAu}
		\begin{aligned}
			I_\tau Au(t_n)
   %          &=\sum_{k=1}^{n} \int_{t_{k-1}}^{t_k}\bigg(\f{(t_n-s)^{\al(\tau_{n,k})-1}\breve{H}_{n,k}(s)}{\Gamma(\al(\tau_{n,k}))}\\
   %          &\hspace{6em}+\int_{0}^{1}\f{(t_n-s)^{\al(z(\tau_{n,k}))-1}(\al(z(\tau_{n,k}))-1)\tilde{H}_{n,k}(z,s)}{\Gamma(\al(z(\tau_{n,k})))}\\
			% &\hspace{7em}+\f{(t_n-s)^{\al(z(\tau_{n,k}))-1}}{\Gamma(\al(z(\tau_{n,k})))}\al'(z(\tau_{n,k}))dz\bigg)dsAu(x,t_k)\\
			&=\sum_{k=1}^{n} \int_{t_{k-1}}^{t_k}\bigg(\f{(t_n-s)^{\al(\tau_{n,k})-1}\breve{H}_{n,k}(s)}{\Gamma(\al(\tau_{n,k}))}\\
            &\hspace{6em}+\int_{0}^{1}\f{(t_n-s)^{\al(z(\tau_{n,k}))-1}\breve{U}_{n,k}(z,s)}{\Gamma(\al(z(\tau_{n,k})))}dz\bigg)dsAu(x,t_k)\\
			&=:\sum_{k=1}^{n}\int_{t_{k-1}}^{t_k}\mathcal{K}_{n,k}(s)dsAu(x,t_k)=:\sum_{k=1}^{n}b_{n,k}Au(x,t_k),
		\end{aligned}
	\end{equation}
	where $\breve{H}_{n,k}(s):=\tilde{H}_{n,k}(1,s)$ and $ \breve{U}_{n,k}(z, s) := (\alpha(z \tau_{n,k}) - 1) \tilde{H}_{n,k}(z, s) + \alpha'(z \tau_{n,k})$ with $\tilde{H}_{n,k}(z,s):=\al'(z\tau_{n,k})(\ln(t_n-s)-\digamma(\al(z\tau_{n,k})))$. The truncation errors $J_n$ and $\hat{J}_n$ in \eqref{eq:gpconAu} are defined as 
	\begin{equation*}
			J_n\!:=\sum_{k=1}^{n}\int_{t_{k-1}}^{t_k}\mathcal{K}_{n,k}(s)(Au(x,s)-Au(x,t_k))ds,\,
			\hat{J}_n\!
            :=\sum_{k=1}^{n}\int_{t_{k-1}}^{t_k}(g'(t_n-s)-\mathcal{K}_{n,k}(s))Au(x,s)ds.
	\end{equation*}	
	 
    For the convenience of subsequent analysis, we decompose $b_{n,k}:=\hat{b}_{n,k}+\tilde{b}_{n,k}$ where
	\begin{equation*}
		\begin{aligned}
		% b_{n,k}&=\hat{b}_{n,k}+\tilde{b}_{n,k},\\
		\hat{b}_{n,k}&=\int_{t_{k-1}}^{t_k}\f{(t_n-s)^{\al(\tau_{n,k})-1}\breve{H}_{n,k}(s)}{\Gamma(\al(\tau_{n,k}))}ds
		% &=\int_{t_{k-1}}^{t_k}\f{(t_n-s)^{\al(\tau_{n,k})-1}\cdot[\al'(\tau_{n,k})
		% 	\ln(t_n-s)-\digamma(\al(\tau_{n,k}))\al'(\tau_{n,k})]}{\Gamma(\al(\tau_{n,k})}ds\\
		%&=\f{\al'(\tau_{n,k})\int_{t_{k-1}}^{t_k}\f{\ln (t_n-s)ds}{(t_n-s)^{1-\al(\tau_{n,k})}}-\digamma(\al(\tau_{n,k}))\al'(\tau_{n,k})\int_{t_{k-1}}^{t_k}(t_n-s)^{\al(\tau_{n,k})-1}ds}{\Gamma(\al(\tau_{n,k}))}\\
		&=:\f{\al'(\tau_{n,k})\hat{b}_{n,k}^1-\digamma(\al(\tau_{n,k}))\al'(\tau_{n,k})\hat{b}_{n,k}^2}{\Gamma(\al(\tau_{n,k}))},
		\end{aligned}
	\end{equation*}
	with
	\begin{equation*}
		\begin{aligned}
			\hat{b}_{n,k}^1:=&\int_{t_{k-1}}^{t_k}\f{\ln(t_n-s)ds}{(t_n-s)^{1-\al(\tau_{n,k})}}
			=\f{\tau_{n,k-1}^{\al(\tau_{n,k})}\ln(\tau_{n,k-1})}{\al(\tau_{n,k})}
            -\f{\tau_{n,k}^{\al(\tau_{n,k})}\ln(\tau_{n,k})}{\al(\tau_{n,k})}
			+\f{\tau_{n,k}^{\al(\tau_{n,k})}-\tau_{n,k-1}^{\al(\tau_{n,k})}}{\left(\al(\tau_{n,k})\right)^2},
		\end{aligned}
	\end{equation*}	
	\begin{equation*}
		\begin{aligned}
			\hat{b}_{n,k}^2:=&\int_{t_{k-1}}^{t_k}(t_n-s)^{\al(\tau_{n,k})-1}ds
			=\f{\tau_{n,k-1}^{\al(\tau_{n,k})}-\tau_{n,k}^{\al(\tau_{n,k})}}{\al(\tau_{n,k})},
		\end{aligned}
	\end{equation*}	
	and
	\begin{equation*}
		\begin{aligned}
			\tilde{b}_{n,k}:=&\int_{0}^{1}\int_{t_{k-1}}^{t_k}\f{(\al(z\tau_{n,k})-1)(t_n-s)^{\al(z\tau_{n,k})-1}\tilde{H}_{n,k}(z,s)}{\Gamma(\al(z\tau_{n,k}))}+\f{(t_n-s)^{\al(z\tau_{n,k})-1}}{\Gamma(\al(z\tau_{n,k}))}\al'(z\tau_{n,k})dsdz\\
			=&\int_{0}^{1}\tilde{b}_{n,k}^1(z)\int_{t_{k-1}}^{t_k}(t_n-s)^{\al(z\tau_{n,k})-1}\ln(t_n-s)ds\\
            &\hspace{2em}+\tilde{b}_{n,k}^2(z)\int_{t_{k-1}}^{t_k}(t_n-s)^{\al(z\tau_{n,k})-1}dsdz
            =\int_{0}^{1}\tilde{b}_{n,k}^1(z)\tilde{c}_{n,k}^1(z)+\tilde{b}_{n,k}^2(z)\tilde{c}_{n,k}^2(z)dz,
		\end{aligned}
	\end{equation*}
    with
	\begin{equation*}
	   \tilde{b}_{n,k}^1(z):=\f{(\al(z\tau_{n,k})-1)\al'(z\tau_{n,k})}{\Gamma(\al(z\tau_{n,k}))},~~
        \tilde{b}_{n,k}^2(z):=\frac{\al'(z\tau_{n,k})\left[ 1 - (\al(z\tau_{n,k}) - 1)\digamma(\al(z\tau_{n,k})) \right]}{\Gamma(\al(z\tau_{n,k}))} ,
    \end{equation*}
    \begin{equation*}
		\tilde{c}_{n,k}^1(z):=\f{\ln(\tau_{n,k-1})\tau_{n,k-1}^{\al(z\tau_{n,k})}-\ln(\tau_{n,k})\tau_{n,k}^{\al(z\tau_{n,k})}-\tilde{c}_{n,k}^2(z)}{\al(z\tau_{n,k})},~~
		\tilde{c}_{n,k}^2(z):=\f{\tau_{n,k-1}^{\al(z\tau_{n,k})}-\tau_{n,k}^{\al(z\tau_{n,k})}}{\al(z\tau_{n,k})}.
	\end{equation*}
We combine (\ref{eq:gpconAu}) with   the backward Euler method and the convolution quadrature \cite{Lub1,Lub2} to obtain the semidiscrete-in-time scheme of the first equation of \eqref{jy1}
	\begin{equation}\label{scheme}
		\begin{aligned}
			\f{u^n-u^{n-1}}{\tau}+\sum_{i=0}^{n-1}d_i^{(1-\al_0)}Au^{n-i}=\overline{\p}_\tau W^H_Q(t_n)-I_\tau Au^n,\quad u^0=0,
		\end{aligned}
	\end{equation}
	where $\overline{\partial}_\tau W^H_Q(t)=0$ for $t=0$ or $t>T$ and 
    $\overline{\partial}_\tau W^H_Q(t)=\tfrac{W^H_Q(t_n)-W^H_Q(t_{n-1})}{\tau}$ 
    for $t\in (t_{n-1},t_n]$ ($1\leq n\leq N$), and the coefficients $\{d_i^{(1-\alpha_0)}\}$ are determined by \begin{equation}\label{def:delta_tau}
		(\delta_\tau(\xi))^{1-\al_0}=\sum_{i=0}^{\infty}d_i^{(1-\al_0)}\xi^i,\text{ where }\delta_\tau(\xi)=\f{1-\xi}{\tau}.
	\end{equation}
	% \begin{equation*}
	% 		\overline{\partial}_\tau W^H_Q(t) = \left\{
	% 		\begin{aligned}
	% 		&0, && t = t_0, \\
	% 		&\dfrac{W^H_Q(t_n) - W^H_Q(t_{n-1})}{\tau}, && t \in (t_{n-1}, t_n], \\
	% 		&0, && t > t_N.
	% 	\end{aligned}\right.
	% \end{equation*}

	Similarly, we obtain the semidiscrete-in-time scheme of the second equation of \eqref{jy1},
	\begin{equation}\label{scheme_v}
	\begin{aligned}
		\f{v^n-v^{n-1}}{\tau}+\sum_{i=0}^{n-1}d_i^{(1-\al_0)}Av^{n-i}+I_\tau Av^n=0,\quad v^0=G_0.
	\end{aligned}
    \end{equation}

    Now the semidiscrete-in-time numerical approximation to the stochastic multiscale subdiffusion equation (\ref{model_2}) can be defined as $G^n := u^n + v^n$.

    We  provide estimates for the coefficients $b_{n,k}$ in \eqref{eq:ItauAu} for future use.
    \begin{lmm}\label{lem:bnkbound}
	   Let $\alpha(t)$ satisfy $0<\al_*\le\alpha(t)<1$ and $|\alpha'(t)|\le L$ on $[0,T]$. Then there exists a constant $C$ independent of $n,k,\tau$, such that
	\begin{equation*}
		|b_{n,k}|\le Ct_{n-k+1}^{\al_*-1}(1+|\ln(t_{n-k+1})|)\tau\leq Ct_{n-k+1}^{\al_*-1-\tilde{\eps}}\tau,\qquad k=1,\dots,n.
	\end{equation*}
    where $0<\tilde{\eps}<\al_*$. Furthermore, $\sum_{k=1}^{n}|b_{n,k}|\le C$.
    \end{lmm}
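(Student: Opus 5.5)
The plan is to bound the two pieces $\hat b_{n,k}$ and $\tilde b_{n,k}$ of $b_{n,k}$ separately, directly from their integral representations $\hat b_{n,k}=\int_{t_{k-1}}^{t_k}\frac{(t_n-s)^{\al(\tau_{n,k})-1}\breve H_{n,k}(s)}{\Gamma(\al(\tau_{n,k}))}ds$ and the analogous double integral for $\tilde b_{n,k}$. The closed formulas for $\hat b^1_{n,k},\hat b^2_{n,k},\tilde c^i_{n,k}$ will not be used. First I record uniform bounds on the coefficient functions. Since $\al_*\le\al(\cdot)<1$ and $\al$ is continuous on $[0,T]$, its range lies in a compact subset of $(0,1)$. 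Hence $1/\Gamma(\al(\cdot))$ and $\digamma(\al(\cdot))$ are bounded, and $|\al'|\le L$. Therefore
\begin{equation*}
|\breve H_{n,k}(s)|+|\breve U_{n,k}(z,s)|\le C\bigl(1+|\ln(t_n-s)|\bigr),\qquad z\in[0,1].
\end{equation*}

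Next I bound the power factor. For $s\in(t_{k-1},t_k)$ we have $t_n-s\in(\tau_{n,k},\tau_{n,k-1})\subset(0,T]$, and every exponent $\beta\in\{\al(\tau_{n,k}),\al(z\tau_{n,k})\}$ lies in $[\al_*,1)$. I use $(t_n-s)^{\beta-1}\le \max(1,T)\,(t_n-s)^{\al_*-1}$, which holds because $(t_n-s)^{\beta-\al_*}\le \max(1,T)^{\beta-\al_*}$. Consequently both integrands are bounded by $C(t_n-s)^{\al_*-1}(1+|\ln(t_n-s)|)$, uniformly in $z$, and the $z$-integral over $[0,1]$ only contributes a constant.

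The key step is then to compare the integral $\int_{t_{k-1}}^{t_k}(t_n-s)^{\al_*-1}(1+|\ln(t_n-s)|)ds$ with $\tau\, t_{n-k+1}^{\al_*-1}(1+|\ln t_{n-k+1}|)$, noting that $t_{n-k+1}=\tau_{n,k-1}$ is the right endpoint of the range of $t_n-s$.

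\emph{Case $k\le n-1$.} Here $t_n-s\ge \tau_{n,k}\ge \tau_{n,k-1}/2$. The function $x^{\al_*-1}(1+|\ln x|)$ is comparable on $[y/2,y]$ to its value at $y$, with constants depending only on $\al_*$ and $T$. This gives the bound directly.

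\emph{Case $k=n$.} This is the singular endpoint and I expect it to be the main obstacle. Here I compute $\int_0^\tau x^{\al_*-1}(1+|\ln x|)dx\le C\tau^{\al_*}(1+|\ln\tau|)$ by integration by parts. This equals $C\,t_1^{\al_*-1}(1+|\ln t_1|)\tau$, which is the claimed form.

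The second inequality in the statement follows from $1+|\ln x|\le C_{\tilde\eps}x^{-\tilde\eps}$ for $x\in(0,T]$.

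For the sum, I write
\begin{equation*}
\sum_{k=1}^n|b_{n,k}|\le C\sum_{j=1}^{n}\tau\, t_j^{\al_*-1-\tilde\eps}.
\end{equation*}
This is a lower Riemann sum of the decreasing, integrable function $x^{\al_*-1-\tilde\eps}$, so it is bounded by $\int_0^{T}x^{\al_*-1-\tilde\eps}dx=T^{\al_*-\tilde\eps}/(\al_*-\tilde\eps)$. This uses $\tilde\eps<\al_*$.

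Alternatively, the sum can be bounded without passing through the pointwise estimate. Each $|b_{n,k}|$ is dominated by the integral over $(t_{k-1},t_k)$ of the common majorant, so the sum is at most $\int_0^{t_n}C x^{\al_*-1}(1+|\ln x|)dx\le C$.
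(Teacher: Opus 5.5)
Your proof is correct and follows essentially the paper's argument. The shared steps are: uniform bounds on the bounded factors ($\alpha'$, $1/\Gamma(\alpha)$, the digamma term), the comparison $t_{n-k}\ge t_{n-k+1}/2$ for $k<n$, a separate treatment of the singular cell $k=n$, then $1+|\ln t|\le C_{\tilde\eps}t^{-\tilde\eps}$ and a Riemann-sum bound. The only difference is that you majorize the whole kernel $\mathcal{K}_{n,k}$ pointwise by $C(t_n-s)^{\al_*-1}(1+|\ln(t_n-s)|)$ and integrate, where the paper uses the closed forms of $\hat b^{1,2}_{n,k}$, $\tilde c^{1,2}_{n,k}$ with the mean value theorem; this also lets you bound $\sum_k|b_{n,k}|$ directly by $\int_0^{t_n}Cx^{\al_*-1}(1+|\ln x|)\,dx$.
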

    \begin{proof}
	Under the uniform mesh, $t_n-t_k=(n-k)\tau=t_{n-k}$ for every $k$. For $k<n$, we have $t_n-t_k\ge\tau$. By the mean value theorem,
	\begin{equation*}
		\hat b_{n,k}^2=\frac{\tau_{n,k-1}^{\alpha(t_{n-k})}-\tau_{n,k}^{\alpha(t_{n-k})}}{\alpha(t_{n-k})}=\xi^{\alpha(t_{n-k})-1}\tau,\qquad \xi\in[\tau_{n,k},\tau_{n,k-1}]\subset[\tau,T].
	\end{equation*}
     Since $\alpha(t_{n-k})-1\in(-1,0)$, we have
	$
		|\hat b_{n,k}^2|\le \xi^{\alpha(t_{n-k})-1}\tau\le t_{n-k}^{\alpha(t_{n-k})-1}\tau.
$
	Similarly, using $\hat b_{n,k}^1=\xi^{\alpha(t_{n-k})-1}\ln(\xi)\tau$ for some $\xi\in[\tau_{n,k},\tau_{n,k-1}]$ and $|\ln\xi|\le C(1+|\ln (t_{n-k+1})|)$, we have
	$
		|\hat b_{n,k}^1|\le Ct_{n-k}^{\alpha(t_{n-k})-1}\tau\big(1+|\ln (t_{n-k+1})|\big).
	$
	The same bounds hold for $\tilde c_{n,k}^1(z)$ and $\tilde c_{n,k}^2(z)$ uniformly in $z\in[0,1]$:
 $
		|c_{n,k}^1(z)|\le Ct_{n-k}^{\alpha(z(t_{n-k}))-1}(1+|\ln (t_{n-k+1})| )\tau,~|c_{n,k}^2(z)|\le Ct_{n-k}^{\alpha(z(t_{n-k}))-1}\tau.
	$
    Collecting all bounded factors ($\alpha',1/\Gamma,\digamma$) we obtain
	$
		|\tilde{b}_{n,k}|\le C(1+|\ln (t_{n-k+1})| ) t_{n-k}^{\al_*-1}\tau.
	$
    Then $|{b}_{n,k}|\le C(1+|\ln (t_{n-k+1})| )t_{n-k+1}^{\al_*-1}\tau$.
    
	For $k=n$, %Using $\lim_{u\to0^+}u^{\alpha_0}\ln u=0$,
	we have $
		\hat b_{n,n}^2=\frac{\tau^{\alpha_0}}{\alpha_0}$ and $
		\hat b_{n,n}^1=\frac{\tau^{\alpha_0}\ln\tau}{\alpha_0}-\frac{\tau^{\alpha_0}}{\alpha_0^2}.
	$
	Since $R_{n,n}=-\digamma(\alpha_0)\alpha'(0)$,
	\begin{equation*}
		|\hat b_{n,n}|=\bigg|\frac{\alpha'(0)}{\Gamma(\alpha_0)}\left[\frac{\tau^{\alpha_0}\ln\tau}{\alpha_0}-\frac{\tau^{\alpha_0}}{\alpha_0^2}-\digamma(\alpha_0)\frac{\tau^{\alpha_0}}{\alpha_0}\right]\bigg|\leq C(1+|\ln (\tau)| ) \tau^{\al_0}.
	\end{equation*}
	Since $\tilde c_{n,n}^1(z)\equiv\hat b_{n,n}^1$ and $\tilde c_{n,n}^2(z)\equiv\hat b_{n,n}^2$ (independent of $z$), and $\tilde b_{n,n}^1(z),\tilde b_{n,n}^2(z)$ are likewise constants in $z$, we have
	$
		|\tilde b_{n,n}|=|\tilde b_{n,n}^1(1)\hat b_{n,n}^1+\tilde b_{n,n}^2(1)\hat b_{n,n}^2(1)|\le C(1+|\ln (\tau)| ) \tau^{\al_0}.
	$
    Then we obtain $|b_{n,n}|\le C(1+|\ln (\tau)| ) \tau^{\al_0}\le C(1+|\ln (\tau)| ) t_{1}^{\al_*-1}\tau$. Now we estimate $(1+|\ln(t)|)\le C_{\tilde{\eps}}t^{-\tilde{\eps}}$, for $t\in(0,T],~\tilde{\eps}>0$, where $C_{\tilde{\eps}}\leq \f{e^{\tilde{\eps}}-1}{\tilde{\eps}}+T^{\tilde{\eps}} \left( 1 + \max[\ln T, 0] \right)$.%=\sup\limits_{t\in(0,T]}t^{\eps}(1+|\ln(t)|)
    Then taking $0<\tilde{\eps}<\al_*$, we have
    \begin{equation*}
        \begin{aligned}
            \sum_{k=1}^n |b_{n,k}| \leq C  \tau \sum_{k=1}^n t_{n-k+1}^{\alpha_*-1}(1+|\ln (t_{n-k+1})| )\leq CC_{\tilde{\eps}}  \tau \sum_{k=1}^n t_{n-k+1}^{\alpha_*-1-\tilde{\eps}} \le C .
        \end{aligned}
    \end{equation*}
    We finish the proof.
\end{proof}

\section{Auxiliary estimates}\label{sec:TemDisAux}
We derive integral representations of the solutions to the schemes \eqref{scheme} and \eqref{scheme_v}, based on which we establish the spatial regularity of the solutions to the schemes \eqref{scheme} and \eqref{scheme_v}, which will be used in error estimates.
\subsection{Integral representations of numerical solutions}\label{subsec51}
	Multiplying both sides of \eqref{scheme} by $\xi^n$ and summing $n$ from $1$ to $\infty$ lead to
    \begin{equation*}
	\begin{aligned}
	 % &\sum_{n=1}^{\infty}\f{u^n-u^{n-1}}{\tau}\xi^n+\sum_{n=1}^{\infty}\sum_{i=0}^{n-1}d_i^{(1-\al_0)}Au^{n-i}\xi^n=\sum_{i=0}^{n-1}\overline{\p}_\tau\notag W^H_Q(t_n)\xi^n-\sum_{n=1}^{\infty}\sum_{i=0}^{n-1}b_{n,n-i}Au^{n-i}\xi^n.\\
	 % &\left(\f{1-\xi}{\tau}\right)\sum_{n=1}^{\infty}u^n\xi^n+\sum_{i=0}^{\infty}d_i^{(1-\al_0)}\xi^i\sum_{n=1}^{\infty}Au^{n}\xi^n=\sum_{n=1}^{\infty}\overline{\p}_\tau W^H_Q(t_n)\xi^n-\sum_{n=1}^{\infty}\sum_{i=0}^{n-1}b_{n,n-i}Au^{n-i}\xi^n.\\
	 % &\left(\delta_\tau(\xi)+(\delta_\tau(\xi))^{1-\al_0}A\right)\sum_{n=1}^{\infty}u^{n}\xi^n=\sum_{n=1}^{\infty}\overline{\p}_\tau W^H_Q(t_n)\xi^n-\sum_{n=1}^{\infty}\sum_{i=0}^{n-1}b_{n,n-i}Au^{n-i}\xi^n.\\
	 &\sum_{n=1}^{\infty}u^{n}\xi^n=\left(\delta_\tau(\xi)+(\delta_\tau(\xi))^{1-\al_0}A\right)^{-1}\Big(\sum_{n=1}^{\infty}\overline{\p}_\tau W^H_Q(t_n)\xi^n-\sum_{n=1}^{\infty}\sum_{i=0}^{n-1}b_{n,n-i}Au^{n-i}\xi^n\Big).
	\end{aligned}
    \end{equation*}
     Let $\widetilde{\overline{\p}_\tau W^H_Q}$ denote the Laplace transform of $\overline{\p}_\tau W^H_Q$. By definition, we have
		\begin{equation*}
			\begin{aligned}
				\f{z}{e^{z\tau}-1}\widetilde{\overline{\p}_\tau W^H_Q}(\cdot,z)
                % &=\f{z}{e^{z\tau}-1}\mathcal{L}\left(\sum_{k=1}^{\infty}\frac{W_Q^H(t_k)-W_Q^H(t_{k-1})}{\tau}\left(q(t-t_{k-1})-q(t-t_k)\right)\right)\\
				&=\f{z}{e^{z\tau}-1}\sum_{k=1}^{\infty}\frac{W_Q^H(t_k)-W_Q^H(t_{k-1})}{z\tau}\left(e^{-zt_{k-1}}-e^{-zt_k}\right),
				%\\ &=\sum_{k=1}^{\infty}\overline{\p}_\tau W^H_Q(t_k)e^{-zt_k},
			\end{aligned}%感觉有点啰嗦，所以把最后等式注释掉
		\end{equation*}
     which leads to the following equality %, which is established in the proof of {\cite[Proposition~3.2]{GunzMathComp}} (see the equation below~(3.26)),
	   \begin{equation}\label{lapW}
			\sum_{n=1}^{\infty}\overline{\p}_\tau W^H_Q(t_n)e^{-zt_n}=\f{z}{e^{z\tau}-1}\widetilde{\overline{\p}_\tau W^H_Q}.
		\end{equation}

	% Define the Heaviside function by $q(t) = 1$ for $t > 0$ and  $q(t) = 0$ for $ t \leq 0$. and denote $\mathcal{L}$ as Laplace transform, then

    Define
	\begin{equation*}
    	\Gamma_{\theta,\kappa}^\tau := \{ z \in \mathbb{C} : \kappa \leq |z| \leq \frac{\pi}{\tau \sin(\theta)},  |\arg z| = \theta \} \cup \left\{ z \in \mathbb{C} : |z| = \kappa,  |\arg z| \leq \theta \right\},
	\end{equation*}
	where $\kappa\leq\frac{\pi}{t_n|\sin(\theta)|}$ and $\theta\in\left(\frac{\pi}{2},\operatorname{arccot}\left(-\frac{2}{\pi}\right)\right)$. Applying \eqref{lapW} and taking $\xi=e^{-z\tau}$, the solution of scheme \eqref{scheme} can be expressed via the Cauchy integral principle
	\begin{equation}\label{u_nemeric_0}
		\begin{aligned}
			u^n=\f{1}{2\pi i}\int_{\Gamma_{\theta,\kappa}^\tau}e^{zt_n}&\left(\delta_\tau(e^{-z\tau})+(\delta_\tau(e^{-z\tau}))^{1-\al_0}A\right)^{-1}\\
			&\times\Big(\f{z\tau}{e^{z\tau}-1}\widetilde{\overline{\p}_\tau W^H_Q}(z)-\tau\sum_{j=1}^{\infty}\sum_{i=0}^{j-1}b_{j,j-i}Au^{j-i}e^{-zt_j}\Big)dz.
		\end{aligned}
	\end{equation}
    
	Define a continuous auxiliary function
	\begin{equation*}
		\begin{aligned}
			\hat{u}(t)=\f{1}{2\pi i}\int_{\Gamma_{\theta,\kappa}^\tau}e^{zt}&\left(\delta_\tau(e^{-z\tau})+(\delta_\tau(e^{-z\tau}))^{1-\al_0}A\right)^{-1}\\
			&\times\Big(\f{z\tau}{e^{z\tau}-1}\widetilde{\overline{\p}_\tau W^H_Q}(z)-\tau\sum_{j=1}^{\infty}\sum_{i=0}^{j-1}b_{j,j-i}Au^{j-i}e^{-zt_j}\Big)dz,
		\end{aligned}
	\end{equation*}
	then $\hat{u}(t_n)=u^n$. Define 
	\begin{align*}
		\overline{F}(t)=\f{1}{2\pi i}\int_{\Gamma_{\theta,\kappa}^\tau}e^{zt}\left(\delta_\tau(e^{-z\tau})+(\delta_\tau(e^{-z\tau}))^{1-\al_0}A\right)^{-1}\f{z\tau}{e^{z\tau}-1}dz,
	\end{align*}
 %    and denote $\mathcal{L}^{-1}$ as inverse Laplace transform,  then
	% \begin{equation*}
	% 	\mathcal{L}(\hat{u}(t))=\mathcal{L}\left(\overline{F}(t)\right)\cdot\mathcal{L}\left(\overline{\p}_\tau W^H_Q(t)\right)-\f{\mathcal{L}\left(\overline{F}(t)\right)(e^{z\tau}-1)}{z}\sum_{j=1}^{\infty}\sum_{i=0}^{j-1}b_{j,j-i}Au^{j-i}e^{-zt_j},
	% \end{equation*}
	% \begin{equation}\label{u_nemeric}
	% 	\begin{aligned}
	% 	\hat{u}(t)&=\overline{F}(t)*\overline{\p}_\tau W^H_Q(t)-\sum_{j=1}^{\infty}\sum_{i=0}^{j-1}b_{j,j-i}Au^{j-i}\cdot\overline{F}(t)*\mathcal{L}^{-1}\left(e^{-zt_j}\cdot\f{e^{z\tau}-1}{z}\right)\\
	% 	&=\overline{F}(t)*\left(\overline{\p}_\tau W^H_Q(t)-\sum_{j=1}^{\infty}\sum_{i=0}^{j-1}b_{n,n-i}Au^{j-i}\left(q(t-t_{j-1})-q(t-t_j)\right)\right)\\
	% 	&=\overline{F}(t)*\left(\overline{\p}_\tau W^H_Q(t)-\sum_{j=1}^{\infty}I_\tau Au^j\left(q(t-t_{j-1})-q(t-t_j)\right)\right),
	% 	\end{aligned}
	% \end{equation}
	then 
    \begin{equation}\label{u_nemeric}
        \hat{u}(t_n)=\overline{F}(t_n)*\left(\overline{\p}_\tau W^H_Q(t_n)-I_\tau Au^n\right).
    \end{equation}
    
    %下为更早的注释
	% \begin{equation*}
	% 	\begin{aligned}
	% 		u^n=\hat{u}(t_n)=\overline{F}(t_n)*\left(\overline{\p}_\tau W^H_Q(t_n)-I_\tau Au^n\right)
	% 	\end{aligned}
	% \end{equation*}
	% with the fact that $\mathcal{L}^{-1}\left(e^{-za}/z\right)=q(t-a)$.

Similar to the derivation of \eqref{u_nemeric_0}, we obtain
\begin{equation}\label{vn_numer}
    \begin{aligned}
    v^n&=\f{1}{2\pi i}\int_{\Gamma_{\theta,\kappa}^\tau}e^{zt_n}\left(\delta_\tau(e^{-z\tau})+(\delta_\tau(e^{-z\tau}))^{1-\al_0}A\right)^{-1}\Big(e^{-z\tau}G_0-\tau\sum_{j=1}^{\infty}\sum_{i=0}^{j-1}b_{j,j-i}Av^{j-i}e^{-zt_j}\Big)dz\\
    &=\breve{F}(t_{n-1})G_0-\overline{F}(t_n)*I_\tau Av^n,
    \end{aligned}
\end{equation}
where $\breve{F}$ is defined by
\begin{equation}\label{def:breF}
    \breve{F}(t):=\f{1}{2\pi i}\int_{\Gamma_{\theta,\kappa}^\tau}e^{zt}\left(\delta_\tau(e^{-z\tau})+(\delta_\tau(e^{-z\tau}))^{1-\al_0}A\right)^{-1}dz.
\end{equation}

\begin{lmm}[cf. {\cite[Lemma 3.4]{GunzMathComp}}]\label{lem:delta} %JinYan p51 lmm3.1 NIESUN J.N.A. p18 lemma3.1
    Let $\alpha_0>0$, $\varrho\in(0,1)$, and
    $\theta\in\left(\frac{\pi}{2},\operatorname{arccot}\left(-\frac{2}{\pi}\right)\right)$.
    Suppose that $\delta_\tau(\zeta)$ is given by \eqref{def:delta_tau}. If $0<\kappa\leq\min\left(\frac{1}{T},-\frac{\ln(\varrho)}{\tau}\right),$ then $\delta_\tau(e^{-z\tau})$ and $(\delta_\tau(e^{z\tau})^{\alpha_0}+A)^{-1}$ are analytic functions of $z$ in the region 
    \begin{equation*}
    \Omega_{\theta,\kappa,\varrho}^{\tau}:=\left\{z\in\mathbb C:|z|\geq\kappa,\quad|\arg z|\leq\theta,\quad|\operatorname{Im}z|\leq\frac{\pi}{\tau},\quad\operatorname{Re}z\leq-\frac{\ln\varrho}{\tau}\right\},
    \end{equation*}
    and for all $z\in\Gamma_{\theta,\kappa}^{\tau}$ we have $\delta_{\tau}\left(e^{-z\tau}\right)\in\Sigma_{\theta}$,
    		$c_{0}|z|\leq\left|\delta_{\tau}\left(e^{-z\tau}\right)\right|\leq c_{1}|z|$, $\left|\delta_{\tau}\left(e^{-z\tau}\right)-z\right|\leq c\tau|z|^{2}$ and $
            \left|(\delta_{\tau}\left(e^{-z\tau}\right))^{\alpha_0}-z^{\alpha_0}\right|\leq c\tau|z|^{1+\alpha_0}$.
\end{lmm}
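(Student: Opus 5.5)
The statement is the backward Euler convolution quadrature lemma (cf. \cite[Lemma 3.4]{GunzMathComp}). I would prove it by direct analysis of the map $z\mapsto\delta_\tau(e^{-z\tau})=(1-e^{-z\tau})/\tau$, working in the scaled variable $w=z\tau$. Then $\delta_\tau(e^{-z\tau})=\tau^{-1}\phi(w)$ with $\phi(w)=1-e^{-w}$. Under this scaling $\Gamma^\tau_{\theta,\kappa}$ and $\Omega^\tau_{\theta,\kappa,\varrho}$ become $\tau$-independent sets: $|\operatorname{Im}w|\le\pi$, $|\arg w|\le\theta$, and $\operatorname{Re}w\le-\ln\varrho$, together with the small arc $|w|=\kappa\tau$. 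Every claimed inequality is homogeneous under this scaling. So it suffices to prove $\tau$-free estimates for $\phi$ on this compact-type region and then undo the scaling.

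\textbf{Analyticity.} $\phi$ is entire, so $\delta_\tau(e^{-z\tau})$ is analytic everywhere. The only zeros of $\phi$ are $w=2\pi i m$, and the only one with $|\operatorname{Im}w|\le\pi$ is $w=0$, which is excluded by $|z|\ge\kappa$. So $\phi\ne0$ on the region, and the principal power $\phi^{\alpha_0}$ is analytic once we know $\phi$ avoids the cut $(-\infty,0]$. Analyticity of $(\delta_\tau^{\alpha_0}+A)^{-1}$ then follows from the spectral calculus: $A$ is self-adjoint with spectrum in $[\lambda_1,\infty)$, so it suffices that $\delta_\tau^{\alpha_0}$ stays in a sector $\Sigma_{\alpha_0\theta'}$ with $\alpha_0\theta'<\pi$, away from $(-\infty,-\lambda_1]$.

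\textbf{Sector inclusion and two-sided bound.} Write $w=re^{\pm i\theta}$ with $0<r\le\pi/\sin\theta$. This is the key geometric step and the expected main obstacle.

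For small $r$ the expansion $\phi(w)=w-w^2/2+O(w^3)$ gives $\arg\phi(w)\approx\arg w$. Beyond that I would study $\arg(1-e^{-w})$ along the ray, using $\operatorname{Re}w=r\cos\theta<0$, so that $|e^{-w}|>1$. I would show directly that $|\arg(1-e^{-w})|\le\theta$ up to the endpoint $\operatorname{Im}w=\pi$, where $\phi=1+e^{-\operatorname{Re}w}>0$ is real. The upper restriction $\theta<\operatorname{arccot}(-2/\pi)$ is exactly what keeps the segment from $0$ to $-\pi\cot\theta+i\pi$ in the region where this monotonicity argument works; this is the condition from \cite{GunzMathComp}, and I would verify it by checking the sign of $\frac{d}{dr}\arg\phi(re^{i\theta})$.

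On the arc $|w|=\kappa\tau$, the constraint $\kappa\le\min(1/T,-\ln\varrho/\tau)$ keeps $\kappa\tau$ bounded, so $\phi$ is a small perturbation of the identity there. Hence $\phi(w)\in\Sigma_\theta$ on all of $\Gamma^\tau_{\theta,\kappa}$.

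The bounds $c_0|w|\le|\phi(w)|\le c_1|w|$ follow from continuity and nonvanishing of $\phi(w)/w$ on the compact closure of the scaled region: its only possible problematic point, $w=0$, is removable with value $1$.

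\textbf{Approximation bounds.} From $|\phi(w)-w|\le C|w|^2$ on the bounded scaled region, rescaling gives $|\delta_\tau-z|\le c\tau|z|^2$.

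For the power, write $\delta_\tau^{\alpha_0}-z^{\alpha_0}=z^{\alpha_0}\big((\phi(w)/w)^{\alpha_0}-1\big)$. Since $\phi(w)/w$ lies in a compact set avoiding $(-\infty,0]$ and satisfies $|\phi(w)/w-1|\le C|w|$, the mean value theorem for $\zeta^{\alpha_0}$ on that set gives $|(\phi(w)/w)^{\alpha_0}-1|\le C|w|=C\tau|z|$. This yields $c\tau|z|^{1+\alpha_0}$.
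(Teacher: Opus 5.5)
The paper does not prove this lemma; it only cites \cite[Lemma 3.4]{GunzMathComp}. Your route is the standard argument behind that citation: rescale to $w=z\tau$, set $\phi(w)=1-e^{-w}$, and use compactness of the rescaled contour. The compactness parts are fine. The inequalities are scale-invariant, $\phi(w)/w$ and $(\phi(w)-w)/w^2$ extend continuously to $w=0$, and $\phi$ has no zero other than $0$ in $|\operatorname{Im}w|\le\pi$.

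The gap is in the sector inclusion, on which the branch of $\phi^{\alpha_0}$ and the invertibility of $\delta_\tau^{\alpha_0}+A$ both rest.
\begin{itemize}
\item On the arc $|w|=\kappa\tau$ you argue that $\phi$ is a small perturbation of the identity. But $\kappa\tau\le 1/N$ can equal $1$. More to the point, an unsigned $O(\kappa\tau)$ change of argument does not keep $\arg\phi(w)\le\theta$ when $\arg w$ is just below $\theta$. You need to know that the rotation is towards the positive real axis.
\item Analyticity in $\Omega^{\tau}_{\theta,\kappa,\varrho}$ requires $\phi(w)\notin(-\infty,0]$ on the whole rescaled region. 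Your argument only addresses the contour.
\end{itemize}

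Both points are settled by a signed comparison that holds at every angle. For $0<\operatorname{Im}w\le\pi$, write $\phi(w)/w=\int_0^1e^{-sw}\,ds$. Then $\operatorname{Im}(\phi(w)/w)=-\int_0^1e^{-s\operatorname{Re}w}\sin(s\operatorname{Im}w)\,ds<0$, while $\operatorname{Im}\phi(w)=e^{-\operatorname{Re}w}\sin(\operatorname{Im}w)\ge0$. Since $\arg w+\arg(\phi(w)/w)$ and $\arg\phi(w)$ both lie in $(-\pi,\pi)$ and agree modulo $2\pi$, they coincide. Hence $0\le\arg\phi(w)<\arg w$. Together with $\phi(\bar w)=\overline{\phi(w)}$ and $\phi(w)>0$ for $w>0$, this gives $|\arg\phi(w)|\le|\arg w|\le\theta$ on the rays, the arc and all of $\Omega^{\tau}_{\theta,\kappa,\varrho}$ at once.

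The same identity justifies the factorization $\phi^{\alpha_0}=w^{\alpha_0}(\phi/w)^{\alpha_0}$ with principal branches, which you use tacitly in the last step. It also supports your mean-value step: since $|\arg(\phi/w)|\le\theta$ and $|\phi/w|\ge c_0$, the segment from $1$ to $\phi(w)/w$ stays at distance at least $\min(1,c_0)\cos(\theta/2)$ from $0$.

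Your planned derivative check would also work, but not because of the hypothesis you cite. Along the ray of angle $\psi$, with $k=-\cot\psi>0$ and $y=\operatorname{Im}w$, one finds $\frac{d}{dy}\cot\arg\phi=\frac{1-e^{-ky}(\cos y+k\sin y)}{\sin^2y}\ge0$, because $e^{ky}\ge1+ky\ge\cos y+k\sin y$. This holds for every $\psi\in(\pi/2,\pi)$, so $\theta<\operatorname{arccot}(-2/\pi)$ plays no role there. Also, the ray ends at $\pi\cot\theta+i\pi$, not at $-\pi\cot\theta+i\pi$.
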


According to Lemmas \ref{thm_contour} and \ref{lem:delta}, for $z\in\Gamma_{\theta,\kappa}^{\tau}$, we estimate
    % 好像不用那么麻烦用插值，可以直接估计
	% \begin{equation*}%\label{BoundFInInt}
	% 	\begin{aligned}
	% 		&\Big\|\left(\delta_\tau(e^{-z\tau})+\left(\delta_\tau(e^{-z\tau})\right)^{1-\al_0}A\right)^{-1}\f{z\tau}{e^{z\tau}-1}\Big\| \\
	% 		=&\Big\|-\left(z+z^{1-\al_0}A\right)^{-1}+\left(\delta_\tau(e^{-z\tau})+(\delta_\tau(e^{-z\tau}))^{1-\al_0}A\right)^{-1}\f{z\tau}{e^{z\tau}-1}\Big\|\\
	% 		&~~\qquad~~+\Big\|\left(z+z^{1-\al_0}A\right)^{-1}\Big\|
	% 		\leq C\left(\tau+|z|^{-1}\right)
	% 	\end{aligned}
	% \end{equation*}
	% and
	% \begin{equation*}%\label{BoundLapFA}
	% 	\begin{aligned}
	% 		&\Big\|\left(\delta_\tau(e^{-z\tau})+(\delta_\tau(e^{-z\tau}))^{1-\al_0}A\right)^{-1}\f{z\tau}{e^{z\tau}-1}A\Big\|\\
	% 		=&\Big\|\Big(-\left(z+z^{1-\al_0}A\right)^{-1}+\left(\delta_\tau(e^{-z\tau})+(\delta_\tau(e^{-z\tau}))^{1-\al_0}A\right)^{-1}\f{z\tau}{e^{z\tau}-1}\Big)A\Big\|\\
	% 		&~~\qquad~~+\Big\|\left(z+z^{1-\al_0}A\right)^{-1}A\Big\|
	% 		\leq C\Big(\tau|z|^{\alpha_0}+|z|^{\alpha_0-1}\Big).
	% 	\end{aligned}
	% \end{equation*}
	
	% By interpolation property $\|A^\beta v\|\leq\|Av\|^\beta\|v\|^{1-\beta}$, we obtain
	\begin{equation*}
	 	\begin{aligned}
	 		\big\|\left(\delta_\tau(e^{-z\tau})+(\delta_\tau(e^{-z\tau}))^{1-\al_0}A\right)^{-1}\f{z\tau}{e^{z\tau}-1}A^{\beta}\big\|
	 		\leq C|z|^{\alpha_0\beta-1},~~\beta\in[0,1],
	 	\end{aligned}
	 \end{equation*}
	which implies
	\begin{align}\label{Bound_FA1_sigma}
			&\left\|\overline{F}(t_n-r)A^{1-\sigma}\right\|
   %          =&\Big\|\f{1}{2\pi i}\int_{\Gamma_{\theta,\kappa}^\tau}e^{z(t_n-r)}\left(\delta_\tau(e^{-z\tau})+(\delta_\tau(e^{-z\tau}))^{1-\al_0}A\right)^{-1}\f{z\tau}{e^{z\tau}-1}dzA^{1-\sigma}\Big\|\\
			% \leq& C\int_{\Gamma_{\theta,\kappa}^\tau}|e^{z(t_n-r)}|\cdot\Big\|\left(\delta_\tau(e^{-z\tau})+(\delta_\tau(e^{-z\tau}))^{1-\al_0}A\right)^{-1}\f{z\tau}{e^{z\tau}-1}A^{1-\sigma}\Big\||dz|\\
			\leq C\int_{\Gamma_{\theta,\kappa}^\tau}|e^{z(t_n-r)}|\cdot|z|^{\alpha_0(1-\sigma)-1} |dz|,~~\sigma\in[0,1],
		\\
\label{ineq:disint}
			&\Big(\int_0^{t_n}\left\|\overline{F}(t_n-s)A\right\|ds\Big)^2=\bigg(\int_0^{t_n}\int_{\Gamma_{\theta,\kappa}^\tau}|e^{z(t_n-s)}||z|^{\al_0-1}|dz|ds\bigg)^2\leq C.
		\end{align}
  \subsection{Spatial regularity of numerical solutions}  
We establish the spatial regularity for the solution to scheme \eqref{scheme} in the following lemma.
\begin{lmm}\label{lem:AsigUn_bound}
	Assume that $\|A^{-\rho}\|_{\mathcal{L}_2^0}<\infty$ with $\rho<\min\{\f{H}{\al_0},1+\eps\}$, and let $u^n$ be the solution of the semidiscrete scheme \eqref{scheme}. Then there exists a constant $C$, independent of $n$ and $\tau$, such that $\cE\left[\left\|A^{\sigma}u^{n}\right\|^{2}\right]\leq C,~ n=1,\dots,N,$ for $\sigma\in\big[-\rho,\ \min\{1-\rho,\ \f{H}{\al_0}-\rho-\eps\}\big]$.
\end{lmm}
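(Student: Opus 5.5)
We will mimic the continuous argument of Theorem \ref{thm_reg} at the discrete level, starting from the integral representation \eqref{u_nemeric}, i.e. $u^n=\overline{F}(t_n)*\overline{\p}_\tau W^H_Q(t_n)-\overline{F}(t_n)*I_\tau Au^n$. Applying $A^\sigma$ and the elementary inequality $(a+b)^2\le 2a^2+2b^2$ gives
\begin{equation*}
\cE\big[\|A^\sigma u^n\|^2\big]\le 2\,\cE\Big[\Big\|\int_0^{t_n}A^\sigma\overline{F}(t_n-s)\overline{\p}_\tau W^H_Q(s)\,ds\Big\|^2\Big]+2\,\cE\Big[\big\|A^\sigma\overline{F}(t_n)*I_\tau Au^n\big\|^2\Big]=:2(P_1+P_2),
\end{equation*}
and the goal is to show $P_1\le C$ uniformly in $n,\tau$ and to bound $P_2$ by a discrete weakly singular convolution of $\cE[\|A^\sigma u^j\|^2]$, $j\le n$, so that Lemma \ref{lmm:disGrowIneq} closes the estimate.

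For $P_1$ we will proceed as in the proof of Lemma \ref{lemjy1} (cf. \cite{NieSunDeng}). Since $\overline{\p}_\tau W^H_Q$ is piecewise constant, the stochastic integral is $\int_0^{t_n}\big(\tfrac1\tau\int_{t_{k-1}}^{t_k}A^\sigma\overline{F}(t_n-r)dr\big)\,dW^H_Q(s)$ with $s\in(t_{k-1},t_k]$, so Lemma \ref{lmm:nieDW} expresses $P_1$ as a double integral with kernel $|r-s|^{2H-2}$ of Hilbert--Schmidt inner products. We will write $A^\sigma\overline{F}Q^{1/2}=A^{\sigma+\rho}\overline{F}\,A^{-\rho}Q^{1/2}$, use $\|A^{-\rho}\|_{\mathcal L_2^0}<\infty$, and bound $\|A^{\sigma+\rho}\overline{F}(t)\|\le C\int_{\Gamma^\tau_{\theta,\kappa}}|e^{zt}||z|^{\alpha_0(\sigma+\rho)-1}|dz|\le Ct^{-\alpha_0(\sigma+\rho)}$, using the resolvent bound preceding \eqref{Bound_FA1_sigma} (valid since $\sigma+\rho\in[0,1]$). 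The averaging over a time cell preserves this bound up to constants (it is a contraction in the sup over the cell, and the singularity at $t=0$ is integrable in the cell). Then the Hardy--Littlewood type estimate $\int_0^t\int_0^t (t-s)^{-a}(t-r)^{-a}|r-s|^{2H-2}drds\le C$ for $2a<2H$, i.e. $\alpha_0(\sigma+\rho)<H$, gives $P_1\le C$ precisely under $\sigma\le \tfrac{H}{\alpha_0}-\rho-\eps$. This is the step I expect to be the main obstacle: the discrete operator $\overline F$ near $t=0$ (the cell $[t_{n-1},t_n]$) and the piecewise-constant averaging must be handled carefully, and I would split off the last cell, bounding it directly by $\tau^{-\alpha_0(\sigma+\rho)}$ times $\tau^{2H}\tau^{-2}\cdot\tau^2$-type factors.

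For $P_2$, writing $I_\tau Au^n$ as the piecewise-constant function equal to $\sum_{k\le j}b_{j,k}Au^k$ on $(t_{j-1},t_j]$, we will use $A^\sigma\overline{F}(t_n-s)I_\tau Au^j=A\overline{F}(t_n-s)\sum_k b_{j,k}A^\sigma u^k$. By Cauchy--Schwarz, \eqref{ineq:disint} (or rather the weighted form $\|A\overline F(t)\|\le Ct^{-\alpha_0}$, as in \eqref{ineq:FgAw}), and Lemma \ref{lem:bnkbound} ($|b_{j,k}|\le C t_{j-k+1}^{\alpha_*-1-\tilde\eps}\tau$, $\sum_k|b_{j,k}|\le C$), we get
\begin{equation*}
P_2\le C\sum_{j=1}^n\int_{t_{j-1}}^{t_j}(t_n-s)^{-\alpha_0}ds\sum_{k=1}^{j}|b_{j,k}|\,\cE\big[\|A^\sigma u^k\|^2\big]\le C\tau\sum_{k=1}^{n}t_{n-k+1}^{-1+\beta}\cE\big[\|A^\sigma u^k\|^2\big],
\end{equation*}
with some $\beta>0$ obtained from the discrete Beta-function sum, the analogue of the identity used in \eqref{ineq:FgAw}. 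The $k=n$ term carries a factor $C\tau^{\beta}$, which for $\tau$ small is absorbed into the left-hand side (larger $\tau$ handled trivially since $N$ is then bounded). Finally Lemma \ref{lmm:disGrowIneq} with $\eta_1=\eta_2=0$ yields $\cE[\|A^\sigma u^n\|^2]\le C$ uniformly.
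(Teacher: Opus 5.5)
Your proposal is correct, but for the stochastic term it takes a genuinely different route from the paper.

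\textbf{Stochastic term.} The paper never estimates $A^\sigma\overline{F}(t_n)*\overline{\p}_\tau W^H_Q(t_n)$ directly. It compares it with the continuous stochastic convolution:
\begin{itemize}
\item $R_1=\cE[\|A^\sigma F(t_n)*\dot W^H_Q(t_n)\|^2]$ is bounded by Lemma \ref{lemjy1};
\item the noise-averaging error $R_{21}$, still with the continuous $F$, is bounded via Lemma \ref{lmm:nieDW};
\item the operator error $F-\overline F$ is split into a contour-tail piece $R_{221}$ and a piece $R_{222}$ involving $\mathcal{E}_\tau(z)$, each summed explicitly against the weights $(m+1)^{2H}-2m^{2H}+(m-1)^{2H}$.
\end{itemize}
You instead apply Lemma \ref{lmm:nieDW} directly to the cell-averaged integrand built from $\overline F$. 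This needs only the uniform-in-$\tau$ smoothing bound $\|A^{\sigma+\rho}\overline F(t)\|\le Ct^{-\al_0(\sigma+\rho)}$, which follows from the resolvent estimate preceding \eqref{Bound_FA1_sigma}. In effect it is the paper's $R_{21}$ computation with $\overline F$ in place of $F$, and it makes $R_1$, $R_{221}$, $R_{222}$ and the $\mathcal{E}_\tau$ bound unnecessary. The paper's decomposition is modular: its pieces are the same quantities that drive the temporal error estimate borrowed from \cite[Theorem 3.4]{NieSunDenSiamJNA}. Yours is shorter and self-contained.

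The step you flag as the main obstacle is in fact harmless. For $s\in(t_{k-1},t_k]$ and every $k\le n$ one has $\tau^{-1}\int_{t_{k-1}}^{t_k}(t_n-r)^{-a}dr\le C(t_n-s)^{-a}$:
\begin{itemize}
\item for $k<n$, because $t_n-s\le 2(t_n-t_k)$;
\item for $k=n$, because the average equals $\tau^{-a}/(1-a)$ and $t_n-s\le\tau$.
\end{itemize}
So the double-integral bound with $a=\al_0(\sigma+\rho)<H$ applies with no separate last-cell treatment.

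\textbf{Drift term.} Here you are more careful than the paper. The paper pulls $I_\tau A^\sigma u^n$ out of the time integral, even though on $(t_{j-1},t_j]$ the integrand involves $I_\tau Au^j$. Your bookkeeping gives the kernel $t_{n-k+1}^{\al_*-\al_0-\tilde\eps}$, which is equally admissible for Lemma \ref{lmm:disGrowIneq}.

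You also correctly absorb the diagonal term $k=n$. The paper needs this step but skips it, since Lemma \ref{lmm:disGrowIneq} sums only to $n-1$. For the absorption, simply assume $\tau\le\tau_0$, as is standard, rather than arguing that large $\tau$ is trivial.
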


\begin{proof}
	By \eqref{u_nemeric}, we have $u^n=\overline{F}(t_n)*\overline{\p}_\tau W_Q^H(t_n)-\overline{F}(t_n)*I_\tau Au^n$, and we intend to bound its right-hand side terms acted by $A^\sigma$. It is clear that
	\begin{equation*}
		\begin{aligned}
			\cE&\left[\left\|A^\sigma\overline{F}(t_n)*\overline{\p}_\tau W_Q^H(t_n)\right\|^2\right]
			\leq 2\cE\left[\left\|A^\sigma F(t_n)*\dot{W}_Q^H(t_n)\right\|^2\right]\\
			&\hspace{1.5cm}+2\cE\left[\left\|A^\sigma\Big(\overline{F}(t_n)*\overline{\p}_\tau W_Q^H(t_n)-F(t_n)*\dot{W}_Q^H(t_n)\Big)\right\|^2\right]
			=:2R_1+2R_2.
		\end{aligned}
	\end{equation*}
	By Lemma \ref{lemjy1}, $R_1\leq C$ uniformly for $\sigma\in[-\rho,\min\{1-\rho,\f{H}{\al_0}-\rho-\eps\}]$. Split $R_2$ into
    \begin{equation*}%\label{ineq:R2split}
    	\begin{aligned}
    		R_2\leq&2\cE\bigg[\Big\|A^\sigma\int_{0}^{t_n}F(t_n-s)\left(\dot{W}_Q^H(s)-\overline{\p}_\tau W_Q^H(s)\right)ds\Big\|^2\bigg]\\
    		&+2\cE\bigg[\Big\|A^\sigma\int_{0}^{t_n}\big(F(t_n-s)-\overline{F}(t_n-s)\big)\overline{\p}_\tau W_Q^H(s)ds\Big\|^2\bigg]
    		=:2R_{21}+2R_{22}.
    	\end{aligned}
    \end{equation*}

    % Applying Lemma \ref{thm_contour} and set $\beta=\rho+\sigma,~\gamma+\beta\al_0<1,~\gamma, ~\beta\in[0,1],~r>s$, we obtain
    % \begin{equation}\label{ineq:F_holder}
    % 	\begin{aligned}
    % 		\big\|A^{\beta}[F(t_n-s)-F(t_n-r)]\big\|
    % 		&=\Big\|\f{1}{2\pi i}\int_{\Gamma_{\theta,\kappa}}e^{z(t_n-r)}\big(e^{z(r-s)}-1\big)A^\beta\tilde{F}(z)dz\Big\|\\
    % 		&\hspace{-3cm}\leq C\int_{\Gamma_{\theta,\kappa}}\big|e^{z(t_n-r)}\big|\cdot\big|e^{z(r-s)}-1\big|\cdot\big\|A^\beta\tilde F(z)\big\||dz|
    % 		% &\leq C(r-s)^{\gamma}\int_{\Gamma_{\theta,\kappa}}\big|e^{z(t_n-r)}\big||z|^{\beta\al_0+\gamma-1}|dz|\\
    % 		\leq C(r-s)^{\gamma}(t_n-r)^{-\gamma-\beta\al_0},%\qquad\text{provided }\gamma+\beta\al_0<1,\ 0<s<r<t.
    % 	\end{aligned}
    % \end{equation}

    % Fix $k\in\{1,\dots,n\}$. For $s,r\in(t_{k-1},t_k]$ we have $|s-r|\le\tau$, so applying \eqref{ineq:F_holder} with $t=t_n-r$, $h=|s-r|\le\tau$ (and $t-h\ge t_n-t_k$), and
    Let $\Psi(s):=\tau^{-1}\int_{t_{k-1}}^{t_k}A^\sigma\big[F(t_n-s)-F(t_n-r)\big]dr,~ s\in(t_{k-1},t_k],~ k=1,\dots,n$. Then we have
	\begin{equation*}
		\begin{aligned}
			R_{21}=&\cE\bigg[\Big\|A^{\sigma}\sum_{k=1}^{n}\big(\int_{t_{k-1}}^{t_k}F(t_n-s)\dot{W}_Q^H(s)ds-\int_{t_{k-1}}^{t_k}F(t_n-s)\overline{\p}_\tau W_Q^H(s)ds\big)\Big\|^2\bigg]\\
			=&\cE\bigg[\Big\|A^{\sigma}\sum_{k=1}^{n}\bigg(\int_{t_{k-1}}^{t_k}F(t_n-s)\dot{W}_Q^H(s)ds-\tau^{-1}\Big(\int_{t_{k-1}}^{t_k}F(t_n-r)dr\Big)\int_{t_{k-1}}^{t_k}\dot{W}_Q^H(s)ds\bigg)\Big\|^2\bigg]\\
		%	=&\cE\bigg[\Big\|\tau^{-1}\sum_{k=1}^{n}\big(\int_{t_{k-1}}^{t_k}\int_{t_{k-1}}^{t_k}A^{\sigma}\big[F(t_n-s)-F(t_n-r)\big]dr\;\dot{W}_Q^H(s)ds\big)\Big\|^2\bigg]\\
            =&\cE\bigg[\Big\|\int_0^{t_n}\Psi(s)dW_Q^H(s)\Big\|^2\bigg].
		\end{aligned}
	\end{equation*}
    According to Lemma \ref{lmm:nieDW}, we have
   	\begin{equation}\label{ineq:isometry}
    \begin{aligned}
        R_{21}
		&\le CH(2H-1)\int_0^{t_n}\int_0^{t_n}\big\langle\Psi(s)Q^{1/2},\Psi(r)Q^{1/2}\big\rangle|s-r|^{2H-2}drds\\
        &\le CH(2H-1)\int_0^{t_n}\Big\langle\Psi(s)Q^{1/2},\int_s^{t_n}\Psi(r)Q^{1/2}|r-s|^{2H-2}dr\Big\rangle  ds\\%由于被积函数对称
        &\leq CH(2H-1)\int_0^{t_n}\|\Psi(s)Q^{1/2}\|_{\mathcal{L}_2}\cdot\Big\|\int_0^{t_n-s}\Psi(s+r)Q^{1/2}|r|^{2H-2}dr\Big\|_{\mathcal{L}_2}ds.\\
    \end{aligned}
	\end{equation}
  Now we turn to bound integrands in (\ref{ineq:isometry}).  By $\|A^{-\rho}\|_{\mathcal{L}_2^0}<\infty$, we have $\|\Psi(s)Q^{1/2}\|_{\mathcal{L}_2}\leq\|\Psi(s)A^\rho\|\cdot\|A^{-\rho}\|_{\mathcal{L}_2^0}\le C\|\Psi(s)A^\rho\|$. Applying Lemma \ref{thm_contour} and setting $\beta=\rho+\sigma$, and recalling that $\beta\al_0<1$ and $\beta\in[0,1]$, we obtain for $s<r<t_n$
    \begin{equation}\label{ineq:F_holder}
    	\begin{aligned}
    		\big\|A^{\beta}[F(t_n-s)-F(t_n-r)]\big\|
    		&=\Big\|\f{1}{2\pi i}\int_{\Gamma_{\theta,\kappa}}e^{z(t_n-r)}\big(e^{z(r-s)}-1\big)A^\beta\tilde{F}(z)dz\Big\|\\
    		&\hspace{-3cm}\leq C\int_{\Gamma_{\theta,\kappa}}\big|e^{z(t_n-r)}\big|\cdot\big|e^{z(r-s)}-1\big|\cdot\big\|A^\beta\tilde F(z)\big\||dz|
    		\leq C(t_n-r)^{-\beta\al_0}.
    	\end{aligned}
    \end{equation}
 Thus, for $s\in(t_{k-1},t_k]$ we get
    \begin{equation*}
        \begin{aligned}
        \|\Psi(s)A^\rho\|=&\big\|\tau^{-1}\int_{t_{k-1}}^{t_k}A^{\beta}[F(t_n-s)-F(t_n-r)]dr\big\|
        \leq C\tau^{-1}\int_{t_{k-1}}^{t_k}(t_n-\max\{s,r\})^{-\beta\alpha_0}dr\\
        \leq&C(t_n-s)^{-\beta\alpha_0}
        +C\tau^{-1}((t_n-s)^{1-\beta\alpha_0}
        -(t_n-t_k)^{1-\beta\alpha_0})\\
        \leq&C(t_n-s)^{-\beta\alpha_0}
        +C\tau^{-1}(t_k-s)(t_n-s)^{-\beta\alpha_0} \leq C(t_n-s)^{-\beta\alpha_0}.
    \end{aligned}
    \end{equation*}
We combine this with
	\begin{equation*}
		\begin{aligned}
			\Big\|\int_0^{t_n-s}\Psi(s+r)Q^{1/2}|r|^{2H-2}dr\Big\|_{\mathcal{L}_2}
			&\leq\|A^{-\rho}\|_{\mathcal{L}_2^0}\int_0^{t_n-s}r^{2H-2}(t_n-s-r)^{-\beta\al_0}dr\\
			&\le C(t_n-s)^{2H-1-\beta\al_0}B(2H-1,1-\beta\al_0),
		\end{aligned}
	\end{equation*}
 to obtain $R_{21}\le C\int_0^{t_n}(t_n-s)^{2H-1-2\beta\al_0}ds\le C.$

 % This term $R_{22}$ is treated by the same contour-splitting device used for $\zeta_{1,1}$ and $\zeta_{1,2}$ in the proof of Theorem \ref{time_dic_err}, but with the stochastic integral in place of the deterministic convolution against $g'*Au$.
 To bound $R_{22}$,  we write
\begin{equation*}
	\begin{aligned}
		R_{22}\leq& 2\cE\bigg[\Big\|A^\sigma\int_{0}^{t_n}\int_{\Gamma_{\theta,\kappa}\backslash\Gamma_{\theta,\kappa}^\tau}e^{z(t_n-s)}\widetilde{F}(z)dz\overline{\p}_\tau W_Q^H(s)ds\Big\|^2\bigg]\\
		&+2\cE\bigg[\Big\|A^\sigma\int_{0}^{t_n}\int_{\Gamma_{\theta,\kappa}^\tau}e^{z(t_n-s)}\mathcal{E}_\tau(z)dz\overline{\p}_\tau W_Q^H(s)ds\Big\|^2\bigg]
		=:2R_{221}+2R_{222}.
	\end{aligned}
\end{equation*}
Define $F_{1}(t):=\f{1}{2\pi i}\int_{\Gamma_{\theta,\kappa}\backslash\Gamma_{\theta,\kappa}^\tau}e^{zt}z^{\al_0-1}(z^{\al_0}+A)^{-1}dz.$ For $s\in(t_{k-1},t_k]$, $k=1,\dots,n$, set $\Psi_{1}(s):=\tau^{-1}\int_{t_{k-1}}^{t_k}A^\sigma F_{1}(t_n-\eta)d\eta$ and $\Psi_{1}^{(k)}:=\Psi_{1}(s)|_{s\in(t_{k-1},t_{k}]}$. Then we have
\begin{equation}\label{ineq:phikQ}
    \begin{aligned}
        \|\Psi_{1}^{(k)}(s)Q^{1/2}\|_{\mathcal{L}_2}\leq\|\Psi_{1}^{(k)}(s)A^\rho\|\cdot\|A^{-\rho}\|_{\mathcal{L}_2^0}\le C\tau^{-1}[(t_n-t_{k-1})^{1-\beta\al_0}-(t_n-t_{k})^{1-\beta\al_0}].
    \end{aligned}
\end{equation}
Similar to \eqref{ineq:isometry}, we have
\begin{equation}\label{ineq:R221}\begin{aligned}
R_{221}=&\cE\bigg[\Big\|\int_0^{t_n}\Psi_{1}(s)dW_Q^H(s)\Big\|^2\bigg]\\
\le & CH(2H-1)\int_0^{t_n}\|\Psi_{1}(s)Q^{1/2}\|_{\mathcal{L}_2}\cdot\Big\|\int_0^{t_n-s}\Psi_{1}(s+r)Q^{1/2}|r|^{2H-2}dr\Big\|_{\mathcal{L}_2}ds\\
\le &C\sum_{k=1}^n \|\Psi_{1}^{(k)}(s)Q^{1/2}\|_{\mathcal{L}_2} \int_{t_{k-1}}^{t_k}
\Big\| \int_0^{t_n-s} \Psi_{1}(s+r) Q^{1/2} |r|^{2H-2}  dr \Big\|_{\mathcal{L}_2}  ds.
\end{aligned}
\end{equation}
By
\begin{equation*}
\int_{0}^{t_n - s} \Psi_{1}(s + r) Q^{1/2} r^{2H-2}  dr 
= \Psi_{1}^{(k)} Q^{1/2} \int_{0}^{t_k - s} r^{2H-2}  dr 
+ \sum_{j=k+1}^n \Psi_{1}^{(j)} Q^{1/2} \int_{t_{j-1} - s}^{t_j - s} r^{2H-2}  dr,
\end{equation*}
we have
\begin{equation}\label{ineq:R2212}\begin{aligned}
&\int_{t_{k-1}}^{t_k}\Big\| \int_0^{t_n-s} \Psi_{1}(s+r)Q^{1/2}r^{2H-2}  dr \Big\|_{\mathcal{L}_2} ds
\\
\leq& \int_{t_{k-1}}^{t_k}\frac{(t_k-s)^{2H-1}}{2H-1} \| \Psi_{1}^{(k)}Q^{1/2} \|_{\mathcal{L}_2}
+ \sum_{j=k+1}^n \frac{(t_j-s)^{2H-1} - (t_{j-1}-s)^{2H-1}}{2H-1}\| \Psi_{1}^{(j)}Q^{1/2} \|_{\mathcal{L}_2}ds\\
=& \frac{\tau^{2H}}{2H(2H-1)} \bigg(\| \Psi_{1}^{(k)}Q^{1/2}\|_{\mathcal{L}_2}  +  \sum_{m=1}^{n-k} \| \Psi_{1}^{(k+m)}Q^{1/2} \|_{\mathcal{L}_2} \left[ (m+1)^{2H} - 2m^{2H} + (m-1)^{2H} \right]\bigg).
\end{aligned}
\end{equation}
Applying \eqref{ineq:phikQ}, \eqref{ineq:R221} and \eqref{ineq:R2212}, we obtain $R_{221}\le C$.

Similar to \eqref{ineq:R221}, 
\begin{equation}\label{ineq:R222}
	\begin{aligned}
			R_{222} \le &C\sum_{k=1}^n \|\Psi_{2}^{(k)}(s)Q^{1/2}\|_{\mathcal{L}_2} \int_{t_{k-1}}^{t_k}
\Big\| \int_0^{t_n-s} \Psi_{2}(s+r) Q^{1/2} |r|^{2H-2}  dr \Big\|_{\mathcal{L}_2}  ds.\\
	\end{aligned}
\end{equation}
where $\Psi_{2}(s):=\tau^{-1}\int_{t_{k-1}}^{t_k}A^\sigma F_{2}(t_n-\eta)d\eta,~F_{2}(t):=\f{1}{2\pi i}\int_{\Gamma_{\theta,\kappa}^{\tau}}e^{zt}\mathcal{E}_\tau(z)A^{\rho}dz$ and $\Psi_{2}^{(k)}:=\Psi_{2}(s)|_{s\in(t_{k-1},t_{k}]}$. Using $\|\mathcal{E}_\tau(z)A^{\rho}\|\leq C\tau|z|^{\rho\al_0}$ and $\|\Psi_{2}^{(k)}(s)Q^{1/2}\|_{\mathcal{L}_2}\leq\|\Psi_{2}^{(k)}(s)A^\rho\|\cdot\|A^{-\rho}\|_{\mathcal{L}_2^0}$, we have
\begin{equation}\label{phi2Q}
\|\Psi_{2}^{(k)}(s)Q^{1/2}\|_{\mathcal{L}_2}\leq 
C \left[ (t_n - t_k)^{-\beta \alpha_0} - (t_n - t_{k-1})^{-\beta \alpha_0} \right], ~~k = 1, \dots, n-1,  
\end{equation}
and $\|\Psi_{2}^{(k)}(s)Q^{1/2}\|_{\mathcal{L}_2}\leq C \tau^{-\beta \alpha_0},~k = n.$ Applying \eqref{phi2Q}, we obtain
\begin{equation}\label{ineq:R2222}\begin{aligned}
&\int_{t_{k-1}}^{t_k}\Big\| \int_0^{t_n-s} \Psi_{2}(s+r)Q^{1/2}r^{2H-2}  dr \Big\|_{\mathcal{L}_2} ds
\\
\leq& \frac{\tau^{2H}}{2H(2H-1)} \bigg(\| \Psi_{2}^{(k)}Q^{1/2} \|_{\mathcal{L}_2}  +  \sum_{m=1}^{n-k} \| \Psi_{2}^{(k+m)}Q^{1/2}\|_{\mathcal{L}_2} \left[ (m+1)^{2H} - 2m^{2H} + (m-1)^{2H} \right]\bigg).
\end{aligned}
\end{equation}
Substituting \eqref{phi2Q} and \eqref{ineq:R2222} into \eqref{ineq:R222} yields $R_{222}\le C$. Then we have 
\begin{equation}\label{ineq:AsigFW_bound}
    \begin{aligned}
        \cE\left[\left\|A^\sigma\overline{F}(t_n)*\overline{\p}_\tau W_Q^H(t_n)\right\|^2\right]\le C.
    \end{aligned}
\end{equation}

Using \eqref{ineq:disint} and Lemma \ref{lem:bnkbound}, we obtain
	\begin{equation}\label{ineq:drift_bound}
		\begin{aligned}
			&\cE\left[\left\|A^\sigma\overline{F}(t_n)*I_\tau Au^n\right\|^2\right]
			\leq C\Big(\int_0^{t_n}\left\|\overline{F}(t_n-s)A\right\|ds\Big)^2\cE\left[\left\| I_\tau A^\sigma u^n \right\|^2\right]\\
			\leq& C\sum_{k=1}^{n}|b_{n,k}|\cE\left[\left\|A^\sigma u^k\right\|^2\right]
			\leq C\tau\sum_{k=1}^{n}t_{n-k+1}^{\al_*-1-\tilde{\eps}}\cE\left[\left\|A^\sigma u^k\right\|^2\right].
		\end{aligned}
	\end{equation}%这里用了cauchy不等式，lem:AsigVn_bound没用

	Combining \eqref{ineq:AsigFW_bound} and \eqref{ineq:drift_bound} yields
	\begin{equation*}
		\cE\left[\left\|A^\sigma u^n\right\|^2\right]\leq C+C\tau\sum_{k=1}^{n}t_{n-k+1}^{\al_*-1-\tilde{\eps}}\cE\left[\left\|A^\sigma u^k\right\|^2\right],\qquad n=1,\dots,N.
	\end{equation*}
	Applying Lemma \ref{lmm:disGrowIneq} to complete the proof.
\end{proof}

We now establish the following regularity result for the solution to scheme \eqref{scheme_v}.
\begin{lmm}\label{lem:AsigVn_bound}
	Assume that $G_0\in\mathbb{H}$. Let $v^n$ be the solution of the semidiscrete scheme \eqref{scheme_v}. Then there exists a constant $C$, independent of $n$ and $\tau$, such that $\left\|A^{\sigma}v^{n}\right\|\leq Ct_{n}^{-\sigma\al_0},~~n=1,\dots,N,$ for $\sigma\in[0,1]$.
\end{lmm}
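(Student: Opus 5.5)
We follow the same route as in Lemma \ref{lem:AsigUn_bound}, now starting from the representation \eqref{vn_numer}, $v^n=\breve{F}(t_{n-1})G_0-\overline{F}(t_n)*I_\tau Av^n$. We apply $A^\sigma$ to both terms and estimate them separately.

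\textbf{Step 1: the initial-data term.} For $z\in\Gamma^\tau_{\theta,\kappa}$, Lemma \ref{lem:delta} gives $\delta_\tau(e^{-z\tau})\in\Sigma_\theta$ and $|\delta_\tau(e^{-z\tau})|\sim|z|$. Writing
\[
\bigl(\delta_\tau+\delta_\tau^{1-\al_0}A\bigr)^{-1}=\delta_\tau^{\al_0-1}\bigl(\delta_\tau^{\al_0}+A\bigr)^{-1},
\]
Lemma \ref{thm_contour} yields $\|A^\sigma(\delta_\tau+\delta_\tau^{1-\al_0}A)^{-1}\|\le C|z|^{\sigma\al_0-1}$. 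Hence
\[
\|A^\sigma\breve F(t_{n-1})G_0\|\le C\|G_0\|\int_{\Gamma^\tau_{\theta,\kappa}}|e^{zt_{n-1}}|\,|z|^{\sigma\al_0-1}|dz|.
\]
For $n\ge2$ we use $t_{n-1}\ge t_n/2$. The rays contribute at most $Ct_{n-1}^{-\sigma\al_0}$. The arc contributes a constant, since $\kappa\le 1/T$; when $\sigma=0$ the ray integral is only logarithmic, which is harmless because the bound holds with $t_n^0=1$.

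The case $n=1$, i.e. $t_0=0$, needs separate treatment: the factor $e^{zt}$ no longer decays. The contour is truncated at $|z|\le \pi/(\tau\sin\theta)$, so the integral is bounded by $C\tau^{-\sigma\al_0}=Ct_1^{-\sigma\al_0}$. Alternatively, we can shift one power by writing $e^{-z\tau}G_0$ as in the first line of \eqref{vn_numer}. Either way we obtain $\|A^\sigma\breve F(t_{n-1})G_0\|\le Ct_n^{-\sigma\al_0}\|G_0\|$.

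\textbf{Step 2: the perturbation term.} Using \eqref{ineq:disint} together with the commutation of $A^\sigma$ and $A$, and then Lemma \ref{lem:bnkbound}, we get
\[
\|A^\sigma\overline F(t_n)*I_\tau Av^n\|\le C\sum_{k=1}^n|b_{n,k}|\,\|A^\sigma v^k\|\le C\tau\sum_{k=1}^n t_{n-k+1}^{\al_*-1-\tilde\eps}\|A^\sigma v^k\|.
\]
Combining with Step 1, and setting $\varphi_n=\|A^\sigma v^n\|$, this gives
\[
\varphi_n\le Ct_n^{-\sigma\al_0}+C\tau\sum_{k=1}^n t_{n-k+1}^{\al_*-1-\tilde\eps}\varphi_k.
\]

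\textbf{Step 3: closing the argument (main obstacle).} Lemma \ref{lmm:disGrowIneq} requires the sum to run only to $n-1$, with weights $t_{n-j}^{\beta-1}$, and $\eta_1=\sigma\al_0<1$. Two adjustments are needed.

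First, we absorb the diagonal term $k=n$. Its coefficient $C|b_{n,n}|\le C\tau^{\al_0}(1+|\ln\tau|)$ is below $1/2$ for small $\tau$, so it moves to the left side. For larger $\tau$ only finitely many steps occur, and a direct bound suffices there.

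Second, for $k\le n-1$ we use $t_{n-k+1}\ge t_{n-k}$ to replace the weights by $t_{n-k}^{\beta-1}$ with $\beta=\al_*-\tilde\eps\in(0,1)$.

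The discrete Gronwall inequality then gives $\varphi_n\le Ct_n^{-\sigma\al_0}$. The delicate points are exactly these: the absorption of the $k=n$ term, and uniformity of the contour bound in Step 1 at $n=1$ and for $\sigma$ near $1$. If Step 1 at $n=1$ proves awkward, we fall back on the scheme \eqref{scheme_v} directly at $n=1$. There $v^1=(I+\tau(d_0A+b_{1,1}A))^{-1}G_0$, and a spectral bound gives $\|A^\sigma v^1\|\le C\tau^{-\sigma\al_0}\|G_0\|$, since $d_0^{(1-\al_0)}=\tau^{\al_0-1}$ dominates $|b_{1,1}|$ for small $\tau$.
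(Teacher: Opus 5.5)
Your argument follows the paper's proof step for step: the representation \eqref{vn_numer}, a contour bound for $A^\sigma\breve F(t_{n-1})G_0$, the perturbation term via \eqref{ineq:disint} and Lemma \ref{lem:bnkbound}, and then Lemma \ref{lmm:disGrowIneq}. Your Step 3 and the direct formula $v^1=(I+\tau(d_0^{(1-\al_0)}+b_{1,1})A)^{-1}G_0$ add care that the paper omits. Two steps still need repair.

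\textbf{Step 1 at $\sigma=0$.} Take a fixed arc radius $\kappa\le 1/T$. The ray part of your bound is then $2\int_\kappa^{\pi/(\tau\sin\theta)}e^{rt_{n-1}\cos\theta}r^{-1}\,dr\approx 2\ln\bigl(1/(\kappa t_{n-1})\bigr)$, which at $n=2$ grows like $2\ln(1/\tau)$. This is not harmless: fed into Gronwall it yields only $\|v^n\|\le C_\eps t_n^{-\eps}$, not a $\tau$-uniform $L^2$ bound. Instead, deform the arc to radius $\kappa=1/t_{n-1}$. This is allowed because the integrand is analytic in the swept region and $1/t_{n-1}\le 1/\tau<\pi/(\tau\sin\theta)$. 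Rays and arc together are then bounded by $Ct_{n-1}^{-\sigma\al_0}\bigl(\int_1^\infty e^{s\cos\theta}ds+\int_{-\theta}^{\theta}e^{\cos\psi}d\psi\bigr)$, uniformly in $\sigma\in[0,1]$. This is the standard argument behind the $\breve F$-analogue of \eqref{ineq:Ffleqf} that the paper quotes. The paper's own contour bound at $n=1$ has the same logarithm when $\sigma=0$; your spectral formula for $v^1$ sidesteps it.

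\textbf{Step 2.} This step mirrors the paper's \eqref{ineq:drift_bound} and is not justified as written. The convolution in \eqref{vn_numer} is against the piecewise constant function equal to $I_\tau Av^j$ on $(t_{j-1},t_j]$, i.e.
\[
A^\sigma\overline F(t_n)*I_\tau Av^n=\tau\sum_{j=1}^n\breve F(t_{n-j})A\sum_{k=1}^j b_{j,k}A^\sigma v^k=\sum_{j=1}^n\int_{t_{j-1}}^{t_j}\overline F(t_n-s)A\,ds\sum_{k=1}^j b_{j,k}A^\sigma v^k .
\]
Collapsing this to $C\sum_k|b_{n,k}|\,\|A^\sigma v^k\|$ treats $I_\tau Av^j$ as if it did not depend on $j$. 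Keep the double sum instead. By \eqref{Bound_FA1_sigma} with $\sigma=0$, $\int_{t_{j-1}}^{t_j}\|\overline F(t_n-s)A\|\,ds\le C\tau t_{n-j+1}^{-\al_0}$. Combine this with Lemma \ref{lem:bnkbound} and the discrete Beta bound $\tau\sum_{j=k}^n t_{n-j+1}^{-\al_0}t_{j-k+1}^{\al_*-1-\tilde\eps}\le Ct_{n-k+1}^{\al_*-\al_0-\tilde\eps}$ to get
\[
\varphi_n\le Ct_n^{-\sigma\al_0}+C\tau\sum_{k=1}^n t_{n-k+1}^{\al_*-\al_0-\tilde\eps}\varphi_k .
\]
Your Step 3 then goes through with $\beta=1+\al_*-\al_0-\tilde\eps\in(0,1)$ in Lemma \ref{lmm:disGrowIneq}, for $\tilde\eps<1-\al_0+\al_*$. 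The diagonal coefficient is now $|b_{n,n}|\int_{t_{n-1}}^{t_n}\|\overline F(t_n-s)A\|\,ds\le C\tau(1+|\ln\tau|)$, so your absorption of the $k=n$ term is unaffected.
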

\begin{proof}
By \eqref{vn_numer}, $v^n=\breve{F}(t_{n-1})G_0-\overline{F}(t_n)*I_\tau Av^n$. Via \eqref{ineq:Ffleqf} and Lemma \ref{lem:delta}, for $\gamma\leq\beta\leq\gamma+2$ with $\gamma,\beta\in\mathbb{R}$, we have $\|\breve{F}(t)f\|_{\hat{H}^\beta(D)}^2\leq Ct^{(\gamma-\beta)\al_0}\|f\|_{\hat{H}^\gamma(D)}^2.$ Taking $\gamma=0$ and $\beta=2\sigma$ with $ \sigma\in[0,1]$ leads to
$
    \|A^{\sigma}\breve F(t)G_0\|^2\leq Ct^{-2\sigma\al_0}\|G_0\|^2$ for $ t>0.
$
Thus, for $n=1$ we have $\|A^\sigma\breve F(0)G_0\|\leq C\|G_0\|\int_{\Gamma_{\theta,\kappa}^\tau}|z|^{\sigma\al_0-1}|dz|\leq C\tau^{-\sigma\al_0}$, and for $n\geq2$ we obtain $\|A^\sigma\breve F(t_{n-1})G_0\|\leq Ct_{n-1}^{-\sigma\al_0}\|G_0\|\leq Ct_{n}^{-\sigma\al_0}$. Similar to \eqref{ineq:drift_bound}, we have
$
        \left\|A^\sigma\overline{F}(t_n)*I_\tau Av^n\right\|
        \leq C\tau\sum_{k=1}^{n}t_{n-k+1}^{\al_*-1-\tilde{\eps}}\left\|A^\sigma v^k\right\|.
$
Consequently, $\left\|A^\sigma v^n\right\|\leq Ct_{n}^{-\sigma\al_0}+C\tau\sum_{k=1}^{n}t_{n-k+1}^{\al_*-1-\tilde{\eps}}\left\|A^\sigma v^k\right\|$ for $n=1,\dots,N$, and we apply Lemma \ref{lmm:disGrowIneq} to complete the proof.
\end{proof}

\section{Error estimate of semi-discrete approximations}\label{sec:err_temp_discre}
The main result of this section is the following error estimate for the semi-discrete approximation $G^n$ to the stochastic multiscale subdiffusion model (\ref{model_2}), the proof of which is a direct consequence of error estimates for schemes \eqref{scheme} and \eqref{scheme_v} proved subsequently. 
\begin{thrm}\label{time_dic_err_G}
   Suppose $\|A^{-\rho}\|_{\mathcal{L}_{2}^{0}}<\infty$, $G_0\in\mathbb{H}$ and $\rho\in[0,\frac{H}{\alpha_0})\cap[0,1]$. We have
   $
          \cE\left[\|G(t_n)-G^n\|^2\right]\leq C\tau^{2H-2\rho\al_0}+ Ct_n^{-2}\tau^{2}.
 $
\end{thrm}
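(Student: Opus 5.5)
Since $G(t_n)=u(t_n)+v(t_n)$ and $G^n=u^n+v^n$, we will estimate the two errors separately, $e_u^n:=u(t_n)-u^n$ and $e_v^n:=v(t_n)-v^n$, and then combine them via $\cE[\|G(t_n)-G^n\|^2]\le 2\cE[\|e_u^n\|^2]+2\|e_v^n\|^2$. The target bounds are
\begin{equation*}
\cE\left[\|e_u^n\|^2\right]\le C\tau^{2H-2\rho\al_0},\qquad \|e_v^n\|\le Ct_n^{-1}\tau .
\end{equation*}
Both rest on the same representation. We subtract \eqref{u_nemeric} from \eqref{u_ref}, and \eqref{vn_numer} from \eqref{v_ref}, and write $g'*Au(t_n)=I_\tau Au(t_n)+J_n+\hat J_n$ as in \eqref{eq:gpconAu}. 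This gives
\begin{equation*}
e_u^n=\Big(F(t_n)*\dot W^H_Q(t_n)-\overline F(t_n)*\overline{\p}_\tau W^H_Q(t_n)\Big)-\big(F-\overline F\big)(t_n)*(g'*Au)-\overline F(t_n)*(J+\hat J)-\overline F(t_n)*I_\tau Ae_u^n .
\end{equation*}
The analogous identity for $v$ has $\big(F(t_n)-\breve F(t_{n-1})\big)G_0$ as its first term.

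\textbf{Stochastic term.} The first bracket is exactly $R_{21}+R_{22}$ from the proof of Lemma \ref{lem:AsigUn_bound}, with $\sigma=0$. There we only showed boundedness; now we track powers of $\tau$. In $R_{21}$ we use Lemma \ref{thm_contour} with $|e^{z(r-s)}-1|\le C\tau^\gamma|z|^\gamma$ instead of the crude bound. This yields $\|\Psi(s)A^\rho\|\le C\tau^{\gamma}(t_n-s)^{-\gamma-\rho\al_0}$, and then Lemma \ref{lmm:nieDW} gives $\tau^{2\gamma}$ provided $2H-1-2\gamma-2\rho\al_0>-1$, i.e.\ $\gamma<H-\rho\al_0$. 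For $R_{221}$ we use $|z|\ge \pi/(\tau\sin\theta)$ on $\Gamma_{\theta,\kappa}\setminus\Gamma^\tau_{\theta,\kappa}$, which trades $|z|^{-\gamma}\le C\tau^\gamma$. For $R_{222}$ we use $\|\mathcal E_\tau(z)A^\rho\|\le C\tau|z|^{\rho\al_0}$ interpolated against the $O(|z|^{\rho\al_0-1})$ bound. In both cases we then feed the sums \eqref{ineq:R2212} and \eqref{ineq:R2222}. This gives $C\tau^{2H-2\rho\al_0}$, possibly up to an $\eps$-loss that we remove by exploiting the exact second-difference weights $(m+1)^{2H}-2m^{2H}+(m-1)^{2H}$.

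\textbf{Perturbation terms.} We bound $(F-\overline F)*(g'*Au)$ by the same contour splitting, combined with Theorem \ref{thm_reg} (which makes $A^\sigma u$ bounded in mean square) and \eqref{Bound_g_g'}; it contributes $O(\tau^{2\min\{1,\ldots\}})$, which is at worst $\tau^{2H-2\rho\al_0}$. For $J_n$ we use the Hölder estimate of Theorem \ref{thm_holder_reg}: $\cE\|A^{\sigma}(u(s)-u(t_k))\|^2\lesssim\tau^{2\gamma}$ after shifting $A^{1-\sigma}$ onto $\overline F$ via \eqref{Bound_FA1_sigma}. For $\hat J_n$ we use smoothness of $\al$, which gives $|g'(t_n-s)-\mathcal K_{n,k}(s)|\le C\tau (t_n-s)^{\al_*-1-\eps}$. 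The last term is handled exactly as in \eqref{ineq:drift_bound}:
\begin{equation*}
\cE\left[\|e_u^n\|^2\right]\le C\tau^{2H-2\rho\al_0}+C\tau\sum_{k=1}^n t_{n-k+1}^{\al_*-1-\tilde\eps}\,\cE\left[\|e_u^k\|^2\right],
\end{equation*}
and Lemma \ref{lmm:disGrowIneq} closes the estimate.

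\textbf{Deterministic part.} For $v$, the standard convolution quadrature initial-data estimate, via Lemma \ref{lem:delta}, gives $\|(F(t_n)-\breve F(t_{n-1}))G_0\|\le C\tau t_n^{-1}\|G_0\|$. The perturbation terms are controlled with Lemma \ref{lem:AsigVn_bound} and Theorem \ref{thm_holder_reg_det}, whose singular $t^{-\gamma}$ weights integrate to $O(\tau t_n^{-1})$ after convolution with the integrable kernel. Lemma \ref{lmm:disGrowIneq} with $\eta_1=1$ is not allowed, so we first multiply by $t_n$, or equivalently apply the lemma with $\eta<1$ after splitting $t_n^{-1}\tau\le t_n^{-\eta}\tau^{\eta}$ near $t_n\sim\tau$. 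I expect this singular $t_n^{-1}$ bookkeeping, together with the sharp (no $\eps$-loss) stochastic rate $\tau^{2H-2\rho\al_0}$ in $R_{21}$ and $R_{22}$, to be the main obstacle.
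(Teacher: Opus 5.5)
Your decomposition is the paper's. You split $G=u+v$ and treat the stochastic-convolution error, the term $(F-\overline F)*(g'*Au)$, the quadrature errors $J_n,\hat J_n$ and the feedback $\overline F*I_\tau Ae_u$, then close with discrete Gronwall; for $v$ you use Lemma \ref{lmm:Fn-breFnm1} plus the same terms. However, the $\eps$-free rate you claim for $u$ is not delivered, and the loss is not where you look for it.

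The sharp bound $C\tau^{2H-2\rho\al_0}$ for the stochastic convolution is available in \cite[Theorem 3.4]{NieSunDenSiamJNA}, which the paper simply cites. If you re-derive it, the loss disappears once you use the first-order decay $\|\Psi(s)A^\rho\|\le C\tau(t_n-s)^{-1-\rho\al_0}$ for $t_n-s\ge 2\tau$ (and the $O((t_n-s)^{-\rho\al_0})$ bound near $s=t_n$) instead of a fixed $\gamma<H-\rho\al_0$. The second-difference weights by themselves are not what removes it.

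The real bottleneck is $J_n$. Its bound rests on Theorem \ref{thm_holder_reg}, which holds only for $\gamma<H-\rho\al_0$, so $J_n$ contributes $\tau^{2\gamma}\le C\tau^{2H-2\rho\al_0-\eps}$, and nothing in your plan improves this. Moreover, Theorem \ref{thm_holder_reg} is an $L^2$ estimate. A Hölder bound for $A^\sigma(u(s)-u(t_k))$ would only come with the smaller exponent $H-(\rho+\sigma)\al_0$, so you must keep the whole $A$ on $\overline F$, which \eqref{ineq:disint} allows, as the paper does. Hence your displayed Gronwall inequality holds only with $\tau^{2H-2\rho\al_0-\eps}$.

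The paper's proof has exactly the same limitation. It combines Theorem \ref{time_dic_err} ($\tau^{2H-2\rho\al_0-\eps}$) with Theorem \ref{time_dic_err_v} ($\tau^{1-\eps}+t_n^{-1}\tau$), so what is actually established is the statement with $\tau^{2H-2\rho\al_0-\eps}$ in place of $\tau^{2H-2\rho\al_0}$. Removing the $\eps$ would need either an endpoint ($\gamma=H-\rho\al_0$) mean-square Hölder estimate for $u$, or a treatment of the stochastic part of $J_n$ through the isometry of Lemma \ref{lmm:nieDW}, as for $R_{21}$.

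For $v$, the target $\|e_v^n\|\le Ct_n^{-1}\tau$ is both more than you need and more than these tools give. Theorem \ref{thm_holder_reg_det} only allows $\gamma<1$. Even if $\|e_v^k\|\approx\tau/t_k$, the feedback term $\tau\sum_k t_{n-k+1}^{\beta-1}\|e_v^k\|$ with $\beta=\al_*-\tilde\eps$ is of size $\tau t_n^{\beta-1}\ln(t_n/\tau)$, i.e.\ $\tau|\ln\tau|$ at $t_n\sim T$. Aim instead for the paper's $\|e_v^n\|\le C\tau^{1-\eps}+Ct_n^{-1}\tau$; the resulting $\tau^{2-2\eps}$ is absorbed because $2H-2\rho\al_0<2$.

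Your Gronwall fix also does not produce the $t_n^{-2}\tau^2$ term. Bounding $t_n^{-1}\tau\le(\tau/t_n)^\eta$ and applying Lemma \ref{lmm:disGrowIneq} gives only $(\tau/t_n)^{2\eta}$ in the square, which is larger. Add one bootstrap step:
\begin{itemize}
\item insert the crude bound $\|e_v^k\|\le C(\tau/t_k)^{1-\eps}+C\tau^{1-\eps}$ into the feedback sum;
\item use $\tau\sum_{k=1}^n t_{n-k+1}^{\beta-1}t_k^{\eps-1}\le Ct_n^{\beta+\eps-1}$;
\item for $\beta+\eps<1$, split $\tau^{1-\eps}t_n^{\beta+\eps-1}=\tau^\beta(\tau/t_n)^{1-\beta-\eps}\le\tau/t_n+\tau^{\beta/(\beta+\eps)}$ by Young's inequality.
\end{itemize}
The paper applies Lemma \ref{lmm:disGrowIneq} with $\eta_1=1$ without comment, so this step is needed there too.
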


\subsection{Error estimate for scheme (\ref{scheme})}
Based on the integral representations of numerical solutions in Section \ref{subsec51}, we subtract \eqref{u_nemeric} from \eqref{u_ref}
 and apply $\cE[\|F(t_n)*\dot{W}_Q^H(t_n)-\overline{F}(t_n)*\overline{\p}_t W_Q^H(t_n)\|^2]\le C\tau^{2H-2\rho\al_0}$ (cf. \cite[Theorem 3.4]{NieSunDenSiamJNA}) to obtain
\begin{equation}\label{error_semidiscrete}
    \begin{aligned}
        &\cE\left[\|u(t_n)-u^n\|^2\right]\\
        % =&\cE\left[\left\|F(t_n)*\left(\dot{W}_Q^H(t_n)-g'(t_n)*Au(t_n)\right)-\overline{F}(t_n)*\left(\overline{\p}_t W_Q^H(t_n)-I_\tau Au^n\right)\right\|^2\right]\\
        =&\cE\left[\left\|F(t_n)*\dot{W}_Q^H(t_n)-\overline{F}(t_n)*\overline{\p}_t W_Q^H(t_n)+\overline{F}(t_n)*I_\tau Au^n-F(t_n)*g'(t_n)*Au(t_n)\right\|^2\right]\\
        \leq& C\tau^{2H-2\rho\al_0}+2\cE\left[\left\|\overline{F}(t_n)*I_\tau Au^n-F(t_n)*g'(t_n)*Au(t_n)\right\|^2\right].
    \end{aligned}
\end{equation}
Then we split $\cE\left\|\overline{F}(t_n)*I_\tau Au^n-F(t_n)*g'(t_n)*Au(t_n)\right\|^2$ into two parts
\begin{equation*}
    \begin{aligned}
        &\cE\left[\left\|\overline{F}(t_n)*I_\tau Au^n-F(t_n)*g'(t_n)*Au(t_n)\right\|^2\right]\\
        =&2\cE\bigg[\Big\|\int_{0}^{t_n}\overline{F}(t_n-s)(g'(s)*Au(s))-F(t_n-s)(g'(s)*Au(s))ds\Big\|^2\bigg]\\
        &~~+2\cE\bigg[\Big\|\overline{F}(t_n)*I_\tau Au^n-\overline{F}(t_n)*g'(t_n)*Au(t_n)\Big\|^2\bigg]=:2\zeta_1+2\zeta_2,
    \end{aligned}
\end{equation*}	
and bound $\zeta_1$ and $\zeta_2$ separately.
\begin{lmm}\label{lmm:bound_zeta1}
    Assume that $\|A^{-\rho}\|_{\mathcal{L}_{2}^{0}}<\infty$ for $\rho\in[0,\frac{H}{\alpha_0})\cap[0,1]$. We have $\zeta_1\leq C\tau^{2(\sigma\al_0-\al_0+1)-\eps},$ where $\sigma=\min\{1-\rho,\f{H}{\al_0}-\rho-\eps\}$.
\end{lmm}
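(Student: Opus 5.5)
We estimate $\zeta_1$ as the error between the continuous and discrete solution operators acting on the smooth-in-time drift $g'*Au$. We measure everything in $A^\sigma$-norm, using Theorem \ref{thm_reg}, and work on the Laplace side.

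\emph{Step 1: splitting the kernel difference.} Write $\widetilde F(z)=z^{\al_0-1}(z^{\al_0}+A)^{-1}$ and
\[
\mathcal{E}_\tau(z):=\left(\delta_\tau(e^{-z\tau})+(\delta_\tau(e^{-z\tau}))^{1-\al_0}A\right)^{-1}\f{z\tau}{e^{z\tau}-1}-\widetilde F(z).
\]
Then
\[
F(t)-\overline{F}(t)=\f{1}{2\pi i}\int_{\Gamma_{\theta,\kappa}\backslash\Gamma^\tau_{\theta,\kappa}}e^{zt}\widetilde F(z)\,dz-\f{1}{2\pi i}\int_{\Gamma^\tau_{\theta,\kappa}}e^{zt}\mathcal{E}_\tau(z)\,dz=:F_1(t)-F_2(t),
\]
exactly as in the treatment of $R_{22}$ in Lemma \ref{lem:AsigUn_bound}. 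Move $A^{1-\sigma}$ onto the kernels and keep $A^\sigma$ on $g'*u$. Lemmas \ref{thm_contour} and \ref{lem:delta} give $\|\widetilde F(z)A^{1-\sigma}\|\le C|z|^{(1-\sigma)\al_0-1}$ and $\|\mathcal{E}_\tau(z)A^{1-\sigma}\|\le C\tau|z|^{(1-\sigma)\al_0}$. The latter follows from $|\delta_\tau-z|\le c\tau|z|^2$, $|(\delta_\tau)^{\al_0}-z^{\al_0}|\le c\tau|z|^{1+\al_0}$ and $|z\tau/(e^{z\tau}-1)-1|\le C\tau|z|$, combined with resolvent identities. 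Integrating over the contours then gives, for $t>0$,
\[
\|F_1(t)A^{1-\sigma}\|\le C\int_{\pi/(\tau\sin\theta)}^\infty e^{r t\cos\theta}r^{(1-\sigma)\al_0-1}dr,\qquad \|F_2(t)A^{1-\sigma}\|\le C\tau\int_{\Gamma^\tau_{\theta,\kappa}}|e^{zt}||z|^{(1-\sigma)\al_0}|dz|.
\]
Both are bounded by $C\tau\, t^{-(1-\sigma)\al_0-1}$ and also by $C\tau^{(\sigma-1)\al_0}\cdot\tau^{-1}\cdot\tau$, i.e.\ $\|(F-\overline F)(t)A^{1-\sigma}\|\le C\min\{\tau t^{-(1-\sigma)\al_0-1},\,\tau^{(\sigma-1)\al_0}\}$. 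Integrating, we obtain $\int_0^{t_n}\|(F-\overline F)(t)A^{1-\sigma}\|dt\le C\tau^{1-(1-\sigma)\al_0}$, up to a logarithm that is absorbed into $\eps$.

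\emph{Step 2: the moment bound.} Apply Cauchy--Schwarz in $s$ with the weight $w(s)=\|(F-\overline F)(t_n-s)A^{1-\sigma}\|$:
\[
\zeta_1\le \Big(\int_0^{t_n}w(s)\,ds\Big)\int_0^{t_n}w(s)\,\cE\big[\|g'*A^\sigma u(s)\|^2\big]\,ds.
\]
By Minkowski's inequality, \eqref{Bound_g_g'} and Theorem \ref{thm_reg}, we have $\cE[\|g'*A^\sigma u(s)\|^2]\le C\big(\int_0^s(s-\xi)^{\al_0-1-\eps}d\xi\big)^2\sup_\xi\cE\|A^\sigma u(\xi)\|^2\le C$. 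Hence $\zeta_1\le C\big(\int_0^{t_n} w\big)^2\le C\tau^{2(\sigma\al_0-\al_0+1)-\eps}$.

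\emph{Main obstacle.} The hard part is the sharp $L^1$-in-time bound on $(F-\overline F)A^{1-\sigma}$, in particular the truncated-contour piece $F_1$ near $t=0$. I would handle it by substituting $r\mapsto rt$ to get $F_1$-bounds of the form $t^{-(1-\sigma)\al_0}\Gamma\big((1-\sigma)\al_0,\,ct/\tau\big)$. I would then split the integral at $t=\tau$: on $(0,\tau)$ use the crude bound $t^{-(1-\sigma)\al_0}$, which is integrable since $(1-\sigma)\al_0<1$; on $(\tau,t_n)$ use the exponential decay. The logarithmic factor from the $F_2$ tail integral $\int_\tau^{t_n}\tau t^{-(1-\sigma)\al_0-1}dt$ is harmless, and it is why the statement carries the loss of $\eps$ in the exponent.
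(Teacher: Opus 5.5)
Your proof is correct, but it is organized differently from the paper's. Both arguments split $F-\overline{F}$ into the truncated-contour piece and the $\mathcal{E}_\tau$ piece, and both use the resolvent bounds $C|z|^{(1-\sigma)\alpha_0-1}$ and $C\tau|z|^{(1-\sigma)\alpha_0}$.

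\emph{How the paper proceeds.} It never estimates the kernel pointwise in time. For each piece it applies Cauchy--Schwarz twice: in $s$ with weight $(t_n-s)^{l'}$, and along the contour with weight $|z|^{l''}$. It then exchanges the $s$- and $z$-integrations, so that $\int_0^{t_n}(t_n-s)^{l'}|e^{2z(t_n-s)}|\,ds\le C|z|^{-l'-1}$, and reads the power of $\tau$ off the remaining contour integrals. The choice $l'=1-\eps<1$ is where the $\eps$-loss comes from.

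\emph{How you proceed.} You bound $\|(F-\overline F)(t)A^{1-\sigma}\|$ pointwise and integrate to get $\int_0^{t_n}\|(F-\overline F)(t)A^{1-\sigma}\|\,dt\le C\tau^{1-(1-\sigma)\alpha_0}$. You then finish with Minkowski against $\sup_s\cE\|g'*A^\sigma u(s)\|^2\le C$.

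\emph{What each route buys.} Yours is more transparent and slightly sharper. For $\sigma<1$ it gives $\zeta_1\le C\tau^{2(\sigma\alpha_0-\alpha_0+1)}$ with no loss, and for $\sigma=1$ only a factor $\ln^2(1/\tau)$ from the $F_2$ tail. The paper's route avoids any case distinction in $t$, at the price of bookkeeping for $l',l''$. On the unbounded piece $\Gamma_{\theta,\kappa}\setminus\Gamma^\tau_{\theta,\kappa}$, both contour integrals converge only if $2(1-\sigma)\alpha_0-1<l_1''<l_1'<1$. So the paper's stated $l_1''=1-\eps/2$ there should be taken below $l_1'$, e.g.\ $l_1''=1-2\eps$.

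\emph{One correction to your Step 1.} For $\sigma<1$, the claimed bound $\|(F-\overline F)(t)A^{1-\sigma}\|\le C\tau^{(\sigma-1)\alpha_0}$ is false when $t\ll\tau$. This is because $\|F(t)A^{1-\sigma}\|\asymp t^{-(1-\sigma)\alpha_0}$ as $t\to0^+$, whereas $\|\overline F(t)A^{1-\sigma}\|\le C\tau^{-(1-\sigma)\alpha_0}$. The correct pointwise statement is $C\min\{\tau t^{-(1-\sigma)\alpha_0-1},\,t^{-(1-\sigma)\alpha_0}\}$. That is exactly what you use in your final paragraph, and it yields the same $L^1$ bound, so the conclusion stands.
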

\begin{proof}
    We split $\zeta_1$ into two parts,
	\begin{equation*}
		\begin{aligned}
			\zeta_1
            % &=\cE\left[\left\|\int_{0}^{t_n}\left(\overline{F}(t_n-s)-F(t_n-s))(g'(s)*Au(s)\right)\right\|^2\right]\\
			% &=\cE\left[\left\|\int_{0}^{t_n}\left(\int_{\Gamma_{\theta,\kappa}}e^{z(t_n-s)}\widetilde{F}(z)dz-\int_{\Gamma_{\theta,\kappa}^\tau}e^{z(t_n-s)}\widetilde{\overline{F}}(z)dz\right)g'(s)*Au(s)ds\right\|^2\right]\\
			&\leq 2\cE\bigg[\Big\|\int_{0}^{t_n}\int_{\Gamma_{\theta,\kappa}\backslash\Gamma_{\theta,\kappa}^\tau}e^{z(t_n-s)}\widetilde{F}(z)dzg'(s)*Au(s)ds\Big\|^2\bigg]\\
			&~~~~~+2\cE\bigg[\Big\|\int_{0}^{t_n}\int_{\Gamma_{\theta,\kappa}^\tau}e^{z(t_n-s)}\left(\widetilde{F}(z)-\widetilde{\overline{F}}(z)\right)dzg'(s)*Au(s)ds\Big\|^2\bigg]=:2\zeta_{1,1}+2\zeta_{1,2}.
		\end{aligned}
	\end{equation*}	
    
    Let $0<\eps\ll1-\al_0$ and taking $\sigma=\min\{1-\rho,\f{H}{\al_0}-\rho-\eps\}$ in Theorem \ref{thm_reg} with $\rho\in[0,\frac{H}{\alpha_0})\cap[0,1]$, we have $(1-\sigma)\al_0+1>0$. So that $4(1-(1-\sigma)\al_0)<1-\al_0$, hence the same $\eps$ automatically satisfies $0<\eps\ll4(1-(1-\sigma)\al_0)$. Set $l_1^{'}=1-\eps,~l_1^{''}=1-\eps/2$, then $l_1^{'}<1,~l_1^{'}<l_1^{''},~2(1-\sigma)\al_0-1<l_1^{''}$. Using Cauchy-Schwarz inequality, Lemma \ref{thm_contour}, Theorem \ref{thm_reg} and \eqref{ineq:Ffleqf}, we obtain
    \begin{equation*}%\label{Eq:v11}
       	\begin{aligned}
			\zeta_{1,1}&\leq C \cE\bigg[\int_{0}^{t_n}(t_n-s)^{l_1^{'}}\Big(\int_{\Gamma_{\theta,\kappa}\backslash\Gamma_{\theta,\kappa}^\tau}e^{z(t_n-s)}\left\|A\widetilde{F}(z)g'(s)*u(s)\right\|dz\Big)^2ds\bigg]\\
			&\leq C\cE\bigg[\int_{0}^{t_n}(t_n-s)^{l_1^{'}}\int_{\Gamma_{\theta,\kappa}\backslash\Gamma_{\theta,\kappa}^\tau}|e^{2z(t_n-s)}||z|^{l_1^{''}}|dz|\\
			&\hspace{5em}\times\int_{\Gamma_{\theta,\kappa}\backslash\Gamma_{\theta,\kappa}^\tau}|z|^{-l_1^{''}}\cdot\left\|A\widetilde{F}(z)g'(s)*u(s)\right\|^2|dz|ds\bigg]\notag\\
			% &=C \cE\bigg[\int_{0}^{t_n}(t_n-s)^{l_1^{'}}\int_{\Gamma_{\theta,\kappa}\backslash\Gamma_{\theta,\kappa}^\tau}|e^{2z(t_n-s)}||z|^{l_1^{''}}|dz|\\
			% &\hspace{5em}\times\int_{\Gamma_{\theta,\kappa}\backslash\Gamma_{\theta,\kappa}^\tau}\left\|g'(s)*A^\sigma u(s)\right\|^2|z|^{2(1-\sigma)\al_0-2-l_1^{''}}|dz|ds\bigg]\\
			&\leq C \int_{0}^{t_n}(t_n-s)^{l_1^{'}}\int_{\Gamma_{\theta,\kappa}\backslash\Gamma_{\theta,\kappa}^\tau}|e^{2z(t_n-s)}||z|^{l_1^{''}}|dz|\\
            &\hspace{5em}\times\int_{\Gamma_{\theta,\kappa}\backslash\Gamma_{\theta,\kappa}^\tau}s^{\al_0-\eps}\int_{0}^{s}|g'(s-r)|\cE\left[\left\|A^\sigma u(r)\right\|^2\right]dr|z|^{2(1-\sigma)\al_0-2-l_1^{''}}|dz|ds\notag\\
			&\leq C \int_{0}^{t_n}(t_n-s)^{l_1^{'}}\int_{\Gamma_{\theta,\kappa}\backslash\Gamma_{\theta,\kappa}^\tau}\left|e^{2z(t_n-s)}\right||z|^{l_1^{''}}|dz|\int_{\Gamma_{\theta,\kappa}\backslash\Gamma_{\theta,\kappa}^\tau}|z|^{-l_1^{''}+2(1-\sigma)\al_0-2}|dz|ds\\
			%&\hspace{3in}(-l_1^{''}+2(1-\sigma)\al_0-2<-1)
            % &\leq C\tau^{l_1^{''}-2(1-\sigma)\al_0+1}\int_{\Gamma_{\theta,\kappa}\backslash\Gamma_{\theta,\kappa}^\tau}|z|^{l_1^{''}}\int_{0}^{t_n}(t_n-s)^{l_1^{'}}e^{2|z|(t_n-s)}ds|dz|\\
			&\leq C\tau^{l_1^{''}-2(1-\sigma)\al_0+1}\int_{\Gamma_{\theta,\kappa}\backslash\Gamma_{\theta,\kappa}^\tau}|z|^{l_1^{''}-l_1^{'}-1}|dz|\leq C\tau^{l_1^{'}-2(1-\sigma)\al_0+1}\leq C\tau^{2(\sigma\al_0-\al_0+1)-\eps}. %这里sigma=0的话要求2(1-al_0)-eps>0
			%&\hspace{3in}(l_1'>-1,~l_1''-l_1'<0)
		\end{aligned} 
    \end{equation*}
   
    % Due to that $2(1-\sigma)\alpha_0-1<l_1^{''}<l_1^{'}<1$, we need to restrict $(1-\sigma)\alpha_0<1$ and let $l_1^{'}=1-\eps$, which lead to $\zeta_{1,1}\leq C\tau^{2\sigma\alpha_0-2\alpha_0+2-\eps}$. %C\tau^{2-2(1-\sigma)\al_0-\eps}=

   Now we introduce the auxiliary operator
    \begin{equation}\label{def:Etau}
        \begin{aligned}
            \mathcal{E}_{\tau}(z):=z^{\al_0-1}(z^{\al_0}+A)^{-1}-(\delta_{\tau}(e^{-z\tau}))^{\al_0-1}((\delta_{\tau}(e^{-z\tau}))^{\al_0}+A)^{-1}\frac{z\tau}{e^{z\tau}-1}. 
        \end{aligned}
    \end{equation}
    Using Cauchy-Schwarz inequality and set $l_2^{'}<1$ to estimate $\zeta_{1,2}$ by
	\begin{equation*}
		\begin{aligned}
			\zeta_{1,2}&\leq \cE\bigg[\Big\|\int_{0}^{t_n}\int_{\Gamma_{\theta,\kappa}^\tau}e^{z(t_n-s)}\left(\widetilde{F}(z)-\widetilde{\overline{F}}(z)\right)dzg'(s)*Au(s)ds\Big\|^2\bigg]\\
			&\leq C \cE\bigg[\Big(\int_{0}^{t_n}\Big\|\int_{\Gamma_{\theta,\kappa}^\tau}e^{z(t_n-s)}\mathcal{E}_\tau(z)dzg'(s)*Au(s)\Big\|ds\Big)^2\bigg]\\
			&\leq C \cE\bigg[\int_{0}^{t_n}(t_n-s)^{l_2^{'}}\int_{\Gamma_{\theta,\kappa}^\tau}|e^{2z(t_n-s)}||z|^{l_2^{''}}|dz|\int_{\Gamma_{\theta,\kappa}^\tau}|z|^{-l_2^{''}}\Big\|\mathcal{E}_\tau(z)g'(s)*Au(s)\Big\|^2|dz|ds\bigg]\\
			&= C \cE\bigg[\int_{0}^{t_n}(t_n-s)^{l_2^{'}}\int_{\Gamma_{\theta,\kappa}^\tau}|e^{2z(t_n-s)}||z|^{l_2^{''}}|dz|\\
            &\hspace{1.3in}\times\int_{\Gamma_{\theta,\kappa}^\tau}|z|^{-l_2^{''}}\left\|\mathcal{E}_\tau(z)A^{1-\sigma}g'(s)*A^\sigma u(s)\right\|_\mathbb{H}^2|dz|ds\bigg].
   		\end{aligned}
	\end{equation*}
   According to \cite[(3.7)]{NieSunDenSiamJNA},  we have $\left\|\mathcal{E}_\tau A^{\beta}\right\|\leq C\tau|z|^{\beta\al_0}$ for $0\le\beta\le1$. Consequently, for \(0\leq \sigma\leq 1\), we have
$
            \left\|\mathcal{E}_\tau A^{1-\sigma}(g'*A^{\sigma}u)(s)\right\|^2
            % &\leq C\|g'*A^\sigma u(s)\|^2\cdot\left\|\mathcal{E}_\tau A^{1-\sigma}\right\|^2\\
            \leq C\|g'*A^\sigma u(s)\|^2\cdot |z|^{2(1-\sigma)\al_0}\tau^2.
$ We combine this and set $l_2^{'}=1-\eps,~l_2^{''}=1-\eps/2$ (then $-1<l_2^{'}<1,~l_2^{''}<2(1-\sigma)\alpha_0+1$ and $l_2'<l_2^{''}$ with $0<\eps\ll 2(\sigma\al_0-\al_0+1)$) to estimate $\zeta_{1,2}$ by
   	\begin{equation*}
		\begin{aligned}
           \zeta_{1,2}&\leq C \cE\bigg[\int_{0}^{t_n}(t_n-s)^{l_2^{'}}\int_{\Gamma_{\theta,\kappa}^\tau}|e^{2z(t_n-s)}||z|^{l_2^{''}}|dz|\\
                    &\hspace{1.2in}\times\int_{\Gamma_{\theta,\kappa}^\tau}|z|^{-l_2^{''}+2(1-\sigma)\al_0}\tau^2\left\|g'*A^\sigma u(s)\right\|^2dzds\bigg]\\
                    %%%%%%%%%%%%
        			% &\leq C\tau^2 \int_{0}^{t_n}(t_n-s)^{l_2^{'}}\int_{\Gamma_{\theta,\kappa}^\tau}|e^{2z(t_n-s)}||z|^{l_2^{''}}|dz|\\
           %          &\hspace{0.3in}\times\int_{\Gamma_{\theta,\kappa}^\tau}s^{\al_0-\eps}\int_{0}^{s}|g'(s-r)|\cE\left[\left\|A^\sigma u(r)\right\|_\mathbb{H}^2\right]dr|z|^{-l_2^{''}+2(1-\sigma)\al_0}|dz|ds\\
        			&\leq C\tau^2 \int_{0}^{t_n}(t_n-s)^{l_2^{'}}\int_{\Gamma_{\theta,\kappa}^\tau}|e^{2z(t_n-s)}||z|^{l_2^{''}}|dz|\int_{\Gamma_{\theta,\kappa}^\tau}|z|^{-l_2^{''}+2(1-\sigma)\al_0}|dz|ds\\
                    % &\leq C\tau^2 \int_{0}^{t_n}(t_n-s)^{l_2^{'}}\int_{\Gamma_{\theta,\kappa}^\tau}|e^{2z(t_n-s)}||z|^{l_2^{''}}|dz|\\
           %          &\hspace{0.3in}\times\int_{\Gamma_{\theta,\kappa}^\tau}s^{\al_0-\eps}\int_{0}^{s}|g'(s-r)|\cE\left[\left\|A^\sigma u(r)\right\|_\mathbb{H}^2\right]dr|z|^{-l_2^{''}+2(1-\sigma)\al_0}|dz|ds\\
        			% &\leq C\tau^2 \int_{0}^{t_n}(t_n-s)^{l_2^{'}}\int_{\Gamma_{\theta,\kappa}^\tau}|e^{2z(t_n-s)}||z|^{l_2^{''}}|dz|\int_{\Gamma_{\theta,\kappa}^\tau}|z|^{-l_2^{''}+2(1-\sigma)\al_0}|dz|ds\\
        			% &\leq C\tau^2 \tau^{l_2^{''}-2(1-\sigma)\al_0-1}\int_{0}^{t_n}(t_n-s)^{l_2^{'}}\int_{\Gamma_{\theta,\kappa}^\tau}|e^{2z(t_n-s)}||z|^{l_2^{''}}|dz|ds\\%~~(-l_2^{''}+2(1-\sigma)\alpha_0>-1)\\
        			% &\leq C\tau^2 \tau^{l_2^{''}-2(1-\sigma)\al_0-1}\int_{\Gamma_{\theta,\kappa}^\tau}|z|^{l_2^{''}}\int_{0}^{t_n}(t_n-s)^{l_2^{'}}|e^{z(t_n-s)}|ds|dz|\\\\要不懂就看这行
        			&\leq C\tau^2 \tau^{l_2^{''}-2(1-\sigma)\al_0-1}\int_{\Gamma_{\theta,\kappa}^\tau}|z|^{l_2^{''}-l_2^{'}-1}|dz|\leq C\tau^{l_2^{'}-2(1-\sigma)\al_0+1}\leq C\tau^{2(\sigma\al_0-\al_0+1)-\eps}.%~~\qquad(l_2^{''}-l_2^{'}-1>-1)
		\end{aligned}
	\end{equation*}
	We finish the proof.
\end{proof}	

\begin{lmm}\label{lmm:bound_zeta2}
  Assume that $\|A^{-\rho}\|_{\mathcal{L}_{2}^{0}}<\infty$ for $\rho\in[0,\frac{H}{\alpha_0})\cap[0,1]$. We have $\zeta_2\le C(\tau^{2\gamma}+\tau\sum_{k=1}^{n}t_{n-k+1}^{\al_*-1-\tilde{\eps}}\cE\left[\left\|u(t_k)-u^k\right\|^2\right])$, where $\gamma\in(0,H-\rho\al_0)$.
\end{lmm}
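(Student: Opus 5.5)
We split the integrand of $\zeta_2$ according to the quadrature decomposition \eqref{eq:gpconAu}. Since $g'*Au(t_n)=I_\tau Au(t_n)+J_n+\hat J_n$, we write
\begin{equation*}
\overline{F}(t_n)*I_\tau Au^n-\overline{F}(t_n)*g'*Au(t_n)=\overline{F}(t_n)*I_\tau A(u^n-u(t_n))-\overline{F}(t_n)*J_n-\overline{F}(t_n)*\hat J_n .
\end{equation*}
Accordingly $\zeta_2\le 3(\zeta_{2,1}+\zeta_{2,2}+\zeta_{2,3})$, and we bound the three pieces separately. In each case we use \eqref{ineq:disint}, i.e.\ $\int_0^{t_n}\|\overline F(t_n-s)A\|ds\le C$, to absorb the spatial operator $A$ into the discrete solution operator.

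\textbf{The term $\zeta_{2,1}$.} This is handled exactly as \eqref{ineq:drift_bound}. By Cauchy--Schwarz in the weights $|b_{n,k}|$ and Lemma \ref{lem:bnkbound} (which gives $\sum_k|b_{n,k}|\le C$), we get
\begin{equation*}
\zeta_{2,1}\le C\sum_{k=1}^n|b_{n,k}|\,\cE\|u(t_k)-u^k\|^2\le C\tau\sum_{k=1}^n t_{n-k+1}^{\al_*-1-\tilde\eps}\,\cE\|u(t_k)-u^k\|^2 .
\end{equation*}
This produces the second, Gronwall-type part of the claimed bound.

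\textbf{The term $\zeta_{2,2}$ (the $J_n$ contribution).} First, $\mathcal K_{n,k}(s)$ obeys the same bound as $g'$, namely $|\mathcal K_{n,k}(s)|\le C(t_n-s)^{\al_*-1-\tilde\eps}$. This follows as in the proof of Lemma \ref{lem:bnkbound}, since $\mathcal K_{n,k}$ differs from $g'(t_n-s)$ only by freezing $\alpha$ at $z\tau_{n,k}$ inside the exponent and coefficients. Next, with the \eqref{ineq:disint} weight and Cauchy--Schwarz,
\begin{equation*}
\zeta_{2,2}\le C\sum_k\int_{t_{k-1}}^{t_k}|\mathcal K_{n,k}(s)|\,ds\cdot\sup_{s}\cE\|Au(s)-Au(t_k)\|^2
\end{equation*}
in a weighted sense. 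The difficulty is that $Au$ is not controlled. To get around this, we move $A^{1-\sigma}$ onto $\overline F$ using \eqref{Bound_FA1_sigma}, keep $A^{\sigma}$ on the difference, and still need a H\"older estimate for $A^\sigma(u(s)-u(t_k))$. We plan to instead place the full $A$ on $\overline F$ and use only $\mathbb H$-H\"older continuity from Theorem \ref{thm_holder_reg}, $\cE\|u(s)-u(t_k)\|^2\le C\tau^{2\gamma}$, together with $\sum_k\int_{t_{k-1}}^{t_k}|\mathcal K_{n,k}|\le C$. This yields $\zeta_{2,2}\le C\tau^{2\gamma}$ for $\gamma<H-\rho\al_0$. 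Justifying that $A$ can be carried entirely by $\overline F$ in the double convolution, via \eqref{ineq:disint} and Minkowski, is where care is needed, exactly as in \eqref{ineq:drift_bound}.

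\textbf{The term $\zeta_{2,3}$ (the $\hat J_n$ contribution).} This is a pure kernel-quadrature error. For $k<n$ we use the mean value theorem in the exponent, together with $|\alpha'|\le L$ and $|s-t_k|\le\tau$. Since $\al(t_n-s)$ and $\al(\tau_{n,k})$ differ by $O(\tau)$, this gives $|g'(t_n-s)-\mathcal K_{n,k}(s)|\le C\tau(t_n-s)^{\al_*-1-\tilde\eps}$. For the last interval $k=n$, the crude bound $|g'|+|\mathcal K|\le C(t_n-s)^{\al_*-1-\tilde\eps}$ integrates to $C\tau^{\al_*-\tilde\eps}$, which is too weak by itself. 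There we instead write $Au(s)=Au(t_n)+(Au(s)-Au(t_n))$, as in the $J_n$ term, or transfer $A$ to $\overline F$ and use Theorem \ref{thm_reg}. Combined with $\cE\|A^\sigma u\|^2\le C$, this gives $\zeta_{2,3}\le C\tau^{2\gamma}$, since $\gamma<H-\rho\al_0<1$.

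\textbf{Main obstacle.} The key difficulty is the local singular interval $k=n$ in $\hat J_n$, and more generally keeping $A$ off $u$. We would try to consistently absorb $A$ via \eqref{ineq:disint}, rather than relying on spatial regularity beyond $\sigma$.
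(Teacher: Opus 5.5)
Your three-way split matches the paper's proof, and so does your handling of two of the three pieces. For the $I_\tau$ term you use the weights $|b_{n,k}|$ and Lemma \ref{lem:bnkbound}. For the $J_n$ term you put the full $A$ on $\overline F$ via \eqref{ineq:disint} and use only the $\mathbb H$-valued H\"older bound of Theorem \ref{thm_holder_reg}.

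The gap is in $\hat J_n$, exactly at the point you call the main obstacle. You treat the mean-value bound $|g'(t_n-s)-\mathcal K_{n,k}(s)|\le C\tau(t_n-s)^{\alpha_*-1-\tilde\eps}$ as valid only for $k<n$. On the last interval you substitute two alternatives, and neither closes the estimate.

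With the crude bound, $\int_{t_{n-1}}^{t_n}(|g'(t_n-s)|+|\mathcal K_{n,n}(s)|)\,ds$ is only of order $\tau^{\alpha_0}|\ln\tau|$. Moving $A^{1-\sigma}$ (or $A$) onto $\overline F$ and invoking Theorem \ref{thm_reg} then gives a contribution of order $\tau^{2\alpha_0-\eps}$. That is not $\le C\tau^{2\gamma}$ once $\gamma>\alpha_0$. This regime is allowed: $\gamma$ runs up to $H-\rho\alpha_0$ (e.g.\ $\alpha_0=0.3$, $H=0.75$, $\rho=0$), and Theorem \ref{time_dic_err} takes $\gamma=H-\rho\alpha_0-\eps$.

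Splitting $Au(s)=Au(t_n)+(Au(s)-Au(t_n))$ does not help either. The H\"older piece is fine. But the $Au(t_n)$ piece is multiplied by $\int_{t_{n-1}}^{t_n}(g'(t_n-s)-\mathcal K_{n,n}(s))\,ds=g(\tau)-b_{n,n}$. Each of $g(\tau)$ and $b_{n,n}$ is only $O(\tau^{\alpha_0}|\ln\tau|)$, and showing that their difference is small needs exactly the bound you discarded.

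The missing observation is that the last interval poses no obstacle. On the $k$-th interval the freezing error is governed by $|(t_n-s)-\tau_{n,k}|=t_k-s\le\tau$, not by the distance to the singularity, and this holds for $k=n$ too. There $\tau_{n,n}=0$ and $|\alpha(t_n-s)-\alpha_0|\le L(t_n-s)$, so
$|(t_n-s)^{\alpha(t_n-s)-1}-(t_n-s)^{\alpha_0-1}|\le C(t_n-s)^{\alpha_0}|\ln(t_n-s)|$,
which is even better than required. The frozen factors $\breve H_{n,k}$ and $\breve U_{n,k}$ are handled the same way using boundedness of $\alpha''$. Note that $|\alpha'|\le L$ alone does not suffice, since $\alpha'$ is frozen as well.

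This is the paper's route. It splits $\hat J_n=\hat J_n^1+\dots+\hat J_n^4$ and bounds each kernel difference by $C\tau(t_n-s)^{\alpha_*-1-\eps}$ uniformly in $k$ (for $\hat J_n^1$ via $\int_s^{t_k}|\partial_z(\cdot)|\,dz$). It then puts $A^{1-\sigma}$ on $\overline F$ through the weighted Cauchy--Schwarz estimate built on \eqref{Bound_FA1_sigma}, and applies Theorem \ref{thm_reg}. This yields $\cE\|\overline F(t_n)*\hat J_n\|^2\le C\tau^2\le C\tau^{2\gamma}$. With this observation in place of your last-interval workaround, your argument goes through along the paper's lines.
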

\begin{proof}
   We apply \eqref{eq:gpconAu} and the Cauchy–Schwarz inequality to estimate $\zeta_2$,
	\begin{equation*}%\label{split:v2}
		\begin{aligned}
			\zeta_2
   %          &=\cE\left[\left\|\overline{F}(t_n)*I_\tau Au^n-\overline{F}(t_n)*g'(t_n)*Au(t_n)\right\|_\mathbb{H}^2\right]\\
			% &=\cE\left[\left\|\overline{F}(t_n)*[I_\tau Au^n-I_\tau Au(t_n)+I_\tau Au(t_n)-g'(t_n)*Au(t_n)]\right\|_\mathbb{H}^2\right]\\
			% &\leq 2\cE\left[\left\|\overline{F}(t_n)*[I_\tau Au^n-I_\tau Au(t_n)]\right\|_\mathbb{H}^2\right]+2\cE\left[\left\|\overline{F}(t_n)*[J_n+\hat{J}_n]\right\|_\mathbb{H}^2\right]\\&
			\leq2\cE\left[\left\|\overline{F}(t_n)*[I_\tau Au^n-I_\tau Au(t_n)]\right\|^2\right]
			 +4\cE\left[\left\|\overline{F}(t_n)*J_n\right\|^2\right]+4\cE\left[\left\|\overline{F}(t_n)*\hat{J}_n\right\|^2\right].
		\end{aligned}
	\end{equation*}
	
	Let $0<\eps\ll 1-\al_0$ and set $l_3^{'}=1-\eps,~l_3^{''}=-1+2\eps$, we have $l_3^{'}+l_3^{''}>0,~1>l_3^{'},~1>l_3^{''},~-1>2\al_0-2+l_3^{''}$. Apply Minkowski's inequality, \eqref{Bound_FA1_sigma} and Theorem \ref{thm_holder_reg} to estimate
	\begin{equation*}%\label{Bound_Jn}
		\begin{aligned}
			&\cE\left[\left\|\overline{F}(t_n)*J_n\right\|^2\right]
            % &=\cE\left[\int_D\left(\int_{0}^{t_n}\overline{F}(t_n-s)J_n(s)ds\right)^2dx \right]\\
			% &\leq \cE\left[\left(\int_{0}^{t_n}\left(\int_D\left(\overline{F}(t_n-s)J_n(s)\right)^2dx\right)^{\f{1}{2}}ds\right)^2\right]\\%~~(Min.Ineq.)
			\leq\cE\bigg[\Big(\int_{0}^{t_n}\left\|\overline{F}(t_n-r)J_n(r)\right\|dr\Big)^2\bigg]\\
            &\leq C\cE\bigg[\Big(\int_{0}^{t_n}\Big\|\overline{F}(t_n-r)A\sum_{k=1}^n\int_{t_{k-1}}^{t_k}(t_n-s)^{\al_*-1}|\ln(t_n-s)|ds\int_{t_{k-1}}^{t_k}|\partial_\theta u(\cdot,\theta)|d\theta \Big\|dr\Big)^2\bigg]\\
			% &\leq C\cE\bigg[\bigg(\int_{0}^{t_n}\bigg\|\overline{F}(t_n-r)A\bigg\|\\
   %          &\hspace{2cm}\times\bigg\|\sum_{k=1}^n\int_{t_{k-1}}^{t_k}(t_n-s)^{\al_*-1}|\ln(t_n-s)|ds\left|u(\cdot,t_k)-u(\cdot,t_{k-1})\right| \bigg\|_{{\mathbb{H}}}dr\bigg)^2\bigg]\\
			% &\leq C\cE\bigg[\int_{0}^{t_n}(t_n-r)^{l_3'}\bigg\|\overline{F}(t_n-r)A\bigg\|^2dr\\
   %          &\hspace{2cm}\times\left(\sum_{k=1}^n\int_{t_{k-1}}^{t_k}(t_n-s)^{\al_*-1-\varepsilon}ds\left\|u(\cdot,t_k)-u(\cdot,t_{k-1}) \right\|_{{\mathbb{H}}}\right)^2\bigg]\\
			&\leq C\Big(\int_{0}^{t_n}(t_n-r)^{l_3'}\|\overline{F}(t_n-r)A\|^2dr\Big)\\
            &\hspace{2cm}\times\bigg(\sum_{k=1}^n\Big(\int_{t_{k-1}}^{t_k}(t_n-s)^{\al_*-1-\varepsilon}ds\Big)^2\cE\left[\left\|u(\cdot,t_k)-u(\cdot,t_{k-1}) \right\|^2\right]\bigg)\\
			% &\leq C\tau^{2\gamma}\left(\int_{0}^{t_n}(t_n-r)^{l_3'}\left(\int_{\Gamma_{\theta,\kappa}^\tau}|e^{z(t_n-r)}|\cdot\left(\tau|z|^{\alpha_0}+|z|^{\alpha_0-1}\right)|dz|\right)^2dr\right)\\
   %          &\hspace{2cm}\times\left(\sum_{k=1}^n\int_{t_{k-1}}^{t_k}(t_n-s)^{\al_*-1-\varepsilon}ds\right)^2\\
			&\leq C\tau^{2\gamma}\bigg(\int_{0}^{t_n}(t_n-r)^{l_3'}\int_{\Gamma_{\theta,\kappa}^\tau}|z|^{-l_3^{''}}|e^{2z(t_n-r)}|dz\cdot\int_{\Gamma_{\theta,\kappa}^\tau}|z|^{2\alpha_0-2+l_3^{''}}|dz|dr\bigg)\\
			&\leq C\tau^{2\gamma}\int_{0}^{t_n}(t_n-r)^{l_3'+l_3^{''}-1}dr\leq C\tau^{2\gamma},
		\end{aligned}%这里之所以要出来C\tau^{2\gamma+2\eps}是由于gamma<H-rho\al_0,但是我们想证出H-rho\al_0阶
	\end{equation*}
    where $\gamma\in(0,H-\rho\al_0)$ provided in Theorem \ref{thm_holder_reg}. Similarly, taking $l_4^{'}$<1, we have
%	\begin{equation}\label{BoundFqJ2}
%		\begin{aligned}
%			\left\|\overline{F}(t_n-s)\right\|&=\left\|\f{1}{2\pi i}\int_{\Gamma_{\theta,\kappa}^\tau}e^{z(t_n-s)}\left(\delta_\tau(e^{-z\tau})+(\delta_\tau(e^{-z\tau}))^{1-\al_0}A\right)^{-1}\f{z\tau}{e^{z\tau}-1}dz\right\|\\
%%			&\leq C\left\|\int_{\Gamma_{\theta,\kappa}^\tau}e^{z(t_n-s)}\left(\delta_\tau(e^{-z\tau})+(\delta_\tau(e^{-z\tau}))^{1-\al_0}A\right)^{-1}\f{z\tau}{e^{z\tau}-1}dz\right\|\\
%			&\leq C\int_{\Gamma_{\theta,\kappa}^\tau}e^{z(t_n-s)}\left\|\left(\delta_\tau(e^{-z\tau})+(\delta_\tau(e^{-z\tau}))^{1-\al_0}A\right)^{-1}\f{z\tau}{e^{z\tau}-1}\right\|dz\\
%		\end{aligned}
%	\end{equation}
    % where $-1<l_3'<-l_3''$ and $1-2\alpha_0<l_3''$, and we set $l_3'=2\alpha_0-1-\varepsilon
    % $.%在平板上有计算细节
	% we need to take $l_3'=-1-\varepsilon,~l_3''=1-\varepsilon$ to satisfy $-1<l_3'<1,~1-2\alpha_0<l_3',~l_3'+l_3''<0$. 
	\begin{equation}\label{BoundJconvHatJ}
		\begin{aligned}
			\cE\left[\left\|\overline{F}(t_n)*\hat{J}_n\right\|^2\right] &\leq\cE\bigg[\Big(\int_{0}^{t_n}\left\|\overline{F}(t_n-s)\hat{J}_n(s)\right\|ds\Big)^2\bigg]\\
            &\leq C \cE\bigg[\int_{0}^{t_n}(t_n-s)^{l_4'}\sum_{i=1}^{4}\Big\|\overline{F}(t_n-s)\hat{J}_{n}^{i}\Big\|^2ds\bigg].
		\end{aligned}
	\end{equation}
    Now we take $\sigma=\min\{1-\rho,\f{H}{\al_0}-\rho-\eps\}$. Taking $l_4^{'}=1-\eps,~ l_4^{''}=2\eps-1$. Consequently, $l_4'<1,~l_4^{''}<1,~0<l_4'+l_4'',~2\al_0(1-\sigma)+l_4^{''}<1$ with $0<\eps\ll 1-\al_0(1-\sigma)$, and we utilize \eqref{Bound_FA1_sigma} to obtain
	\begin{equation*}
		\begin{aligned}
		&\int_{0}^{t_n}(t_n-s)^{l_4'}\left\|\overline{F}(t_n-s)A^{1-\sigma}\right\|^2 ds
		=\int_{0}^{t_n}(t_n-s)^{l_4'}\bigg(\int_{\Gamma_{\theta,\kappa}^\tau}|e^{z(t_n-s)}|\cdot|z|^{\alpha_0(1-\sigma)-1}|dz|\bigg)^2 ds\\
		\leq& C\int_{0}^{t_n}(t_n-s)^{l_4'}\int_{\Gamma_{\theta,\kappa}^\tau}|z|^{-l_4^{''}}|e^{2z(t_n-s)}||dz|\cdot\int_{\Gamma_{\theta,\kappa}^\tau}|z|^{2\alpha_0(1-\sigma)+l_4^{''}-2}|dz| ds\\
        \leq& C\int_{0}^{t_n}(t_n-s)^{l_4'+l_4^{''}-1}ds
        \leq C.
		\end{aligned}
	\end{equation*}
     We split $\hat{J}_n$ into four parts $\hat{J}_n=\hat{J}_{n}^{1}+\hat{J}_{n}^{2}+\hat{J}_{n}^{3}+\hat{J}_{n}^{4},$ where
    \begin{equation*}
		\hat{J}_{n}^{1}:=\sum_{k=1}^{n}\int_{t_{k-1}}^{t_{k}}\left[\frac{(t_{n}-s)^{\alpha(t_{n}-s)-1}}{\Gamma(\alpha(t_{n}-s))}-\frac{(t_{n}-s)^{\alpha(\tau_{n,k})-1}}{\Gamma(\alpha(\tau_{n,k}))}\right]H(t_{n}-s)A u(\boldsymbol{x},s)ds,
	\end{equation*}
     \begin{equation*}
		\hat{J}_{n}^{2}:=\sum_{k=1}^{n}\int_{t_{k-1}}^{t_{k}}\frac{(t_{n}-s)^{\alpha(\tau_{n,k})-1}}{\Gamma(\alpha(\tau_{n,k}))}\Big[H(t_{n}-s)-\breve{H}_{n,k}(s)\Big]A u(\boldsymbol{x},s)ds,
	\end{equation*}
      \begin{equation*}
		\hat{J}_{n}^{3}:=\sum_{k=1}^{n}\int_{t_{k-1}}^{t_k}\left(\int_{0}^{1}\left(\f{(t_n-s)^{\al(z(t_n-s))-1}}{\Gamma(\al(z(t_n-s)))}-\f{(t_n-s)^{\al(z\tau_{n,k})-1}}{\Gamma(\al(z\tau_{n,k}))}\right)U(z,t_n-s)dz\right)Au(x,s)ds,
	\end{equation*}
    \begin{equation*}
		\hat{J}_{n}^{4}:=\sum_{k=1}^{n}\int_{t_{k-1}}^{t_k}\left(\int_{0}^{1}\f{(t_n-s)^{\al(z\tau_{n,k})-1}}{\Gamma(\al(z\tau_{n,k}))}\left(U(z,t_n-s)-\breve{U}_{n,k}(z,s)\right)dz\right) Au(x,s)ds.
	\end{equation*}
	Use \eqref{Bound_FA1_sigma} and Theorem \ref{thm_reg} to estimate
	%According to \cite{Zheng}, we have
	\begin{equation*}%\label{BoundJconvHatJ1}
		\begin{aligned}
			&\cE\bigg[\int_{0}^{t_n}(t_n-s)^{l_4'}\Big\|\overline{F}(t_n-s)\hat{J}_{n}^{1}\Big\|^2ds\bigg]\\
			% =&\cE\bigg[\int_{0}^{t_n}(t_n-s)^{l_4'}\bigg\|\overline{F}(t_n-r)\\
   %          &\qquad\times\left(\sum_{k=1}^{n}\int_{t_{k-1}}^{t_{k}}\left[\frac{(t_{n}-s)^{\alpha(t_{n}-s)-1}}{\Gamma(\alpha(t_{n}-s))}-\frac{(t_{n}-s)^{\alpha(\tau_{n,k})-1}}{\Gamma(\alpha(\tau_{n,k}))}\right]H(t_{n}-s)A u(\boldsymbol{x},s)ds\right)\bigg\|^2dr\bigg]\\
			\leq&C\cE\bigg[\int_{0}^{t_n}(t_n-s)^{l_4'}\Big\|\overline{F}(t_n-s)\\
            &\qquad\times\Big(	\sum_{k=1}^{n}\int_{t_{k-1}}^{t_{k}}\left|\ln(t_{n}-s)\right|\int_{s}^{t_{k}}\Big|\partial_{z}\Big(\frac{(t_{n}-s)^{\alpha(t_{n}-z)-1}}{\Gamma(\alpha(t_{n}-z))}\Big)\Big|dzA u(\boldsymbol{x},s)ds\Big)\Big\|^2ds\bigg]\\
			% \leq&C\tau^2\cE\left[\int_{0}^{t_n}(t_n-s)^{l_4'}\left\|\overline{F}(t_n-r)\left(	\sum_{k=1}^{n}\int_{t_{k-1}}^{t_{k}}(t_{n}-s)^{\alpha_*-1-\varepsilon}A u(\boldsymbol{x},s)ds\right)\right\|^2dr\right]\\
			\leq&C\tau^2\Big(\int_{0}^{t_n}(t_n-s)^{l_4'}\left\|\overline{F}(t_n-s)A^{1-\sigma}\right\|^2 ds\Big)\cE\bigg[\Big\|\Big(\int_{0}^{t_{n}}(t_{n}-s)^{\alpha_*-1-\varepsilon}A^{\sigma} u(\boldsymbol{x},s)ds\Big)\Big\|^2\bigg]\\
			\leq&C\tau^{2}\bigg(\int_{0}^{t_{n}}(t_{n}-s)^{2\alpha_*-2-2\varepsilon}(t_{n}-s)^{1-\alpha_*}ds\int_{0}^{t_n}\cE\left[\left\|A^{\sigma} u(\boldsymbol{x},s)\right\|^2\right](t_n-s)^{\alpha_*-1}ds\bigg)
			\leq C\tau^{2}.
		\end{aligned}
	\end{equation*}
	% where $\al_*=\inf\limits_{t\in[0,T]}\al(t)$. 
    Here we assume $t_n - s \leq 1$, and the case $t_n - s > 1$ follows by a similar argument. Under the assumption that $\al'(t),~\al''(t)$ is bounded over $[0,T]$, each of the remaining terms on the right-hand side of \eqref{BoundJconvHatJ} can be bounded by $C\tau^2$, then we obtain $\cE\left[\left\|\overline{F}(t_n)*\hat{J}_n\right\|^2\right]\leq  C\tau^{2}.$ Applying \eqref{Bound_FA1_sigma}, \eqref{ineq:disint} and Lemma \ref{lem:bnkbound}, we estimate

	\begin{equation*}%\label{boundFcuerr}
		\begin{aligned}
			&\cE\left[\left\|\overline{F}(t_n)*[I_\tau Au^n-I_\tau Au(t_n)]\right\|^2\right]\\
			\leq& C\cE\bigg[\Big(\int_0^{t_n}\left\|\overline{F}(t_n-s)A\right\|\cdot\left\|I_\tau u^n-I_\tau u(t_n)\right\|ds\Big)^2\bigg]\\
			% \leq& C\cE\left[\left(\int_0^{t_n}\int_{\Gamma_{\theta,\kappa}^\tau}e^{|z|(t_n-s)}(\tau|z|^{\al_0}+|z|^{\al_0-1})|dz|\cdot\left\|I_\tau u^n-I_\tau u(t_n)\right\|_\mathbb{H}ds\right)^2\right]\\
			\leq& C\cE\bigg[\Big(\int_0^{t_n}\int_{\Gamma_{\theta,\kappa}^\tau}e^{|z|(t_n-s)}|z|^{\al_0-1}|dz|ds\Big)^2\cdot\left\|I_\tau u^n-I_\tau u(t_n)\right\|^2\bigg]\\
%			\leq&C\tau^{2-2\al_0}\tau^{1-2\al_0}\cE\left[\left\|I_\tau u^n-I_\tau u(t_n)\right\|_\mathbb{H}^2\right]\\
			% =&C\cE\left[\left\|\sum_{k=1}^{n}b_{n,k}(u(t_k)-u^k)\right\|_\mathbb{H}^2\right]\\
			% \leq&C\tau^{2-2\al_0}\sum_{k=1}^{n}|b_{n,k}|\sum_{k=1}^{n}|b_{n,k}|\cE\left[\left\|u(t_k)-u^k\right\|_\mathbb{H}^2\right]	
            \leq &C\sum_{k=1}^{n}|b_{n,k}|\cdot\cE\left[\left\|u(t_k)-u^k\right\|^2\right]
            \leq C\tau\sum_{k=1}^{n}t_{n-k+1}^{\al_*-1-\tilde{\eps}}\cE\left[\left\|u(t_k)-u^k\right\|^2\right].
		\end{aligned}
	\end{equation*}
    We have $\zeta_2\le C(\tau^{2\gamma}+\tau\sum_{k=1}^{n}t_{n-k+1}^{\al_*-1-\tilde{\eps}}\cE\left[\left\|u(t_k)-u^k\right\|^2\right])$.
\end{proof}

Next, we establish the error estimate for the semidiscrete scheme \eqref{scheme}.
\begin{thrm}\label{time_dic_err}
	Assume that $\|A^{-\rho}\|_{\mathcal{L}_{2}^{0}}<\infty$ for $\rho\in[0,\frac{H}{\alpha_0})\cap[0,1]$. Let $u(t_n)$ and $u^n$ be the solutions of problem \eqref{model_3_sto} and scheme \eqref{scheme}, respectively. We have error estimate $\cE\left[\|u(t_n)-u^n\|^2\right]\leq C\tau^{2H-2\rho\al_0-\eps}.$
\end{thrm}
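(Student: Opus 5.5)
The plan is to assemble the pieces already prepared. Define $e^n:=\cE[\|u(t_n)-u^n\|^2]$. The starting point is \eqref{error_semidiscrete}, which bounds the stochastic convolution error by $C\tau^{2H-2\rho\al_0}$ (from \cite[Theorem 3.4]{NieSunDenSiamJNA}). The remaining term is split into $2\zeta_1+2\zeta_2$. This gives
\begin{equation*}
e^n\le C\tau^{2H-2\rho\al_0}+C\zeta_1+C\zeta_2 .
\end{equation*}
We then insert Lemma \ref{lmm:bound_zeta1} and Lemma \ref{lmm:bound_zeta2}, which yield
\begin{equation*}
e^n\le C\tau^{2H-2\rho\al_0}+C\tau^{2(\sigma\al_0-\al_0+1)-\eps}+C\tau^{2\gamma}+C\tau\sum_{k=1}^{n}t_{n-k+1}^{\al_*-1-\tilde\eps}e^k .
\end{equation*}
Here $\gamma\in(0,H-\rho\al_0)$ is free. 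We choose $\gamma=H-\rho\al_0-\eps/2$, so that $\tau^{2\gamma}=\tau^{2H-2\rho\al_0-\eps}$.

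Next I would check that the $\zeta_1$ exponent dominates the target, i.e.\ $2(\sigma\al_0-\al_0+1)-\eps\ge 2H-2\rho\al_0-\eps$, using the case split of the Remark. If $H>\al_0$, then $\sigma=1-\rho$ and the exponent is $2-2\rho\al_0-\eps\ge 2H-2\rho\al_0-\eps$ since $H<1$. If $H\le\al_0$, then $\sigma=H/\al_0-\rho-\eps$ and the exponent is $2H-2\rho\al_0+2(1-\al_0)-(2\al_0+1)\eps$. This is at least $2H-2\rho\al_0-\eps$ once $\eps\ll 1-\al_0$, after renaming $\eps$. Since $\tau\le T$, all the source terms are then bounded by $C\tau^{2H-2\rho\al_0-\eps}$.

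The last step is the discrete Gronwall argument. The sum includes the diagonal term $k=n$, with weight $C\tau\, t_1^{\al_*-1-\tilde\eps}=C\tau^{\al_*-\tilde\eps}$, while Lemma \ref{lmm:disGrowIneq} requires the sum to run only up to $n-1$. For $\tau$ small enough that $C\tau^{\al_*-\tilde\eps}\le 1/2$, I would absorb this diagonal term into the left-hand side; for larger $\tau$ the estimate is trivial by the uniform bounds of Theorem \ref{thm_reg} and Lemma \ref{lem:AsigUn_bound}. Absorption leaves
\begin{equation*}
e^n\le C\tau^{2H-2\rho\al_0-\eps}+C\tau\sum_{k=1}^{n-1}t_{n-k}^{\al_*-1-\tilde\eps}e^k ,
\end{equation*}
where I used $t_{n-k+1}\ge t_{n-k}$ and the negativity of the exponent to replace $t_{n-k+1}$ by $t_{n-k}$. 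Lemma \ref{lmm:disGrowIneq} then applies with $\eta_1=\eta_2=0$ and $\beta=\al_*-\tilde\eps$, which gives the claim.

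The main obstacle I expect is bookkeeping of the various $\eps$'s. One has to make sure that the requirements of Lemmas \ref{lmm:bound_zeta1} and \ref{lmm:bound_zeta2} ($\eps\ll 1-\al_0$, $\eps\ll H/\al_0-\rho$) and the choice of $\gamma$ are mutually compatible. The final loss must be a single arbitrarily small $\eps$, and this forces the rate $2H-2\rho\al_0-\eps$ rather than the sharp $2H-2\rho\al_0$.
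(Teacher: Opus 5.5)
Your proposal is correct and follows the paper's proof: combine \eqref{error_semidiscrete} with Lemmas \ref{lmm:bound_zeta1} and \ref{lmm:bound_zeta2}, take $\gamma$ just below $H-\rho\al_0$, and close with Lemma \ref{lmm:disGrowIneq}. Your explicit check that the $\zeta_1$ exponent dominates, and your absorption of the diagonal $k=n$ term, are sound refinements of the same route. The paper skips the latter: it appeals directly to Lemma \ref{lmm:disGrowIneq}, whose sum stops at $n-1$.
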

\begin{proof}
	Combining Lemmas \ref{lmm:bound_zeta1} and \ref{lmm:bound_zeta2} with \eqref{error_semidiscrete}, we obtain
	\begin{equation*}%\label{bound_v2}
		\begin{aligned}
		\cE\left[\|u(t_n)-u^n\|^2\right]\leq &C\tau^{\min\{2H-2\rho\al_0,2-2\alpha_0(1-\sigma)-\varepsilon,2\gamma\}}
        +C\tau\sum_{k=1}^{n}t_{n-k+1}^{\al_*-1-\tilde{\eps}}\cE\left[\left\|u(t_k)-u^k\right\|^2\right].
		\end{aligned}
	\end{equation*}
    Thus we utilize $\sigma=\min\left\{1-\rho,\frac{H}{\alpha_0}-\rho-\varepsilon\right\}$ and $\gamma=H-\rho\al_0-\eps$ in Theorems \ref{thm_reg} and \ref{thm_holder_reg}, and apply Lemma \ref{lmm:disGrowIneq} to finish the proof.
\end{proof}

\subsection{Error estimate for scheme (\ref{scheme_v})}
To prove error estimate for the scheme \eqref{scheme_v}, we first present the following lemma.
\begin{lmm}\label{lmm:Fn-breFnm1}
Let $F(t)$ and $\breve{F}(t)$ be defined as \eqref{def:F} and \eqref{def:breF}, respectively. Then  $\|F(t_n)-\breve{F}(t_{n-1})\|\leq Ct_n^{-1}\tau$ for $n\geq 1.$
% \begin{equation*}
%     \|F(t_n)-\breve{F}(t_n-\tau)\|\leq Ct_n^{-1}\tau,~n\geq 1.
% \end{equation*}
\end{lmm}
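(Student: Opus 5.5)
We compare the two contour representations directly. Writing $\widetilde F(z)=z^{\al_0-1}(z^{\al_0}+A)^{-1}$ and using $\delta_\tau(e^{-z\tau})+(\delta_\tau(e^{-z\tau}))^{1-\al_0}A=(\delta_\tau(e^{-z\tau}))^{1-\al_0}\big((\delta_\tau(e^{-z\tau}))^{\al_0}+A\big)$, we have
\begin{equation*}
\breve F(t_{n-1})=\f{1}{2\pi i}\int_{\Gamma^\tau_{\theta,\kappa}}e^{zt_n}e^{-z\tau}(\delta_\tau(e^{-z\tau}))^{\al_0-1}\big((\delta_\tau(e^{-z\tau}))^{\al_0}+A\big)^{-1}dz .
\end{equation*}
We then split
\begin{equation*}
F(t_n)-\breve F(t_{n-1})=\f{1}{2\pi i}\int_{\Gamma_{\theta,\kappa}\backslash\Gamma^\tau_{\theta,\kappa}}e^{zt_n}\widetilde F(z)dz+\f{1}{2\pi i}\int_{\Gamma^\tau_{\theta,\kappa}}e^{zt_n}\big(\widetilde F(z)-e^{-z\tau}\widetilde{\breve F}_\tau(z)\big)dz=:E_1+E_2,
\end{equation*}
where $\widetilde{\breve F}_\tau(z):=(\delta_\tau(e^{-z\tau}))^{\al_0-1}((\delta_\tau(e^{-z\tau}))^{\al_0}+A)^{-1}$. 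As in the definition of $\Gamma^\tau_{\theta,\kappa}$, we take $\kappa\le \pi/(t_n|\sin\theta|)$, or simply $\kappa\sim t_n^{-1}$, so that the arc contributes nothing worse than the rays.

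For $E_1$, Lemma \ref{thm_contour} with $\beta=0$ gives $\|\widetilde F(z)\|\le C|z|^{-1}$. On the truncated rays we have $|z|\ge \pi/(\tau\sin\theta)$, hence
\begin{equation*}
\|E_1\|\le C\int_{c/\tau}^\infty e^{-c' r t_n}r^{-1}dr\le C\int_{c/\tau}^\infty e^{-c'rt_n}\,\tau\, dr\le C\tau t_n^{-1},
\end{equation*}
where we used $r^{-1}\le C\tau$ on this range.

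For $E_2$, we write
\begin{equation*}
\widetilde F(z)-e^{-z\tau}\widetilde{\breve F}_\tau(z)=\big(\widetilde F(z)-\widetilde{\breve F}_\tau(z)\big)+(1-e^{-z\tau})\widetilde{\breve F}_\tau(z).
\end{equation*}
\begin{itemize}
\item The first difference is handled with Lemma \ref{lem:delta}. Its bounds $|\delta_\tau(e^{-z\tau})-z|\le c\tau|z|^2$ and $|(\delta_\tau)^{\al_0}-z^{\al_0}|\le c\tau|z|^{1+\al_0}$, together with $c_0|z|\le|\delta_\tau|\le c_1|z|$ and the sectorial resolvent bounds $\|(w^{\al_0}+A)^{-1}\|\le C|w|^{-\al_0}$ for $w\in\Sigma_\theta$, give through the resolvent identity
\begin{equation*}
\|\widetilde F(z)-\widetilde{\breve F}_\tau(z)\|\le C\tau .
\end{equation*}
This is the same mechanism that yields $\|\mathcal E_\tau(z)\|\le C\tau$ in \cite[(3.7)]{NieSunDenSiamJNA}, so that bound can essentially be reused.
\item The second term satisfies $|1-e^{-z\tau}|\le C\tau|z|$ on $\Gamma^\tau_{\theta,\kappa}$, because there $|z\tau|$ is bounded. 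Combined with $\|\widetilde{\breve F}_\tau(z)\|\le C|z|^{-1}$, it is bounded by $C\tau$.
\end{itemize}
Hence
\begin{equation*}
\|E_2\|\le C\tau\int_{\Gamma^\tau_{\theta,\kappa}}|e^{zt_n}||dz|\le C\tau\Big(\int_\kappa^\infty e^{-c'rt_n}dr+\kappa\Big)\le C\tau t_n^{-1},
\end{equation*}
using $\kappa\le Ct_n^{-1}$ for the arc.

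The main obstacle is making the estimate $\|\widetilde F-\widetilde{\breve F}_\tau\|\le C\tau$ uniform along $\Gamma^\tau_{\theta,\kappa}$, which requires $\delta_\tau(e^{-z\tau})\in\Sigma_\theta$ so that the resolvent bounds apply. Lemma \ref{lem:delta} supplies this for $\theta$ in the stated range. A second delicate point is the choice of $\kappa$ in terms of $t_n$, which must be consistent with the admissible range $\kappa\le\min(1/T,-\ln\varrho/\tau)$. If that constraint bites, we use Cauchy's theorem to deform the arc radius to $\kappa=\min(1/t_n,\cdot)$ instead. The case $n=1$ is immediate, since $\|F(t_1)\|$ and $\|\breve F(0)\|$ are bounded and $t_1^{-1}\tau=1$.
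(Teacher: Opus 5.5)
Your proof is correct, but for $n\ge 2$ it takes a different route from the paper.

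\textbf{The paper's route.} It inserts $F(t_{n-1})$ and uses the triangle inequality.
\begin{itemize}
\item It bounds $\|F(t_n)-F(t_{n-1})\|\le C\tau t_{n-1}^{-1}$ by a smoothing estimate on $\Gamma_{\theta,\kappa}$.
\item It splits $\|F(t_{n-1})-\breve F(t_{n-1})\|$ into the tail over $\Gamma_{\theta,\kappa}\setminus\Gamma_{\theta,\kappa}^\tau$, handled through the exponential integral $E_1$, plus a same-time kernel comparison on $\Gamma_{\theta,\kappa}^\tau$.
\item It then needs $t_{n-1}^{-1}\le 2t_n^{-1}$, which is why $n=1$ is treated separately.
\end{itemize}
Only in the $n=1$ case does the paper use your device of absorbing the shift $e^{-z\tau}$ into the kernel. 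There it writes $\widetilde{\mathcal W}_\tau=\widetilde F-e^{-z\tau}\widetilde{\breve F}_\tau$ and splits it through $\mathcal E_\tau$ and the factor $\frac{z\tau}{e^{z\tau}-1}-e^{-z\tau}$.

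\textbf{Your route.} You apply that device uniformly for every $n\ge1$, with the more direct split $(\widetilde F-\widetilde{\breve F}_\tau)+(1-e^{-z\tau})\widetilde{\breve F}_\tau$. Your tail bound via $r^{-1}\le C\tau$ replaces the $E_1$ asymptotics. This gives $Ct_n^{-1}\tau$ in one stroke. It also makes explicit the integration of the pointwise $C\tau$ bound against $|e^{zt_n}|$, a step the paper compresses.

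\textbf{Comparison.} What the paper's splitting buys is modularity. It separates a purely continuous smoothing error from the convolution-quadrature error at a fixed time, and reuses a standard estimate for each. Your version is shorter and needs no special case.

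\textbf{Two small remarks.}
\begin{itemize}
\item Your separate $n=1$ sentence is unnecessary, since your general argument already gives $\|E_1\|+\|E_2\|\le C=Ct_1^{-1}\tau$. As written it is also not quite immediate. Estimating $\|\breve F(0)\|$ directly on the contour only gives $C|\ln(\kappa\tau)|$; boundedness requires the identification $\breve F(0)=(I+\tau^{\al_0}A)^{-1}$.
\item Your concern about $\kappa$ is moot. The admissible range in Lemma \ref{lem:delta} already forces $\kappa\le 1/T\le 1/t_n$, so the arc contributes at most $C\kappa e^{\kappa t_n}\le Ct_n^{-1}$ without any contour deformation.
\end{itemize}
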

\begin{proof}
For $n=1$, we have
        \begin{equation}\label{eq:Ftau-F0}
		F(\tau)-\breve{F}(0)
		=\frac{1}{2\pi i}\int_{\Gamma_{\theta,\kappa}\setminus\Gamma_{\theta,\kappa}^\tau}e^{z\tau}z^{\al_0-1}(z^{\al_0}+A)^{-1}dz
		+\frac{1}{2\pi i}\int_{\Gamma_{\theta,\kappa}^\tau}e^{z\tau}\widetilde{\mathcal{W}}_\tau(z)dz,
	\end{equation}
	where $\widetilde{\mathcal{W}}_\tau(z):=z^{\al_0-1}(z^{\al_0}+A)^{-1}-e^{-z\tau}(\delta_\tau(e^{-z\tau}))^{\al_0-1}\big((\delta_\tau(e^{-z\tau}))^{\al_0}+A\big)^{-1}.$
	By Lemma \ref{thm_contour}, the first term on the right-hand side of \eqref{eq:Ftau-F0} is bounded by $C$.
	% By Lemma \ref{thm_contour} with $\beta=0$, $\|z^{\al_0-1}(z^{\al_0}+A)^{-1}\|\le C|z|^{-1}$ for $z\in\Sigma_\theta$. On $\Gamma_{\theta,\kappa}\setminus\Gamma_{\theta,\kappa}^\tau$, i.e. $|z|=r\ge R:=\pi/(\tau\sin\theta)$ with $\arg z=\pm\theta$, we have $|e^{z\tau}|=e^{-cr\tau}$ with $c:=-\cos\theta>0$. Hence
	% \begin{equation*}
	% 	\|\mathrm{I}\|\le C\int_{R}^{\infty}e^{-cr\tau}r^{-1}dr=CE_1(cR\tau),
	% \end{equation*}
	% where $E_1(y)=\int_1^\infty e^{-yt}t^{-1}dt$ is the exponential integral. Since $R\tau=\pi/\sin\theta$ is a constant independent of $\tau$, $E_1(cR\tau)=E_1(c\pi/\sin\theta)$ is likewise a constant independent of $\tau$. Therefore
	% \begin{equation*}
	% 	\|\mathrm{I}\|\le C.
	% \end{equation*}
	Recall $\mathcal{E}_\tau(z)$ defined in \eqref{def:Etau} satisfies $\|\mathcal{E}_\tau(z)\|\le C\tau$ for all $z\in\Gamma_{\theta,\kappa}^\tau$. Write
	\begin{equation*}
		\widetilde{\mathcal{W}}_\tau(z)=\mathcal{E}_\tau(z)+(\delta_\tau(e^{-z\tau}))^{\al_0-1}\big((\delta_\tau(e^{-z\tau}))^{\al_0}+A\big)^{-1}\left(\frac{z\tau}{e^{z\tau}-1}-e^{-z\tau}\right).
	\end{equation*}
	Combining Lemma \ref{thm_contour} and Lemma \ref{lem:delta}, we have $\big\|(\delta_\tau(e^{-z\tau}))^{\al_0-1}\big((\delta_\tau(e^{-z\tau}))^{\al_0}+A\big)^{-1}\big\|\le C|z|^{-1}.$
	Since  for all $z\in\Gamma_{\theta,\kappa}^\tau$, $\left|\frac{z\tau}{e^{z\tau}-1}-e^{-z\tau}\right|\le C\tau|z|$, then%z\in\Gamma_{\theta,\kappa}^\tau
	\begin{equation*}
		\left\|(\delta_\tau(e^{-z\tau}))^{\al_0-1}\big((\delta_\tau(e^{-z\tau}))^{\al_0}+A\big)^{-1}\left(\frac{z\tau}{e^{z\tau}-1}-e^{-z\tau}\right)\right\|\le C|z|^{-1}\tau|z|=C\tau.
	\end{equation*}
	Hence $\|\widetilde{\mathcal{W}}_\tau(z)\|\le\|\mathcal{E}_\tau(z)\|+C\tau\le C\tau$ for all $z\in\Gamma_{\theta,\kappa}^\tau,$ then $\|F(t_1)-\breve{F}(t_0)\|\le C$. For $n\geq2$, 
\begin{equation}\label{ineq:Fn-breFnm1}
    \begin{aligned}
        \|F(t_{n})-\breve{F}(t_{n-1})\|
        % &=\|F(t_{n})-F(t_{n-1})+F(t_{n-1})-\breve{F}(t_{n-1})\|\\
        \leq \|F(t_{n})-F(t_{n-1})\|+\|F(t_{n-1})-\breve{F}(t_{n-1})\|,
    \end{aligned}
\end{equation}
then we use Lemma \ref{thm_contour} to estimate
% \begin{equation*}
%     \begin{aligned}
%         \|F(t_{n})-F(t_{n-1})\|\leq&C\left\|\int_{\Gamma_{\theta,\kappa}}\left(e^{zt_n}-e^{zt_{n-1}}\right)z^{\al_0-1}\left(z^{\al_0}+A\right)^{-1}dz\right\|\\
%         \leq&C\tau^\gamma\int_{\Gamma_{\theta,\kappa}}|e^{zt_{n-1}}| |z|^{\gamma-1}|dz|\leq Ct_{n-1}^{-\gamma}\tau^\gamma.
%     \end{aligned}
% \end{equation*}
\begin{equation}\label{ineq:Fn-Fns1}
    \begin{aligned}
        \|F(t_{n})-F(t_{n-1})\|\leq&C\Big\|\int_{\Gamma_{\theta,\kappa}}\left(e^{zt_n}-e^{zt_{n-1}}\right)z^{\al_0-1}\left(z^{\al_0}+A\right)^{-1}dz\Big\|\\
        \leq&C\tau\int_{\Gamma_{\theta,\kappa}}|e^{zt_{n-1}}| |dz|\leq Ct_{n-1}^{-1}\tau\le Ct_{n}^{-1}\tau.
    \end{aligned}
\end{equation}
Using Lemmas \ref{thm_contour} and \ref{lem:delta} yields
\begin{equation}\label{ineq:Ftnm1-breFnm1}
    \begin{aligned}
        &\|F(t_{n-1})-\breve{F}(t_{n-1})\|\\
        % =\left\|\int_{\Gamma_{\theta,\kappa}}e^{zt_{n-1}}z^{\al_0-1}\left(z^{\al_0}+A\right)^{-1}dz-\int_{\Gamma_{\theta,\kappa}^\tau}e^{zt_{n-1}}z^{\al_0-1}\left(z^{\al_0}+A\right)^{-1}dz\right\|\\
        % &\hspace{1cm}+\left\|\int_{\Gamma_{\theta,\kappa}^\tau}e^{zt_{n-1}}z^{\al_0-1}\left(z^{\al_0}+A\right)^{-1}dz-\int_{\Gamma_{\theta,\kappa}^\tau}e^{zt_{n-1}}\left(\delta_\tau(e^{-z\tau})+(\delta_\tau(e^{-z\tau}))^{1-\al_0}A\right)^{-1}dz\right\|
        \leq& \Big\|\int_{\Gamma_{\theta,\kappa}\backslash\Gamma_{\theta,\kappa}^\tau}e^{zt_{n-1}}z^{\al_0-1}\left(z^{\al_0}+A\right)^{-1}dz\Big\|\\
        &+\Big\|\int_{\Gamma_{\theta,\kappa}^\tau}e^{zt_{n-1}}\left(\left(z+z^{1-\al_0}A\right)^{-1}-\left(\delta_\tau(e^{-z\tau})+(\delta_\tau(e^{-z\tau}))^{1-\al_0}A\right)^{-1}\right)dz\Big\|\\
        \leq&C\int_\frac{\pi}{\tau\sin(\theta)}^{\infty}e^{|z|cos(\theta)t_{n-1}}|z|^{-1}|dz|+C\tau
        \leq CE_1\left(-\frac{\pi t_{n-1}\cos\theta}{\tau\sin\theta}\right)+C\tau
        \leq Ct_{n-1}^{-1}\tau\le Ct_{n}^{-1}\tau,
    \end{aligned}
\end{equation}
where 
\begin{equation*}
    \begin{aligned}
        \Big\|\int_{\Gamma_{\theta,\kappa}^\tau}e^{zt_{n-1}}\left(\left(z+z^{1-\al_0}A\right)^{-1}-\left(\delta_\tau(e^{-z\tau})+(\delta_\tau(e^{-z\tau}))^{1-\al_0}A\right)^{-1}\right)dz\Big\|
        % =&\int_{\Gamma_{\theta,\kappa}^\tau}e^{zt_{n-1}}\left\|\left(z+z^{1-\al_0}A\right)^{-1}-\left(\delta_\tau(e^{-z\tau})+(\delta_\tau(e^{-z\tau}))^{1-\al_0}A\right)^{-1}\right\|dz\\
        % \leq &C\tau\int_{\Gamma_{\theta,\kappa}^\tau}e^{zt_{n-1}} dz
        \leq C\tau,
    \end{aligned}
\end{equation*}
and $E_1(\cdot)$ is the exponential integral defined by $E_1(z)=\int_1^\infty e^{-zt}/tdt$ \cite[Definition 5.1.1]{Abramowitz1948handbook}. Incorporating \eqref{ineq:Fn-Fns1} and \eqref{ineq:Ftnm1-breFnm1} into \eqref{ineq:Fn-breFnm1} completes the proof.
\end{proof}

According to \eqref{v_ref} and \eqref{vn_numer}, we obtain the following estimate.
\begin{thrm}\label{time_dic_err_v}
	Assume that $\|A^{-\rho}\|_{\mathcal{L}_{2}^{0}}<\infty$, $G_0\in\mathbb{H}$, and $\rho\in[0,\frac{H}{\alpha_0})\cap[0,1]$. We have the error estimate $\|v(t_n)-v^n\|\leq C\tau^{1-\eps}+Ct_n^{-1}\tau, ~~n\geq 1.$
\end{thrm}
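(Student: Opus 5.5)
Subtracting \eqref{vn_numer} from \eqref{v_ref} at $t=t_n$ gives
\begin{equation*}
v(t_n)-v^n=\big(F(t_n)-\breve F(t_{n-1})\big)G_0-\big(F(t_n)*(g'*Av)(t_n)-\overline F(t_n)*I_\tau Av^n\big).
\end{equation*}
The first term is handled directly by Lemma \ref{lmm:Fn-breFnm1}: its norm is bounded by $Ct_n^{-1}\tau\|G_0\|$. This accounts for the $Ct_n^{-1}\tau$ contribution in the statement.

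For the second term I will copy the splitting used for scheme \eqref{scheme}:
\begin{equation*}
\eta_1:=\big\|(F-\overline F)(t_n)*(g'*Av)(t_n)\big\|,\qquad \eta_2:=\big\|\overline F(t_n)*\big(g'*Av(t_n)-I_\tau Av^n\big)\big\|.
\end{equation*}
Decompose $\eta_2$ using \eqref{eq:gpconAu} into three pieces:
\begin{itemize}
\item $\overline F*(I_\tau Av(t_n)-I_\tau Av^n)$, the stability part;
\item $\overline F*J_n$;
\item $\overline F*\hat J_n$.
\end{itemize}
Everything is deterministic, so Minkowski's inequality can replace the Cauchy--Schwarz/expectation steps.

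For $\eta_1$, follow Lemma \ref{lmm:bound_zeta1}. Split the contour into $\Gamma_{\theta,\kappa}\setminus\Gamma^\tau_{\theta,\kappa}$ and $\Gamma^\tau_{\theta,\kappa}$, and use $\|\mathcal E_\tau(z)A^{1-\sigma}\|\le C\tau|z|^{(1-\sigma)\al_0}$. Bound $\|g'*A^\sigma v(s)\|$ by Theorem \ref{thm:Asig_v} with $q=0$, i.e. by $C\int_0^s(s-\xi)^{\al_0-1-\eps}\xi^{-\sigma\al_0}d\xi\le Cs^{(1-\sigma)\al_0-\eps}$, which is integrable. Taking $\sigma$ close to $1$ (say $\sigma=1-\eps$), the argument of Lemma \ref{lmm:bound_zeta1} yields $\eta_1\le C\tau^{1-\eps}$; the exponent $\sigma\al_0-\al_0+1$ tends to $1$.

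For the $\hat J_n$ part, use the bound $\|\overline F(t_n-s)A^{1-\sigma}\|$ from \eqref{Bound_FA1_sigma} together with the $O(\tau)$ kernel-difference estimates of $\hat J^i_n$ under boundedness of $\al',\al''$. Apply them with $\|A^\sigma v(s)\|\le Cs^{-\sigma\al_0}$, which is integrable against $(t_n-s)^{\al_*-1-\eps}$. This gives $C\tau^{1-\eps}$.

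The stability part is bounded via \eqref{ineq:disint} and Lemma \ref{lem:bnkbound} by
\begin{equation*}
C\tau\sum_{k=1}^n t_{n-k+1}^{\al_*-1-\tilde\eps}\,\|v(t_k)-v^k\|.
\end{equation*}
Collecting terms, $e_n:=\|v(t_n)-v^n\|$ satisfies
\begin{equation*}
e_n\le C\tau^{1-\eps}+Ct_n^{-1}\tau+C\tau\sum_{k=1}^n t_{n-k+1}^{\al_*-1-\tilde\eps}e_k .
\end{equation*}
I expect two difficulties.

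\textbf{Bounding $J_n$.} This needs a time-H\"older bound on $Av$, whereas Theorem \ref{thm_holder_reg_det} only controls $v$ in $\mathbb H$ and carries a singular factor $t^{-\gamma}$. I would instead write $J_n$ with $A^{\sigma}(v(s)-v(t_k))$. Then I would use a time-derivative bound $\|A^\sigma v'(t)\|\le Ct^{-1-\sigma\al_0}$, derived from the contour representation exactly as in \eqref{ineq:Fn-Fns1}, plus the perturbation term and Gronwall. Integrating over each subinterval, $\int_{t_{k-1}}^{t_k}$, and summing gives $C\tau^{1-\eps}$ for $k\ge2$ and an $O(\tau^{1-\sigma\al_0})$ first-interval contribution. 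The latter is absorbed by letting $\sigma$ be small in that step, using $\overline F A$ from \eqref{ineq:disint}.

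\textbf{Gronwall with a singular source.} Lemma \ref{lmm:disGrowIneq} requires exponents $\eta<1$, but the source contains $t_n^{-1}\tau$, whose exponent is exactly $1$. I would handle this by writing $t_n^{-1}\tau=t_n^{-(1-\eps)}\tau\,t_n^{-\eps}\le t_n^{-(1-\eps)}\tau^{1-\eps}$, since $t_n\ge\tau$. The lemma then applies with $\eta_1=0$, $\eta_2=1-\eps$, giving $e_n\le C\tau^{1-\eps}+Ct_n^{-1+\eps}\tau^{1-\eps}$. For the final form in the statement, I would rerun the sum estimate directly. Since $\tau\sum_k t_{n-k+1}^{\al_*-1-\tilde\eps}t_k^{-1}\tau\le C\tau(1+|\ln\tau|)t_n^{\al_*-1-\tilde\eps}$, the resulting correction is $\le C\tau^{1-\eps}$. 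Hence $e_n\le C\tau^{1-\eps}+Ct_n^{-1}\tau$.
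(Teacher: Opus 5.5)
Your proposal follows the paper's route: the same splitting of $v(t_n)-v^n$, Lemma~\ref{lmm:Fn-breFnm1} for the initial-data term, the analogues of $\zeta_1,\zeta_2$ with Lemma~\ref{lem:bnkbound}, and a discrete Gronwall argument. Taking $\sigma$ close to $1$ in the $\zeta_1$-analogue is in fact what produces $\tau^{1-\eps}$. With the value $\sigma=1/2$ named in the paper, the exponent $\sigma\al_0-\al_0+1$ of Lemma~\ref{lmm:bound_zeta1} gives only about $\tau^{1-\al_0/2}$.

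\textbf{The $J_n$ term.} Your first ``difficulty'' rests on a misreading. In the proof of Lemma~\ref{lmm:bound_zeta2} the whole operator $A$ in $J_n$ is moved onto $\overline F$. Only $\|\overline F(t_n-r)A\|$ (integrable in $r$, cf.~\eqref{ineq:disint}) and the $\mathbb H$-increments of the solution then enter; no H\"older bound on $Au$ is used. The same works for $v$ with $\sigma=0$. Theorem~\ref{thm_holder_reg_det} with $\gamma=1-\eps/2$ gives $\|v(s)-v(t_k)\|\le C\tau^{\gamma}(1+t_k^{-\gamma})$ on $[t_{k-1},t_k]$, and $\|v(s)-v(t_1)\|\le C$ on the first interval. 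Since $\gamma<1$, the factor $t_k^{-\gamma}$ is summable against $\tau t_{n-k+1}^{\al_*-1-\tilde\eps}$. Your substitute bound $\|A^\sigma v'(t)\|\le Ct^{-1-\sigma\al_0}$ is not in the paper, and it does not follow ``exactly as in \eqref{ineq:Fn-Fns1}''. Differentiating $F*(g'*Av)$ meets the singularities of $g'$ and of $Av$ at $t=0$, so it needs its own argument (e.g.\ a $t\partial_t$-weighted Gronwall). Since you end up taking $\sigma$ small anyway, take $\sigma=0$ and use the existing theorem.

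\textbf{The Gronwall step.} Your second point is a genuine improvement. The paper applies Lemma~\ref{lmm:disGrowIneq} to a source containing $t_n^{-1}\tau$, whose exponent $1$ lies outside the lemma's hypotheses. Your first step is correct: $t_n^{-1}\tau\le t_n^{-(1-\eps)}\tau^{1-\eps}$ gives $e_n\le C\tau^{1-\eps}(1+t_n^{\eps-1})$. The last step needs repair, in two places.

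First, you may not insert the target bound $e_k\le Ct_k^{-1}\tau$ into the sum. Start instead from $e_k\le C\tau^{1-\eps}(1+t_k^{\eps-1})$ and iterate $e\le S+Ke$. Here $S_n=C\tau^{1-\eps}+Ct_n^{-1}\tau$ and $(Ke)_n=C\tau\sum_k t_{n-k+1}^{a-1}e_k$ with $a=\al_*-\tilde\eps$. After $m\ge(1-\eps)/a$ iterations, $K^m e\le C\tau^{1-\eps}$.

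Second, your bound $K(t^{-1}\tau)_n\le C\tau(1+|\ln\tau|)t_n^{a-1}$ is right, but it is not $\le C\tau^{1-\eps}$; at $t_n=\tau$ it is about $\tau^{a}|\ln\tau|$. It is, however, $\le C(\tau^{1-\eps}+t_n^{-1}\tau)$. If $t_n^{a}|\ln\tau|\le1$, the term is $\le t_n^{-1}\tau$. Otherwise $t_n^{a-1}<|\ln\tau|^{(1-a)/a}$, so the term is $\le C\tau|\ln\tau|^{1/a}\le C\tau^{1-\eps}$. The same holds for $K^j(t^{-1}\tau)\le C\tau(1+|\ln\tau|)t_n^{ja-1}$. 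With these corrections the argument yields the stated estimate.
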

\begin{proof}
Applying \eqref{v_ref} and \eqref{vn_numer} yields
\begin{equation*}%\label{ineq:vtn-vn}
    \begin{aligned}
        \|v(t_n)-v^n\|&\leq \|(F(t_n)-\breve{F}(t_{n-1}))G_0\|
        +2\left\|\overline{F}(t_n)*I_\tau Av^n-F(t_n)*g'(t_n)*Av(t_n)\right\|.
    \end{aligned}
\end{equation*}
By using Lemma \ref{lmm:Fn-breFnm1}, we estimate
$\|(F(t_n)-\breve{F}(t_{n-1}))G_0\|\leq \|G_0\| \cdot \|F(t_n)-\breve{F}(t_{n-1})\|
% \leq Ct_n^{-1}\tau\|G_0\|_{\mathbb{H}}.
\leq Ct_n^{-1}\tau.$

Similar to the bound of $\zeta_1$ and $\zeta_2$ in Theorem \ref{time_dic_err}, taking $q=0,~\sigma=1/2$ in Theorem \ref{thm:Asig_v} and $0<\eps\ll (1-\al_0)/2,~\gamma=1-\eps/2$ in Theorem \ref{thm_holder_reg_det}, applying Theorem \ref{lem:bnkbound}, we have
\begin{equation*}
    \begin{aligned}
        &\left\|\overline{F}(t_n)*I_\tau Av^n-F(t_n)*g'(t_n)*Av(t_n)\right\|\\
        =&\cE\bigg[\Big\|\int_{0}^{t_n}\overline{F}(t_n-s)(g'(s)*Av(s))-F(t_n-s)(g'(s)*Av(s))ds\Big\|\bigg]\\
			&~~+\cE\bigg[\Big\|\overline{F}(t_n)*I_\tau Av^n-\overline{F}(t_n)*g'(t_n)*Av(t_n)\Big\|\bigg]\\
        \leq& C\tau^{1-\eps}
        +C\sum_{k=1}^{n}|b_{n,k}|\cdot\cE\left[\left\|v(t_k)-v^k\right\|\right]
        \leq C\tau^{1-\eps}
        +C\tau\sum_{k=1}^{n}t_{n-k+1}^{\al_*-1-\tilde{\eps}}\cE\left[\left\|v(t_k)-v^k\right\|\right].
    \end{aligned}
\end{equation*}
Using Lemma \ref{lmm:disGrowIneq} completes the proof.
\end{proof}

% \begin{thrm}
% 	Let $G$ be the solution of problem \eqref{model_2} and $G^n = u^n + v^n$, where $u^n$ and $v^n$ are the solutions of models \eqref{scheme} and \eqref{scheme_v}, respectively. Assume that $\|A^{-\rho}\|_{\mathcal{L}^0_2} < \infty$ with $\rho \in [0, \frac{H}{\alpha_0}) \cap [0, 1]$. Assume $G_0 \in \hat{H}^q(D)$ with $q \leq 2$. There holds
% 	\begin{equation*}
% 		(\mathbb{E}\|U(t_n)-G^n\|_\mathbb{H}^2)^{1/2}\leq C\tau^{H-\rho\alpha_0}+C{t_n}^{q\alpha_0/2-1}\tau\|G_0\|_{\hat{H}^q(D)}.
% 	\end{equation*}
% \end{thrm}

\section{Full discretization and error estimate}\label{sec:spatial_discre}
 In this section, we employ the finite element method to propose the fully-discrete schemes and perform corresponding error estimates. 
\subsection{Schemes and auxiliary estimates}
Let $\mathcal{T}_{h}$ be a partition of the domain $D$, where $h$ is the maximum diameter. Define $X_{h}(D):=\{\nu_{h}\in C(\bar{D}):\nu_{h}|_{\mathbf{T}}\in\mathcal{P}^{1}, \forall\mathbf{T}\in\mathcal{T}_{h}, \nu_{h}|_{\partial D}=0\}$, where $\mathcal{P}^{1}$ is the piecewise linear finite element space defined on $\mathcal{T}_{h}$. We introduce $P_h:L^2(D)\to X_h$ and $R_h:H^1_0(D)\to X_h$ by $(P_hu,\nu_h)=(u,\nu_h),~\forall\nu_h\in X_h$ and $(\nabla R_{h} u,\nabla\nu_{h})=(\nabla u,\nabla\nu_{h}),~\forall\nu_{h}\in X_{h}$, respectively. Then $R_h$ satisfies Ritz projection estimate $\|R_h v - v\| + h \|\nabla (R_h v - v)\| \leq C h^s \| v\|_{H^{s}(D)}
$ for $v \in H^s(D) \cap H_0^1(D)$ and $1 \leq s \leq 2$ \cite[Lemma 1.1]{ThoVid}. %p8 Lemma 1.1

% \begin{equation*}
% 	\begin{aligned}
% 		&(P_hu,\nu_h)=(u,\nu_h)\quad\forall\nu_h\in X_h, \\
% 		&(\nabla R_{h} u,\nabla\nu_{h})=(\nabla u,\nabla\nu_{h})\quad\forall\nu_{h}\in X_{h}.
% 	\end{aligned}
% \end{equation*}

Denote $A_h$ by $(A_hu_h,\nu_h)=(\nabla u_h,\nabla\nu_h)$ with $u_h, \nu_h\in X_h$, the fully discrete Galerkin scheme for 
the first equation of \eqref{jy1} is: $\forall t\in(0,T)$, find $u_h^n\in X_h$ such that
\begin{equation}\label{full_discre_1}
	\begin{aligned}&\left(\frac{u_h^n-u_h^{n-1}}{\tau},\nu_h\right)+\sum_{i=0}^{n-1}d_i^{(1-\alpha_0)}(A_hu_h^{n-i},\nu_h)\\
	&\hspace{6em}=\left(\frac{W_Q^H(t_n)-W_Q^H(t_{n-1})}{\tau},\nu_h\right)-\left(I_\tau A_hu^n_h,\nu_h\right),\quad\forall\nu_h\in X_h\subset H^1_0(D),\end{aligned}
\end{equation}
with $u_h^0=0$. Scheme \eqref{full_discre_1} can be written as
\begin{equation}\label{full_discrete_scheme}
\frac{u_{h}^{n}-u_{h}^{n-1}}{\tau}+\sum_{i=0}^{n-1}d_{i}^{(1-\alpha_0)}A_{h}u_{h}^{n-i}+I_\tau A_hu_h^n=P_{h}\frac{W_{Q}^{H}(t_{n})-W_{Q}^{H}(t_{n-1})}{\tau}.
\end{equation}
Proceeding as in the derivation of \eqref{u_nemeric} and applying the Cauchy integral principle, the solution of \eqref{full_discrete_scheme} admits the representation 
\begin{equation}\label{uh_nemeric}
u^n_h=\bar{F}_{h}(t_n)*\left(P_h\overline{\p}_\tau W^H_Q(t_n)-I_\tau A_hu^n_h\right).
\end{equation}
% where $\bar{F}_{h}(t)=\frac{1}{2\pi{i}}\int_{\Gamma_{\theta ,\kappa}^{\tau}}e^{zt}(\delta_{\tau}(e^{-z\tau}))^{\alpha_0-1}((\delta_{\tau}(e^{-z\tau}))^{\alpha_0}+A_{h})^{-1}\frac{z\tau}{e^{z\tau}-1}dz.$
\begin{equation*}\bar{F}_{h}(t)=\frac{1}{2\pi{i}}\int_{\Gamma_{\theta ,\kappa}^{\tau}}e^{zt}(\delta_{\tau}(e^{-z\tau}))^{\alpha_0-1}((\delta_{\tau}(e^{-z\tau}))^{\alpha_0}+A_{h})^{-1}\frac{z\tau}{e^{z\tau}-1}dz.\end{equation*}
SImilarly, we obtain the fully discrete scheme of the second equation of \eqref{jy1},
\begin{equation}\label{full_discre_2}
\frac{v_h^n - v_{h}^{n-1}}{\tau} + \sum_{i=0}^{n-1} d_i^{(1-\alpha_0)} A_h v_h^{n-i} + I_{\tau} A_h v_h^n = 0, \quad v_h^0 = P_h G_0.
\end{equation}
% \begin{equation}
% 	\left\{ 
% 	\begin{aligned}&\left(\frac{v_h^n-v_h^{n-1}}{\tau},\nu_h\right)+\sum_{i=0}^{n-1}d_i^{(1-\alpha)}(A_hv_h^{n-i},\nu_h)=-\left(I_\tau A_hv^n_h,\nu_h\right),\quad\forall\nu_h\in X_h\subset H^1_0(D),\\
% 		&v_h^0=P_h G_0.\end{aligned}
% 	\right.
% \end{equation}
Multiplying both sides of \eqref{full_discre_2} by $\xi^n$ and summing $n$ from $1$ to $\infty$ and using $v_h^0 = P_h G_0$,
\begin{equation*}
  \begin{aligned}
\left(
\delta_\tau(\xi)
+(\delta_\tau(\xi))^{1-\alpha_0}A_h
\right)\sum_{n=1}^\infty v_h^n \xi^n=
\frac{\xi}{\tau}P_hG_0-\sum_{n=1}^{\infty}\sum_{i=0}^{n-1}b_{n,n-i}A_hv_h^{n-i}\xi^n.
\end{aligned}
\end{equation*}
Then the solution of \eqref{full_discre_2} can be represented as
\begin{equation}\label{vn_hnumer}
    \begin{aligned}
    v^n_h=\breve{F}_h(t_{n-1})P_hG_0-\overline{F}_h(t_n)*I_\tau A_hv^n_h,
    \end{aligned}
\end{equation}
where $\bar{F}$ is defined above and $\breve{F}$ is defined by
\begin{equation}\label{def:breFh}
    \breve{F}_h(t):=\f{1}{2\pi i}\int_{\Gamma_{\theta,\kappa}^\tau}e^{zt}\left(\delta_\tau(e^{-z\tau})+(\delta_\tau(e^{-z\tau}))^{1-\al_0}A_h\right)^{-1}dz.
\end{equation}
We now define the fully discrete numerical approximation to \eqref{model_2} as $G_h^n := u_h^n + v_h^n$.

\begin{lmm}(cf. \cite[Lemma 13.1]{JinYan})\label{lem:Ah} %p68 P336
	For $\delta_\tau(\xi)$ defined in \eqref{def:delta_tau} and for all $\theta\in(\frac{\pi}{2},\pi)$,
	\begin{equation*}
		\begin{aligned}
		&\|(z+A_h)^{-1}\|\leq C|z|^{-1},~~\forall z\in\Sigma_\theta.\\
		&\|A_h(\delta_\tau(e^{-z\tau})^{\alpha_0}+A_h)^{-1}\|\leq C,~~
		\|\delta_\tau(e^{-z\tau})^{\alpha_0-1}A_h(\delta_\tau(e^{-z\tau})^{\alpha_0}+A_h)\|\leq C|z|^{-1},\quad\forall z\in\Gamma_{\theta,\kappa}^\tau.
		\end{aligned}
	\end{equation*}
\end{lmm}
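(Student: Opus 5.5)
The approach is spectral: all three bounds reduce to the elementary scalar inequality
\[
|w+\lambda|\ \ge\ c_{\vartheta}\,(|w|+\lambda)\qquad \text{for all } \lambda>0,\ w\in\Sigma_{\vartheta},\ \vartheta<\pi,
\]
combined with Lemma \ref{lem:delta}. The constants will depend only on the sector angle, hence are independent of $h$ and $\tau$.

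\textbf{Setting.} The operator $A_h$ is symmetric and positive definite on the finite-dimensional space $X_h$ with the $L^2(D)$ inner product, because $(A_hu_h,\nu_h)=(\nabla u_h,\nabla\nu_h)$. Hence it has an $L^2$-orthonormal eigenbasis $\{\varphi_{h,j}\}$ with eigenvalues $\lambda_{h,j}>0$. Any rational function of $A_h$, such as $(w+A_h)^{-1}$ or $A_h(w+A_h)^{-1}$, is diagonal in this basis. Its operator norm on $X_h$ is therefore the maximum over $j$ of the modulus of the corresponding scalar function at $\lambda_{h,j}$. The whole proof thus reduces to scalar estimates that are uniform in $\lambda>0$.

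\textbf{Scalar inequality.} For $w=|w|e^{\mathrm{i}\phi}$ with $|\phi|\le\vartheta<\pi$ and $\lambda>0$, the law of cosines gives
\[
|w+\lambda|^2=|w|^2+\lambda^2+2|w|\lambda\cos\phi .
\]
If $\cos\phi\ge0$, this is at least $\tfrac12(|w|+\lambda)^2$. Otherwise it is at least $(1-|\cos\vartheta|)(|w|^2+\lambda^2)$. In both cases we obtain the displayed inequality with $c_\vartheta$ depending only on $\vartheta$.

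\textbf{First bound.} Apply the scalar inequality with $w=z\in\Sigma_\theta$ and $\vartheta=\theta$. This gives $|z+\lambda_{h,j}|^{-1}\le c_\theta^{-1}|z|^{-1}$ for every $j$, hence $\|(z+A_h)^{-1}\|\le C|z|^{-1}$.

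\textbf{Second and third bounds.} Let $z\in\Gamma^\tau_{\theta,\kappa}$. By Lemma \ref{lem:delta}, $\delta:=\delta_\tau(e^{-z\tau})\in\Sigma_\theta$ and $c_0|z|\le|\delta|\le c_1|z|$. Set $w=\delta^{\alpha_0}$. Then $w\in\Sigma_{\alpha_0\theta}$ with $\alpha_0\theta<\theta<\pi$, so the scalar inequality applies with $\vartheta=\alpha_0\theta$.

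For the second bound,
\[
\frac{\lambda}{|w+\lambda|}\le \frac{\lambda}{c_\vartheta(|w|+\lambda)}\le c_\vartheta^{-1},
\]
which gives $\|A_h(\delta^{\alpha_0}+A_h)^{-1}\|\le C$.

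For the third bound, I read the intended statement as the resolvent bound $\|\delta^{\alpha_0-1}(\delta^{\alpha_0}+A_h)^{-1}\|\le C|z|^{-1}$. As printed, the operator $\delta^{\alpha_0-1}A_h(\delta^{\alpha_0}+A_h)$ is unbounded uniformly in $h$, so the inverse must have been dropped. With this reading,
\[
|\delta|^{\alpha_0-1}\,|w+\lambda|^{-1}\le c_\vartheta^{-1}|\delta|^{\alpha_0-1}|\delta|^{-\alpha_0}=c_\vartheta^{-1}|\delta|^{-1}\le C|z|^{-1},
\]
using $|\delta|\ge c_0|z|$.

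\textbf{Main obstacle.} The only genuinely delicate point is that the argument of $\delta_\tau(e^{-z\tau})$ stays in a sector of half-angle strictly less than $\pi$ along the truncated contour. This is exactly what Lemma \ref{lem:delta} supplies, and it relies on the restriction $\theta<\operatorname{arccot}(-2/\pi)$ and on the choice of $\kappa$.
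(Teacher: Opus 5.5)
Your argument is correct and is the standard spectral proof behind \cite[Lemma 13.1]{JinYan}; the paper gives no proof of its own, only that citation. You diagonalize the $L^2$-self-adjoint positive definite $A_h$, use the sector inequality $|w+\lambda|\ge c_\vartheta(|w|+\lambda)$ uniformly in $\lambda>0$, and take the sector and two-sided modulus bounds for $\delta_\tau(e^{-z\tau})$ from Lemma~\ref{lem:delta}.

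Your repair of the misprinted third bound is sensible. The variant actually used later, in the proof of Theorem~\ref{thm_space_dis_err}, is $\|\delta_\tau(e^{-z\tau})^{\alpha_0-1}A_h(\delta_\tau(e^{-z\tau})^{\alpha_0}+A_h)^{-1}\|\le C|z|^{\alpha_0-1}$, and it follows at once from your second bound. The restriction $\theta<\operatorname{arccot}(-2/\pi)$ that you inherit from Lemma~\ref{lem:delta} is harmless, since $\Gamma_{\theta,\kappa}^\tau$ is only defined for that range.
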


%这里由于算子是负拉普拉斯，双线性形式是($(\nabla u,\nabla v)$_L2 )，所以Ritz投影就是椭圆椭圆算子
% \begin{lmm}(cf. \cite{JinYan})\label{lem:RieErr}
% 	For any $y\in H^{2\sigma}(D)\cap H^1_0(D)$, it holds
% 	\begin{equation*}
% 		\left\|R_hy-y\right\|_{L^2(D)}+h\left\|\nabla (R_hy-y)\right\|_{L^2(D)}\leq Ch^{2\sigma}\left\|y\right\|_{H^{2\sigma}(D)}.
% 	\end{equation*}
% \end{lmm}

\begin{lmm}\label{lem:FmFh}
	Let $\aleph_h(t):=\overline{F}(t)-\overline{F}_h(t)P_h$, then $\|\wtNh(z)A^s\|\leq C|z|^{\al_0-1}h^{2-2s}$ for any $z\in\Gamma_{\theta,\kappa}^{\tau}$ and $s\in[0,1/2]$.
\end{lmm}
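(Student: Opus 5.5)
The plan is to pass to the Laplace symbol and reduce everything to a uniform-in-$\mu$ finite element resolvent estimate. By the definitions of $\overline{F}$ and $\bar F_h$, the symbol of $\aleph_h$ on $\Gamma_{\theta,\kappa}^\tau$ is
\begin{equation*}
\wtNh(z)=w(z)\Big[(\mu+A)^{-1}-(\mu+A_h)^{-1}P_h\Big],
\end{equation*}
with
\begin{equation*}
\mu:=(\delta_\tau(e^{-z\tau}))^{\al_0},\qquad w(z):=(\delta_\tau(e^{-z\tau}))^{\al_0-1}\f{z\tau}{e^{z\tau}-1}.
\end{equation*}
By Lemma \ref{lem:delta}, $\delta_\tau(e^{-z\tau})\in\Sigma_\theta$ and $c_0|z|\le|\delta_\tau(e^{-z\tau})|\le c_1|z|$. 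Hence $\mu\in\Sigma_{\theta'}$ for some $\theta'<\pi$ (using $\al_0<1$), and $|\delta_\tau^{\al_0-1}|\le C|z|^{\al_0-1}$. On $\Gamma_{\theta,\kappa}^\tau$ we have $|z\tau|\le \pi/\sin\theta$ and $|\arg z|\le\theta$, so $z\tau$ stays away from the nonzero zeros $2\pi i k$ of $e^{z\tau}-1$, and $|z\tau/(e^{z\tau}-1)|\le C$. Therefore $|w(z)|\le C|z|^{\al_0-1}$. It then suffices to prove, uniformly for $\mu\in\Sigma_{\theta'}$,
\begin{equation*}
\Big\|\Big[(\mu+A)^{-1}-(\mu+A_h)^{-1}P_h\Big]A^s\Big\|\le Ch^{2-2s},\qquad s\in[0,1/2].
\end{equation*}

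For $s=0$, fix $f\in\mathbb H$ and set $y=(\mu+A)^{-1}f$, $y_h=(\mu+A_h)^{-1}P_hf$ and $e=y-y_h$. Galerkin orthogonality gives $\mu(e,\chi)+(\nabla e,\nabla\chi)=0$ for all $\chi\in X_h$. The standard sectorial coercivity argument (multiplying by a suitable rotation $e^{-i\arg\mu/2}$) yields
\begin{equation*}
|\mu|\,\|e\|^2+\|\nabla e\|^2\le C\inf_{\chi\in X_h}\big(|\mu|\,\|y-\chi\|^2+\|\nabla(y-\chi)\|^2\big).
\end{equation*}
Choosing $\chi=R_hy$, applying the Ritz estimate, and using $|\mu|\|y\|+\|Ay\|\le C\|f\|$ gives $\|\nabla e\|\le Ch\|f\|$. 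A duality (Aubin--Nitsche) argument with the adjoint problem $(\bar\mu+A)\phi=e$, which enjoys the same uniform resolvent bounds, then upgrades this to $\|e\|\le Ch^2\|f\|$. This is the case $s=0$ and is the estimate of \cite[Lemma 13.1]{JinYan}.

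For $s=1/2$, take $f=A^{1/2}g$, so $\|f\|_{\hat H^{-1}(D)}=\|g\|$. The resolvent bound in the $\hat H^{-1}\to\hat H^1$ scale gives $|\mu|^{1/2}\|y\|+\|\nabla y\|\le C\|g\|$. Since $y_h$ is the Galerkin approximation of $y$ in the sense above, the same quasi-optimality applies, now with $\chi=P_hy$ (or $R_hy$) and only $H^1$ regularity available. The duality step, using $\hat H^2$ regularity of the adjoint solution, should then give $\|e\|\le Ch\|g\|$. One must check that $(\mu+A_h)^{-1}P_hA^{1/2}$ makes sense on a dense set and extends boundedly; I will interpret $P_hA^{1/2}g$ via $(A^{1/2}g,\chi)=(g,A^{1/2}\chi)$. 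Interpolating between $s=0$ and $s=1/2$ (the operator $A^s$ is analytic in $s$, so Stein/Riesz--Thorin-type interpolation applies, or one can argue directly with $\hat H^{-2s}$ norms) gives $h^{2-2s}$. Multiplying by $|w(z)|\le C|z|^{\al_0-1}$ completes the proof.

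The step I expect to be the main obstacle is the $s=1/2$ endpoint: obtaining the $O(h)$ bound uniformly in $\mu$ for rough data $f\in\hat H^{-1}$. This needs the $\mu$-uniform energy and duality estimates handled carefully with complex $\mu$. If the duality argument proves delicate, I would instead write $e=(\mu+A)^{-1}f-(\mu+A_h)^{-1}P_hf$ as $(\mu+A_h)^{-1}\big[(A_hR_h-P_hA)\big](\mu+A)^{-1}f$. Bounding it through $\|(\mu+A_h)^{-1}A_h\|\le C$ and $\|(I-R_h)y\|\le Ch\|\nabla y\|$ would recover the same rate.
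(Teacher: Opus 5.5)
Your proposal is correct and follows the paper's route. The paper also interpolates between $\|\wtNh(z)\|\le C|z|^{\al_0-1}h^2$ and $\|\wtNh(z)\|_{\hat{H}^{-1}(D)\to L^2(D)}\le C|z|^{\al_0-1}h$, but quotes these endpoints from \cite[(4.6),(4.8)]{NieSunDenSiamJNA}, whereas you derive them by $\mu$-uniform quasi-optimality plus duality (for $s=1/2$, take $\chi=0$ or $\chi=P_hy$ in the energy step, since $\chi=R_hy$ leaves a factor $|\mu|h^2$ that is not uniformly bounded).

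Your fallback identity is wrong as written: $A_hR_h=P_hA$, so the bracket vanishes. The correct form is $e=\big(I-\mu(\mu+A_h)^{-1}P_h\big)(I-R_h)y$, bounded via $\|\mu(\mu+A_h)^{-1}\|\le C$, though your main argument does not need it.
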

\begin{proof}
	Following from \cite[(4.6),(4.8)]{NieSunDenSiamJNA}, we have the estimates $\|\wtNh(z)\|\leq C|z|^{\alpha_0-1}h^2$
	and $\|\wtNh(z)\|_{\hat{H}^{-1}(D)\rightarrow L^{2}(D)}\leq C|z|^{\alpha_0-1}h$ for $z\in\Gamma_{\theta,\kappa}^{\tau}$. Through interpolation property, for $s\in[0,1/2]$, we obtain $\|\wtNh(z)\|_{\hat{H}^{-2s}(D)\rightarrow L^{2}(D)}\leq C|z|^{\alpha_0-1}h^{2-2s}$ for $z\in\Gamma_{\theta,\kappa}^{\tau}.$
	The proof is complete.
\end{proof}

%\begin{lmm}\label{lem:RieErr}\cite{JinYan} 
%	For any $y\in H^{r+1}(D)\cap H^1_0(D)$, it holds
%	\begin{equation}
%		\left\|R_hy-y\right\|_{L^2(D)}+h\left\|\nabla (R_hy-y)\right\|_{L^2(D)}\leq Ch^{r+1}\left\|y\right\|_{H^{r+1}(D)}.
%	\end{equation}
%\end{lmm}

%\begin{lmm}\label{lem:RieErr}\cite{JinYan} 
%	For any $y\in H^q(\Omega)\cap H^1_0(\Omega)$ with $q=1,2$, it holds
%	\begin{equation}
%		\left\|R_hy-y\right\|_{L^2(\Omega)}+h\left\|\nabla (R_hy-y)\right\|_{L^2(\Omega)}\leq Ch^q\left\|y\right\|_{H^q(\Omega)}.
%	\end{equation}
%\end{lmm}

%\begin{lmm}\cite{JinYan} %p35,lemma2.6 
%	 Let $v\in L^2(\Omega),z\in\Sigma_\theta,w=(z^\alpha+A)^{-1}v$,and $w_h=(z^\alpha+A_h)^{-1}P_hv$, then
%	\begin{equation}
%		\|w_h-w\|_{L^2(\Omega)}+h\|\nabla(w_h-w)\|_{L^2(\Omega)}\leq Ch^2\|v\|_{L^2(\Omega)}.
%	\end{equation}
%\end{lmm}

% Similar to Theorem \ref{thm_space_dis_err}, we have the following theorem,
% \begin{thrm}\label{thm:space_dis_err_G}
% 	$G^n:=u^n+v^n$ and $G^n_h:=u^n_h+v^n_h$, assume that $\|A^{-\rho}\|_{\mathcal{L}_2^0}$ with $\rho\in[-1,\frac{1}{2}]$, and $G_0\in\hat{H}^q(D)$ with $q\leq2$, then
% 	\begin{equation*}
% 		\begin{aligned}
% 			(\cE\left\|G^n-G^n_h\right\|^2_{\mathbb{H}})^{1/2}\leq Ch^{\min(2-2\rho,\f{2H}{\al_0}-2\rho-\varepsilon,2)}+Ch^2\|G_0\|_{\hat{H}^q(D)}
% 		\end{aligned}
% 	\end{equation*}
% \end{thrm}

% Combining Theorem \ref{thm_space_dis_err} with the results in \cite{JinLazRayZho, NieSunDenSiamJNA} yields the following statements:
\begin{lmm}\label{lmm:breFnm1-breFhnm1}
Let $\breve{F}(t)$ and $\breve{F}_h(t)$ be defined as in \eqref{def:breF} and \eqref{def:breFh}, respectively. Then
$\|\breve{F}(t_{n-1})-\breve{F}_h(t_{n-1})P_h\|\leq Ct_{n}^{-\alpha_0}h^2.$
\end{lmm}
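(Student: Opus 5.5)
We will write the difference as a single contour integral over $\Gamma_{\theta,\kappa}^\tau$ and estimate its integrand pointwise in $z$. By \eqref{def:breF} and \eqref{def:breFh},
\begin{equation*}
\breve{F}(t_{n-1})-\breve{F}_h(t_{n-1})P_h=\f{1}{2\pi i}\int_{\Gamma_{\theta,\kappa}^\tau}e^{zt_{n-1}}\,\delta_\tau^{\al_0-1}\Big[(\delta_\tau^{\al_0}+A)^{-1}-(\delta_\tau^{\al_0}+A_h)^{-1}P_h\Big]dz,
\end{equation*}
where we abbreviate $\delta_\tau:=\delta_\tau(e^{-z\tau})$. By Lemma \ref{lem:delta}, $\delta_\tau\in\Sigma_\theta$, so $w:=\delta_\tau^{\al_0}$ lies in a sector $\Sigma_{\theta'}$ with $\theta'<\pi$. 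The standard Galerkin resolvent estimate (the one underlying \cite[(4.6)]{NieSunDenSiamJNA} and Lemma \ref{lem:FmFh}) then gives
\[
\|(w+A)^{-1}-(w+A_h)^{-1}P_h\|\le Ch^2 \quad\text{uniformly for } w\in\Sigma_{\theta'}.
\]
It is proved by the duality/Ritz-projection argument: \cite[Lemma 1.1]{ThoVid} with $s=2$ and elliptic regularity on the convex polygon. Combining this with $|\delta_\tau^{\al_0-1}|\le C|z|^{\al_0-1}$ from $c_0|z|\le|\delta_\tau|\le c_1|z|$, the integrand is bounded by $C|e^{zt_{n-1}}||z|^{\al_0-1}h^2$.

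\textbf{Case $n\ge2$.} Here $t_{n-1}\ge t_n/2$. We integrate along the rays and the arc of $\Gamma_{\theta,\kappa}^\tau$, exactly as in \eqref{urtwo}:
\[
\int_{\Gamma_{\theta,\kappa}^\tau}|e^{zt_{n-1}}||z|^{\al_0-1}|dz|\le C\int_\kappa^\infty e^{r t_{n-1}\cos\theta}r^{\al_0-1}dr+C\kappa^{\al_0}\le Ct_{n-1}^{-\al_0},
\]
using $\kappa\le 1/T$ so that the arc term is also $\le Ct_{n-1}^{-\al_0}$. Hence the norm is at most $Ct_{n-1}^{-\al_0}h^2\le C2^{\al_0}t_n^{-\al_0}h^2$.

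\textbf{Case $n=1$.} This is the main obstacle: $t_0=0$, the exponential gives no decay, and we must exploit the truncation $|z|\le \pi/(\tau\sin\theta)$ of $\Gamma_{\theta,\kappa}^\tau$. The integral becomes
\[
\int_{\Gamma_{\theta,\kappa}^\tau}|z|^{\al_0-1}|dz|\le C\int_\kappa^{\pi/(\tau\sin\theta)}r^{\al_0-1}dr+C\le C\tau^{-\al_0}=Ct_1^{-\al_0},
\]
which is the claimed bound for $n=1$. The same device was already used for $\breve F(0)$ in the proof of Lemma \ref{lem:AsigVn_bound}.

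If the uniform resolvent estimate in $w$ is not directly citable, we will derive it from Lemma \ref{lem:FmFh}. Writing $\widetilde{\aleph}_h(z)$ as $\delta_\tau^{\al_0-1}[(\delta_\tau^{\al_0}+A)^{-1}-(\delta_\tau^{\al_0}+A_h)^{-1}P_h]$ times the factor $z\tau/(e^{z\tau}-1)$, we note that this factor is bounded above and below on $\Gamma_{\theta,\kappa}^\tau$, since $|z\tau|\le\pi/\sin\theta$ stays away from $2\pi i\mathbb{Z}\setminus\{0\}$ by the choice of $\theta$. Dividing by it turns Lemma \ref{lem:FmFh} with $s=0$ into exactly the needed bound $C|z|^{\al_0-1}h^2$.
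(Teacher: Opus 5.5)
Your proposal is correct and follows the paper's proof essentially step for step. The paper also factors out $\delta_\tau(e^{-z\tau})^{\alpha_0-1}$, applies the uniform sectorial Galerkin resolvent bound $\|(w+A)^{-1}-(w+A_h)^{-1}P_h\|\le Ch^2$ (citing \cite[Lemma 3.4]{Bazhlekova2015}) together with Lemma \ref{lem:delta}, and then treats $n\ge 2$ via $t_{n-1}^{-\alpha_0}\le 2^{\alpha_0}t_n^{-\alpha_0}$ and $n=1$ via the truncation $|z|\le \pi/(\tau\sin\theta)$ of $\Gamma_{\theta,\kappa}^\tau$, which gives $C\tau^{-\alpha_0}h^2$.
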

\begin{proof}
Since $\|(z^{\alpha}+A)^{-1}-(z^{\alpha}+A_{h})^{-1}P_{h}\|\leq Ch^{2}$ for all $z\in\Sigma_{\theta}$ \cite[Lemma 3.4]{Bazhlekova2015} and Lemma \ref{lem:delta}, for $n\geq 2$, we have
\begin{equation*}
    \begin{aligned}
        &\|\breve{F}(t_{n-1})-\breve{F}_h(t_{n-1})P_h\|\\
        % \leq&\int_{\Gamma_{\theta,\kappa}^\tau}e^{zt_{n-1}}\left\|\left(\left(\delta_\tau(e^{-z\tau})+(\delta_\tau(e^{-z\tau}))^{1-\al_0}A\right)^{-1}-\left(\delta_\tau(e^{-z\tau})+(\delta_\tau(e^{-z\tau}))^{1-\al_0}A_h\right)^{-1}P_h\right)\right\|dz\\
        \leq&\int_{\Gamma_{\theta,\kappa}^\tau}e^{zt_{n-1}}|(\delta_\tau(e^{-z\tau}))^{\al_0-1}|\left\|\left((\delta_\tau(e^{-z\tau}))^{\alpha_0}+A\right)^{-1}-\left((\delta_\tau(e^{-z\tau}))^{\alpha_0}+A_h\right)^{-1}P_h\right\||dz|\\
        \leq& C\int_{\Gamma_{\theta,\kappa}^\tau}e^{zt_{n-1}}|z|^{\al_0-1}h^2|dz|\leq Ct_{n-1}^{-\alpha_0}h^2\le C2^{\al_0}t_{n}^{-\alpha_0}h^2.
    \end{aligned}
\end{equation*}

For $n=1$, $\|\breve{F}(0)-\breve{F}_h(0)P_h\|\le Ch^2\int_{\Gamma_{\theta,\kappa}^\tau}|z|^{\al_0-1}|dz|\leq C\tau^{-\al_0}h^2.$
% Since
% \begin{equation*}
% \begin{aligned}
%      \int_{\Gamma_{\theta,\kappa}^\tau}|z|^{\al_0-1}|dz|=&2\int_\kappa^{\pi/(\tau\sin\theta)}r^{\al_0-1}dr+\int_{-\theta}^{\theta}\kappa^{\al_0}d\psi\\
%      =&\frac{2}{\al_0}\big((\pi/(\tau\sin\theta))^{\al_0}-\kappa^{\al_0}\big)+2\kappa^{\al_0}\theta \le C\tau^{-\al_0},
% \end{aligned}
% \end{equation*}
%  Hence $\int_{\Gamma_{\theta,\kappa}^\tau}|z|^{\al_0-1}|dz|\le C\tau^{-\al_0}.$
% Therefore $\|\breve{F}(0)-\breve{F}_h(0)P_h\|\le C\tau^{-\al_0}h^2$.
The proof is finished.
\end{proof}
\subsection{Error estimates}
We derive spatial error estimates for the fully discrete schemes \eqref{full_discrete_scheme} and \eqref{full_discre_2}. Combining these estimates with the temporal error estimates, we obtain an overall error estimate for the fully discrete approximation $G_h^n$ of the original problem \eqref{model_2}. We begin with the error estimate for scheme \eqref{full_discrete_scheme}.
\begin{thrm}\label{thm_space_dis_err}
	Let $u^n$ and $u^h_n$ are the solutions of models \eqref{scheme} and \eqref{full_discrete_scheme}, respectively. Assume that $\|A^{-\rho}\|_{\mathcal{L}_2^0}<\infty$ with $\rho\in[0,\frac{H}{\al_0}-\frac{1}{2})\cap[0,\frac{1}{2}]$, then
	$\big(\cE\left[\left\|u^n-u^n_h\right\|^2\right]\big)^{1/2}\leq Ch^{\min(2-2\rho,\f{2H}{\al_0}-2\rho-\varepsilon)}$.
\end{thrm}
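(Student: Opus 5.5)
We compare the two integral representations \eqref{u_nemeric} and \eqref{uh_nemeric}. Subtracting them gives
\begin{equation*}
u^n-u^n_h=\aleph_h(t_n)*\overline{\p}_\tau W^H_Q(t_n)-\big(\overline{F}(t_n)*I_\tau Au^n-\overline{F}_h(t_n)*I_\tau A_hu^n_h\big),
\end{equation*}
where $\aleph_h=\overline{F}-\overline{F}_hP_h$ as in Lemma \ref{lem:FmFh}. We treat the stochastic term and the memory term separately and close with Lemma \ref{lmm:disGrowIneq}. We write $e^n:=u^n-u^n_h$ and $r:=\min(2-2\rho,\f{2H}{\al_0}-2\rho-\eps)$.

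\textbf{Stochastic term.} For $\cE[\|\aleph_h(t_n)*\overline{\p}_\tau W^H_Q(t_n)\|^2]$ we proceed exactly as for $R_{21}$–$R_{222}$ in Lemma \ref{lem:AsigUn_bound}. We rewrite the term as $\int_0^{t_n}\Phi(s)\,dW^H_Q(s)$ with $\Phi(s)=\tau^{-1}\int_{t_{k-1}}^{t_k}\aleph_h(t_n-\eta)\,d\eta$ on $(t_{k-1},t_k]$. We then apply Lemma \ref{lmm:nieDW} and the bound $\|\Phi Q^{1/2}\|_{\mathcal{L}_2}\le\|\Phi A^\rho\|\,\|A^{-\rho}\|_{\mathcal{L}_2^0}$. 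To estimate $\|\aleph_h(t)A^\rho\|$ we use Lemma \ref{lem:FmFh} with $s=\rho\le 1/2$, interpolated against the plain stability bound $\|\wtNh(z)A^{\rho}\|\le C|z|^{\rho\al_0-1}$ (no $h$ gain). This gives
\begin{equation*}
\|\wtNh(z)A^\rho\|\le C|z|^{\rho\al_0-1+\theta_0\al_0(1-\rho)}h^{\theta_0(2-2\rho)},\qquad \theta_0\in[0,1],
\end{equation*}
and after the contour integration $\|\aleph_h(t)A^\rho\|\le Ct^{-\al_0(\rho+\theta_0(1-\rho))}h^{\theta_0(2-2\rho)}$.

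The double integral with kernel $|s-r|^{2H-2}$ converges if and only if $2H-2\al_0(\rho+\theta_0(1-\rho))>0$. We take $\theta_0=1$ when $H>\al_0$, which yields $h^{2-2\rho}$. Otherwise we take $\theta_0(1-\rho)=H/\al_0-\rho-\eps/2$, which yields $h^{2H/\al_0-2\rho-\eps}$. This is where the restriction $\rho<H/\al_0-1/2$ enters: it guarantees $\theta_0\ge$ the threshold at which Lemma \ref{lem:FmFh} with $s\le1/2$ is applicable.

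\textbf{Memory term.} We split it as
\begin{equation*}
\aleph_h(t_n)*I_\tau Au^n+\overline{F}_h(t_n)*\big(P_hI_\tau Au^n-I_\tau A_hu^n_h\big).
\end{equation*}
For the first piece we write $Au^n=A^{1-\sigma}A^\sigma u^n$ and use Lemma \ref{lem:AsigUn_bound}, Lemma \ref{lem:bnkbound}, and the estimate $\|\wtNh(z)A^{1-\sigma}\|\le C|z|^{\al_0\sigma' -1}h^{2\sigma}$ obtained as above. With $\sigma$ from the Remark, $2\sigma=r$, so this piece is bounded by $Ch^{2r}$ in mean square.

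For the second piece we insert the Ritz projection: $P_hA=A_hR_h$, so $P_hAu^k-A_hu^k_h=A_h(R_hu^k-u^k)+A_he^k_h$, with $e_h^k=P_hu^k-u_h^k$ or, more conveniently, directly $A_h(R_hu^k-u^k_h)$. The factor $A_h$ is absorbed by $\int_0^{t_n}\|\overline{F}_h(t_n-s)A_h\|ds\le C$ (the discrete analogue of \eqref{ineq:disint}, via Lemma \ref{lem:Ah}). The resulting term is then bounded by $C\sum_k|b_{n,k}|\,\cE\|R_hu^k-u^k_h\|^2$, and
\begin{equation*}
\|R_hu^k-u^k_h\|\le\|R_hu^k-u^k\|+\|e^k\|\le Ch^{2\sigma}\|A^\sigma u^k\|+\|e^k\|.
\end{equation*}
Here the Ritz estimate is interpolated between $s=0$ and $s=2$, which requires some care for $2\sigma<1$. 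I expect this Ritz/projection bookkeeping, together with the interpolation in the stochastic term, to be the main obstacle. If the Ritz route fails for low $\sigma$, I would instead keep $\overline{F}_h(t_n)A_hP_h$ and estimate $\overline{F}_h P_h A^{1-\sigma}$ directly using the negative-norm stability of $P_h$.

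\textbf{Closing.} Collecting all terms gives
\begin{equation*}
\cE\|e^n\|^2\le Ch^{2r}+C\tau\sum_{k=1}^n t_{n-k+1}^{\al_*-1-\tilde\eps}\,\cE\|e^k\|^2.
\end{equation*}
Lemma \ref{lmm:disGrowIneq} then yields $\cE\|e^n\|^2\le Ch^{2r}$, which is the claimed bound.
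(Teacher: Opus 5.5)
Your argument follows the paper's proof. It uses the same splitting of the memory term via $P_hA=A_hR_h$, and Lemma \ref{lem:FmFh} applied to $A^{1-\sigma}$ on the $\aleph_h(t_n)*I_\tau Au^n$ piece, together with Lemmas \ref{lem:AsigUn_bound} and \ref{lem:bnkbound}. It then closes with the Ritz estimate and Lemma \ref{lmm:disGrowIneq}. The differences are harmless. The paper cites \cite[Theorem 4.1]{NieSunDenSiamJNA} for the stochastic term rather than re-deriving it. It also runs the discrete Gronwall argument on $R_hu^n-u_h^n$ and then applies the triangle inequality, whereas you move the Ritz error into the source term and apply Gronwall to $e^n$ directly.

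The gap is that you never check the one fact that resolves the step you call the main obstacle. You also attach the hypothesis $\rho<\f{H}{\al_0}-\f{1}{2}$ to the wrong place.

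\begin{itemize}
\item In your stochastic estimate this hypothesis plays no role. There you use Lemma \ref{lem:FmFh} only at $s=\rho$, together with the stability bound $\|\wtNh(z)A^\rho\|\le C|z|^{\rho\al_0-1}$. These need only $\rho\le\f{1}{2}$ (so that $\|A_h^{-\rho}P_hA^\rho\|\le C$) and $\rho<\f{H}{\al_0}$ (for the $|s-r|^{2H-2}$ integral to converge). The geometric-mean interpolation itself is unproblematic.
\item What the hypothesis actually gives is $\sigma=\min\{1-\rho,\f{H}{\al_0}-\rho-\eps\}\ge\f{1}{2}$ for small $\eps$.
\end{itemize}

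You need $\sigma\ge\f{1}{2}$, without having checked it, in two places:
\begin{itemize}
\item Applying Lemma \ref{lem:FmFh} with $s=1-\sigma$ requires $1-\sigma\le\f{1}{2}$; your $\sigma'$ is then simply $1$.
\item The Ritz bound $\|R_hu^k-u^k\|\le Ch^{2\sigma}\|A^\sigma u^k\|$ holds only for $2\sigma\in[1,2]$. Since $R_h$ is defined on $H^1_0(D)$ and not on $L^2(D)$, there is no interpolation down to $s=0$.
\end{itemize}

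Once $\sigma\ge\f{1}{2}$ is recorded, the case $2\sigma<1$ never arises and your fallback is unnecessary.

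A minor point: $A_h(R_hu^k-u^k)$ is undefined, since $u^k\notin X_h$. Write $P_hAu^k-A_hu_h^k=A_h(R_hu^k-u_h^k)$ directly, as you eventually do.
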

\begin{proof}
Subtracting \eqref{uh_nemeric} from \eqref{u_nemeric}, we obtain the expectation% of $\left\|u^n-u^n_h\right\|^2_{\mathbb{H}}$
\begin{equation*}
	\begin{aligned}
		\cE\left[\left\|u^n-u^n_h\right\|^2\right]=&2\cE \left[\left\|\overline{F}(t_n)*\left(\overline{\p}_\tau W^H_Q(t_n)\right)-\bar{F}_{h}(t_n)*\left(P_h\overline{\p}_\tau W^H_Q(t_n)\right)\right\|^2\right]\\
		&+2\cE\left[\left\|\bar{F}_{h}(t_{n})*\left(I_\tau A_hu^n_h\right)-\bar{F}(t_{n})*\left(I_\tau Au^n\right)\right\|^2\right].
	\end{aligned}
\end{equation*}

For $\rho\in[-1,\f{1}{2}],~0<\eps\ll \f{2H}{\al_0}-2\rho$, there holds \cite[Theorem 4.1]{NieSunDenSiamJNA},
\begin{equation*}
	\begin{aligned}
		\left(\cE \left[\left\|\overline{F}(t_n)*\left(\overline{\p}_\tau W^H_Q(t_n)\right)-\bar{F}_{h}(t_n)*\left(P_h\overline{\p}_\tau W^H_Q(t_n)\right)\right\|^2\right]\right)^{1/2}\leq Ch^{\min(2-2\rho,\f{2H}{\al_0}-2\rho-\varepsilon,2)}.
	\end{aligned}
\end{equation*}
Since $A_hR_h=P_hA$, now we estimate
\begin{equation*}
\begin{aligned}
		&\cE\left[\left\|\bar{F}_{h}(t_{n})*(I_\tau A_hu^n_h)-\bar{F}(t_{n})*\left(I_\tau A_hR_hu^n\right)\right\|^2\right]\\
		% =&\cE\bigg[\big\|\bar{F}_{h}(t_{n})*\left(I_\tau A_hu^n_h\right)-\bar{F}_{h}(t_{n})*\left(I_\tau A_hR_hu^n\right)\\
		% &\hspace{6em}+\bar{F}_{h}(t_{n})*(I_\tau P_hAu^n)-\bar{F}(t_{n})*\left(I_\tau Au^n\right)\big\|^2\bigg]\\
		% \leq&\cE\bigg[2\big\|\bar{F}_{h}(t_{n})*\left(I_\tau A_hu^n_h\right)-\bar{F}_{h}(t_{n})*\left(I_\tau A_hR_hu^n\right)\big\|^2_{\mathbb{H}}\\
		% &\hspace{6em}+2\big\|\bar{F}_{h}(t_{n})*\left(I_\tau A_hR_hu^n\right)-\bar{F}(t_{n})*\left(I_\tau Au^n\right)\big\|^2_{\mathbb{H}}\bigg]\\
		% \leq&\cE\bigg[2\big\|\bar{F}_{h}(t_{n})*\left(I_\tau A_hu^n_h\right)-\bar{F}_{h}(t_{n})*\left(I_\tau A_hR_hu^n\right)\big\|^2\\\\\\这段可以打开
		&\hspace{6em}+2\big\|\bar{F}_{h}(t_{n})*\left(I_\tau P_hAu^n\right)-\bar{F}(t_{n})*\left(I_\tau Au^n\right)\big\|^2\bigg]\\
		\leq&2\cE\bigg[\Big\|\left(\bar{F}_{h}(t_{n})A_h\right)*\bigg(\big(\sum_{k=1}^{n}b_{n,k}u_h^k\big)-\big(\sum_{k=1}^{n}b_{n,k}R_hu^k\big)\bigg)\Big\|^2\bigg]\\
		&\hspace{6em}+2\cE\bigg[\bigg\|\left(\bar{F}_{h}(t_{n})P_h-\bar{F}(t_{n})\right)*\big(\sum_{k=1}^{n}b_{n,k}Au^k\big)\bigg\|^2\bigg]\\
		\leq&2\cE\bigg[\Big(\int_{0}^{t_n}\left\|\overline{F}_{h}(t_n-s)A_h\right\|\cdot\Big\|\sum_{k=1}^{n}b_{n,k}\left(u_h^k-R_hu^k\right)\Big\|ds\Big)^2\bigg]\\
		&\hspace{2em}+2\cE\bigg[\Big(\int_{0}^{t_n}\left\|\left(\bar{F}_{h}(t_{n}-s)P_h-\bar{F}(t_{n}-s)\right)A^{1-\sigma}\right\|\cdot\Big\|\Big(\sum_{k=1}^{n}b_{n,k}A^\sigma u^k\Big)\Big\|ds\Big)^2\bigg],
\end{aligned}
\end{equation*}
where $\sigma=\min\{1-\rho, \f{H}{\al_0}-\rho-\f{\eps}{2}\}$ in Theorem \ref{thm_reg}.
According to Lemma \ref{lem:Ah}, %similar to \eqref{BoundFA}, 
\begin{equation*}
	\begin{aligned}
		\left\|\overline{F}_{h}(t_n-s)A_h\right\|
		&\leq C\int_{\Gamma_{\theta,\kappa}^\tau}e^{z(t_n-s)}\left\|\left(\delta_\tau(e^{-z\tau})+(\delta_\tau(e^{-z\tau}))^{1-\al_0}A_{h}\right)^{-1}A_h\f{z\tau}{e^{z\tau}-1}\right\|dz\\
		&\leq C\int_{\Gamma_{\theta,\kappa}^\tau}e^{z(t_n-s)}|z|^{\al_0-1}dz,
	\end{aligned}
\end{equation*}
then applying Lemma \ref{lem:bnkbound}, we obtain
\begin{equation*}
	\begin{aligned}
	&\cE\bigg[\Big(\int_{0}^{t_n}\Big\|\overline{F}_{h}(t_n-s)A_h\Big\|\cdot\bigg\|\sum_{k=1}^{n}b_{n,k}\left(u_h^k-R_hu^k\right)\bigg\|ds\Big)^2\bigg]\\
	\leq&C\Big(\int_{0}^{t_n}\int_{\Gamma_{\theta,\kappa}^\tau}e^{z(t_n-s)}|z|^{\al_0-1}dzds\Big)^2\cE\bigg[\Big\|\sum_{k=1}^{n}b_{n,k}\left(u_h^k-R_hu^k\right)\Big\|^2\bigg]\\
	\leq &C\sum_{k=1}^{n}|b_{n,k}|\cE\left[\left\|u_h^k-R_hu^k\right\|^2\right]\leq C\tau\sum_{k=1}^{n}t_{n-k+1}^{\al_*-1-\tilde{\eps}}\cE\left[\left\|u_h^k-R_hu^k\right\|^2\right].
	\end{aligned}
\end{equation*}

%taking an extra assumption that $\sigma>1/2$,
According to Theorem \ref{time_dic_err} and Lemmas \ref{lem:AsigUn_bound} and \ref{lem:FmFh},
\begin{equation*}%\label{boundFhPhmF}
	\begin{aligned}
		&\cE\bigg[\Big(\int_{0}^{t_n}\Big\|\left(\bar{F}_{h}(t_{n}-s)P_h-\bar{F}(t_{n}-s)\right)A^{1-\sigma}\Big\|\cdot\Big\|\Big(\sum_{k=1}^{n}b_{n,k}A^\sigma u^k\Big)\Big\|ds\Big)^2\bigg]\\
    	\leq&C\cE\bigg[\bigg(\int_{0}^{t_n}\Big\|\int_{\Gamma_{\theta ,\kappa}^{\tau}}e^{z(t_n-s)}(\delta_{\tau}(e^{-z\tau}))^{\alpha_0-1}\big(((\delta_{\tau}(e^{-z\tau}))^{\alpha_0}+A_{h})^{-1}P_h\\
    	&\hspace{3em}-((\delta_{\tau}(e^{-z\tau}))^{\alpha_0}+A)^{-1}\big)A^{1-\sigma}\frac{z\tau}{e^{z\tau}-1}dz\Big\|\cdot\Big\|\Big(\sum_{k=1}^{n}b_{n,k}A^\sigma u^k\Big)\Big\|ds\bigg)^2\bigg]\\
    	\leq& Ch^{4\sigma}\cE\bigg[\Big(\int_{0}^{t_n}\int_{\Gamma_{\theta ,\kappa}^{\tau}}e^{z(t_n-s)}|z|^{\al_0-1}|dz|\Big\|\Big(\sum_{k=1}^{n}b_{n,k}A^{\sigma}u^k\Big)\Big\|ds\Big)^2\bigg]\\
%    	\leq& Ch^{4\sigma}\cE\left[\left(\left\|\sum_{k=1}^{n}b_{n,k}A^\sigma  u^k\right)\right\|_{\mathbb{H}}^2\right]
    	% \leq& Ch^{4\sigma}\left(\sum_{k=1}^{n}|b_{n,k}|\sum_{k=1}^{n}|b_{n,k}|\cE\left[\left\|A^\sigma  u^k\right\|_{\mathbb{H}}^2\right]\right)\leq Ch^{4\sigma}.%~~(\sigma<1-\rho)
        \leq& Ch^{4\sigma}\bigg(\sum_{k=1}^{n}|b_{n,k}|\cE\left[\left\|A^\sigma  u^k\right\|^2\right]\bigg)\leq Ch^{4\sigma}.%~~(\sigma<1-\rho)
	\end{aligned}
\end{equation*}
% which is obtained by \eqref{sumbnk} 
% and
% \begin{equation*} 
% 	\begin{aligned}
% 		\cE\left[\|A^\sigma u^k\|_{\mathbb{H}}^2\right]&=\cE\left[\|A^\sigma (u^k-u(t_k)+u(t_k))\|_{\mathbb{H}}^2\right]\\
% 		&\leq 2\cE\left[\|A^\sigma (u^k-u(t_k))\|_{\mathbb{H}}^2\right]+2\cE\left[\|A^\sigma u(t_k)\|_{\mathbb{H}}^2\right]\leq C.
% 	\end{aligned}
% \end{equation*}
% 按道理时间在怎么离散不影响空间正则性吧，所以虽然是Asigma u^k，仍然可以控制
% According to Theorem \ref{thm_reg_G} and Lemma \ref{lem:RieErr}, 
Consequently,
we obtain
\begin{equation}\label{ineq:un-uhn}
	\begin{aligned}
		&\cE\left[\left\|u^n-u^n_h\right\|^2\right]\leq Ch^{\min(4-4\rho,\f{4H}{\al_0}-4\rho-2\varepsilon)}+C\tau\sum_{k=1}^{n}t_{n-k+1}^{\al_*-1-\tilde{\eps}}\cE\left[\left\|u_h^k-R_hu^k\right\|^2\right].
	\end{aligned}
\end{equation}
Choose $\eps$ sufficiently small such that $0<\eps\ll2(\f{H}{\al_0}-\rho)-1$, which ensures that $2\sigma\in[1,2]$. By the inequality $\cE\left[\left\|R_hu^n-u_h^n\right\|^2\right]\leq 2\cE\left[\left\|R_hu^n-u^n\right\|^2\right]+2\cE\left[\left\|u^n-u_h^n\right\|^2\right]$, together with the Ritz projection estimate and Lemma \ref{lem:AsigUn_bound}, we obtain $\cE\left[\left\|R_hu^n-u^n_h\right\|^2\right]\leq Ch^{4\sigma}\cE[\|A^\sigma u^n\|^2]\leq Ch^{4\sigma}$. Combining these estimates with \eqref{ineq:un-uhn}, we have
\begin{equation*}
	\begin{aligned}
		\cE\left[\left\|R_hu^n-u^n_h\right\|^2\right]\leq Ch^{\min(4-4\rho,\f{4H}{\al_0}-4\rho-2\varepsilon)}+C\tau\sum_{k=1}^{n}t_{n-k+1}^{\al_*-1-\tilde{\eps}}\cE\left[\left\|u_h^k-R_hu^k\right\|^2\right].
	\end{aligned}
\end{equation*}

By Lemma \ref{lmm:disGrowIneq}, we obtain
       $ \left(\cE\left\|R_hu^n-u^n_h\right\|^2\right)^{1/2}\leq Ch^{\min(2-2\rho,\f{2H}{\al_0}-2\rho-\varepsilon)}$, %因为rho>0，所以删除2
	which, together with
% Applying Lemma \ref{lem:RieErr},
 \eqref{ineq:un-uhn}, completes the proof.
\end{proof}

Then we provide spatial error estimate for the fully discrete scheme \eqref{full_discre_2}.
\begin{thrm}\label{thm_space_v_dis_err}
    Let $v^n$ and $v^h_n$ denote the solutions of models \eqref{scheme_v} and \eqref{full_discre_2}, respectively.  Assume that $\|A^{-\rho}\|_{\mathcal{L}_2^0}< \infty$ with $\rho\in[0, \frac{H}{\alpha_0}) \cap[0,\frac{1}{2}]$. If $G_0\in\mathbb{H}$, then we have the error estimate $\|v^n-v^n_h\|\leq Ct_n^{-\al_0}h^2.$
\end{thrm}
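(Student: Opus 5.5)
We subtract the representation \eqref{vn_hnumer} from \eqref{vn_numer} and, since $A_hR_h=P_hA$, write
\begin{equation*}
v^n-v^n_h=\big(\breve F(t_{n-1})-\breve F_h(t_{n-1})P_h\big)G_0+\big(\overline F_h(t_n)P_h-\overline F(t_n)\big)*I_\tau Av^n+\overline F_h(t_n)A_h*I_\tau\big(v_h^n-R_hv^n\big).
\end{equation*}
Lemma \ref{lmm:breFnm1-breFhnm1} bounds the first term by $Ct_n^{-\al_0}h^2\|G_0\|$.

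The second term is handled as in the proof of Theorem \ref{thm_space_dis_err}. We write it as $\int_0^{t_n}\aleph_h(t_n-s)A^{1-\sigma}\sum_kb_{n,k}A^{\sigma}v^k\,ds$. Choosing $\sigma$ close to $1$ would give the full $h^2$, but Lemma \ref{lem:FmFh} only allows $A^s$ with $s\le 1/2$. We therefore plan instead to use the resolvent estimate $\|(z^{\al_0}+A)^{-1}-(z^{\al_0}+A_h)^{-1}P_h\|\le Ch^2$ together with $\|A^{-1}\|$-type smoothing. Concretely, we write $A\,(\cdot)=A^{1-\sigma}A^{\sigma}$ with $\sigma=1$ and use $\|\aleph_h(z)A\|\le C|z|^{\al_0-1}\cdot|z|^{\al_0}h^2$, obtained from $\aleph_h(z)A=\aleph_h(z)(z^{\al_0}+A)-z^{\al_0}\aleph_h(z)$ and Galerkin orthogonality. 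Combined with Lemma \ref{lem:AsigVn_bound} (giving $\|Av^k\|\le Ct_k^{-\al_0}$) and Lemma \ref{lem:bnkbound}, this term becomes
\begin{equation*}
Ch^2\int_0^{t_n}(t_n-s)^{-\al_0}\,ds\;\tau\sum_k t_{n-k+1}^{\al_*-1-\tilde\eps}\,t_k^{-\al_0}\le Ch^2t_n^{-\al_0}\cdot C,
\end{equation*}
after a discrete Beta-function estimate. If this singularity bookkeeping is too lossy, the fallback is $\sigma=1/2$ in Lemma \ref{lem:FmFh}, which gives only $h$ and would need to be upgraded. This term is the main obstacle: we need both the full $h^2$ and the time weight $t_n^{-\al_0}$ at once, without the spatial regularity exceeding what Lemma \ref{lem:AsigVn_bound} provides. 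We expect to exploit the smoothing of the initial data through the weight $t_k^{-\al_0}$.

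For the third term, the bound $\|\overline F_h(t_n-s)A_h\|\le C\int_{\Gamma^\tau_{\theta,\kappa}}|e^{z(t_n-s)}||z|^{\al_0-1}|dz|$ from Lemma \ref{lem:Ah} gives a uniformly bounded time integral, hence a contribution $C\tau\sum_k t_{n-k+1}^{\al_*-1-\tilde\eps}\|v^k_h-R_hv^k\|$. Adding and subtracting $R_hv^n$, with the Ritz estimate $\|R_hv^n-v^n\|\le Ch^2\|Av^n\|\le Ch^2t_n^{-\al_0}$ (Lemma \ref{lem:AsigVn_bound}, $\sigma=1$), we obtain for $\varphi_n:=\|R_hv^n-v^n_h\|$ the inequality
\begin{equation*}
\varphi_n\le Ct_n^{-\al_0}h^2+C\tau\sum_{k=1}^nt_{n-k+1}^{\al_*-1-\tilde\eps}\varphi_k.
\end{equation*}
The $k=n$ term is absorbed for small $\tau$. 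Lemma \ref{lmm:disGrowIneq} with $\eta_1=\al_0<1$ then yields $\varphi_n\le Ct_n^{-\al_0}h^2$, and the triangle inequality with the Ritz estimate finishes the proof.
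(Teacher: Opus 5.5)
Your skeleton matches the paper's proof: the splitting via $A_hR_h=P_hA$, Lemma~\ref{lmm:breFnm1-breFhnm1} for the initial-data term, the bound $\int_0^{t_n}\|\overline F_h(t_n-s)A_h\|\,ds\le C$ with Lemma~\ref{lem:bnkbound} for the feedback term, the Ritz estimate with $\|Av^n\|\le Ct_n^{-\al_0}$, and Lemma~\ref{lmm:disGrowIneq}. Your absorption of the $k=n$ term is a detail the paper skips.

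The gap is in the middle term $(\overline F_h(t_n)P_h-\overline F(t_n))*I_\tau Av^n$: the argument you give there does not support the line you display.
\begin{itemize}
\item The bound $\|\wtNh(z)A\|\le C|z|^{2\al_0-1}h^2$ is false in the $L^2$ operator norm. Fix $h$ and let $j\to\infty$ along the eigenfunctions $\varphi_j$. Then $P_h\varphi_j\to0$, and $(I-P_h)$ annihilates the discrete part of $\wtNh(z)A\varphi_j$. Hence $\|(I-P_h)\wtNh(z)A\varphi_j\|\to\big|(\delta_\tau(e^{-z\tau}))^{\al_0-1}\tfrac{z\tau}{e^{z\tau}-1}\big|$, which is nonzero and independent of $h$.
\item Galerkin orthogonality only yields $\|\wtNh(z)Af\|\le C|z|^{\al_0-1}h^2\|Af\|$, that is, $h^2$ measured against $\|Af\|$ rather than $\|f\|$.
\item Even granting your bound, it gives the kernel $\int_{\Gamma^\tau_{\theta,\kappa}}|e^{z(t_n-s)}||z|^{2\al_0-1}|dz|\sim(t_n-s)^{-2\al_0}$. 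Its time integral is not bounded uniformly in $\tau$ when $\al_0\ge1/2$.
\item The bound would also have to be paired with $\|v^k\|$, not $\|Av^k\|$, since it already contains the $A$.
\end{itemize}

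The obstacle you describe comes from conflating $\sigma$ with the exponent $s$ of Lemma~\ref{lem:FmFh}. In the splitting $A=A^{1-\sigma}A^{\sigma}$ the operator carries $A^{1-\sigma}$. So $\sigma=1$ is the case $s=0$, which the lemma allows and which gives the full $h^2$: $\|\wtNh(z)\|\le C|z|^{\al_0-1}h^2$, hence $\|\aleph_h(t)\|\le Ch^2\int_{\Gamma^\tau_{\theta,\kappa}}|e^{zt}||z|^{\al_0-1}|dz|\le Ch^2t^{-\al_0}$.

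Pair this with $\|Av^k\|\le Ct_k^{-\al_0}$ (Lemma~\ref{lem:AsigVn_bound} with $\sigma=1$) and you get exactly your displayed line. The discrete Beta bound $\tau\sum_k t_{n-k+1}^{\al_*-1-\tilde{\eps}}t_k^{-\al_0}\le Ct_n^{\al_*-\tilde{\eps}-\al_0}$, together with $1-\al_0+\al_*-\tilde{\eps}>0$, then closes it as $Ch^2t_n^{-\al_0}$. This is what the paper means by taking $\sigma=1$ in Lemma~\ref{lem:AsigVn_bound}.

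No fallback to $s=1/2$ is needed. A positive $s$ was forced in Theorem~\ref{thm_space_dis_err} only because $u^k$ has just $A^\sigma$-regularity with $\sigma<1$. Here $v^k\in D(A)$ with the integrable weight $t_k^{-\al_0}$. With this replacement your proof coincides with the paper's.
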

    \begin{proof}
Applying \eqref{vn_numer} and \eqref{vn_hnumer} yields
\begin{equation*}%\label{ineq:vtn-vn}
    \begin{aligned}
        \|v^n-v^n_h\|&\leq \|(\breve{F}(t_{n-1})-\breve{F}_h(t_{n-1})P_h)G_0\|
        +\left\|\overline{F}(t_n)*I_\tau Av^n-\overline{F}_h(t_n)*I_\tau A_hv^n_h\right\|.
    \end{aligned}
\end{equation*}% 由于t_{-al_0}>>t_{-sigma \al_0}，所以t_{-sigma \al_0}项被第一项覆盖了
By using Lemma \ref{lmm:breFnm1-breFhnm1}, we estimate
\begin{equation*}
    \begin{aligned}
        \|(\breve{F}(t_{n-1})-\breve{F}_h(t_{n-1})P_h)G_0\|\leq \|G_0\|\cdot\|\breve{F}(t_{n-1})-\breve{F}_h(t_{n-1})P_h\|\leq Ct_n^{-\alpha_0}h^2.
    \end{aligned}
\end{equation*}

% Similar to the proof in Theorem \ref{thm_space_dis_err}  
From Lemmas \ref{lem:bnkbound} and \ref{lem:AsigVn_bound}, we obtain
\begin{equation*}
    \begin{aligned}
        \left\|\overline{F}(t_n)*I_\tau Av^n-\overline{F}_h(t_n)*I_\tau A_hv^n_h\right\|\leq Ct_n^{-\al_0}h^{2}+C\tau\sum_{k=1}^{n}t_{n-k+1}^{\al_*-1-\tilde{\eps}}\cE\left[\left\|v_h^k-R_hv^k\right\|^2\right],
    \end{aligned}
\end{equation*}
where we take $\sigma=1$ in Lemma \ref{lem:AsigVn_bound}. Using Lemma \ref{lmm:disGrowIneq} completes the proof.
\end{proof}

Combining Theorems \ref{thm_space_dis_err} and \ref{thm_space_v_dis_err} yields the following main theorem.
\begin{thrm}\label{thm:space_dis_err_G}
    Assume  $\|A^{-\rho}\|_{\mathcal{L}_2^0}< \infty$ with $\rho\in[0, \frac{H}{\alpha_0}-\frac{1}{2}) \cap[0,\frac{1}{2}]$ and $G_0\in\mathbb{H}$. Then
	\begin{equation*}
		\begin{aligned}
			(\cE\left\|G(t_n)-G^n_h\right\|^2)^{1/2}\leq C(h^{\min(2-2\rho,\f{2H}{\al_0}-2\rho-\varepsilon)}+t_n^{-\al_0}h^{2}+\tau^{H-\rho\al_0-\eps}+t_n^{-1}\tau).
		\end{aligned}
	\end{equation*}
\end{thrm}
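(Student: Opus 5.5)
The plan is to split the error by the triangle inequality in $L^2(\Omega;\mathbb{H})$ into a temporal and a spatial part,
\begin{equation*}
G(t_n)-G_h^n=\big(G(t_n)-G^n\big)+\big(G^n-G_h^n\big),
\end{equation*}
and bound each part separately.

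For the temporal part we invoke Theorem \ref{time_dic_err_G}, or more precisely its two ingredients:
\begin{itemize}
\item Theorem \ref{time_dic_err} gives $\big(\cE[\|u(t_n)-u^n\|^2]\big)^{1/2}\le C\tau^{H-\rho\al_0-\eps}$, after renaming $\eps/2$ as $\eps$.
\item Theorem \ref{time_dic_err_v} gives $\|v(t_n)-v^n\|\le C\tau^{1-\eps}+Ct_n^{-1}\tau$.
\end{itemize}
Since $H-\rho\al_0<1$, the term $\tau^{1-\eps}$ is absorbed into $\tau^{H-\rho\al_0-\eps}$ once $\eps$ is small. The hypothesis $\rho\in[0,\frac{H}{\al_0}-\frac12)\cap[0,\frac12]$ is contained in $[0,\frac H{\al_0})\cap[0,1]$, so these results apply.

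For the spatial part we write $G^n-G_h^n=(u^n-u_h^n)+(v^n-v_h^n)$. Theorem \ref{thm_space_dis_err}, whose hypothesis matches the assumed range of $\rho$ exactly, gives $Ch^{\min(2-2\rho,\frac{2H}{\al_0}-2\rho-\eps)}$ for the stochastic component. Theorem \ref{thm_space_v_dis_err} gives the deterministic bound $Ct_n^{-\al_0}h^2$. Because $v^n,v_h^n$ are deterministic, their expectation is trivial.

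Adding the four contributions with $(a+b)^2\le 2a^2+2b^2$ produces the stated estimate. The $\eps$'s from the different results are unified by taking the smallest admissible one and enlarging $C$.

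The only point needing care is compatibility of parameter ranges. We must check that the $\eps$ restrictions in Theorems \ref{time_dic_err}, \ref{thm_space_dis_err} and \ref{time_dic_err_v} can be met simultaneously. Each requires $\eps$ below finitely many positive thresholds depending on $H,\al_0,\rho$, and $\frac{H}{\al_0}-\rho>\frac12$ keeps $2\sigma\in[1,2]$ in the spatial argument. No genuine obstacle is expected beyond this bookkeeping.
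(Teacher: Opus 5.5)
Your proposal is correct and takes essentially the same route as the paper, which obtains this theorem by combining the temporal estimates (Theorems \ref{time_dic_err} and \ref{time_dic_err_v}, i.e.\ Theorem \ref{time_dic_err_G}) with the spatial estimates of Theorems \ref{thm_space_dis_err} and \ref{thm_space_v_dis_err} through the splitting $G(t_n)-G_h^n=(G(t_n)-G^n)+(G^n-G_h^n)$. Your remarks on absorbing $\tau^{1-\eps}$ into $\tau^{H-\rho\al_0-\eps}$ and on checking that the assumed range of $\rho$ satisfies each cited hypothesis spell out bookkeeping the paper leaves implicit.
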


\section{Numerical experiments}\label{sec:simulation}
In our setting, we take the number of sample paths $M=100,~D=(0,1),~T = 0.01$ and $\Lambda_k = k^m$ for $k=1,2,\dots$, where the parameter $m\in\mathbb{R}$ together with the spatial dimension $d$ controls the regularity of the noise. The eigenfunctions of $Q$ are $\phi_k(x)=\sqrt{2}\sin(k\pi x)$.

If $m$ is decreased, the lower bound for $\rho$ becomes smaller, meaning that the noise admits higher spatial regularity. Conversely, if $m$ is increased, the decay of $\Lambda_k$ slows down, allowing stronger contributions from high-frequency modes and thus producing a rougher noise field. We approximate $W_Q^H$ by %Refer to \cite{NieSunDenSiamJNA}, 
\begin{equation*}W_Q^H(x,t)\approx\sum_{k=1}^{1000}\sqrt{\Lambda_k}\phi_k(x)W_k^H(t),\end{equation*} 
and the relationship between $m$ and $\rho$ is given by the inequality $\rho>(1+m)d/4$, which stems from the assumption $\|A^{-\rho}\|_{\mathcal{L}_2^0}<\infty$ as established in \cite{LapJFA,LiYau,NieSunDenSiamJNA}.

We take the variable exponent
\begin{equation*}%\label{alp_exmp}
	\alpha({t})=\alpha_T+\left(\alpha_0-\alpha_T\right)\left[1-\frac{{t}}{{T}}-\frac{\sin\left(2\pi(1-{t}/{T})\right)}{2\pi}\right],~~\alpha_T:=\alpha(T).
\end{equation*}

Since the exact solution is unknown, let $\omega_i$ denote the $i$th sample path. We compute
\begin{align*}
	e_{h}=\Big(\frac{1}{M}\sum_{i=1}^{M}\|G_{h}^{N}(\omega_{i})-G_{h/2}^{N}(\omega_{i})\|^{2}\Big)^{1/2},~~
	e_{\tau}=\Big(\frac{1}{M}\sum_{i=1}^{M}\|G^N_{\tau}(\omega_{i})-G^{2N}_{\tau/2}(\omega_{i})\|^{2}\Big)^{1/2},
\end{align*}
and the convergence rates follow from
\begin{equation*}
	\mathrm{Rate}_h=\frac{\ln(e_h/e_{h/2})}{\ln 2},\quad\mathrm{Rate}_\tau=\frac{\ln(e_\tau/e_{\tau/2})}{\ln 2}.
\end{equation*}

The following two examples are presented to compute the convergence rates in time and space, respectively.
\begin{xmpl}\label{exm:1}
    In this example, we investigate the temporal convergence rates of the proposed fully discrete scheme.
	The numerical experiments in this example use the parameters $G_0 = 0$ and $h = 1/100$. \crefrange{table1_tau}{table6_tau} summarize the results for different parameter sets: $(\alpha_0,~\alpha_T)=(0.3,~0.7),~(0.5,~0.2),~(0.8,~0.5)$, $H=0.6,~0.75$, and $m=0,~-0.5,~-1$. When $m = 0$, under the condition $\rho > 1/4$, this leads to a convergence order of $\mathcal{O}(\tau^{H-\alpha_0/4})$, as observed in Tables \ref{table1_tau} and \ref{table2_tau}. Furthermore, an increase in the noise regularity is achieved by setting $m = -0.5~(\rho>1/8)$ and $m = -1~(\rho>0)$, which leads to a corresponding improvement in the convergence rates. These improved rates, detailed in \crefrange{table3_tau}{table6_tau}, are in agreement with the theoretical estimates provided in Theorem \ref{time_dic_err_G}.
\end{xmpl}
\begin{table}[!t]
	\caption{Temporal errors and observed convergence rates for numerical scheme \eqref{scheme} with $H=0.6$,~$m=0~(\rho>1/4)$.}
	\label{table1_tau}
	\centering
	%	\vspace{0.5em}	
	\medskip\small\renewcommand{\arraystretch}{1.15}
	\begin{tabular}{||c|c|c|c|c|c|c||}
		\hline
		& \multicolumn{2}{c|}{$(\alpha_0,~\alpha_T)=(0.3,~0.7)$} &  \multicolumn{2}{c|}{$(\alpha_0,~\alpha_T)=(0.5,~0.2)$}  & \multicolumn{2}{c||}{$(\alpha_0,~\alpha_T)=(0.8,~0.5)$} \\
		\hline
		$T/\tau$ & $e_{\tau}$&  $\mathrm{Rate}_\tau$& $e_{\tau}$ & $\mathrm{Rate}_\tau$& $e_{\tau}$ & $\mathrm{Rate}_\tau$\\
		\hline
		$2^{2}$&	1.21E-3     &    -   & 3.26E-3   &   -     &9.50E-3  &  -  \\
		$2^{3}$&	8.60E-4	    & 0.4960   & 2.12E-3   & 0.6172    &6.39E-3  & 0.5731  \\
		$2^{4}$&	5.94E-4	    & 0.5350   & 1.44E-3   & 0.5649    &4.39E-3  & 0.5400  \\
		$2^{5}$&	4.09E-4	    & 0.5376   & 1.02E-3   & 0.4881    &3.29E-3  & 0.4180  \\
        $2^{6}$&	2.88E-4	    & 0.5072   & 7.26E-4   & 0.4941    &2.36E-3  & 0.4799 \\
		\hline
		predicted rate& &0.5250& &0.4750& &0.4000\\
		\hline
	\end{tabular}
\end{table}

\begin{table}[!t]
	\caption{Temporal errors and observed convergence rates for numerical scheme \eqref{scheme} with $H=0.75$,~$m=0~(\rho>1/4)$.}
	\label{table2_tau}
	\centering
	%	\vspace{0.5em}	
	\medskip\small\renewcommand{\arraystretch}{1.15}
	\begin{tabular}{||c|c|c|c|c|c|c||}
		\hline
		& \multicolumn{2}{c|}{$(\alpha_0,~\alpha_T)=(0.3,~0.7)$} &  \multicolumn{2}{c|}{$(\alpha_0,~\alpha_T)=(0.5,~0.2)$}  & \multicolumn{2}{c||}{$(\alpha_0,~\alpha_T)=(0.8,~0.5)$} \\
		\hline
		$T/\tau$ & $e_{\tau}$&  $\mathrm{Rate}_\tau$& $e_{\tau}$ & $\mathrm{Rate}_\tau$& $e_{\tau}$ & $\mathrm{Rate}_\tau$\\
		\hline
		$2^{2}$&	4.07E-4     &    -    & 1.02E-3   &   -     &3.09E-4  &  -  \\
		$2^{3}$&	2.58E-4	    & 0.6559   & 6.16E-4   & 0.7316    &1.93E-4  & 0.6772  \\
		$2^{4}$&	1.63E-4	    & 0.6653   & 3.77E-4   & 0.7087    &1.21E-4  & 0.6753  \\
		$2^{5}$&	1.01E-4	    & 0.6841   & 2.45E-4   & 0.6243    &8.07E-4  & 0.5858  \\
        $2^{6}$&	6.40E-5	    & 0.6657   & 1.54E-4   & 0.6678    &5.16E-5  & 0.6462  \\
		\hline
		predicted rate& &0.6750& &0.6250& &0.5500\\
		\hline
	\end{tabular}
\end{table}

\begin{table}[!t]
	\caption{Temporal errors and observed convergence rates for numerical scheme \eqref{scheme} with $H=0.6$,~$m=-0.5~(\rho>1/8)$.}
	\label{table3_tau}
	\centering
	%	\vspace{0.5em}	
	\medskip\small\renewcommand{\arraystretch}{1.15}
	\begin{tabular}{||c|c|c|c|c|c|c||}
		\hline
		& \multicolumn{2}{c|}{$(\alpha_0,~\alpha_T)=(0.3,~0.7)$} &  \multicolumn{2}{c|}{$(\alpha_0,~\alpha_T)=(0.5,~0.2)$}  & \multicolumn{2}{c||}{$(\alpha_0,~\alpha_T)=(0.8,~0.5)$} \\
		\hline
		$T/\tau$ & $e_{\tau}$&  $\mathrm{Rate}_\tau$& $e_{\tau}$ & $\mathrm{Rate}_\tau$& $e_{\tau}$ & $\mathrm{Rate}_\tau$\\
		\hline
		$2^{2}$&	1.16E-3     &    -    & 2.81E-3   &   -     &6.61E-3  &  -  \\
		$2^{3}$&	8.13E-4	    & 0.5073   & 1.75E-3   & 0.6775    &4.15E-3  & 0.6712  \\
		$2^{4}$&	5.53E-4	    & 0.5577   & 1.12E-3   & 0.6470    &2.67E-3  & 0.6354  \\
		$2^{5}$&	3.75E-4	    & 0.5593   & 7.65E-4   & 0.5509    &1.85E-3  & 0.5319  \\
        $2^{6}$&	2.59E-4	    & 0.5353   & 5.23E-4   & 0.5492    &1.23E-3  & 0.5873  \\
		\hline
		predicted rate& &0.5625& &0.5375& &0.5000\\
		\hline
	\end{tabular}
\end{table}

\begin{table}[!t]
	\caption{Temporal errors and observed convergence rates for numerical scheme \eqref{scheme} with $H=0.75$,~$m=-0.5~(\rho>1/8)$.}
	\label{table4_tau}
	\centering
	%	\vspace{0.5em}	
	\medskip\small\renewcommand{\arraystretch}{1.15}
	\begin{tabular}{||c|c|c|c|c|c|c||}
		\hline
		& \multicolumn{2}{c|}{$(\alpha_0,~\alpha_T)=(0.3,~0.7)$} &  \multicolumn{2}{c|}{$(\alpha_0,~\alpha_T)=(0.5,~0.2)$}  & \multicolumn{2}{c||}{$(\alpha_0,~\alpha_T)=(0.8,~0.5)$} \\
		\hline
		$T/\tau$ & $e_{\tau}$&  $\mathrm{Rate}_\tau$& $e_{\tau}$ & $\mathrm{Rate}_\tau$& $e_{\tau}$ & $\mathrm{Rate}_\tau$\\
		\hline
		$2^{2}$&	3.88E-4     &    -    & 8.82E-4   &   -     &2.16E-3  &  -  \\
		$2^{3}$&	2.45E-4	    & 0.6659   & 5.10E-4   & 0.7899    &1.28E-3  & 0.7589  \\
		$2^{4}$&	1.53E-4	    & 0.6807   & 2.98E-4   & 0.7774    &7.58E-4  & 0.7551  \\
		$2^{5}$&	9.37E-5	    & 0.7026   & 1.86E-4   & 0.6812    &4.71E-4  & 0.6870  \\
        $2^{6}$&	5.81E-5	    & 0.6896   & 1.12E-4   & 0.7253    &2.80E-4  & 0.7484  \\
		\hline
		predicted rate& &0.7125& &0.6875& &0.6500\\
		\hline
	\end{tabular}
\end{table}

\begin{table}[!t]
	\caption{Temporal errors and observed convergence rates for numerical scheme \eqref{scheme} with $H=0.6$,~$m=-1$~$(\rho>0)$.}
	\label{table5_tau}
	\centering
	%	\vspace{0.5em}	
	\medskip\small\renewcommand{\arraystretch}{1.15}
	\begin{tabular}{||c|c|c|c|c|c|c||}
		\hline
		& \multicolumn{2}{c|}{$(\alpha_0,~\alpha_T)=(0.3,~0.7)$} &  \multicolumn{2}{c|}{$(\alpha_0,~\alpha_T)=(0.5,~0.2)$}  & \multicolumn{2}{c||}{$(\alpha_0,~\alpha_T)=(0.8,~0.5)$} \\
		\hline
		$T/\tau$ & $e_{\tau}$&  $\mathrm{Rate}_\tau$& $e_{\tau}$ & $\mathrm{Rate}_\tau$& $e_{\tau}$ & $\mathrm{Rate}_\tau$\\
		\hline
		$2^{2}$&	1.12E-3     &    -    & 2.51E-3   &   -      &4.85E-3  &  -  \\
		$2^{3}$&	7.83E-4	    & 0.5145   & 1.52E-3   & 0.7271    &2.86E-3  & 0.7638  \\
		$2^{4}$&	5.26E-4	    & 0.5740   & 9.15E-4   & 0.7270    &1.72E-3  & 0.7299  \\
		$2^{5}$&	3.53E-4	    & 0.5758   & 6.02E-4   & 0.6056    &1.10E-3  & 0.6436  \\
        $2^{6}$&	2.40E-4	    & 0.5576   & 3.96E-4   & 0.6033    &6.82E-4  & 0.6929  \\
		\hline
		predicted rate& &0.6000& &0.6000& &0.6000\\
		\hline
	\end{tabular}
\end{table}

\begin{table}[!t]
	\caption{Temporal errors and observed convergence rates for numerical scheme \eqref{scheme} with $H=0.75$,~$m=-1~(\rho>0)$.}
	\label{table6_tau}
	\centering
	%	\vspace{0.5em}	
	\medskip\small\renewcommand{\arraystretch}{1.15}
	\begin{tabular}{||c|c|c|c|c|c|c||}
		\hline
		& \multicolumn{2}{c|}{$(\alpha_0,~\alpha_T)=(0.3,~0.7)$} &  \multicolumn{2}{c|}{$(\alpha_0,~\alpha_T)=(0.5,~0.2)$}  & \multicolumn{2}{c||}{$(\alpha_0,~\alpha_T)=(0.8,~0.5)$} \\
		\hline
		$T/\tau$ & $e_{\tau}$&  $\mathrm{Rate}_\tau$& $e_{\tau}$ & $\mathrm{Rate}_\tau$& $e_{\tau}$ & $\mathrm{Rate}_\tau$\\
		\hline
		$2^{2}$&	3.75E-4     &    -    & 7.88E-4   &   -      &1.60E-3  &  -  \\
		$2^{3}$&	2.36E-4	    & 0.6721   & 4.40E-4   & 0.8387    &8.96E-4  & 0.8351  \\
		$2^{4}$&	1.46E-4	    & 0.6917   & 2.46E-4   & 0.8411    &5.03E-4  & 0.8324  \\
		$2^{5}$&	8.87E-5	    & 0.7169   & 1.48E-4   & 0.7308    &2.94E-4  & 0.7773  \\
        $2^{6}$&	5.43E-5	    & 0.7078   & 8.63E-5   & 0.7789    &1.64E-4  & 0.8399  \\
		\hline
		predicted rate& &0.7500& &0.7500& &0.7500\\
		\hline
	\end{tabular}
\end{table}

\begin{xmpl}\label{exm:2}
We next investigate the spatial convergence rate of the proposed fully discrete scheme. This example employs the initial condition $G_{0} = x(1-x)$ and a fixed time step $\tau = T/1024$. Using the same parameter values $\alpha_0$, $\alpha_T$, $H$, and $m$ as in Example \ref{exm:1}, we present the results in \crefrange{table1_h}{table6_h}. These results are consistent with the theoretical convergence rate $\mathcal{O}\bigl(\max\{h^{2-2\rho},h^{\frac{2H}{\alpha_0}-2\rho-\varepsilon}\}\bigr)$ established in Theorem \ref{thm:space_dis_err_G}.
\end{xmpl}

\begin{table}[!t]
	\caption{Spatial errors and observed convergence rates for numerical schemes \eqref{scheme} and \eqref{scheme_v} with $H=0.6$,~$m=0~(\rho>1/4)$.}
	\label{table1_h}
	\centering
	%	\vspace{0.5em}	
	\medskip\small\renewcommand{\arraystretch}{1.15}
	\begin{tabular}{||c|c|c|c|c|c|c||}
		\hline
		& \multicolumn{2}{c|}{$(\alpha_0,~\alpha_T)=(0.3,~0.7)$} &  \multicolumn{2}{c|}{$(\alpha_0,~\alpha_T)=(0.5,~0.2)$}  & \multicolumn{2}{c||}{$(\alpha_0,~\alpha_T)=(0.8,~0.5)$} \\
		\hline
		$1/h$ & $e_h$&  $\mathrm{Rate}_{h}$ & $e_h$ & $\mathrm{Rate}_{h}$ & $e_h$ & $\mathrm{Rate}_{h}$\\
		\hline
		$2^{4}$&	3.45E-4     &    -  & 9.06E-4   &   -   &6.90E-3  &  -  \\
		$2^{5}$&	1.23E-4	    & 1.4861  & 3.36E-4   & 1.4309  &3.18E-3  & 1.1183  \\
		$2^{6}$&	4.29E-5	    & 1.5205  & 1.22E-4   & 1.4664  &1.39E-3  & 1.1986  \\
		$2^{7}$&	1.52E-5	    & 1.5002  & 4.34E-5   & 1.4876  &5.56E-4  & 1.3163  \\
        $2^{8}$&	5.32E-6	    & 1.5127  & 1.55E-5   & 1.4831  &2.14E-4  & 1.3789  \\
		\hline
		predicted rate& &1.5000& &1.5000& &1.0000\\
		\hline
	\end{tabular}
\end{table}

\begin{table}[!t]
	\caption{Spatial errors and observed convergence rates for numerical schemes \eqref{scheme} and \eqref{scheme_v} with $H=0.75$,~$m=0~(\rho>1/4)$.}
	\label{table2_h}
	\centering
	%	\vspace{0.5em}	
	\medskip\small\renewcommand{\arraystretch}{1.15}
	\begin{tabular}{||c|c|c|c|c|c|c||}
		\hline
		& \multicolumn{2}{c|}{$(\alpha_0,~\alpha_T)=(0.3,~0.7)$} &  \multicolumn{2}{c|}{$(\alpha_0,~\alpha_T)=(0.5,~0.2)$}  & \multicolumn{2}{c||}{$(\alpha_0,~\alpha_T)=(0.8,~0.5)$} \\
		\hline
		$1/h$ & $e_h$&  $\mathrm{Rate}_{h}$ & $e_h$ & $\mathrm{Rate}_{h}$ & $e_h$ & $\mathrm{Rate}_{h}$\\
		\hline
		$2^{4}$&	1.85E-4     &    -  & 3.42E-4     &   -     &2.15E-3  &  -  \\
		$2^{5}$&	6.25E-5	    & 1.5600  & 1.22E-4   & 1.4967  &8.85E-4  & 1.2819  \\
		$2^{6}$&	2.11E-5	    & 1.5678  & 4.27E-5   & 1.5095  &3.54E-4  & 1.3204  \\
		$2^{7}$&	7.35E-6	    & 1.5215  & 1.50E-5   & 1.5045  &1.34E-4  & 1.4029  \\
        $2^{8}$&	2.55E-6	    & 1.5235  & 5.33E-6   & 1.4980  &5.01E-5  & 1.4205  \\
		\hline
		predicted rate& &1.5000& &1.5000& &1.3750\\
		\hline
	\end{tabular}
\end{table}

\begin{table}[!t]
	\caption{Spatial errors and observed convergence rates for numerical schemes \eqref{scheme} and \eqref{scheme_v} with $H=0.6$,~$m=-0.5~(\rho>1/8)$.}
	\label{table3_h}
	\centering
	%	\vspace{0.5em}	
	\medskip\small\renewcommand{\arraystretch}{1.15}
	\begin{tabular}{||c|c|c|c|c|c|c||}
		\hline
		& \multicolumn{2}{c|}{$(\alpha_0,~\alpha_T)=(0.3,~0.7)$} &  \multicolumn{2}{c|}{$(\alpha_0,~\alpha_T)=(0.5,~0.2)$}  & \multicolumn{2}{c||}{$(\alpha_0,~\alpha_T)=(0.8,~0.5)$} \\
		\hline
		$1/h$ & $e_h$&  $\mathrm{Rate}_{h}$ & $e_h$ & $\mathrm{Rate}_{h}$ & $e_h$ & $\mathrm{Rate}_{h}$\\
		\hline
		$2^{4}$&	2.13E-4     &    -  & 5.09E-4   &   -   &3.53E-3  &  -  \\
		$2^{5}$&	6.46E-5	    & 1.7191  & 1.63E-4   & 1.6409  &1.39E-3  & 1.3416  \\
		$2^{6}$&	1.92E-5	    & 1.7529  & 5.08E-5   & 1.6820  &5.22E-4  & 1.4187  \\
		$2^{7}$&	5.73E-6	    & 1.7415  & 1.56E-5   & 1.7046  &1.81E-4  & 1.5248  \\
        $2^{8}$&	1.70E-6	    & 1.7499  & 4.78E-6   & 1.7069  &6.05E-5  & 1.5836  \\
		\hline
		predicted rate& &1.7500& &1.7500& &1.2500\\
		\hline
	\end{tabular}
\end{table}

\begin{table}[!t]
	\caption{Spatial errors and observed convergence rates for numerical schemes \eqref{scheme} and \eqref{scheme_v} with $H=0.75$,~$m=-0.5~(\rho>1/8)$.}
	\label{table4_h}
	\centering
	%	\vspace{0.5em}	
	\medskip\small\renewcommand{\arraystretch}{1.15}
	\begin{tabular}{||c|c|c|c|c|c|c||}
		\hline
		& \multicolumn{2}{c|}{$(\alpha_0,~\alpha_T)=(0.3,~0.7)$} &  \multicolumn{2}{c|}{$(\alpha_0,~\alpha_T)=(0.5,~0.2)$}  & \multicolumn{2}{c||}{$(\alpha_0,~\alpha_T)=(0.8,~0.5)$} \\
		\hline
		$1/h$ & $e_h$&  $\mathrm{Rate}_{h}$ & $e_h$ & $\mathrm{Rate}_{h}$ & $e_h$ & $\mathrm{Rate}_{h}$\\
		\hline
		$2^{4}$&	1.33E-4     &    -   & 2.20E-4  &   -    &1.14E-3  &  -  \\
		$2^{5}$&	3.78E-5	    & 1.8209  & 6.56E-5  & 1.7466  &4.05E-4  & 1.4995  \\
		$2^{6}$&	1.06E-5	    & 1.8299  & 1.94E-5  & 1.7593  &1.39E-4  & 1.5401  \\
		$2^{7}$&	3.05E-6	    & 1.7992  & 5.76E-6  & 1.7520  &4.54E-5  & 1.6150  \\
        $2^{8}$&	8.81E-7	    & 1.7934  & 1.72E-6  & 1.7447  &1.46E-5  & 1.6350 \\
		\hline
		predicted rate& &1.7500& &1.7500& &1.6250\\
		\hline
	\end{tabular}
\end{table}

\begin{table}[!t]
	\caption{Spatial errors and observed convergence rates for numerical schemes \eqref{scheme} and \eqref{scheme_v} with $H=0.6$,~$m=-1~(\rho>0)$.}
	\label{table5_h}
	\centering
	%	\vspace{0.5em}	
	\medskip\small\renewcommand{\arraystretch}{1.15}
	\begin{tabular}{||c|c|c|c|c|c|c||}
		\hline
		& \multicolumn{2}{c|}{$(\alpha_0,~\alpha_T)=(0.3,~0.7)$} &  \multicolumn{2}{c|}{$(\alpha_0,~\alpha_T)=(0.5,~0.2)$}  & \multicolumn{2}{c||}{$(\alpha_0,~\alpha_T)=(0.8,~0.5)$} \\
		\hline
		$1/h$ & $e_h$&  $\mathrm{Rate}_{h}$ & $e_h$ & $\mathrm{Rate}_{h}$ & $e_h$ & $\mathrm{Rate}_{h}$\\
		\hline
		$2^{4}$&	1.54E-4     &    -   & 3.16E-4   &   -   &1.89E-3  &  -  \\
		$2^{5}$&	4.17E-5	    & 1.8876  & 9.00E-5   & 1.8122  &6.45E-4  & 1.5469  \\
		$2^{6}$&	1.10E-5	    & 1.9168  & 2.49E-5   & 1.8514  &2.10E-4  & 1.6190  \\
		$2^{7}$&	2.92E-6	    & 1.9180  & 6.81E-6   & 1.8726  &6.43E-5  & 1.7077  \\
        $2^{8}$&	7.67E-7	    & 1.9281  & 1.85E-6   & 1.8829  &1.90E-5  & 1.7580  \\
		\hline
		predicted rate& &2.0000& &2.0000& &1.5000\\
		\hline
	\end{tabular}
\end{table}

\begin{table}[!t]
	\caption{Spatial errors and observed convergence rates for numerical schemes \eqref{scheme} and \eqref{scheme_v} with $H=0.75$,~$m=-1~(\rho>0)$.}
	\label{table6_h}
	\centering
	%	\vspace{0.5em}	
	\medskip\small\renewcommand{\arraystretch}{1.15}
	\begin{tabular}{||c|c|c|c|c|c|c||}
		\hline
		& \multicolumn{2}{c|}{$(\alpha_0,~\alpha_T)=(0.3,~0.7)$} &  \multicolumn{2}{c|}{$(\alpha_0,~\alpha_T)=(0.5,~0.2)$}  & \multicolumn{2}{c||}{$(\alpha_0,~\alpha_T)=(0.8,~0.5)$} \\
		\hline
		$1/h$ & $e_h$&  $\mathrm{Rate}_{h}$ & $e_h$ & $\mathrm{Rate}_{h}$ & $e_h$ & $\mathrm{Rate}_{h}$\\
		\hline
		$2^{4}$&	1.14E-4     &    -   & 1.69E-4   &   -    &6.54E-4  &  -  \\
		$2^{5}$&	2.96E-5	    & 1.9512  & 4.50E-5   & 1.9106  &2.02E-4  & 1.6952  \\
		$2^{6}$&	7.59E-6	    & 1.9634  & 1.18E-6   & 1.9273  &6.07E-5  & 1.7341  \\
		$2^{7}$&	1.95E-6	    & 1.9594  & 3.10E-6   & 1.9300  &1.75E-5  & 1.7944  \\
        $2^{8}$&	5.01E-7	    & 1.9629  & 8.13E-7   & 1.9328  &4.97E-6  & 1.8153  \\
		\hline
		predicted rate& &2.0000& &2.0000& &1.8750\\
		\hline
	\end{tabular}
\end{table}

\begin{figure}[htbp]
	\centering
	\setlength{\tabcolsep}{0pt}
	
	\begin{tabular}{@{}c@{\hspace{0.03\textwidth}}c@{}}
		\begin{subfigure}[b]{0.46\textwidth}
			\centering
			\includegraphics[width=\textwidth]{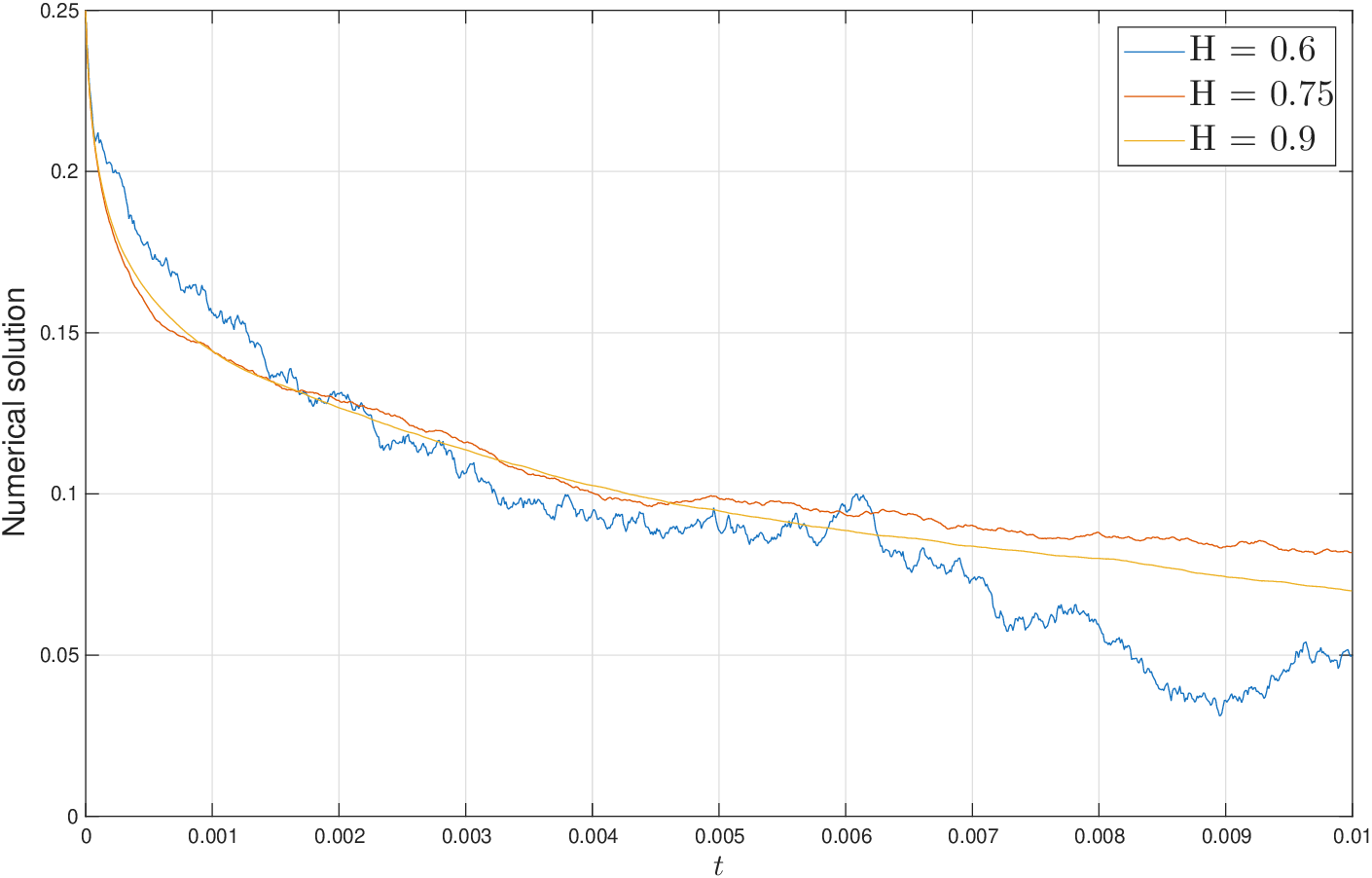}
			\caption{}
			\label{fig:a}
		\end{subfigure}
		&
		\begin{subfigure}[b]{0.46\textwidth}
			\centering
			\includegraphics[width=\textwidth]{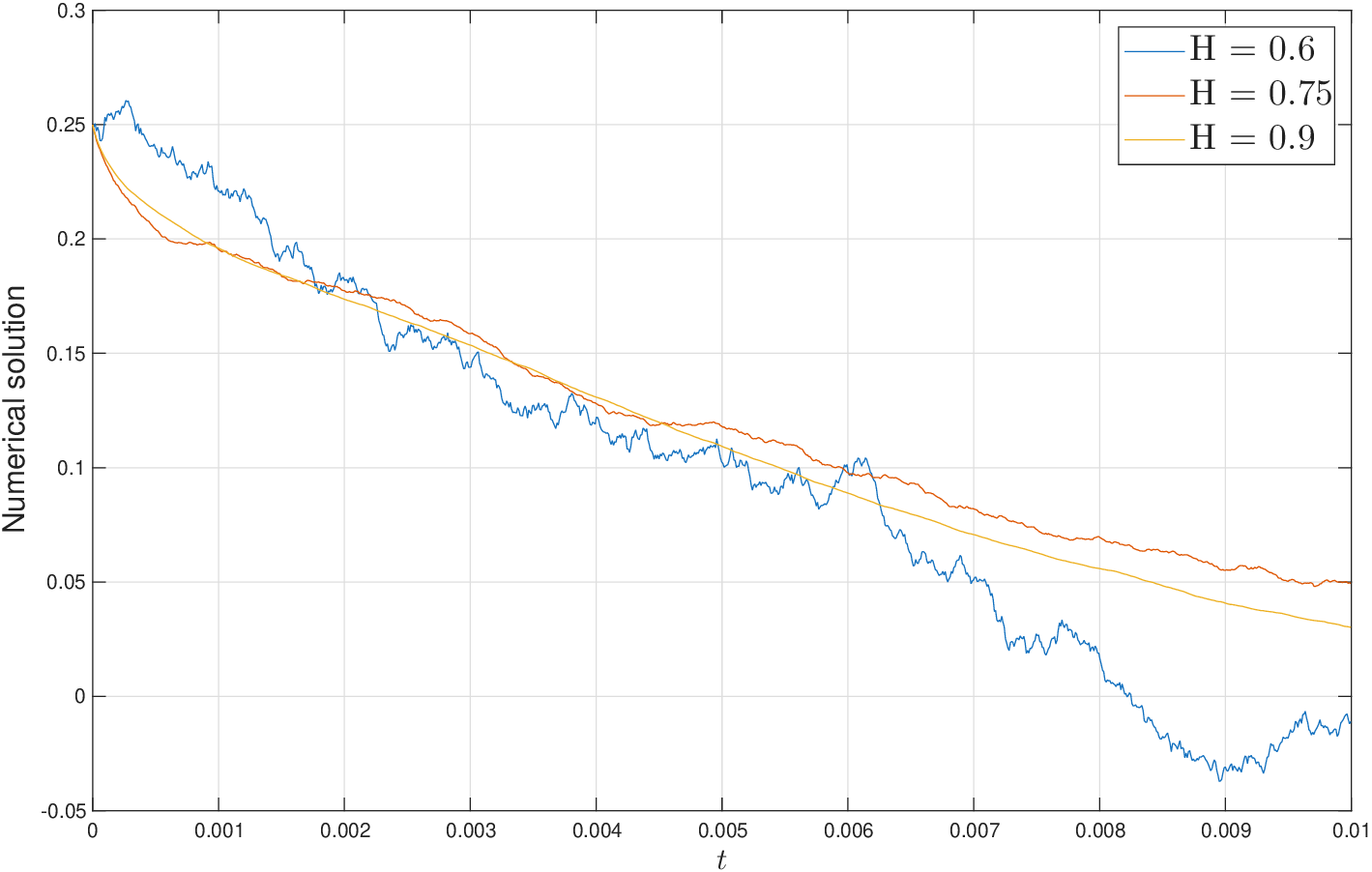}
			\caption{}
			\label{fig:b}
		\end{subfigure}
		\\[1em]
		\begin{subfigure}[b]{0.46\textwidth}
			\centering
			\includegraphics[width=\textwidth]{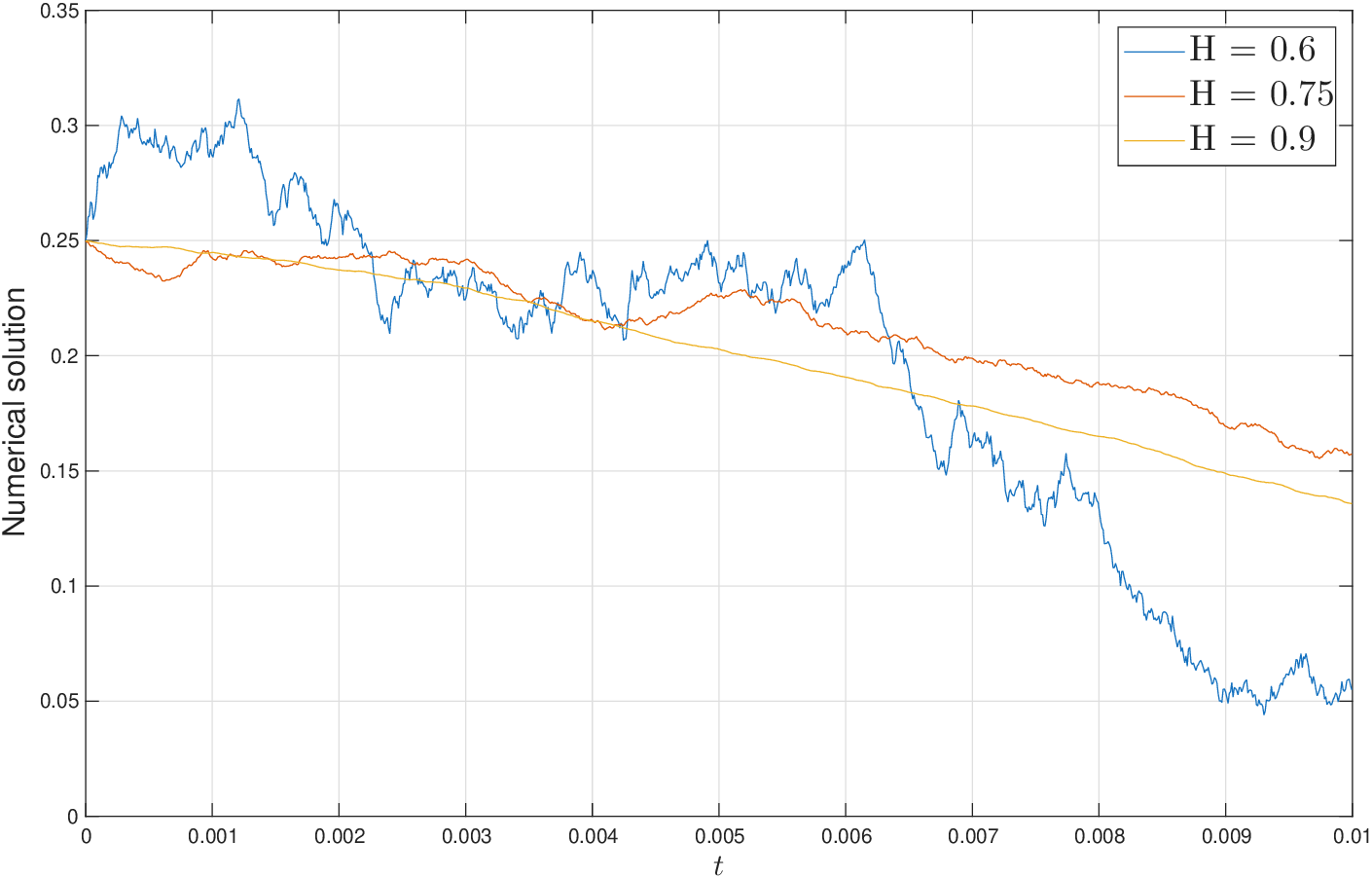}
			\caption{}
			\label{fig:c}
		\end{subfigure}
		&
		\begin{subfigure}[b]{0.46\textwidth}
			\centering
			\includegraphics[width=\textwidth]{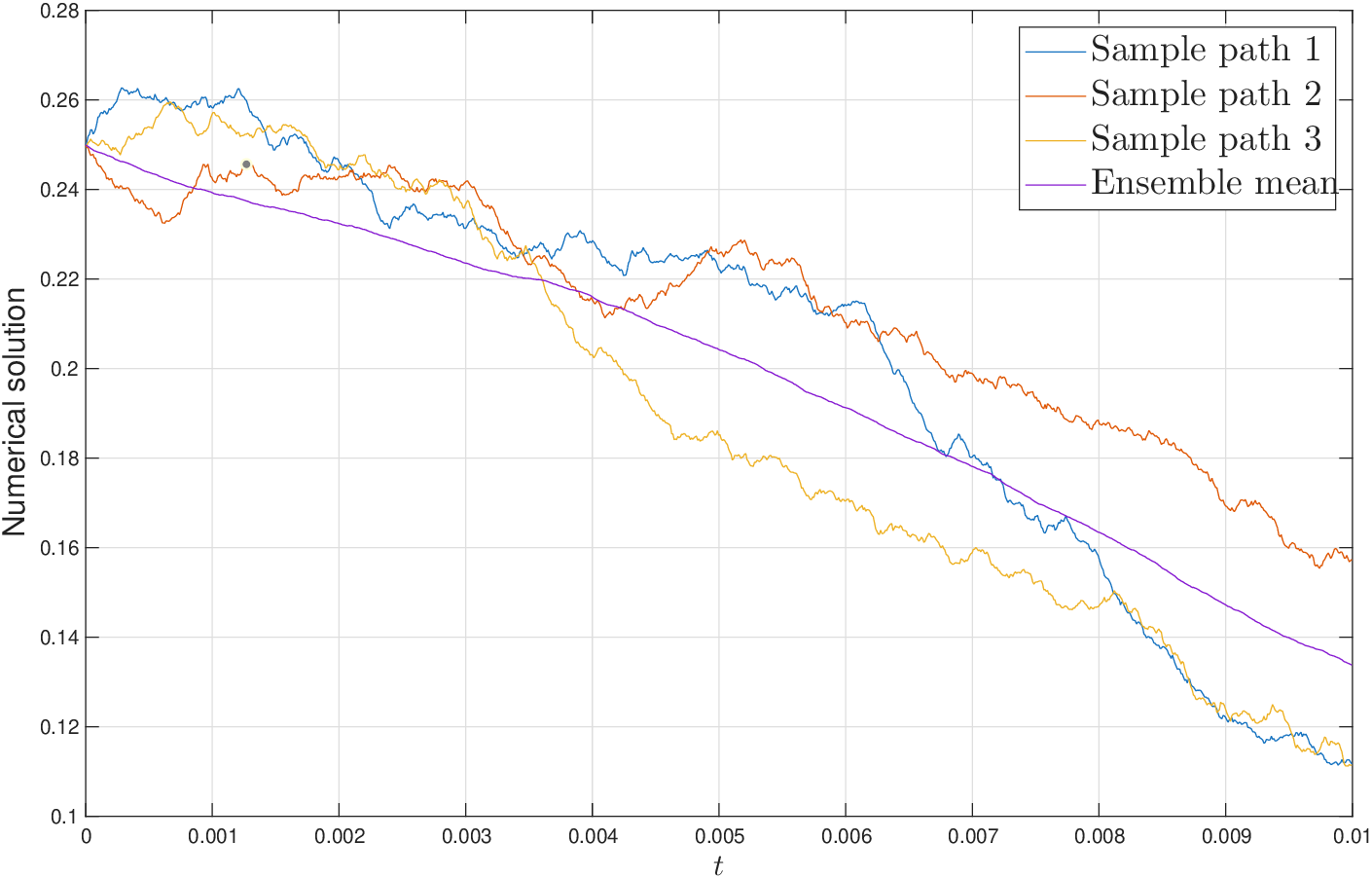}
			\caption{}
			\label{fig:d}
		\end{subfigure}
		    \end{tabular}
    \caption{Numerical solution of $G(0.5,t)$ with $h = 1/100$, $\tau = T/1024$, and $m = -0.5$. \textbf{(a)} $(\alpha_0,~\alpha_T) = (0.3,~0.7)$; \textbf{(b)} $(\alpha_0,~\alpha_T) = (0.5,~0.2)$; \textbf{(c)} $(\alpha_0,~\alpha_T) = (0.8,~0.5)$; \textbf{(d)} Sample paths and mean numerical solution with $(\alpha_0,~\alpha_T) = (0.8,~0.5)$, $H=0.75,~M=100$. }
    \label{fig:numerical_solution}
\end{figure}
In Example \ref{exm:2}, we also present several sample paths of the numerical solution for problem \eqref{model_2}. As shown in Figures \ref{fig:numerical_solution}(\subref{fig:a}), \ref{fig:numerical_solution}(\subref{fig:b}) and \ref{fig:numerical_solution}(\subref{fig:c}), the parameters are chosen as $(\alpha_0,~\alpha_T)=(0.3,~0.7),~(0.5,~0.2),~(0.8,~0.5)$. Setting $x = 0.5$, $h = 1/100$, $\tau = T/1024$ and $m = -0.5$, we compare the numerical solutions for different Hurst index $H = 0.6,~0.75,~0.9$. The results visually confirm the theoretical property of fractional Brownian motion: larger values of $H$ yield smoother solution trajectories, while smaller values of $H$ lead to stronger fluctuations, reflecting the distinct correlation structures of fractional Gaussian noise. Figure \ref{fig:numerical_solution}(\subref{fig:d}) displays three individual sample paths alongside the ensemble average obtained from 100 independent realizations. The figure qualitatively illustrates the inherent variability of individual trajectories and demonstrates how the ensemble mean effectively smooths out the sample-induced fluctuations, yielding a more stable representative mean path.

\section{Conclusion}\label{sec:conclu}
We consider a multiscale subdiffusion equation driven by fractional Gaussian noise, which describes anomalous diffusion phenomena characterizing mean squared displacement with time-varying sublinear growth rates, perturbed by noise with autocorrelation structure. We prove the well-posedness and solution regularity of the problem and develop and analyze a fully discrete numerical scheme. The sharpness of the error estimate is verified by extensive numerical experiments.

\bibliographystyle{siamplain}
\bibliography{references}

@book{Abramowitz1948handbook,
  title={Handbook of mathematical functions with formulas, graphs, and mathematical tables},
  author={Abramowitz, Milton and Stegun, Irene A},
  volume={55},
  year={1948},
  publisher={US Government printing office}
}

@book{Adams2003sobolev,
  title={Sobolev spaces},
  author={Adams, Robert A and Fournier, John JF},
  volume={140},
  year={2003},
  publisher={Elsevier}
}

@article{Bazhlekova2015,
  title     = {An analysis of the {{R}}ayleigh–{{S}}tokes problem for a generalized second-grade fluid},
  author    = {Bazhlekova, E. and Jin, B. and Lazarov, R. and others},
  journal   = {Numer. Math.},
  volume    = {131},
  pages     = {1--31},
  year      = {2015},
  doi       = {10.1007/s00211-014-0685-2}
}

@article{BenJoh,
  title={Fractional {{B}}rownian motions, fractional noises and applications},
  author={Mandelbrot, Benoit B and Van Ness, John W},
  journal={SIAM review},
  volume={10},
  number={4},
  eid={422--437},
  year={1968},
  publisher={SIAM}
}

@article{EfeLeuLiPunVab,
  title   = {Nonlocal transport equations in multiscale media: Modeling, dememorization, and discretizations},
  author  = {Efendiev, Yalchin and Leung, Wing Tat and Li, Wenyuan and Pun, Sai-Mang and Vabishchevich, Petr N.},
  year    = {2023},
  journal = {J. Comput. Phys.},
  volume  = {472},
  eid   = {111555},
}

@article{AlKar,
  title = {Numerical Approximation of Semilinear Subdiffusion Equations with Nonsmooth Initial Data},
  author = {Al-Maskari, Mariam and Karaa, Samir},
  year = {2019},
  month = jan,
  journal = {SIAM J. Numer. Anal.},
  volume = {57},
  number = {3},
  eid = {1524--1544}
}

@book{Chow2007stochastic,
  title={Stochastic partial differential equations},
  author={Chow, Pao-Liu},
  year={2007},
  publisher={Chapman and Hall/CRC}
}

@article{CleDa,
  author = {Cl{\'e}ment, P. and Da Prato, G.},
  title = {Some results on stochastic convolutions arising in Volterra equations perturbed by noise},
  journal = {Atti Accad. Naz. Lincei Cl. Sci. Fis. Mat. Natur. Rend. Lincei Mat. Appl.},
  volume = {7},
  number = {3},
  eid =  {147--153},
  year = {1996}
}

@article{DicGao,
  author  =  {Dick, Josef and Gao, Hecong and McLean, William and Mustapha, Kassem},
  title   = {Time-fractional diffusion equations with randomness, and efficient numerical estimations of expected values},
  journal = {Numer. Algorithms},
  volume  = {101},
  eid   = {1865--1883},
  year    = {2026},
}

@article{FenYaoLiWan,
  title = {An Inverse Source Problem for the Stochastic Multiterm Time-Fractional Diffusion-Wave Equation},
  author = {Feng, Xiaoli and Yao, Qiang and Li, Peijun and Wang, Xu},
  year = {2025},
  month = jun,
  journal = {SIAM/ASA J. Uncertain. Quantif.},
  volume = {13},
  number = {2},
  eid = {518--542}}

@article{GliEliRanChe,
  title = {From Inverse-Cascade to Subdiffusive Dynamic Scaling in Driven Disordered {{B}}ose Fluids},
  author = {Gliott, Elisabeth and Ran{\c c}on, Adam and Cherroret, Nicolas},
  year = {2024},
  month = dec,
  journal = {Phys. Rev. Lett.},
  volume = {133},
  number = {23},
  eid = {233403}
  }

@article{Gunzburger2019,
  title     = {Convergence of finite element solutions of stochastic partial integro-differential equations driven by white noise},
  author    = {Gunzburger, M. and Li, B. and Wang, J.},
  journal   = {Numer. Math.},
  volume    = {141},
  pages     = {1043--1077},
  year      = {2019}
}

@article{GunzMathComp,
  title={Sharp convergence rates of time discretization for stochastic time-fractional {PDE}s subject to additive space-time white noise},
  author={Gunzburger, Max and Li, Buyang and Wang, Jilu},
  journal={Math. Comput.},
  volume={88},
  number={318},
  eid={1715--1741},
  year={2019}
}

@article{HuAliEfeLeu,
  title   = {Partially explicit time discretization for time fractional diffusion equation},
  author  = {Hu, Jiuhua and Alikhanov, Anatoly A. and Efendiev, Yalchin and Leung, Wing Tat},
  year    = {2022},
  journal = {Fract. Calc. Appl. Anal.},
  volume  = {25},
  number  = {5},
  eid   = {1908--1924}
}

@article{IdoEdw,
  title = {Physical Nature of Bacterial Cytoplasm},
  author = {Golding, Ido and Cox, Edward C.},
  journal = {Phys. Rev. Lett.},
  volume = {96},
  issue = {9},
  eid = {098102},
  numpages = {4},
  year = {2006},
  month = {Mar}
}

@book{Jin,
  title={Fractional differential equations},
  author={Jin, Bangti},
  volume = {206},
  series = {Applied Mathematical Sciences},
  year={2021},
  publisher={Springer},
  address   = {Cham, Switzerland}
}

@book{JinYan,
  author    = {Jin, Bangti and Zhou, Zhi},
  title     = {Numerical Treatment and Analysis of Time-Fractional Evolution Equations},
  year      = {2023},
  series    = {Applied Mathematical Sciences},
  volume    = {214},
  publisher = {Springer},
  address   = {Cham, Switzerland}
}

@article{JinYanZho,
title = {Numerical approximation of stochastic time-fractional diffusion},
author = {Jin, Bangti and Yan, Yubin and Zhou, Zhi},
volume = {53},
eid = {1245--1268},
journal = {ESAIM: M2AN},
issn = {0764-583X},
publisher = {EDP Sciences},
number = {4},
year={2019}
}

@article{KovPri,
  title = {Strong Order of Convergence of a Fully Discrete Approximation of a Linear Stochastic {{V}}olterra Type Evolution Equation},
  author = {Kov{\'a}cs, Mih{\'a}ly and Printems, Jacques},
  year = {2014},
  month = jan,
  journal = {Math. Comp.},
  volume = {83},
  number = {289},
  eid =  {2325--2346}
}

@article{LapJFA,
  title = {Dirichlet and {{N}}eumann Eigenvalue Problems on Domains in {{Euclidean Spaces}}},
  author = {Laptev, A},
  year = {1997},
  month = dec,
  journal = {J. Funct. Anal.},
  volume = {151},
  number = {2},
  eid = {531--545}
}

@article{Lub1,
  title = {Convolution Quadrature and Discretized Operational Calculus. {{I}}},
  author = {Lubich, C.},
  year = {1988},
  month = jan,
  journal = {Numer. Math.},
  volume = {52},
  number = {2},
  eid =  {129--145}
}

@article{Lub2,
  title = {Convolution Quadrature and Discretized Operational Calculus. {{II}}},
  author = {Lubich, C.},
  year = {1988},
  month = jul,
  journal = {Numer. Math.},
  volume = {52},
  number = {4},
  eid =  {413--425}
}

@article{LiYau,
  title = {On the {{S}}chr{\"o}dinger Equation and the Eigenvalue Problem},
  author = {Li, Peter and Yau, Shing-Tung},
  year = {1983},
  month = sep,
  journal = {Comm. Math. Phys.},
  volume = {88},
  number = {3},
  eid = {309--318}
}

@article{MijNan,
  title = {Space--Time Fractional Stochastic Partial Differential Equations},
  author = {Mijena, Jebessa B. and Nane, Erkan},
  year = {2015},
  month = sep,
  journal = {Stochastic Process. Appl.},
  volume = {125},
  number = {9},
  eid =  {3301--3326}
}

@article{NieDeng2022,
title = {A Unified Convergence Analysis for the Fractional Diffusion Equation Driven by Fractional {{G}}aussian Noise with {{H}}urst Index ${{H}}\in(0,1)$},
author = {Nie, Daxin and Deng, Weihua},
journal = {SIAM J. Numer. Anal.},
volume = {60},
number = {3},
eid = {1548-1573},
year = {2022}
}

@article{NieSunDeng,
	author = {Nie, Daxin and Sun, Jing and Deng, Weihua},
	title = {Numerical approximation for stochastic nonlinear fractional diffusion equation driven by rough noise},
	journal = {ESAIM: M2AN},
	year = 2025,
	volume = 59,
	number = 1,
	eid = "389-418"
}

@article{NieSunDenSiamJNA,
author = {Nie, Daxin and Sun, Jing and Deng, Weihua},
title = {Strong Convergence Order for the Scheme of Fractional Diffusion Equation Driven by Fractional {{G}}aussian Noise},
journal = {SIAM J. Numer. Anal. },
volume = {60},
number = {4},
eid = {1879-1904},
year = {2022}
}

@article{PhiRal,
year = {2023},
month = {jun},
publisher = {IOP Publishing},
volume = {25},
number = {6},
eid = {063003},
author = {Meyer, Philipp G and Metzler, Ralf},
title = {Stochastic processes in a confining harmonic potential in the presence of static and dynamic measurement noise},
journal = {New J. Phys.}
}

@article{SerMen,
  title = {Non-Markovian Model for Transport and Reactions of Particles in Spiny Dendrites},
  author = {Fedotov, Sergei and M\'endez, Vicen\ifmmode \mbox{\c{c}}\else \c{c}\fi{}},
  journal = {Phys. Rev. Lett.},
  volume = {101},
  issue = {21},
  eid = {218102},
  numpages = {4},
  year = {2008},
  month = {Nov},
  publisher = {American Physical Society}
}

@article{SunZhaCheRee,
title = {Use of a variable-index fractional-derivative model to capture transient dispersion in heterogeneous media},
journal = {J. Contam. Hydrol.},
volume = {157},
eid = {47-58},
year = {2014},
issn = {0169-7722},
author = {Sun, HongGuang and Zhang, Yong and Chen, Wen and Reeves, Donald M. }
}

@article{SunNieDeng2023,
	author = {Sun, Jing and Nie, Daxin and Deng, Weihua},
	title = {An Efficient Numerical Algorithm for the Model Describing the Competition Between Super- and Sub-diffusions Driven by Fractional {{B}}rownian Sheet},
	journal = {J. Sci. Comput.},
	year = 2023,
	volume = 96,
	eid = 10
}

@book{ThoVid,
  title = {Galerkin Finite Element Methods for Parabolic Problems},
  author = {Thom{\'e}e, Vidar},
  year = {2006},
  series = {Springer Series in Computational Mathematics},
  edition = {2nd ed},
  number = {v. 25},
  publisher = {Springer},
  address = {Berlin ; New York}
}

@article{WuYan2025,
  title={Milstein Scheme for a Stochastic Semilinear Subdiffusion Equation Driven by Fractionally Integrated Multiplicative Noise},
  author={Wu, Xiaolei and Yan, Yubin},
  journal={Fractal Fract.},
  volume={9},
  number={5},
  eid = {314},
  year={2025},
  publisher={MDPI}
}

@article{XuFenShaSeeCha,
  title = {Subdiffusion of a Sticky Particle on a Surface},
  author = {Xu, Q. and Feng, L. and Sha, R. and Seeman, N. C. and Chaikin, P. M.},
  year = {2011},
  month = jun,
  journal = {Phys. Rev. Lett.},
  volume = {106},
  number = {22},
  eid = {228102}
  }

@book{Yul,
  author    = {Mishura, Yuliya S.},
  title     = {Stochastic Calculus for Fractional {{B}}rownian Motion and Related Processes},
  year      = {2008},
  series    = {Lecture Notes in Mathematics},
  volume    = {1929},
  publisher = {Springer},
  address   = {Berlin}
}

@article{ZengZhaKar,
  title   = {A generalized spectral collocation method with tunable accuracy for variable-order fractional differential equations},
  author  = {Zeng, Fanhai and Zhang, Zhongqiang and Karniadakis, George Em},
  journal = {SIAM J. Sci. Comput.},
  volume  = {37},
  number  = {6},
  eid   = {A2710--A2732},
  year    = {2015}
}

@article{ZengZhaKar2016,
  title   = {Fast difference schemes for solving high-dimensional time-fractional subdiffusion equations},
  author  = {Zeng, Fanhai and Zhang, Zhongqiang and Karniadakis, George Em},
  journal = {J. Comput. Phys.},
  volume  = {307},
  eid   = {15--33},
  year    = {2016}
}

@book{ZhaKar,
  title     = {Numerical Methods for Stochastic Partial Differential Equations with White Noise},
  author    = {Zhang, Zhongqiang and Karniadakis, George Em},
  series    = {Applied Mathematical Sciences},
  volume    = {196},
  publisher = {Springer},
  address   = {Cham},
  year      = {2017}
}

@article{ZheACOM,
  title = {Optimal control of variable-exponent subdiffusion},
  author = {Li, Yiqun and Liu, Mengmeng and Qiu, Wenlin},
   year    = {2025},
  journal = {arXiv preprint},
  volume  = {},
  number  = {},
  pages   = {},
  URL = {arXiv:2505.17678}
}

@article{ZheCMS,
  author  = {Qiu, Wenlin and Guo, Tao and Li, Yiqun and Guo, Xu and Zheng, Xiangcheng},
  title   = {A multiscale {{A}}bel kernel and application in viscoelastic problem},
  journal = {Commun. Math. Sci.},
  volume  = {24},
  number  = {7},
  eid   = {2047--2067},
  year    = {2026},
}

@Article{Zheng,
author = {Zheng , Xiangcheng},
title = {Two Methods Addressing Variable-Exponent Fractional Initial and Boundary Value Problems and {{A}}bel Integral Equation},
journal = {CSIAM Trans. Appl. Math.},
year = {2025},
volume = {6},
number = {4},
eid = {666--710}
}

@article {ZheMMS,
    AUTHOR = {Zheng, Xiangcheng and Li, Yiqun and Qiu, Wenlin},
     TITLE = {Local modification of subdiffusion by initial {F}ickian
              diffusion: multiscale modeling, analysis, and computation},
   JOURNAL = {Multiscale Model. Simul.},
    VOLUME = {22},
      YEAR = {2024},
    NUMBER = {4},
     eid =  {1534--1557},
      ISSN = {1540-3459,1540-3467},
   MRCLASS = {65M12 (35B65 35K20 35R11)},
  MRNUMBER = {4827876},
}
\end{document}